\DeclareSymbolFont{bbold}{U}{bbold}{m}{n}
\DeclareSymbolFontAlphabet{\mathbbm}{bbold}
\title[]{Another proof that $\MM^{++}$ implies Woodin's axiom $(*)$}
\author{Matteo Viale}
\thanks{
I thank Boban Veli\v{c}kovi\'c for suggesting the idea to relate Asper\'o and Schindler's forcing to consistency properties and
for giving me key hints for the proof of Lemma \ref{lem:keylemASPSCH(*)0} and Fact \ref{fac:treeTforP0}. 
I also thank Ilijas Farah and Ben de Bondt for the many useful comments.
I take full responsibility for the errors or gaps remaining.
}
\theoremstyle{plain}
	\newtheorem{Notation}{Notation}
	\newtheorem{theorem}{Theorem}[section]
	\newtheorem{proposition}[theorem]{Proposition}
	\newtheorem{lemma}[theorem]{Lemma}
	\newtheorem{corollary}[theorem]{Corollary}
	\newtheorem{fact}[theorem]{Fact}
	\newtheorem{claim}{Claim}
	\newtheorem{subclaim}{Subclaim}
\theoremstyle{definition}
	\newtheorem{definition}[theorem]{Definition}
	\newtheorem{notation}[theorem]{Notation}
\theoremstyle{remark}
	\newtheorem{remark}[theorem]{Remark}
	\newtheorem{strategy}[theorem]{Strategy}
	\newtheorem{convention}[theorem]{Convention}
	\newtheorem{warning}[theorem]{Warning}
\newcommand{\Ord}{\ensuremath{\mathrm{Ord}}}
\newcommand{\ZFC}{\ensuremath{\mathsf{ZFC}}}
\DeclareMathOperator{\dom}{dom}
\DeclareMathOperator{\ran}{ran}
\DeclareMathOperator{\otp}{otp}
\DeclareMathOperator{\cof}{cof}
\DeclareMathOperator{\trcl}{trcl}
\DeclareMathOperator{\Ult}{Ult}
\DeclareMathOperator{\Coll}{Coll}
\DeclareMathOperator{\Sat}{Sat}
\newcommand{\Pmax}{\ensuremath{\mathbb{P}_{\mathrm{max}}}}
\newcommand{\NS}{\ensuremath{\mathbf{NS}}} 
\newcommand{\stUB}{\ensuremath{(*)\text{-}\mathsf{UB}}}
\newcommand{\bool}[1]{\mathsf{#1}}
\newcommand{\tow}[1]{\mathcal{#1}}
\newcommand{\SL}[1]{\Delta_{#1}}
\newcommand{\pow}[1]{\mathcal{P}\left(#1\right)}
\newcommand{\gp}[1]{\ulcorner #1 \urcorner}
\newcommand{\qp}[1]{\left[ #1 \right]}
\newcommand{\ap}[1]{\langle #1 \rangle}
\newcommand{\bp}[1]{\left\lbrace #1 \right\rbrace}
\newcommand{\Cod}{\ensuremath{\text{{\rm Cod}}}}
\newcommand{\ST}{\ensuremath{\text{{\sf ST}}}}
\newcommand{\UB}{\ensuremath{\text{{\sf UB}}}}
\newcommand{\BMM}{\ensuremath{\text{{\sf BMM}}}} 
\newcommand{\MM}{\ensuremath{\text{{\sf MM}}}}
\newcommand{\FA}{\ensuremath{\text{{\sf FA}}}}
\begin{document}


\begin{abstract}
Let $\MM^{++}(\kappa)$ state that the forcing axiom $\MM^{++}$ can be instantiated only for stationary set preserving posets of size at most
$\kappa$. 
We give a detailed account of Asper\`o and Schindler's proof that 
 $\MM^{++}(\kappa)+$\emph{there are class many Woodin cardinals} implies Woodin's axiom
$(*)$ if $\Diamond_\kappa$ holds and $\kappa>\aleph_2$.
Our presentation takes advantage of the notion of consistency property: specifically we rephrase  
Asper\`o and Schindler's forcing as a specific instantiation of the notion 
of ``consistency property'' used by Makkai, Keisler, Mansfield and others in the study of infinitary logics. 
We also reorganize the order of presentation of the various parts of the proof. Taken aside these variations, 
our account is quite close to \cite{ASPSCH(*)}. 
\end{abstract}

\maketitle

I assume throughout this note that the reader is familiar with the key properties of stationary set preserving forcings,
knows the definition of $\MM^{++}$ and what are its most important applications, and is familiar with the theory
of $\Pmax$ (for example as exposed in \cite{HSTLARSON}).

I will try to make this paper as self-contained as possible. 

What I will do is follow very closely Asper\`o and Schindler's proof, while reorganizing their presentation relating explicitly
 their forcing machinery to the notion of consistency property introduced by Makkai and others to produce models of theories in infinitary logics.
 
I give rightaway a fast account of the proof strategy of  Asper\`o and Schindler's result, more detais and precise definitions will follow later on.

 Following the proof pattern of Asper\`o and Schindler's result I set up
 their machinery in a slightly different terminology to design for each $A\in\pow{\omega_1}\setminus L(\mathbb{R})$
 and $D$ dense subset of $\Pmax$ universally Baire in the codes a stationary set preserving  forcing $P_{D,A}$ which instantiates the 
 $\Sigma_1$-sentence\footnote{$\Sigma_1$ in a signature admitting predicate symbols for all the universally Baire sets, for the predicate
 $\NS_{\omega_1}$ and for all properties defined by a $\Delta_0$-formula. We will be more precise later on.}:
 
 \begin{quote}
 \emph{There is a $\NS$-correct iteration of some $(N,I,a)\in D$ which maps $a$ to $A$.}
  \end{quote}

 Asper\'o and Schindler show that everything can be done assuming $V$ models 
 $\NS_{\omega_1}$\emph{ is saturated $+$ there are class many Woodin cardinals} with $P_{D,A}$ of size $\kappa$ 
 for any $\kappa$ such that:
  \begin{itemize}
\item
$\kappa\geq 2^{(2^{\aleph_1})}$;
\item
$\Diamond_\kappa$ holds.
\end{itemize}
Now if $\MM^{++}(\mathfrak{c})$ holds, 
$\NS_{\omega_1}$ is saturated, and $2^{\aleph_1}=\aleph_2$.
Moreover $\Diamond_\kappa$ can be established for $\kappa=\aleph_3$ by a stationary set preserving forcing of size $2^{\aleph_2}$ which does not add new subets of $\aleph_2$.
Putting everything together, we get that $\MM^{++}(2^{\aleph_2})+$\emph{there are class many Woodin cardinals} implies the following 
\begin{quote}
\emph{For any 
$A\in \pow{\omega_1}\setminus L(\mathbb{R})$ such that
$L[A]$ computes correctly $\omega_1$, and any $D$ dense subset of $\Pmax$ universally Baire in the codes
there is a $\NS$-correct iteration of some $(N,I,a)\in D$ which maps $a$ to $A$.}
\end{quote} 
This is an even stronger version of $(*)$.

The following is the key consequence of the existence of class many Woodin cardinals we will need:
\begin{quote}
\emph{
\[
(H_{\omega_1}^{V},\in,B)\prec (H_{\omega_1}^{V[G]},\in,B^{V[G]})
\]
for all $B$ universally Baire set in $V$ and all generic extensions $V[G]$ of $V$.
}
\end{quote}
This property holds assuming class many Woodin cardinals. 

The presentation is structured as follows:
\begin{itemize}
\item Section \ref{sec:background} gives the main definitions and a review of the results in the literature 
on which one leverages to establish that $\MM^{++}$ implies $(*)$.
This is just to make this paper completely self-contained. The reader familiar with $\Pmax$, $\MM^{++}$ 
etc. can skip this part;
proper references to the results presented in this section are in any case given at the appropriate stages
of this paper.

\item Section \ref{sec:semcert} states the main result (which is just rephrasing in our set up Schindler's and Asper\`o's main theorem)
while introducing the key concept of semantic 
certificate\footnote{Our notion of semantic certificate is slightly different from the one appearing in \cite{ASPSCH(*)}, but in spirit it has the same effects.}.
Roughly a semantic certificate for $A\in\pow{\omega_1}\setminus L(\mathbb{R})$ and 
$D\in L(\mathbb{R})$ dense subset of 
$\Pmax$ existing in some generic extension $V[G]$ of $V$ (possibly collapsing $\omega_1^V$ to countable) is a tuple 
$(\mathcal{J},r,f,T)$ 
such that $\mathcal{J}$ is an iteration 
of length $\omega_1^V$ of some $P_{\max}$ condition $(N,I,a)\in D^{V[G]}$ mapping $a$ to $A$ 
but
\emph{with no evident obstruction to the fact that $\mathcal{J}$ is $\NS$-correct and belongs to 
a stationary set preserving extension of $V$} (the other elements $r,f,T$ play a crucial role which will be outlined at the proper stage).

The main result
states that there is a stationary set preserving forcing $P_{D,A}$ of size $\kappa$ generically 
adding a semantic certificate $(\mathcal{J}_G,r_G,f_G,T_G)$ for $D,A$ and such that in any generic 
extension $V[G]$ of $V$ by $P_{D,A}$, $\mathcal{J}_G$ is really a $\NS$-correct iteration in $V[G]$.

In \ref{subsec:exsemcert}
we prove the first approximations to this result, i.e. the existence of semantic certificates\footnote{This corresponds to \cite[Lemma 3.2]{ASPSCH(*)}. 
Here we also pay attention to explain how to get the tree $T$ appearing in \cite[eqn. (8) pag. 16]{ASPSCH(*)} and its role in their proof.} in generic extensions collapsing $\omega_2^V$ to countable
(cfr. Lemma \ref{lem:keylemASPSCH(*)0} --- corresponding to \cite[Lemma 3.2]{ASPSCH(*)}). 

\item Section \ref{sec:proof} proves the main result:
\begin{itemize}
%
\item
In Section \ref{subsec:consprop} we introduce the notion of forcing given by consistency properties and establish some general facts about 
these types of forcings. Specifically a consistency property given by a class of structures $\mathcal{C}$
for a signature $\mathcal{L}$ is the family of finite sets of atomic (or negated atomic) 
$\mathcal{L}$-sentences which are realized by some structure in 
$\mathcal{C}$. Now this is naturally a forcing notion $P_{\mathcal{C}}$ ordered by reverse inclusion. When things are properly organized 
a generic filter for $P_{\mathcal{C}}$ produces a term model in $\mathcal{C}$.
\item
In Section \ref{subsec:P0} we introduce and analyze a specific instance of these forcings:
the forcing $P^*_0$ (which corresponds to the 
forcing $\mathbb{P}_0$ in the sequence of forcings $\bp{\mathbb{P}_\alpha:\alpha\leq\kappa}$ of \cite{ASPSCH(*)}) 
and show that it gives an elegant 
way to produce generic term-models 
whose transitive collapse are semantic certificates
(cfr. Fact \ref{fac:keyfacSEMCERTP0} -- corresponding to \cite[Lemma 3.3]{ASPSCH(*)}).
Unfortunately this forcing cannot be provably stationary set preserving since it has size continuum and Woodin has shown that $\MM^{++}(\mathfrak{c})$ is consistent with the negation of $(*)$
\cite[Thm. 10.90]{woodinbook}.
\item
In Section \ref{subsec:Pkappa} we introduce and analyze the forcings $\bp{P_\alpha:\alpha\in C\cup\bp{\kappa}}$
indexed by a club subset $C$ of $\kappa$
(corresponding to the sequence of forcings $\bp{\mathbb{P}_\alpha:\alpha\leq\kappa}$ of \cite{ASPSCH(*)}), 
and we prove the main Theorem \ref{thm:mainthmPkappa}
(corresponding to \cite[Lemma 3.8, Corollary 3.9]{ASPSCH(*)}) stating that $P_\kappa$ is stationary set preserving and  generically adds
a $\NS$-correct iteration witnessing that $G_A\cap D$ is non empty\footnote{$G_A$ is the $P_{\max}$-filter given by the conditions $(N,I,a)$ which correctly iterate $a$ to $A$, more details later.}. The forcings $P_\alpha$ are also consistency properties of the form $P_{\mathcal{W}_\alpha}$, but now $\mathcal{W}_\alpha$
is designed so that $P_{\mathcal{W}_\alpha}$ adds generically countable substructures of $H_\kappa^V$ which can be used
to seal $P_\alpha$-names for clubs\footnote{Roughly  these countable structures can be used to witness that
a given stationary set of $V$ on $\omega_1^V$ is met by a given $P_\alpha$-name for a club subset of $\omega_1^V$.}. $\Diamond_\kappa$ is then used to show that this sealing process
is completed at stage $\kappa$ as all $P_\kappa$-names for clubs are guessed at some stage $\alpha<\kappa$ and sealed by the trace on $P_\alpha$ of the generic filter for $P_\kappa$. This roughly outlines why
$P_\kappa$
should be stationary set preserving.
\end{itemize}
\end{itemize}

I decided to be overly cautious and to outline where the arguments require the existence of class many Woodin cardinals, and why they cannot go through with weaker assumptions. This might be annoying for some readers, I apologize for that.

Let us remark once more that:
\begin{enumerate}[(A)]
\item This work is just a re-elaboration of Asper\'o and Schindler's result, the key ideas are all appearing
in \cite{ASPSCH(*)}. Here we reformulate them in a slightly different terminology 
and give an alternative presentation of this proof.
\item This work wouldn't exist without 
Boban Velickovic's suggestion to relate Asper\'o and Schindler's work to the notion of 
\emph{consistency property}
used in infinitary logic (consistency properties are particularly useful if one wants to produce 
models of a certain first order theory omitting certain prescribed types). This is somewhat implicit in Asper\'o and Schindler's 
work and we make it explicit here.
\end{enumerate} 

It is also convenient before proceeding to introduce the following short-hand notation to denote 
$\mathcal{L}$-structures for a first order signature $\mathcal{L}$:
\begin{Notation}
Given a signature $\tau$, we denote an $\mathcal{L}$-structure $\mathcal{M}=(M,R^{\mathcal{M}}:R\in\tau)$ by
$(M,\tau^{\mathcal{M}})$.
\end{Notation}

Later on we will need a similar notation for multi-sorted structures (see Section \ref{subsec:P0}).

\section{Main result and background material on $\Pmax$, universally baire sets and $\MM^{++}$}\label{sec:background}

Let us first set up the proper language and terminology in order to deal with the $\Pmax$-technology and to state our main result.

$\ZFC^-$ denotes $\ZFC$ minus the powerset axiom.

\begin{definition}
A countable transitive set $M$ is  an iterable structure if it 
models $\ZFC^-+$\emph{there exists an uncountable cardinal} and for some transitive
$N\supseteq M$:
\begin{itemize}
\item
$N$ models $\ZFC+\NS_{\omega_1}$\emph{ is precipitous},
\item
$\omega_2^N$ is a countable ordinal in $V$,
\item
$\otp(N\cap\Ord)\geq\omega_1^V$,
\item 
$H_{\omega_2}^N=H_{\omega_2}^M$, 
\item
if $A\in N$ is a maximal
antichain of $(\pow{\omega_1}/_\NS)^M$, then $A\in M$.
\end{itemize}
\end{definition}

For example if $N$ is transitive, models $\ZFC$,
$\omega_2^N$ is a countable ordinal, and $\otp(N\cap\Ord)\geq\omega_1$, then
$M=H_{\omega_2}^N$ is an iterable structure if $N$ models $\NS_{\omega_1}$\emph{is saturated}; while
so is
$M=H_\kappa^N$ for any countable (in $V$) cardinal (of $N$) $\kappa>(2^{\aleph_2})^N$ in $N$ if $N$ just models 
$\NS_{\omega_1}$\emph{is precipitous}
(see \cite[Lemma 1.5]{HSTLARSON} for the ratio of this definition).


\begin{definition}\cite[Def. 1.2]{HSTLARSON}\label{def:iterstr}
Let $M$ be an iterable structure. 
Let $\gamma$ be an ordinal less than or equal to $\omega_1$. 
An iteration $\mathcal{J}$ of $M$ of length $\gamma$ 
consists of a family of transitive models $\ap{M_\alpha:\,\alpha \leq\gamma}$, 
sets $\ap{G_\alpha:\,\alpha< \gamma}$ 
and a commuting family of elementary embeddings 
\[
\ap{j_{\alpha\beta}: M_\alpha\to M_\beta:\, \alpha\leq\beta\leq\gamma}
\]
such that:
\begin{enumerate}
\item
$M_0 = M$,
\item
each $G_\alpha$ is an $M_\alpha$-generic filter for 
$(\pow{\omega_1}/\NS_{\omega_1})^{M_\alpha}$,
\item
each $j_{\alpha\alpha}$ is the identity mapping,
\item
each $j_{\alpha\alpha+1}$ is the ultrapower embedding induced by $G_\alpha$,
\item
for each limit ordinal $\beta\leq\gamma$,
$M_\beta$ is the direct limit of the system
$\bp{M_\alpha, j_{\alpha\delta} :\, \alpha\leq\delta<\beta}$, and for each $\alpha<\beta$, 
$j_{\alpha\beta}$ is the induced embedding.
\end{enumerate}

An iteration
\[
\mathcal{J}=\ap{j_{\alpha\beta}: M_\alpha\to M_\beta:\, \alpha\leq\beta\leq\omega_1}
\]
existing in $V$
is $\NS$-correct if
\[
M_{\omega_1}\cap \NS_{\omega_1}^V=
\NS_{\omega_1}^{M_{\omega_1}}.
\]
\end{definition}

From now we will just write \emph{correct} in place of $\NS$-correct.

We adopt the convention to denote an iteration $\mathcal{J}$ just
by $\ap{j_{\alpha\beta}:\, \alpha\leq\beta\leq\gamma}$, we also stipulate that
if $X$ denotes the domain of $j_{0\alpha}$, $X_\alpha$ or $j_{0\alpha}(X)$ denotes
the domain of $j_{\alpha\beta}$ for any $\alpha\leq\beta\leq\gamma$.

\begin{fact}\cite[Lemma 1.5, Lemma 1.6]{HSTLARSON}
Assume $(M,\in)$ is iterable. Then for any ordinal
$\gamma\leq\omega_1^V$ any iteration 
\[
\mathcal{J}=\ap{j_{\alpha\beta}: M_\alpha\to M_\beta:\, \alpha\leq\beta<\gamma}
\]
induced by filters $\ap{G_\alpha:\,\alpha< \gamma}$ 
is such that the structure $M_\gamma$ obtained in accordance with\footnote{I.e. $(M_\gamma,E_\gamma)$ is the direct limit of $\mathcal{J}$
if $\gamma$ is limit, or the ultrapower of $(M_\alpha,\in)$ by $G_\alpha$ if $\gamma=\alpha+1$, see \cite[Section 1]{HSTLARSON} for details.}
Def. \ref{def:iterstr} is well-founded hence isomorphic to its transitive collapse.
\end{fact}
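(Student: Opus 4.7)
The plan is to argue by transfinite induction on $\gamma\leq\omega_1^V$, with the inductive hypothesis that $M_\alpha$ is well-founded for all $\alpha\leq\gamma$. The base case $\gamma=0$ is immediate since $M_0=M$ is transitive by hypothesis. The crucial device is to construct, in parallel with the given iteration of $M$, an iteration $\ap{j^N_{\alpha\beta}:N_\alpha\to N_\beta}$ of the witness $N\supseteq M$ from the definition of iterability, driven by the same generic filters $G_\alpha$. This is legitimate because of the antichain clause in the definition: every maximal antichain of $(\pow{\omega_1}/\NS)^{M_\alpha}$ lying in $N_\alpha$ is already in $M_\alpha$, so each $M_\alpha$-generic filter $G_\alpha$ is automatically $N_\alpha$-generic for the corresponding forcing. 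Elementarity then ensures $H^{N_\alpha}_{\omega_2}=M_\alpha$ at every stage, and the $j^N_{\alpha\beta}$ extend the $j_{\alpha\beta}$.

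For the successor step $\gamma=\alpha+1$, assuming $N_\alpha$ is well-founded, elementarity of $j^N_{0\alpha}$ transports precipitousness of $\NS_{\omega_1}$ from $N$ to $N_\alpha$, so $\Ult(N_\alpha,G_\alpha)=N_{\alpha+1}$ is well-founded; the inclusion of representing functions $M_\alpha\hookrightarrow N_\alpha$ descends to an embedding $M_{\alpha+1}\hookrightarrow N_{\alpha+1}$, whence $M_{\alpha+1}$ is well-founded too. For a limit $\gamma=\lambda$ with $\cof(\lambda)>\omega$ --- in particular the principal case $\lambda=\omega_1^V$ --- I use the standard cofinality trick: any putative $\omega$-descending sequence $\ap{x_n}$ in $M_\lambda$ consists of elements of the form $x_n=j_{\alpha_n,\lambda}(y_n)$ with $\alpha_n<\lambda$ and $y_n\in M_{\alpha_n}$, so setting $\alpha:=\sup_n\alpha_n<\lambda$ one pulls the sequence back through the $j_{\alpha_n,\alpha}$'s to a descending sequence in the well-founded $M_\alpha$, a contradiction.

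The main obstacle is the remaining subcase, limit stages $\lambda<\omega_1^V$ of countable cofinality, where $\sup_n\alpha_n$ may equal $\lambda$ and the cofinality argument fails. I intend to handle this again via the parallel iteration of $N$: if $N_\lambda$ is well-founded then so is $M_\lambda$ via the natural embedding inherited at limit stages, and well-foundedness of $N_\lambda$ at countable-cofinality limits is exactly the content of the classical Jech--Magidor--Mitchell--Prikry theory of iterated generic ultrapowers by a precipitous ideal, as exposed in \cite[Lemma 1.5]{HSTLARSON}. This is precisely the point at which the extra structure packaged into the definition of iterable structure --- that $N\models\ZFC$ with precipitous $\NS_{\omega_1}$, has ordinal height at least $\omega_1^V$, and satisfies the antichain-preservation clause --- is doing real work, well beyond what precipitousness of $\NS_{\omega_1}$ inside $M$ alone (which we do not even assume, since $M$ lacks the powerset axiom) would provide.
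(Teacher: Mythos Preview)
The paper does not supply a proof of this Fact; it merely records it with a citation to \cite[Lemma 1.5, Lemma 1.6]{HSTLARSON}. Your sketch is precisely the argument of that reference: Lemma~1.6 there is the reduction from the iterable structure $M$ to the parallel iteration of its $\ZFC$-witness $N$ via the antichain clause (exactly as you describe), and Lemma~1.5 is the well-foundedness of the $N$-iteration itself, whose only delicate case is the countable-cofinality limit and which uses the height hypothesis $\otp(N\cap\Ord)\geq\omega_1^V$. So your citing Lemma~1.5 at that point is not circular --- it is the intended division of labour --- and your overall approach coincides with the one in the source.

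One small imprecision worth tightening: you write $H_{\omega_2}^{N_\alpha}=M_\alpha$, but the definition only gives $H_{\omega_2}^{N}=H_{\omega_2}^{M}$, not $M=H_{\omega_2}^{N}$ (that is merely one of the motivating examples). What you actually need, and what elementarity delivers, is $H_{\omega_2}^{N_\alpha}=H_{\omega_2}^{M_\alpha}$ together with a canonical $\in$-embedding $M_\alpha\hookrightarrow N_\alpha$ induced by the inclusion of representing functions; that suffices to transfer well-foundedness from $N_\alpha$ to $M_\alpha$. With this adjustment your sketch is correct.
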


A trivial fact is that for an iteration
\[
\mathcal{J}=\bp{j_{\alpha\beta}:\,N_\alpha\to N_\beta:\, \alpha\leq\beta\leq\gamma}
\]
and $\alpha<\beta$, $N_\alpha$ and $N_\beta$ are both transitive but in general it holds that
$N_\alpha\not\subseteq N_\beta$.

Notice that $M$ is iterable if and only if it is $\emptyset$-iterable. 

\begin{definition}\cite[Def. 2.1]{HSTLARSON}
$\Pmax$ is the subset of $H_{\omega_1}$ given by the pairs $(M,a)$ such that
\begin{itemize}
\item $M$ is iterable, countable, and models Martin's axiom. 
\item $a\in\pow{\omega_1}^M\setminus L(\mathbb{R})^M$, and there exists $r\in \pow{\omega}\cap M$ such that $\omega_1^M=\omega_1^{L[a,r]}$.
\end{itemize}

$(M,a)\leq (N,b)$ if there exists $\mathcal{J}=\ap{j_{\alpha\beta}:\, \alpha\leq\beta\leq\omega_1^M}$ in 
$M$ iteration of $N$ of length $\omega_1^M$ such that
$j_{0\omega_1^M}(b)=a$ and 
$(M,\in)$ models that $\mathcal{J}$ is correct.
\end{definition}

Note that $\Pmax$ is a definable class in $(H_{\omega_1},\in)$; in particular it belongs to any transitive model of 
$\ZFC^-$
containing $\pow{\omega}$.

Our definition of $\Pmax$ is slightly different  than the one given in \cite{HSTLARSON}, but it defines an equivalent of
the poset defined in \cite[Def. 2.1]{HSTLARSON} in view of the 
following\footnote{Much weaker large  cardinals 
assumptions are needed, we don't spell the optimal hypothesis.}:

\begin{fact}
Let $\Pmax^0$ be the forcing $\Pmax$ as defined in \cite[Def. 2.1]{HSTLARSON}.
Assume there are class many Woodin cardinals. 

Then for every condition $(M,I,a)$ in $\Pmax^0$ there is
a condition $(N,b)$ in $\Pmax$ and an iteration $\ap{j_{\alpha\beta}:\, \alpha\leq\beta\leq\omega_1^N}\in N$ of $(M,I)$ 
according to \cite[Def. 1.2]{HSTLARSON}
such that $j_{0\omega_1^N}(I)=\NS_{\omega_1}^N\cap j_{0\omega_1^N}[M]$ and $j_0(a)=b$.

Conversely assume $N$ is iterable and $\delta$ is Woodin in $N$. Let $G$ be 
$N$-generic for some $P\in N$
forcing $\NS_{\omega_1}$ is precipitous and Martin's axiom. 
Then $(H_{\kappa}^{N[G]},\NS_{\omega_1}^{N[G]},a)\in\Pmax^0$ for all 
$a\in\pow{\omega_1}^{N[G]}\setminus L(\mathbb{R})^{N[G]}$ such that there exists $r\in \pow{\omega}\cap N[G]$ with 
$\omega_1^M=\omega_1^{L[a,r]}$ with $\kappa=(2^{\aleph_1})^+$ in $N[G]$.
\end{fact}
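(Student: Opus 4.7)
For the forward direction, fix $(M, I, a) \in \Pmax^0$ and pass to a generic extension $V[H]$ of $V$ in which $\NS_{\omega_1}$ is precipitous (forced via a Woodin cardinal of $V$) and Martin's axiom holds (forced by a subsequent ccc iteration). Inside $V[H]$, take a countable elementary substructure $X \prec H_\theta^{V[H]}$ containing $\trcl(\bp{M, I, a})$, and let $\pi : X \to N$ be its Mostowski collapse; then $N$ is countable transitive, models $\ZFC + \mathrm{MA} + \NS_{\omega_1}$ precipitous by elementarity, and is iterable in the author's sense inside $V[H]$ (with a sufficiently large $H_\eta^{V[H]}$ containing $N$ witnessing the structural condition), while $\pi$ fixes $M, I, a$. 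Inside $N$, construct the iteration $\ap{j_{\alpha\beta} : \alpha \leq \beta \leq \omega_1^N}$ of $(M, I)$ by the standard Woodin--Larson bookkeeping that meets all maximal antichains of the iterated images $j_{0\alpha}(I)$, producing $j_{0\omega_1^N}(I) = \NS_{\omega_1}^N \cap j_{0\omega_1^N}[M]$; set $b := j_{0\omega_1^N}(a)$ so that $(N, b)$ is a $\Pmax$-condition as computed in $V[H]$. To transfer back to $V$, invoke the $H_{\omega_1}$-level universally Baire absoluteness recorded in the excerpt: the existence assertion for such a pair $(N, b)$ together with the corresponding iteration is $\Sigma_1$ over $H_{\omega_1}$ with parameters $M, I, a \in H_{\omega_1}^V$ (the iteration has countable length in $V$ and so is hereditarily countable), and its truth in $V[H]$ therefore reflects down to $V$.

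For the converse, set $M^* := H_\kappa^{N[G]}$ with $\kappa = (2^{\aleph_1})^+$ computed in $N[G]$. Each clause of $\Pmax^0$ is checked directly: $M^*$ models $\ZFC^- + \mathrm{MA} + \NS_{\omega_1}$ precipitous by the forcing hypothesis on $P$, and the $L[a,r]$-computation of $\omega_1$ is part of the hypothesis on $a$. Iterability of $M^*$ in $V$ is witnessed by $N[G]$ itself --- transitive as a forcing extension of transitive $N$, inheriting $\omega_2$-countability in $V$ and $\otp(N[G] \cap \Ord) \geq \omega_1^V$ from the iterability of $N$ (set-forcing preserves these invariants), satisfying $H_{\omega_2}^{N[G]} = H_{\omega_2}^{M^*}$ since $\kappa > \omega_2^{N[G]}$, and capturing the relevant maximal antichains of $(\pow{\omega_1}/\NS)^{M^*}$ tautologically.

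The main obstacle I anticipate is verifying $b \in \pow{\omega_1}^N \setminus L(\mathbb{R})^N$ in the forward direction, since $L(\mathbb{R})^N$ properly contains the iterated image $L(\mathbb{R})^{M_{\omega_1^N}}$ and elementarity of $j_{0\omega_1^N}$ alone fails. This is resolved by another application of universally Baire absoluteness: the assertion ``$x \notin L(\mathbb{R})$'' for a countable $x$ is $\Pi_1$ over $H_{\omega_1}$ with a universally Baire predicate coding the $L(\mathbb{R})$-definability relation, so the non-membership $a \notin L(\mathbb{R})^M$, itself a hypothesis of $\Pmax^0$, propagates through the collapse to $N$, through the iteration, and through the reflection step, yielding $b \notin L(\mathbb{R})^N$.
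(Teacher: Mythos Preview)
Your forward direction reverses the order of operations relative to the paper, and this introduces a real gap. The paper works entirely in $V$: it takes a countable $X\prec V_\gamma$ with $\gamma>\delta$ both Woodin and $(M,I,a),\delta\in X$, collapses to $N_0$, and then forces \emph{over the countable model $N_0$} (collapsing the image of $\delta$ to $\omega_2$ and forcing $\NS_{\omega_1}$ precipitous together with Martin's axiom) to obtain $N=N_0[g]$. Since $N_0$ is countable, such a $g$ exists in $V$, so $N\in V$ and no reflection back from a generic extension is needed. Iterability of $N_0$ (hence of $N$) is supplied by \cite[Thm.~4.10]{HSTLARSON}, using that $V_\gamma$---and therefore $N_0$ by elementarity---has unboundedly many measurables; the required iteration of $(M,I)$ inside $N$ is then \cite[Lemma~2.8]{HSTLARSON}.

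Your route (force over $V$ to get $V[H]$, take $X\prec H_\theta^{V[H]}$, collapse to $N$, then reflect) breaks at the iterability step. The proposed witness $H_\eta^{V[H]}$ does \emph{not} satisfy the author's structural definition of an iterable structure: $\omega_2^{H_\eta^{V[H]}}=\omega_2^{V[H]}$ is not a countable ordinal, and $H_{\omega_2}^{H_\eta^{V[H]}}=H_{\omega_2}^{V[H]}\neq H_{\omega_2}^{N}$. To salvage your argument you would have to choose $\theta$ so that $H_\theta^{V[H]}$ has cofinally many measurables (e.g.\ $\theta$ Woodin in $V[H]$), invoke \cite[Thm.~4.10]{HSTLARSON} directly for the ``all iterations well-founded'' form of iterability, and then observe that the resulting existence statement is $\Sigma^1_3$ rather than $\Sigma_1$ over $H_{\omega_1}$; it still transfers under class-many Woodins, but not for the reason you give. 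Your converse direction is fine and agrees with the paper's remark that only the first part is non-trivial.
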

\begin{proof}
Only the first part is non-trivial.
Let $\gamma>\delta$ be two Woodin cardinals.
Let $X\prec V_\gamma$ be countable with $\delta,(M,I,a)\in X$.
Let $N_0$ be the transitive collapse of $X$, and $N$ be a generic extension of $N_0$ by a forcing collapsing $\delta$
to become $\omega_2$ and forcing $\NS_{\omega_1}$ is precipitous and Martin's axiom. 
Since $\gamma$ is Woodin, there are class many measurables in $V_\gamma$, hence $N_0$ is iterable 
and so is $N$
(by \cite[Thm. 4.10]{HSTLARSON}). 

By \cite[Lemma 2.8]{HSTLARSON} there is in $N$  the required iteration $\ap{j_{\alpha\beta}:\, \alpha\leq\beta\leq\omega_1^N}$
of $(M,I)$ and we can set $b=j_{0\omega_1^N}(a)$.
\end{proof}

In particular (at the prize of assuming the right large cardinal assumptions) the forcings 
$\Pmax$ and $\Pmax^0$ are equivalent.

Let from now on $\UB$ denote the class of universally Baire 
sets\footnote{See Section \ref{subsec:univbaire} below for details.}.

\begin{definition}
$(*)$-$\mathsf{UB}$ holds in $V$ if:
\begin{itemize}
\item
There are class many Woodin cardinals. 
\item
For any $A\in \pow{\omega_1}\setminus L(\UB)$ such that $\omega_1^{L[A]}=\omega_1^V$:
\begin{itemize}
\item the set
\begin{align*}
G_A=&\{(M_0,a)\in\Pmax: \,\exists\mathcal{J}\text{ correct iteration of }M_0\text{ such that }\\
&j_{0\omega_1}(a)=A\text{ and }\NS_{\omega_1}^{L(\UB)[G]}\cap M_{\omega_1}=\NS_{\omega_1}^{M_{\omega_1}}\}
\end{align*}
is a filter on $\Pmax$; 
\item
$G_A$ meets all dense subsets of $\Pmax$ which are universally Baire 
in the codes\footnote{$D$ is universally Baire in the codes if there is a universally Baire 
set $B$ such that $D=\Cod[B]$ (See Def. \ref{def:cod}). Note that assuming the existence of class many Woodin cardinals any set of reals definable in $L(\mathbb{R})$ is universally Baire.};
\item
$\pow{\omega_1}\subseteq L(\mathbb{R})[A]$.
\end{itemize}
\item
$\NS_{\omega_1}$ is precipitous in $V$.
\end{itemize}
\end{definition}
%
%

Note that the above is expressible as a schema of $\Pi_2$-sentences in the signature $\tau_{\NS_{\omega_1},\UB}$
which admits predicate symbols for all $\Delta_0$-properties, all universally Baire sets, and the non-stationary ideal $\NS_{\omega_1}$,
and constant symbols for $\omega_1,\omega,\emptyset$ (see Proposition \ref{prop:compl(*)}).

The rest of this paper will give a proof of the following remarkable result:
\begin{theorem}[Asper\'o, Schindler \cite{ASPSCH(*)}] \label{thm:mainresult}
Assume there are class many Woodin cardinals and $\MM^{++}(\kappa)$ holds
for $\kappa=2^{\aleph_2}$.
Then so does $(*)$-$\mathsf{UB}$.
\end{theorem}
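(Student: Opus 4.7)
The plan is to reduce the verification of $(*)$-$\UB$ to an application of $\MM^{++}(2^{\aleph_2})$ to a tailor-made forcing. For each $A\in\pow{\omega_1}\setminus L(\UB)$ with $\omega_1^{L[A]}=\omega_1^V$ and each $D\subseteq\Pmax$ dense and universally Baire in the codes, I would use the machinery of Section~\ref{sec:proof} to build a stationary set preserving poset $P_{D,A}$ of size $\kappa=2^{\aleph_2}$ whose generic extensions of $V$ contain a $\NS$-correct iteration of some $(N,a)\in D$ sending $a$ to $A$. Applying the forcing axiom to $P_{D,A}$ with respect to the maximal antichains that identify the generic iteration and force its correctness produces in $V$ a semi-generic filter from which such an iteration can be extracted; the Woodin absoluteness $(H_{\omega_1}^V,\in,B)\prec(H_{\omega_1}^{V[G]},\in,B^{V[G]})$ for $B\in\UB$ then guarantees that the resulting condition still lies in $D^V$, yielding $(N,a)\in G_A\cap D$.

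Before invoking the construction of Section~\ref{sec:proof} I must arrange its background hypotheses. From $\MM^{++}(2^{\aleph_2})$ one extracts that $\NS_{\omega_1}$ is saturated (hence precipitous) and $2^{\aleph_1}=\aleph_2$, so that $2^{(2^{\aleph_1})}=2^{\aleph_2}$; together with class many Woodins this gives three of the four ingredients needed. The missing $\Diamond_\kappa$ at $\kappa=\aleph_3$ is engineered by prepending a stationary set preserving forcing $Q$ of size $2^{\aleph_2}$ that adds no new subsets of $\aleph_2$ and forces $\Diamond_{\aleph_3}$. Because $Q$ adds no subsets of $\aleph_2$, both the saturation of $\NS_{\omega_1}$ and the universal Baireness absoluteness survive in $V^Q$, and the two-step iteration $Q\ast\dot P_{D,A}$ remains stationary set preserving of size at most $2^{\aleph_2}$, so $\MM^{++}(2^{\aleph_2})$ can be applied to it directly from $V$.

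The core of the argument is the construction of $P_{D,A}=P_\kappa$ as the direct limit of a chain of consistency-property forcings $\bp{P_\alpha:\alpha\in C\cup\bp{\kappa}}$ indexed by a club $C\subseteq\kappa$. The base forcing $P_0^*$ is the consistency property whose generic term-model transitively collapses to a semantic certificate for $(D,A)$; its non-triviality rests on Lemma~\ref{lem:keylemASPSCH(*)0}, which exploits the universal Baireness absoluteness supplied by the Woodin hypothesis to produce a certificate in a generic extension collapsing $\omega_2^V$ to countable. At each stage $\alpha\in C$ the consistency property is enriched by atomic data coding countable elementary substructures of $H_\kappa^V$, designed so that the trace on $P_\alpha$ of a $P_\kappa$-generic seals a prescribed $P_\kappa$-name for a club subset of $\omega_1^V$ against a prescribed $V$-stationary set. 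The $\Diamond_\kappa$-sequence enumerates the pairs $(\dot E,S)$ of candidate names and stationary sets, each pair being caught along $C$, so that the sealing completes by stage $\kappa$.

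The main obstacle is Theorem~\ref{thm:mainthmPkappa}: one must verify both that the sealing at each $\alpha\in C$ actually does its work, namely that some trace condition in $P_\alpha$ forces $\dot E\cap S\ne\emptyset$ for the guessed pair $(\dot E,S)$, and that passing to the direct limit at $\kappa$ preserves the semantic certificate built at the base while the $\Diamond_\kappa$-based enumeration really exhausts every $P_\kappa$-name for a club on $\omega_1^V$. Once this is in hand, feeding into $\MM^{++}(2^{\aleph_2})$ the maximal antichains of $Q\ast\dot P_{D,A}$ that witness correctness of the generic iteration, together with the stationarity preservation data for the stationary sets one wishes to preserve, yields in $V$ a correct iteration witnessing $(N,a)\in G_A\cap D$. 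The remaining clauses of $(*)$-$\UB$, namely that $G_A$ is a filter and $\pow{\omega_1}\subseteq L(\mathbb{R})[A]$, are then routine consequences of the $\Pmax$ analysis of \cite{HSTLARSON}, since by the above construction $G_A$ meets every dense subset of $\Pmax$ that is universally Baire in the codes; precipitousness of $\NS_{\omega_1}$ already follows from the saturation extracted from $\MM^{++}(2^{\aleph_2})$.
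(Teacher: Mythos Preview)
Your proposal is correct and follows essentially the paper's route, including the preliminary forcing of $\Diamond_{\aleph_3}$ and the two-step application of $\MM^{++}(2^{\aleph_2})$. Two small corrections are worth flagging: first, the $\Diamond_\kappa$-sequence guesses only the club names $\dot C$ (as subsets $A_\lambda\subseteq Q_\lambda$), not pairs $(\dot E,S)$---the stationary set $S$ being tested is an element of the generic $N_{\omega_1}$ (possibly not in $V$) and is handled by choosing the ultrafilter $G_0\ni S$ in the iteration argument of Theorem~\ref{thm:mainthmPkappa}, which is what yields the full correctness $\NS_{\omega_1}^{N_{\omega_1}}=\NS_{\omega_1}^{V[H]}\cap N_{\omega_1}$ rather than mere stationary set preservation; second, the paper transfers the correct iteration from $V[H]$ to $V$ via the $\Sigma_1$-reflection $(H_{\omega_2}^V,\tau_\ST,\NS_{\omega_1},\UB)\prec_1(H_{\omega_2}^{V[H]},\ldots)$ established in Section~\ref{subsec:MM++BMM++}, and membership $(N_0,a)\in D^V$ is secured not by the $H_{\omega_1}$-absoluteness you invoke but by the witnessing tree---the extracted branch $(r,f)$ lies in $p[T]^V\subseteq B^V$.
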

%

Our proof is a variation of Asper\'o and Schindler's argument which reorganizes slightly their presentation while following its streamline.

%
%
%

\subsection*{Strategy of the proof}
We will use the following result:
\begin{theorem}\cite[Thm. 5.1, Thm. 6.3, Thm 7.7]{HSTLARSON}
Assume in $V$ there are class many Woodin cardinals.
Let $G$ be $L(\mathbb{R})$-generic for $\Pmax$.
Then in $L(\mathbb{R})[G]$ it holds that:
\begin{itemize}
\item
$\psi_{\mathrm{AC}}$ and Martin's axiom hold and therefore, $2^{\omega}=2^{\omega_1}=\omega_2$.
\item
$\NS_{\omega_1}$ is saturated.
\item 
For any $A\in \pow{\omega_1}^{L(\mathbb{R})[G]}\setminus L(\UB)$
\begin{align*}
G_A=&\{(M_0,a)\in\Pmax: \,\exists\mathcal{J}\text{ correct iteration of }M_0\text{ such that }\\
&j_{0\omega_1}(a)=A\text{ and }\NS_{\omega_1}^{L(\UB)[G]}\cap M_{\omega_1}=\NS_{\omega_1}^{M_{\omega_1}}\}
\end{align*}
is an $L(\mathbb{R})$-generic filter for $\Pmax$ and such that $L(\mathbb{R})[G_A]=L(\mathbb{R})[G]$.
\end{itemize}
\end{theorem}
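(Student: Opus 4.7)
The plan is to follow the standard $\Pmax$ analysis of \cite{HSTLARSON}. First I would establish the structural properties of $\Pmax$ as a forcing inside $L(\mathbb{R})$: it is $\sigma$-closed (lower bounds of countable descending sequences come from direct limits of the witnessing iterations) and weakly homogeneous (given two conditions with matching iteration targets, one builds an automorphism by comparing iterations). The hypothesis of class many Woodin cardinals delivers $L(\mathbb{R}) \models \mathsf{AD}^+$, so every set of reals in $L(\mathbb{R})$ is universally Baire; this supplies the absoluteness needed to analyze dense sets of $\Pmax$ lying in $L(\mathbb{R})$ and to reflect their $L(\mathbb{R})$-definitions into countable iterable structures.

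The technical heart of the argument is a \emph{genericity iteration lemma}: for any $(M,a) \in \Pmax$ and any $A \in \pow{\omega_1} \setminus L(\mathbb{R})$ with $\omega_1^{L[A,r]} = \omega_1$ for some real $r$, I would construct a correct iteration $\mathcal{J}$ of $M$ of length $\omega_1$ with $j_{0\omega_1}(a) = A$, additionally threading $\mathcal{J}$ through any prescribed sequence of universally Baire dense subsets of $\Pmax$. The construction proceeds by iterating $\omega_1$-many times, at each successor stage applying the stationary tower up to a Woodin cardinal to countable elementary submodels of some $H_\theta$ containing $A$, so as to produce a generic filter on $(\pow{\omega_1}/\NS)^{M_\alpha}$ whose ultrapower correctly inherits stationarity and advances one toward both $A$ and the scheduled dense sets. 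This is the only place where Woodin cardinals and the descriptive set theory of universally Baire sets are really indispensable, and it will be the hardest step.

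Granted the lemma, the third bullet is immediate: $G_A$ is a filter because two correct iterations of comparable conditions landing on the same $A$ can be compared step-by-step using the uniqueness of generic ultrapowers at successor stages, and $G_A$ meets any dense $D \in L(\mathbb{R})$ by applying the lemma with $D$ (universally Baire in the codes) as one of the scheduled dense sets. The equality $L(\mathbb{R})[G_A] = L(\mathbb{R})[G]$ reflects the mutual definability of $A$ and $G$: the generic $G$ canonically defines the $\omega_1$-image $A_G$ of its parameters, and one verifies $G = G_{A_G}$, while the genericity iteration lemma allows $A_G$ to be prescribed as any admissible $A$.

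The first two bullets are then consequences of the iteration structure. Martin's axiom transfers from the individual conditions $(M,a) \models \mathsf{MA}$ to $L(\mathbb{R})[G]$ via the direct limit of correct iterations, and $\psi_{\mathrm{AC}}$ follows because every subset of $\omega_1$ in $L(\mathbb{R})[G]$ has the form $j_{0\omega_1}(a)$ for some $(M,a) \in G$, which yields a $\Sigma_1$-definable well-ordering of $\pow{\omega_1}$ from the canonical $L(\mathbb{R})$-ranking of $\Pmax$-conditions. Saturation of $\NS_{\omega_1}$ is then proved by contradiction: a putative maximal antichain of size $\omega_2$ would reflect into some countable $(M,a) \in G$, and one derives a contradiction from the correctness of the iteration witnessing $(M,a) \in G$ together with an application of the genericity iteration lemma to the reflected data. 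Finally $2^\omega = 2^{\omega_1} = \omega_2$ is immediate from $\mathsf{MA}$ together with $\psi_{\mathrm{AC}}$.
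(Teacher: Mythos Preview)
The paper does not prove this theorem: it is quoted verbatim as a black-box result from \cite[Thm.~5.1, Thm.~6.3, Thm~7.7]{HSTLARSON} and is used only to reduce the proof of $\stUB$ to showing that $G_A$ meets every universally Baire (in the codes) dense subset of $\Pmax$. There is therefore no proof in the paper to compare your proposal against.

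Your sketch is a reasonable high-level outline of how the cited results are established in \cite{HSTLARSON}, but a few points are off. The genericity iteration (your ``technical heart'') is not usually done via stationary-tower forcing at each successor stage; rather, one uses the basic $\Pmax$ extension lemma (\cite[Lemma~2.8]{HSTLARSON}) to refine conditions into prescribed dense sets, and correctness of the final iteration is arranged by bookkeeping stationary sets along the way. Saturation of $\NS_{\omega_1}$ in $L(\mathbb{R})[G]$ (\cite[Thm.~5.1]{HSTLARSON}) is not proved by reflecting an $\omega_2$-sized antichain into a condition; it comes from the fact that $\pow{\omega_1}^{L(\mathbb{R})[G]}$ is the increasing union of the $j_{0\omega_1}(\pow{\omega_1}^{M_0})$ for $(M_0,a)\in G$, each of which already has a saturated ideal, together with the correctness of the iterations. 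Finally, $\Pmax$ is not $\sigma$-closed in the usual sense (conditions are countable structures, and a descending $\omega$-chain need not have a lower bound in $H_{\omega_1}$ without further work); what one actually uses is the density lemmas and the homogeneity/uniqueness properties of iterations. If you intend to write out a full proof, you should follow \cite{HSTLARSON} directly rather than the variant you describe.
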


In particular to establish that $\stUB$ holds in $V$ it suffices to prove that:
\begin{enumerate}
\item
There are class many Woodin cardinals;
\item
$\NS_{\omega_1}^V$ is saturated;
\item
For some fixed $A$
in $\pow{\omega_1}^V\setminus L(\UB)$, $G_A$ is a filter for $\Pmax$ meeting all dense subsets of $\Pmax$ universally Baire in the codes, and
$\pow{\omega_1}^V\subseteq \pow{\omega_1}^{L(\mathbb{R})[G_A]}$ ($G_A$ will also be 
$L(\mathbb{R})$-generic since all sets of reals definable in $L(\mathbb{R})$
are universally Baire in the codes assuming class many Woodin cardinals).
\end{enumerate}

Assuming $\MM^{++}(\mathfrak{c})$ and the existence of class many Woodin cardinals, the first two conditions are automatically met.
The proof of \cite[Lemma 7.6]{HSTLARSON} (but not its statement) actually shows that if $V$ models that 
$\NS_{\omega_1}$ is saturated, then $G_A$ (as computed in $V$) is a filter on $\Pmax$ 
for any $A\in\pow{\omega_1}^V\setminus L(\mathbb{R})$ such that
$L[A]$ computes correctly $\omega_1$.

Clearly $L(\mathbb{R})[A]=L(\mathbb{R})[G_A]$.
Also $\MM^{++}(\mathfrak{c})$ entails that $\pow{\omega_1}^V\subseteq L(\mathbb{R})[A]$ 
(since for example it entails $\psi_{\mathrm{AC}}$ \cite[Section 6]{HSTLARSON} or Martin's axiom).
So the unique missing point is to check that $G_A$ is a
filter meeting all dense subsets of $\Pmax$ universally Baire in the codes
assuming $\MM^{++}$.

There is a natural strategy:
\begin{strategy}\label{strategy0}
Given $D\subseteq\Pmax$ dense and 
universally Baire in the codes and $A\in\pow{\omega_1}^V\setminus L(\UB)$, 
find a forcing $P_{D,A}$ 
such that if $G$ is $V$-generic for $P_{D,A}$ in $V[G]$ there is an iteration
\[
\mathcal{K}=
\ap{k_{\alpha\beta}: M_\alpha\to M_\beta:\, \alpha\leq\beta\leq\omega_1^V}
\]
such that in $V[G]$:
\begin{enumerate}
\item
$(M_0,A\cap\omega_1^{M_0})\in D$, 
\item
$k_{0\omega_1^V}(A\cap\omega_1^{M_0})=A$,
\item 
$H_{\omega_2}^V\subseteq M_{\omega_1^V}$ and 
$\NS_{\omega_1}^{M_{\omega_1^V}}\cap V=\NS_{\omega_1}^V$,
\item 
$\NS_{\omega_1}^{M_{\omega_1^V}}=\NS_{\omega_1}^{V[G]}\cap M_{\omega_1^V}$.
\end{enumerate}
\end{strategy}

Assume the above is possible. Then $P_{D,A}$ is stationary set preserving since:
\[
\NS_{\omega_1}^V=\NS_{\omega_1}^{M_{\omega_1^V}}\cap V=
\NS_{\omega_1}^{V[G]}\cap M_{\omega_1^V}\cap H_{\omega_2}^V= 
\NS_{\omega_1}^{V[G]}\cap H_{\omega_2}^V=\NS_{\omega_1}^{V[G]}\cap V.
\]
Now the statement 
\[
\exists\mathcal{K}\,[(\mathcal{K}\text{ is a correct iteration})\wedge ((M_0,a)\in D)\wedge k_{0\omega_1}(a)=A]
\]
is a $\Sigma_1$-formula in parameter $A$ in the language 
$\tau_\ST\cup\bp{B,\omega_1,\NS_{\omega_1}}$ 
where $\tau_\ST$ is a signature admitting predicate symbols for all
$\Delta_0$-functions and relations and $B$ is a universally Baire set coding $D$ in some absolute manner.  

If $P_{D,A}$ has size $\kappa\geq\mathfrak{c}$, and $\MM^{++}(\kappa)$ holds in $V$ this $\Sigma_1$-statement is reflected to $V$.

Let us first address the issues of defining properly:
\begin{itemize}
\item
What is the meaning of \emph{``$\tau_\ST$ is a signature admitting predicate symbols for all
$\Delta_0$-functions and relations and $B$ is a universally Baire set coding $D$ in some absolute manner''}.
\item 
What is the meaning of
\emph{``The formula 
\[
\exists\mathcal{K}\,[(\mathcal{K}\text{ is a correct iteration})\wedge ((M_0,a)\in D)\wedge k_{0\omega_1}(a)=A]
\]
is a $\Sigma_1$-formula in parameter $A$ in the language $\tau_\ST\cup\bp{B,\omega_1,\NS_{\omega_1}}$''}. 
\item
Why $\MM^{++}$ holding in $V$ entails that $\Sigma_1$-formulae
in parameter $A\in H_{\omega_2}$ for the language $\tau_\ST\cup\bp{B,\omega_1,\NS_{\omega_1}}$ holding in a
generic extension by a stationary set preserving forcing are reflected to $V$.
\end{itemize}

\subsection{The signature $\tau_\ST$}\label{subsec:signtauST}

\begin{notation}\label{not:keynotation}
\emph{}

\begin{itemize}
\item
$\tau_{\mathsf{ST}}$ is the extension of the first order signature $\bp{\in}$ for set theory 
which is obtained 
by adjoining predicate symbols
$R_\phi$ of arity $n$ for any $\Delta_0$-formula $\phi(x_1,\dots,x_n)$, 
and constant symbols for 
$\omega$ and $\emptyset$.

\item $\ZFC^{-}$ is the
$\in$-theory given by the axioms of
$\ZFC$ minus the power-set axiom.

\item
$T_{\ST}$ is the $\tau_{\ST}$-theory given by the axioms
\[
\forall \vec{x} \,(R_{\forall x\in y\phi}(y,\vec{x})\leftrightarrow \forall x(x\in y\rightarrow R_\phi(y,x,\vec{x}))
\]
\[
\forall \vec{x} \,[R_{\phi\wedge\psi}(\vec{x})\leftrightarrow (R_{\phi}(\vec{x})\wedge R_{\psi}(\vec{x}))]
\]
\[
\forall \vec{x} \,[R_{\neg\phi}(\vec{x})\leftrightarrow \neg R_{\phi}(\vec{x})]
\]
for all $\Delta_0$-formulae $\phi(\vec{x})$, together with the $\Delta_0$-sentences
\[
\forall x\in\emptyset\,\neg(x=x),
\]
\[
\omega\text{ is the first infinite ordinal}
\]
(the former is an atomic $\tau_{\ST}$-sentence, the latter is expressible as the atomic sentence for 
$\tau_{\ST}$ stating that
$\omega$ is a non-empty limit ordinal and all its elements are successor ordinals or $0$).
\item
$\ZFC^-_{\ST}$ is the $\tau_{\ST}$-theory 
\[
\ZFC^{-}\cup T_{\ST},
\] 
accordingly we define $\ZFC_{\ST}$.
\end{itemize}
\end{notation}

In $\ZFC_{\ST}$ many absolute concepts (such as that of being a function) are now expressed
by an atomic formula, while certain more complicated ones 
(for example those defined by means of transfinite recursion over an absolute property, such as 
\emph{$x$ is the transitive closure of $y$}) can still be expressed by means
of $\ZFC^{-}_{\ST}$-provably $\Delta_1$-properties of $\tau_{\ST}$ (i.e. properties which are $\ZFC^{-}_{\ST}$-provably equivalent
 at the same time to a $\Pi_1$-formula
and to a $\Sigma_1$-formula), hence are still absolute between any two models (even non-transitive)
$\mathcal{M}$, $\mathcal{N}$ of $\ZFC_{\ST}$  of which one is a substructure of the other. 
On the other hand many definable properties have truth values which may vary depending on which model of 
$\ZFC_{\ST}$ we work in  (for example \emph{$\kappa$ is an uncountable cardinal} is a $\Pi_1\setminus\Sigma_1$-property in 
$\ZFC_{\ST}$ whose truth value may depend on the choice of the model of $\ZFC_{\ST}$ to which $\kappa$ belongs).

\subsection{Absolute codings of subsets of $H_{\omega_1}$ and Shoenfield's absoluteness} 	\label{subsec:nonmodcompletHomega1}

We define second order number theory as the first order theory of the structure
\[
(\mathcal{P}(\mathbb{N})\cup\mathbb{N},\in,\subseteq,=,\mathbb{N}).
\]

$\Pi^1_n$-sets (respectively $\Sigma^1_n$, $\Delta^1_n$)  are
the subsets of $\mathcal{P}(\mathbb{N})\equiv 2^{\mathbb{N}}$ 
defined by a $\Pi_n$-formula (respectively by a $\Sigma_n$-formula, at the same time 
by a $\Sigma_n$-formula and a $\Pi_n$-formula in the appropriate language), if the formula defining a set 
$A\subseteq (2^{\mathbb{N}})^n$ has some parameter
$\vec{r}\in (2^{\mathbb{N}})^{<\omega}$ we accordingly write that $A$ is $\Pi^1_n(\vec{r})$ 
(respectively $\Sigma^1_n(\vec{r})$, $\Delta^1_n(\vec{r})$).
if the formula defining a set 
$A\subseteq (2^{\mathbb{N}})^n$ uses an extra-predicate symbol $B\subseteq (2^\omega)^k$ we write that
$A$ is $\Pi^1_n(B)$ (respectively $\Sigma^1_n(B)$, $\Delta^1_n(B)$).

$A\subseteq (2^{\mathbb{N}})^N$ is projective if it is defined by some $\Pi^1_n$-property for some $n$.
Similarly we define the notion of being projective in $\vec{r}\in (2^{\mathbb{N}})^{<\omega}$ or 
$B\subseteq  (2^\omega)^k$.
\begin{remark}
$A\subseteq (2^{\mathbb{N}})^k$ is projective in some $\vec{r}\in (2^{\mathbb{N}})^{<\omega}$
 if and only if it is obtained by a Borel subset of $(2^{\mathbb{N}})^m$
by successive applications of the operations of projection on one coordinate and complementation.
\end{remark}

\begin{definition}\label{def:cod}
Given $a\in H_{\omega_1}$, $r\in 2^{\mathbb{N}}$ codes $a$, if (modulo a recursive bijection
of $\mathbb{N}$ with $\mathbb{N}^2$), $r$ codes a well-founded extensional relation on 
$\mathbb{N}$
whose transitive collapse is the transitive closure of $\{a\}$.

\begin{itemize}
\item
 $\mathrm{Cod}:2^{\mathbb{N}}\to H_{\omega_1}$ is the map assigning $a$ to $r$ if and only if $r$ codes $a$
and assigning the emptyset to $r$ otherwise.
\item
$\mathrm{WFE}$ is the set of $r\in 2^{\mathbb{N}}$ which (modulo a recursive bijection
of $\mathbb{N}$ with $\mathbb{N}^2$) are a well founded extensional relation on 
$\mathbb{N}$
whose transitive collapse is the transitive closure of $\{a\}$.

\end{itemize}
\end{definition}

The following are well known facts\footnote{See \cite[Section 25]{JECHST} and in particular the statement and proof of Lemma 25.25, which contains all ideas on which one can elaborate to draw the conclusions below.}.
\begin{remark} 
The map $\mathrm{Cod}$ is defined by a $\ZFC^-$-provably $\Delta_1$-property  (with no parameters)
over $H_{\omega_1}$ and is surjective. Moreover $\mathrm{WFE}$ is a $\Pi^1_1$-subset of $2^{\mathbb{N}}$. 
Therefore if $N$ is a transitive model of $ZFC^-$ existing in some transitive model $W$ of $\ZFC$, 
$N$ computes correctly $\mathrm{Cod}$ and $\mathrm{WFE}$, i.e. $\mathrm{Cod}^N=\mathrm{Cod}^W\cap N$ and 
$\mathrm{WFE}^N=\mathrm{WFE}^W\cap N$.
\end{remark}

\begin{lemma}
Assume $B\subseteq 2^\omega$.
Let $A\subseteq 2^{\mathbb{N}}$ be a $\Sigma^1_{n+1}(B)$ set. Then
$A$ is  $\Sigma_{n}$-definable in the structure $(H_{\omega_1},\tau_\ST^V,B)$ in the language $\tau_{\ST}\cup\bp{B}$.
Conversely if $A$ is $\Sigma_{n}$-definable in the structure $(H_{\omega_1},\tau_\ST^V,B)$ in the language $\tau_{\ST}\cup\bp{B}$,
then $\Cod^{-1}[A]$ is a $\Sigma^1_{n+1}(B)$ set.
\end{lemma}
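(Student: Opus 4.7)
The plan is to prove both directions by induction on $n$, exploiting two features of the setup: (i) $\tau_\ST$ contains predicate symbols $R_\phi$ for every $\Delta_0$-formula $\phi$ of the $\{\in\}$-language, so $\Delta_0$-properties are \emph{atomic} in $\tau_\ST$; and (ii) by the preceding remark, every $x \in H_{\omega_1}$ equals $\Cod(r)$ for some $r \in \mathrm{WFE}$, a $\Pi^1_1$ subset of $2^{\mathbb{N}}$.

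For the forward direction, fix a $\Sigma^1_{n+1}(B)$ definition of $A$, say
\[
x \in A \Lrao \exists Y_1\, \forall Y_2\, \cdots Q\,Y_{n+1}\; \varphi(x, Y_1,\dots, Y_{n+1}, B),
\]
where $\varphi$ is arithmetic in $B$. Interpreting this over $H_{\omega_1}$, every real $Y_i$ lies in $H_{\omega_1}$, and the side condition ``$Y_i \subseteq \omega$'' is $\Delta_0$, so each real quantifier becomes an unbounded first-order $H_{\omega_1}$-quantifier (with a bounded-in-$\tau_\ST$ guard absorbed into the matrix). The arithmetic matrix $\varphi$ translates to a $\tau_\ST\cup\{B\}$-formula whose pure $\{\in\}$ part is $\Delta_0$ (all unbounded number quantifiers are now bounded over $\omega$, which is a constant of $\tau_\ST$) and hence packaged into a single atomic $R_{\phi'}$, with the $B$-atoms untouched. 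Reading off the alternation pattern then yields the desired $\Sigma_n$ definability (modulo a quantifier-count bookkeeping which I will handle uniformly by induction; see the obstacle remark below).

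For the converse, I will pull a $\Sigma_n$ $\tau_\ST\cup\{B\}$-formula $\theta$ back through $\Cod$. The key auxiliary fact is that each atomic predicate $R_\phi(\Cod(r_1),\dots,\Cod(r_k))$, restricted to $r_i \in \mathrm{WFE}$, is $\Delta^1_1$ in the parameters, because the recursive computation of a $\Delta_0$-property along the extensional well-founded relation coded by the $r_i$ reduces to an arithmetic-in-$\mathrm{WFE}$ statement; similarly the predicate $B$ pulls back to $B$ applied to the (code of the) relevant real. Each unbounded first-order $H_{\omega_1}$-quantifier of $\theta$ becomes a real quantifier with the $\Pi^1_1$ side condition $r \in \mathrm{WFE}$. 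Running the induction on the alternation depth of $\theta$, and using that the $\mathrm{WFE}$ guard attached to any $\exists r$ (resp.\ $\forall r$) can be absorbed into a $\Sigma^1_k(B)$ (resp.\ $\Pi^1_k(B)$) formula as soon as $k \geq 2$, we obtain $\Cod^{-1}[A] \in \Sigma^1_{n+1}(B)$.

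The main obstacle is the careful bookkeeping of quantifier alternations: the $\Pi^1_1$ side condition $\mathrm{WFE}$ introduced in the converse direction costs exactly one extra projective level at the base of the induction, and the forward direction must match this. The cleanest way to organize the argument is to prove both directions simultaneously, with a strengthened inductive hypothesis of the form ``$A$ is $\Sigma^1_{n+1}(B)$ iff $A \cap H_{\omega_1}$ is $\Sigma_n$-definable in $(H_{\omega_1},\tau_\ST^V,B)$, and the corresponding equivalence with $\Pi$ in place of $\Sigma$ holds in tandem,'' so that the alternation/negation step of the induction is automatic and the only nontrivial content lies in the base case, where the $\Pi^1_1$-cost of $\mathrm{WFE}$ is absorbed once and for all.
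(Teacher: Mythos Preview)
The paper does not give a proof of this lemma; it is stated as a well-known fact with a pointer to \cite[Lemma 25.25]{JECHST} for the underlying ideas. Your sketch follows the standard route of that argument, and the converse direction is essentially correct: pulling each unbounded $H_{\omega_1}$-quantifier back through $\Cod$ attaches the $\Pi^1_1$ guard $r\in\mathrm{WFE}$, the atoms $R_\phi(\Cod(r_1),\dots,\Cod(r_k))$ become $\Delta^1_1$ on $\mathrm{WFE}$, and the count works out by induction.

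Your forward direction, however, has a real gap. Translating $n+1$ alternating real quantifiers into $n+1$ unbounded first-order quantifiers over $H_{\omega_1}$ yields a $\Sigma_{n+1}$ formula, not a $\Sigma_n$ one, and you defer the discrepancy to ``bookkeeping'' and to the remark that ``the forward direction must match'' the $\Pi^1_1$ cost of $\mathrm{WFE}$ from the converse. But $\mathrm{WFE}$ plays no role whatsoever in the forward direction---the $Y_i$ are already reals, not codes---so this cannot be the mechanism. The actual reason one alternation is saved is that the innermost block $Q\,Y_{n+1}\,\varphi$ is a $\Sigma^1_1(B)$ or $\Pi^1_1(B)$ condition, and every such condition is $\Delta_1$ over $(H_{\omega_1},\tau_{\ST},B)$: a $\Pi^1_1(B)$ statement asserts that a certain $B$-recursive tree on $\omega$ is well-founded, which in $H_{\omega_1}$ is expressible both as $\exists\rho\,(\rho\text{ is a rank function for the tree})$ and as $\forall f\,(f\text{ is not an infinite branch})$, with $\rho,f\in H_{\omega_1}$ since the tree is countable. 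This $\Delta_1$-ness is precisely what lets the innermost block merge with the preceding quantifier. Without this step your base case $\Sigma^1_2(B)\Rightarrow\Sigma_1$ does not go through, and the induction never starts.
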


\begin{definition}\label{def:canpairs}
Let $M,W$ be transitive models of $\ZFC^-$.
The pair $(M,W)$ is a \emph{canonical pair} if $H_{\omega_1}^M\subseteq H_{\omega_1}^W$ and $\otp(M\cap\Ord)\geq\omega_1^W$.
\end{definition}
Note that $M$ may not be a subset of $W$ (and this case will occur in our proofs).

\begin{lemma}\label{lem:SHOABS}
Assume $(M,W)$ is a \emph{canonical pair}. Then
\[
(H_{\omega_1}^M,\tau_{\ST}^M)\prec_1(H_{\omega_1}^W,\tau_{\ST}^W)
\] 
\end{lemma}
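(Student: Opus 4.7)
The plan is to convert $\Sigma_1$-elementarity in $\tau_{\ST}$ into $\Sigma^1_2$-absoluteness in the real codes and then apply Shoenfield absoluteness. The upward direction is immediate: atomic $\tau_{\ST}$-formulas are $R_\phi$ for $\phi$ a $\Delta_0$-formula, hence absolute between any two transitive models of $\ZFC^-$, so a quantifier-free $\psi(y,\vec{a})$ with $y,\vec{a}\in H_{\omega_1}^M\subseteq H_{\omega_1}^W$ is evaluated identically in $(H_{\omega_1}^M,\tau_{\ST}^M)$ and $(H_{\omega_1}^W,\tau_{\ST}^W)$, and any $\Sigma_1$-witness in $M$ remains a witness in $W$.

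For the downward direction fix $\vec{a}\in H_{\omega_1}^M$, pick a real code $r_{\vec{a}}\in\pow{\omega}^M\subseteq\pow{\omega}^W$ of $\vec{a}$, and suppose $(H_{\omega_1}^W,\tau_{\ST}^W)\models\exists y\,\psi(y,\vec{a})$ with $\psi$ quantifier-free. By the previous lemma this rewrites as the $\Sigma^1_2$-statement
\[
\phi^*(r_{\vec{a}})\ :=\ \exists r\in 2^{\mathbb{N}}\,\bigl(r\in\mathrm{WFE}\,\wedge\,\psi^*(r,r_{\vec{a}})\bigr)
\]
holding in $W$, where $\psi^*$ is arithmetic. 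I would then apply Shoenfield absoluteness inside $W$: $\phi^*(r_{\vec{a}})$ reflects to $L[r_{\vec{a}}]^W$, hence admits a witness $r$ lying in some $L_\alpha[r_{\vec{a}}]^W$ with $\alpha<\omega_1^W$ (reals of $L[r_{\vec{a}}]^W$ appear at $L[r_{\vec{a}}]^W$-countable levels). The canonical pair hypothesis $\otp(M\cap\Ord)\geq\omega_1^W$ places $\alpha$ in $M$, and by $\ZFC^-$-provable absoluteness of the $L[r_{\vec{a}}]$-hierarchy one has $L_\alpha[r_{\vec{a}}]^M=L_\alpha[r_{\vec{a}}]^W$; in particular $r\in M$. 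Finally the conjunct $r\in\mathrm{WFE}\wedge\psi^*(r,r_{\vec{a}})$ is $\Pi^1_1$ in $(r,r_{\vec{a}})$, and $\Pi^1_1$-statements about reals are absolute between transitive $\ZFC^-$-models (wellfoundedness of a tree on $\omega$ is certified by a rank function whose existence in any one such model rules out an infinite branch in any larger one). Hence $M\models\phi^*(r_{\vec{a}})$, and decoding via the previous lemma yields the required witness in $(H_{\omega_1}^M,\tau_{\ST}^M)$.

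The main obstacle -- and the reason for routing the argument through $L[r_{\vec{a}}]$ -- is that the canonical pair hypothesis does not include $M\subseteq W$, so textbook Shoenfield absoluteness between $M$ and $W$ is not directly available. The workaround is to apply Shoenfield \emph{inside} $W$ alone, and then let the ordinal-height condition $\otp(M\cap\Ord)\geq\omega_1^W$ import the constructible witness through the shared initial segment of the $L[r_{\vec{a}}]$-hierarchy.
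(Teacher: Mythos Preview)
Your proof is correct and follows the same strategy as the paper: translate $\Sigma_1$ over $(H_{\omega_1},\tau_{\ST})$ into $\Sigma^1_2$ via $\Cod$, then invoke Shoenfield absoluteness. In fact your write-up is more careful than the paper's one-line sketch, which literally says ``between the transitive models of $\ZFC^-$ $M\subseteq W$'' even though the definition of canonical pair explicitly allows $M\not\subseteq W$; your detour through $L_\alpha[r_{\vec a}]$ (using $\otp(M\cap\Ord)\geq\omega_1^W$ to place the constructible witness inside $M$, and $H_{\omega_1}^M\subseteq H_{\omega_1}^W$ to transfer well-foundedness back) is exactly the argument needed to justify the paper's appeal to Shoenfield in that generality.
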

\begin{proof}
Use Shoenfield's absoluteness Lemma for $\Sigma^1_2$-properties 
\cite[Thm. 25.20]{JECHST}
between
the transitive models of $\ZFC^-$ $M\subseteq W$
and the observation that
$\Sigma_1$-definable subsets of $H_{\omega_1}$ corresponds to $\Sigma^1_2$-properties
(see \cite[Lemma 25.25]{JECHST}). 
\end{proof}

\subsection{Universally Baire sets}\label{subsec:univbaire}

%
%
%
%
%
%
%

\begin{definition}\label{def:UBset}
Let $V$ be a transitive model of $\ZFC$.
$B\subseteq 2^\omega$ is universally Baire in $V$ if there are class sized trees $S_B,T_B$ on $(2\times\Ord)^{<\omega}$ such that:
\begin{itemize}
\item $S_B\cap (2\times\alpha)^{<\omega}$ and $T_B\cap (2\times\alpha)^{<\omega}$ are elements of $V$ for any ordinal $\alpha$;
\item $B=p[T_B]$ (i.e. $r\in B$ if and only if there is $f\in \Ord^\omega$ such that $(r\restriction n,f\restriction n)\in T_B$ for all $n\in\omega$);
\item $p[T_B]\cap p[S_B]=\emptyset$;
\item for all $G$ $V$-generic for some forcing notion $P\in V$, $p[T_B]^{V[G]}\cup p[S_B]^{V[G]}=(2^\omega)^{V[G]}$.
\end{itemize}
Trees $T$ and $S$ satisfying the two latter conditions for all $G$ $V$-generic for a specific forcing notion $P$ are said to be projecting to complements for $P$.

\end{definition}

\begin{notation}
Given a universally Baire set $B$ and $G$ $V$-generic for some forcing notion $P$ of $V$, we let $B^{V[G]}= p[T_B]^{V[G]}$.
\end{notation}

\begin{theorem}\label{thm:genabshomega1}
Assume in $V$ there are class many Woodin cardinals. Let $\UB^V$ be the 
family of universally Baire sets of $V$ and $\tau_{\UB^V}=\tau_{\ST}\cup\UB^V$. 
Let $G$ be $V$-generic for some forcing notion $P\in V$ and $H$ be $V[G]$-generic for some forcing 
$Q$ in $V[G]$.

Then 
\[
(H_{\omega_1}^{V[G]},\tau_{\ST}^{V[G]},A^{V[G]}:A\in\UB^V)\prec
(H_{\omega_1}^{V[G][H]},\tau_{\ST}^{V[G]},A^{V[G][H]}:A\in\UB^V).
\]
\end{theorem}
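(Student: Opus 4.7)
The plan is to establish the elementarity by induction on the complexity of $\tau_{\UB^V}$-formulas $\phi(\vec{x})$, proving that $V[G] \models \phi(\vec{a}) \iff V[G][H] \models \phi(\vec{a})$ for all $\vec{a} \in H_{\omega_1}^{V[G]}$. The constants $\omega, \emptyset$ and the relation $\in$ are interpreted absolutely between the transitive models, and each $\tau_{\ST}$-predicate $R_\phi$ is $\Delta_0$, hence absolute between the transitive structures $H_{\omega_1}^{V[G]} \subseteq H_{\omega_1}^{V[G][H]}$. The propositional steps are immediate from the inductive hypothesis, so the two substantive cases are atomic $\UB^V$-formulas and the existential quantifier.

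For the atomic case, fix $B \in \UB^V$ with trees $T_B, S_B \in V$ projecting to complements in every generic extension of $V$. I claim $p[T_B]^{V[G]} = p[T_B]^{V[G][H]} \cap (2^\omega)^{V[G]}$. The forward inclusion is automatic from upward persistence of infinite branches. For the reverse, suppose $r \in (2^\omega)^{V[G]}$ and $r \notin p[T_B]^{V[G]}$. Then the complement property in $V[G]$ places $r \in p[S_B]^{V[G]}$, and its witnessing $\Ord$-branch survives to $V[G][H]$, giving $r \in p[S_B]^{V[G][H]}$; the disjointness of $p[T_B]^{V[G][H]}$ and $p[S_B]^{V[G][H]}$ then forces $r \notin p[T_B]^{V[G][H]}$.

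The substantive step is the existential quantifier. Suppose inductively that $\phi(x, \vec{y})$ is already absolute between $V[G]$ and $V[G][H]$ on $H_{\omega_1}^{V[G]}$-parameters. Via the coding $\Cod$, the statement $\exists x\,\phi(x, \vec{a})$ for $\vec{a} \in H_{\omega_1}^{V[G]}$ translates, once codes $\vec{r} \in V[G]$ for $\vec{a}$ are chosen, into membership of $\vec{r}$ in the set
\[
C_\phi = \{\vec{r} \in (2^\omega)^k : \exists s \in 2^\omega\; \phi(\Cod(s), \Cod(r_1), \ldots, \Cod(r_k))\}.
\]
The key input, a standard consequence of class many Woodin cardinals, is that $\UB^V$ is closed under first-order definability over $(H_{\omega_1},\in,\UB^V)$: equivalently, for every $\tau_{\UB^V}$-formula $\psi$ there is a UB set of $V$ whose interpretation in any generic extension $W$ of $V$ is precisely the set of codes of tuples $\vec{a} \in H_{\omega_1}^W$ for which $W \models \psi(\vec{a})$. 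Applying this to the formula defining $C_\phi$ yields $C_\phi \in \UB^V$, and the atomic case just proved then gives $\vec{r} \in C_\phi^{V[G]} \iff \vec{r} \in C_\phi^{V[G][H]}$. This is exactly the desired equivalence for $\exists x\,\phi$, and surjectivity of $\Cod$ in $V[G]$ recovers an actual witness $b \in H_{\omega_1}^{V[G]}$.

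The main obstacle is not the induction itself but rather the closure of $\UB^V$ used in the existential step, which is a deep theorem rather than an elementary fact. Its proof proceeds by the Martin-Solovay construction: starting from trees representing the UB parameters and from a Woodin cardinal strictly above them, one builds new trees via ultrapowers by extenders from the Woodin cardinal, producing tree representations for existential projections and for complements; the well-foundedness of these derived ultrapowers in all generic extensions of bounded size is what ultimately gives the required universal Baireness in $V$. Since the hypothesis supplies class many Woodin cardinals, one can iterate the construction through every level of complexity needed to interpret an arbitrary $\tau_{\UB^V}$-formula, completing the induction.
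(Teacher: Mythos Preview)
The paper does not supply its own proof of this theorem; it simply refers the reader to \cite[Thm.\ 4.7]{VIATAMSTI}. So there is no in-paper argument to compare against.

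Your approach is correct in substance, and identifying the Martin--Solovay construction (driven by Woodin cardinals) as the real content is exactly right. One organizational point: your outer induction and your ``key input'' are the same thing. The closure fact you invoke---that every $\tau_{\UB^V}$-definable set is represented by some $A_\psi\in\UB^V$ whose tree interpretation computes the definition correctly in every generic extension---is itself proved by induction on formula complexity, with Martin--Solovay handling the projection step; once you have it, the theorem follows in one line from your atomic case, with no further induction needed. As written, you invoke the full closure fact inside the $\exists$-step of the outer induction, which makes that outer induction redundant (and borders on circular in presentation, though not in content). A cleaner layout is either to state the closure fact and derive the theorem immediately, or to run a single induction with the strengthened hypothesis ``there exists $A_\phi\in\UB^V$ with $A_\phi^W=\{\vec r: W\models\phi(\Cod(\vec r))\}$ for all generic extensions $W$ of $V$,'' using Martin--Solovay only at the projection step. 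Either way the mathematics stands.
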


For a proof of this theorem see for example \cite[Thm. 4.7]{VIATAMSTI}.

In particular assume $B$ is universally Baire and such that
\[
(H_{\omega_1},\tau_{\ST}^V, B)\models D=\Cod[B]\text{ is a dense subset of }\Pmax.
\]
Then 
\[
(H_{\omega_1}^{V[G]},\tau_{\ST}^{V[G]},B^{V[G]})\models \Cod[B^{V[G]}]\text{ is a dense subset of }\Pmax
\]
and $\Cod[B^{V[G]}]\cap H_{\omega_1}^V=D$.

\begin{definition}
$A\subseteq H_{\omega_1}$ is \emph{universally Baire in the codes} 
if $A=\Cod[B]$ for some universally Baire set $B$.
\end{definition}


\begin{notation}
Given $\mathcal{A}$ a family of universally Baire sets of $V$,
$\tau_{\mathcal{A}}=\tau_\ST\cup\mathcal{A}$ and
$T_{\mathcal{A}}$ is the theory of the $\tau_{\mathcal{A}}$-structure
\[
(H_{\omega_1}^V,\tau_{\ST},A: A\in \mathcal{A}).
\]
\end{notation}
%

%

\subsection{$\MM^{++}$ and the reflection of $\Sigma_1$-sentences in $\tau_{\ST}\cup\UB\cup\bp{\omega_1,\NS_{\omega_1}}$}
\label{subsec:MM++BMM++}

We now show that $\MM^{++}$ entails a strong form of $\BMM$ which reflects not only $\Sigma_1$-properties, but
also $\Sigma_1$-properties expressed in the first order language expanding $\tau_\ST$ with
predicate symbols for $\NS_{\omega_1}$ and universally Baire sets. Let 
\[
\tau_{\NS_{\omega_1},\UB}=\tau_{\ST}\cup\UB\cup\bp{\omega_1,\NS_{\omega_1}}.
\]

Given a Woodin cardinal $\delta$, $\tow{T}_\delta$ denotes the full stationary tower of height $\delta$
(denoted as $\mathbb{P}_\delta$ in \cite{STATLARSON}) and for $K$ $V$-generic for
$\tow{T}_\delta$ $\Ult(V,K)$ denotes the generic ultrapower and $j_K:V\to\Ult(V,K)$ the generic ultrapower umbedding
(see \cite[Chapter 2]{STATLARSON} for the key definitions and results).

By \cite[Thm. 2.12]{VIAMMREV} assuming the existence of class many Woodin cardinals
$\MM^{++}(\kappa)$ can be formulated as follows:

\begin{definition}
$\MM^{++}(\kappa)$ holds if
for any (some9 Woodin cardinal $\delta>\kappa$ and for
any stationary set preserving forcing $P$ of size $\kappa$, 
there is a stationary set $T_P\in V_\delta$ and
a complete embedding
$i:P\to\tow{T}_\delta\restriction T_P$ such that
whenever $G$ is $V$-generic for $P$:
\begin{itemize}
\item 
The quotient forcing $\tow{T}_\delta\restriction T_P/_{i[G]}$ is stationary set preserving in $V[G]$,
\item
Letting $H$ be $V[G]$-generic for $\tow{T}_\delta\restriction T_P/_{i[G]}$ and $K$ be the
$V$-generic filter for $\tow{T}_\delta\restriction T_P$ induced by the forcing isomorphism of
$\tow{T}_\delta\restriction T_P$ with $P\ast(\tow{T}_\delta\restriction T_P/_{i[\dot{G}]})$, 
we have that
the critical point of $j_K:V\to\Ult(V,K)$ is $\omega_2$, $j_K(\omega_2)<\delta=j_K(\delta)$ and
$\Ult(V,K)^{<\delta}\subseteq\Ult(V,K)$ holds in $V[K]$.
\end{itemize}
\end{definition}

\begin{theorem}
Assume $\MM^{++}(\kappa)$ holds in $V$ and there are class many Woodin cardinals. 
Then for any stationary set preserving forcing $P\in V$ of size $\kappa$ and any $G$
$V$-generic for $P$
\[
(H_{\omega_2}^V,\tau_{\ST}^V,\NS_{\omega_1}^V,\UB^V)\prec_1 
(H_{\omega_2}^{V[G]},\tau_{\ST}^{V[G]},\NS_{\omega_1}^{V[G]},A^{V[G]}:A\in \UB^{V}).
\]
\end{theorem}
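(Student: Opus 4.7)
The plan is to establish the $\Sigma_1$-elementarity in both directions separately.

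For the upward direction (from the $V$-structure to the $V[G]$-structure), a witness $b \in H_{\omega_2}^V$ to an existential statement in $V$ should still witness it in $V[G]$: since $P$ is stationary set preserving, $\omega_2$ is not collapsed, hence $b \in H_{\omega_2}^{V[G]}$; and the atomic subformulas of the $\Delta_0$-matrix agree on $V$-inputs because stationary preservation of $P$ gives $\NS_{\omega_1}^V = \NS_{\omega_1}^{V[G]} \cap V$, while Theorem \ref{thm:genabshomega1} gives $A^V = A^{V[G]} \cap H_{\omega_1}^V$ for each $A \in \UB^V$.

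For the downward direction I would invoke the characterization of $\MM^{++}(\kappa)$ via the stationary tower. Given $V[G] \models \exists x\, \psi(x, \vec{a}, \vec{A}, \NS_{\omega_1})$ with $\vec{a} \in H_{\omega_2}^V$, $\vec{A} \in \UB^V$, and witness $b \in H_{\omega_2}^{V[G]}$, I would pick a Woodin cardinal $\delta > \kappa$, apply $\MM^{++}(\kappa)$ to obtain a complete embedding $i\colon P \to \tow{T}_\delta \restriction T_P$, and force the quotient $\tow{T}_\delta \restriction T_P / i[G]$ over $V[G]$ to obtain $K$ inducing $j_K\colon V \to M = \Ult(V, K)$ in $V[K] = V[G][H]$, with $\crit(j_K) = \omega_2$, $j_K(\omega_2) < \delta = j_K(\delta)$, and $M^{<\delta} \subseteq M$ in $V[K]$. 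Stationary preservation of the quotient together with UB-coherence between $V[G]$ and $V[K]$ ensure that $b$ continues to witness the statement in $V[K]$ with the $V[K]$-interpretations of the predicates.

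Next, I would transfer satisfaction from $V[K]$ down to $M$. The $<\delta$-closure of $M$ in $V[K]$ gives $b \in M$ (since $|\trcl(b)|^{V[G]} < \omega_2^{V[G]} < \delta$), $\mathcal{P}(\omega_1)^{V[K]} \subseteq M$ (so $\NS_{\omega_1}^{V[K]} = \NS_{\omega_1}^M$), and $(2^\omega)^{V[K]} \subseteq M$; moreover the trees $j_K(T_A), j_K(S_A) \in M$ are images under the elementary $j_K$ of trees projecting to complements in $V$, and by the coherence of universally Baire sets under generic embeddings one concludes $A^{V[K]} = (j_K(A))^M$ on the reals of $M$. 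Hence the $\Delta_0$-matrix $\psi$ evaluates identically in $M$ and $V[K]$, so $M \models \exists x\, \psi(x, \vec{a}, (j_K(\vec{A}))^M, \NS_{\omega_1}^M)$. Applying elementarity of $j_K$---using that $\vec{a}$ is fixed by $j_K$ (as $\vec{a} \in H_{\omega_2}^V$ and $\crit(j_K) = \omega_2$), that $j_K(\NS_{\omega_1}^V) = \NS_{\omega_1}^M$, and that $j_K(\vec{A}^V) = (j_K(\vec{A}))^M$---yields $V \models \exists x\, \psi(x, \vec{a}, \vec{A}^V, \NS_{\omega_1}^V)$.

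The main obstacle I expect is the bookkeeping in the transfer down to $M$, especially verifying that the $\NS_{\omega_1}$- and $\UB^V$-subformulas in the $\Delta_0$-matrix evaluate the same way in $M$ as in the ambient $V[K]$. The reliance on class-many Woodin cardinals is twofold: through the stationary-tower formulation of $\MM^{++}(\kappa)$ that supplies the embedding $j_K$ with the correct critical point and closure, and through the generic absoluteness of Theorem \ref{thm:genabshomega1} that keeps the UB reinterpretations aligned across the chain $V \subseteq V[G] \subseteq V[K]$ and under $j_K\colon V \to M \subseteq V[K]$.
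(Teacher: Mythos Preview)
Your proof is correct and follows essentially the same approach as the paper: invoke the stationary-tower characterization of $\MM^{++}(\kappa)$ to get $j_K:V\to M$ with critical point $\omega_2^V$ and $M^{<\delta}\subseteq M$ in $V[K]=V[G][H]$, then push the $\Sigma_1$ witness from $V[G]$ up to $V[K]$ (using that the quotient is SSP and UB sets cohere), identify the relevant predicates in $V[K]$ with their $j_K$-images in $M$ via closure, and pull back along $j_K$. The paper organizes this slightly differently---it packages the last two steps as the single observation that the identity on $H_{\omega_2}^V$ is a full elementary embedding into $(H_{\omega_2}^{V[K]},\NS_{\omega_1}^{V[K]},B^{V[K]})$---but the content is the same.

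One minor slip: in the upward direction you claim ``since $P$ is stationary set preserving, $\omega_2$ is not collapsed.'' This is false in general (Namba forcing is SSP but collapses $\omega_2$). The conclusion $b\in H_{\omega_2}^{V[G]}$ is still correct, however, simply because $H_{\omega_2}^V\subseteq H_{\omega_2}^{V[G]}$ holds for any forcing extension.
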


We sketch a proof:
\begin{proof}
Assume $\phi(x,y)$ is a quantifer free formula in the signature 
$\tau_{\NS_{\omega_1},\UB}$.
Let $B_1,\dots,B_k\in\UB^V$ be the universally Baire sets appearing in the formula
$\phi(x,y)$.
Fix $A\in H_{\omega_2}^V$, and assume that $P$ is stationary set preserving of size $\kappa$ 
and forces
$\exists y\phi(A,y)$.

Let $K$ be $V$-generic for $\tow{T}_\delta\restriction T_P$ and $G,H\in V[K]$ be such that
$G$ is $V$-generic for $P$ and $H$ is $V[G]$-generic for $\tow{T}_\delta\restriction T_P/_{i[G]}$
and $V[K]=V[G][H]$.

Now:
\begin{itemize}
\item 
$H_{\omega_2}^{V[G][H]}=H_{\omega_2}^{\Ult(V,K)}$ and 
$j_K(\NS_{\omega_1}^V)=\NS_{\omega_1}^{\Ult(V,K)}=\NS_{\omega_1}^{V[G][H]}$:
since $V[G][H]=V[K]$,
$j(\omega_2)<\delta$ and $V[K]$ models that $\Ult(V,K)^{<j(\delta)}\subseteq\Ult(V,K)$.
\item
$j_K(B)=B^{V[G][H]}$ for all universally Baire sets $B\in V$: assume $S_B,T_B\in V$ are trees on 
$(2\times\Ord)^{<\omega}$ 
projecting to complements in any forcing extension 
with $B=p[T_B]$. 
Then $j_K(S_B),j_K(T_B)$ in $\Ult(V,K)$ are trees projecting to complements containing
$j_K[S_B],j_K[T_B]$; therefore if $p$ is the projection map of $(2\times\Ord)^{\omega}$ on $2^\omega$, 
\[
B^{V[K]}=p[T_B]=p[j_K[T_B]]=p[j_K(T)]\subseteq j_K(B),
\]
\[
(2^\omega\setminus B)^{V[K]}=p[S_B]=p[j_K[S_B]]\subseteq p[j_K(S_B)]=(2^\omega\setminus B)^{\Ult(V,K)}.
\]
Hence $B^{V[K]}=j_K(B)$.
\end{itemize}
This gives that the identity map defines an elementary embedding of the $\tau_{\NS_{\omega_1},\UB^V}$-structure
\[
(H_{\omega_2}^{V},\tau_{\ST}^{V},\NS_{\omega_1}^{V},\UB^{V})
\]
into the $\tau_{\NS_{\omega_1},\UB^V}$-structure
\[
(H_{\omega_2}^{V[G][H]},\tau_{\ST}^{V[G][H]},\NS_{\omega_1}^{V[G][H]},B^{V[G][H]}:B\in \UB^{V}).
\]
Since $H$ is $V[G]$-generic for a stationary set preserving forcing in $V[G]$
and $B^{V[G][H]}\cap V[G]=B^{V[G]}$ for all universally Baire sets in $V$, we get that
\[
(H_{\omega_2}^{V[G]},\tau_{\ST}^{V[G]},\NS_{\omega_1}^{V[G]},B^{V[G]}:B\in \UB^{V})
\]
is a substructure of
\[
(H_{\omega_2}^{V[G][H]},\tau_{\ST}^{V[G][H]},\NS_{\omega_1}^{V[G][H]},B^{V[G][H]}:B\in \UB^{V}).
\]
By choice of $P$
\[
(H_{\omega_2}^{V[G]},\tau_{\ST}^{V[G]},\NS_{\omega_1}^{V[G]},B^{V[G]}:B\in \UB^{V})\models\exists y\phi(A,y).
\]
Therefore since $\Sigma_1$-properties are preserved by superstructures
\[
(H_{\omega_2}^{V[G][H]},\tau_{\ST}^{V[G][H]},\NS_{\omega_1}^{V[G][H]},B^{V[G][H]}:B\in \UB^{V})\models 
\exists y\phi(A,y).
\]
We conclude that 
\[
(H_{\omega_2}^{V},\tau_{\ST}^{V},\NS_{\omega_1}^{V},\UB^{V})\models 
\exists y\phi(A,y).
\]
The proof is completed.
\end{proof}

\subsection{$G_A\cap D$ is a 
$\Sigma_1$-property in the signature $\tau_{\ST}\cup\UB\cup\bp{\omega_1,\NS_{\omega_1}}$}


The statement 
\[
x=\bp{j_{\alpha\beta}:\,M_\alpha\to M_\beta: \,\alpha\leq\beta\leq\omega_1}
\text{ is an iteration of length $\omega_1$}
\]
is an atomic formula in free variable $x$ for the signature 
$\tau_{\ST}\cup\bp{\omega_1}$ given by the conjuctions of:
\begin{enumerate}
\item
\emph{$x$ is a function},
\item
\emph{$\dom{x}=\omega_1$},
\item
\emph{$\forall\alpha\in\omega_1$ $x(\alpha)$ is an ordered pair $(M_\alpha,G_\alpha)$ such that:}
\begin{itemize}
\item
\emph{$M_\alpha$ is a transitive set},
\item
\emph{$(M_\alpha,\in)$ models $\ZFC+\NS_{\omega_1}$ is precipitous},
\item
\emph{$G_\alpha$ is $M_\alpha$-generic for 
$\pow{\omega_1}^{M_\alpha}/_{\NS_{\omega_1}^{M_\alpha}}$,}
\item
\emph{$M_{\alpha+1}=\Ult(M_\alpha,G_\alpha)$ where  $\Ult(M_\alpha,G_\alpha)$ is the 
generic ultrapower induced by $G_\alpha$ on $M_\alpha$},
\item
\emph{if $\alpha$ is a limit ordinal, $M_{\alpha}$ is the direct limit of the iteration $x\restriction\alpha$}.
\end{itemize}
\end{enumerate}
We leave to the reader to check that the above properties are formalizable by $\Delta_0$-formulae.
\[
x=\bp{j_{\alpha\beta}:\,M_\alpha\to M_\beta:\,\alpha\leq\beta\leq\omega_1}
\text{ is a correct iteration}
\]
is the $\tau_\ST\cup\bp{\omega_1,\NS_{\omega_1}}$-atomic formula given by the conjuctions of
\begin{itemize}
\item
\emph{$x$ is an iteration of length $\omega_1$} 
\item
$\forall z [z\subseteq\omega_1\wedge z\in M_{\omega_1}\rightarrow
[(M_{\omega_1}\models z\text{ is stationary})\leftrightarrow\neg\NS_{\omega_1}(z))]]$.
\end{itemize}

Given $A\in\pow{\omega_1}\setminus L(\UB)$ and $D$ dense subset of $\Pmax$
universally Baire in the codes with
$D=\Cod[B]$ and $B\in\UB$, $G_A\cap D$ is non-empty is given by the 
$\tau_\ST\cup\bp{A,\omega_1,\NS_{\omega_1},B}$
$\Sigma_1$-formula:
\[
\exists x\exists r\,[(x\text{ is a correct iteration of length }\omega_1)\wedge B(r)\wedge 
\Cod(r)=(M_0,A\cap \omega_1^{M_0})\wedge j_{0\omega_1}(A\cap \omega_1^{M_0})=A.]
\]

\begin{proposition}\label{prop:compl(*)}
In any model of $\ZFC+$\emph{there are class many Woodin cardinals$+\NS_{\omega_1}$ is precipitous}
$\stUB$ is expressible by an axiom schema of 
$\Pi_2$-sentences  for the signature $\tau_{\NS_{\omega_1},\UB}$.

This axiom system is given by the $\Pi_2$-axioms of 
$\ZFC^-_\ST$ enriched with the $\Pi_1$-sentence for $\tau_\ST\cup\bp{\omega_1}$
\[
\forall f \,\qp{(f \text{ is a function})\wedge (\dom(f)=\omega)}\rightarrow (\ran(f)\neq\omega_1),
\]
the $\Pi_2$-sentence for $\tau_\ST\cup\bp{\omega_1,\NS_{\omega_1}}$
\[
\forall S\,\qp{\NS_{\omega_1}(S)\leftrightarrow \exists C \,\qp{(C\text{ is a club subset of $\omega_1$})\wedge (C\cap S=\emptyset)}}
\]
and 
(for any universally Baire set $B$ such that $\Cod[B]=D$ is a dense subset of $\Pmax$) 
 the 
$\Pi_2$-sentences  for $\tau_\ST\cup\bp{\omega_1,\NS_{\omega_1},B}$
\begin{align*}
\forall A &[\\
&\qp{(\omega_1\text{ is the first uncountable cardinal})^{L[A]}\wedge \forall r \forall\beta\, 
\qp{(r\subseteq\omega\wedge\beta\text{ is an ordinal})\rightarrow A\notin L_\beta[r])}}\\
&\rightarrow \\
&\exists (N_0,a_0)\,\exists \mathcal{J}\exists r
(B(r)\wedge \Cod(r)=(N_0,a_0)\wedge \mathcal{J} \text{ is a correct iteration of $N_0$ with $j_{0\omega_1}(a_0)=A$})\\
&]
\end{align*}
\end{proposition}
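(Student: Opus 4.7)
The plan is to verify two things: (i) each listed axiom has the stated $\Pi_2$-complexity in $\tau_{\NS_{\omega_1},\UB}$, and (ii) the schema is equivalent to $\stUB$ in any model of $\ZFC$ with class many Woodin cardinals and $\NS_{\omega_1}$ precipitous. The heart of the argument is syntactic and leans on the groundwork laid in the preceding subsections: the predicates $R_\phi$ of $\tau_\ST$ make most otherwise-complicated properties atomic, which is precisely the feature being exploited here.

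For the simpler axioms, the $\Pi_2$ fragment of $\ZFC^-_\ST$ is built from $\Delta_0$-separation and $\Delta_0$-replacement instances, each of which becomes $\Pi_2$ once the $R_\phi$ predicates are available. The uncountability axiom is visibly $\Pi_1$ in $\tau_\ST\cup\bp{\omega_1}$, since ``$f$ is a function with domain $\omega$ and range $\omega_1$'' is atomic. The characterization of $\NS_{\omega_1}$ as ``$S$ is non-stationary iff some club $C$ is disjoint from $S$'' is $\Pi_2$: the forward direction $\forall S[\NS_{\omega_1}(S) \to \exists C\,\phi(C,S)]$ is equivalent to $\forall S\exists C[\NS_{\omega_1}(S) \to \phi(C,S)]$, while the backward direction is the $\Pi_1$ sentence $\forall S\,\forall C[\phi(C,S) \to \NS_{\omega_1}(S)]$; their conjunction reorganizes into a single $\forall\forall\exists$ block with quantifier-free matrix.

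The main axiom requires a little more work but no new ideas. The essential input, already established in the preceding subsection, is that ``$\mathcal{J}$ is a correct iteration of length $\omega_1$'' is atomic in $\tau_\ST\cup\bp{\omega_1,\NS_{\omega_1}}$, and that $\Cod$ is $\ZFC^-_\ST$-provably $\Delta_1$. Hence the body of the conclusion --- namely $B(r)\wedge \Cod(r)=(N_0,a_0)\wedge (\mathcal{J}$ is a correct iteration of $N_0)\wedge j_{0\omega_1}(a_0)=A$ --- is $\Sigma_1$ in the enriched signature, so the whole conclusion is $\Sigma_1$ after existentially quantifying the witnesses. The premise, that $\omega_1^{L[A]}=\omega_1$ together with $\forall r\,\forall\beta\,(A\notin L_\beta[r])$, is a conjunction of $\Pi_1$ sentences. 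Thus the whole axiom has the form $\forall A[\Pi_1 \to \Sigma_1]\equiv\forall A\,\Sigma_1$, i.e.\ $\Pi_2$.

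Finally I would check that the schema captures $\stUB$. One direction is immediate by unwinding definitions. For the converse, fix $A\in\pow{\omega_1}\setminus L(\UB)$ with $\omega_1^{L[A]}=\omega_1$. Since $L[r]\subseteq L(\UB)$ for every real $r$, the premise of each instance of the schema is satisfied, and the conclusion states $G_A\cap D\neq\emptyset$ for $D=\Cod[B]$, which is the genericity clause of $\stUB$. Under class many Woodin cardinals every $L(\mathbb{R})$-definable set of reals is universally Baire in the codes, so $G_A$ is in fact $L(\mathbb{R})$-generic; standard $\Pmax$ theory then yields that $G_A$ is a filter and that $\pow{\omega_1}\subseteq L(\mathbb{R})[G_A]=L(\mathbb{R})[A]$. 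Precipitousness of $\NS_{\omega_1}$ and class many Woodin cardinals are standing assumptions. The only genuine obstacle is the tedious bookkeeping needed to be certain that the atomic status of ``is a correct iteration'' really goes through under the $R_\phi$-translation of the clauses in Definition~\ref{def:iterstr}; this is routine but must be carried out carefully.
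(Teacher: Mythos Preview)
Your proposal is correct and follows the same basic approach as the paper, but your emphasis is inverted relative to the author's. The paper's proof is a single sentence: since the syntactic analysis (that each axiom is $\Pi_2$, that ``$\mathcal{J}$ is a correct iteration'' is atomic, etc.) has already been carried out in the immediately preceding subsections, the author regards everything you wrote in your first three paragraphs as already established and says only that ``the unique thing to check is that $\NS_{\omega_1}$ is saturated follows from the above axioms, but this holds by \cite[Thm.~5.1]{HSTLARSON}.''

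This is precisely the point you pass over with the phrase ``standard $\Pmax$ theory then yields that $G_A$ is a filter.'' The subtlety is that the schema only asserts $G_A\cap D\neq\emptyset$ for each dense $D$; to recover the filter property and $\pow{\omega_1}\subseteq L(\mathbb{R})[A]$ one needs $\NS_{\omega_1}$ saturated in $V$, and the paper singles out the fact that saturation is itself a consequence of the displayed axioms (via \cite[Thm.~5.1]{HSTLARSON}) as the one non-obvious step. Your appeal to ``standard $\Pmax$ theory'' covers this, but leaves implicit exactly the ingredient the paper chooses to name.
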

\begin{proof}
The unique thing to check is that \emph{$\NS_{\omega_1}$} is saturated follows from the above axioms, but this holds by 
\cite[Thm. 5.1]{HSTLARSON}.
\end{proof}

\section{Main result and semantic certificates}\label{sec:semcert}

The key concept introduced by Asper\`o and Schindler is that of a semantic certificate.

Given a dense set $D$ of $\Pmax$ universally Baire in the codes and
$A\in\pow{\omega_1^V}\setminus L(\mathbb{R}^V)$
it is not at all transparent how one can produce 
(even in an arbitrary forcing extension $V[G]$ of $V$ possibly not stationary set  preserving)
an iteration 
\[
\mathcal{J}=\bp{j_{\alpha\beta}:N_\alpha\to N_\beta:\, \alpha\leq\beta\leq\omega_1^V}
\] 
respecting the constraints 
$(N_0,A\cap\omega_1^{N_0})\in D^{V[G]}$ and
$j_{0\omega_1^V}(A\cap\omega_1^{N_0})=A$. Semantic certificates (to be defined below) 
impose even 
stronger constraints on $\mathcal{J}$.

\begin{convention}
Throughout the rest of this paper we assume $\NS_{\omega_1}$ is saturated, Martin's axiom, and 
$2^{\aleph_0}=2^{\aleph_1}=\aleph_2$ holds in $V$. 
\end{convention}
Note that all these assumptions hold assuming $\MM^{++}$ and are also a consequence of $\stUB$.

To introduce the notion of semantic certificate 
we must first select the class of transitive models of $\ZFC$ and 
the type of iterations of countable structures we will consider in the remainder of this paper, 
these are the canonical pairs introduced in Def. \ref{def:canpairs}.


\begin{remark}
We will be interested in the following three combinations of canonical pairs (cfr. Def. \ref{def:canpairs}) $M,W$:
\begin{itemize}
\item $W=M[G]$ is a generic extension of $M$.
\item $M$ is the standard universe of sets $V$ and $W=\bar{M}$, where $j:V\to\bar{M}$ is an elementary embedding definable in $V[G]$
with critical point $\omega_1^V$ for $V[G]$ a generic extension of $V$.
\item $W=V[G][H]$ is a generic extension of $V$ with $G$ $V$-generic for $\Coll(\omega,\omega_2^V)$, $H$
$V[G]$-generic for $\Coll(\omega,\eta)$ with $\eta$ large enough, $\bar{j}:V\to\bar{M}$ an elementary embedding definable in $V[G]$ with critical point 
$\omega_1^V$ and $\omega_1^{\bar{M}}=\omega_1^{V[G]}$, and $M=\bar{M}[H]$.
\end{itemize}

Note that in the second case we can just assume that $H_{\omega_1}^V\subseteq \bar{M}$, in general the transitive classes $\bar{M},V$ definable in 
$V[G]$ do not have much else in common.
\end{remark}

\begin{definition}\label{def:witnessingtree}
Let: 
\begin{itemize}
\item 
$V$ be a transitive model of $\ZFC+\NS_{\omega_1}$\emph{is saturated},
\item
$B$ be a universally Baire set in $V$ and $D$ in $V$ be a dense subset of $\Pmax$ such that $\Cod[B]=D$,
\item
$A\in \pow{\omega_1}^V\setminus L(\mathbb{R})^V$ be such that
$\omega_1^{L[A]}=\omega_1^V$.
\end{itemize}

$T$ in $V$ is a witnessing tree for $A,D$ if:
\begin{itemize}
\item $T$ is a tree on $(2\times \omega_2^V)^{<\omega}$;
\item $p[T]\subseteq B^{V[G]}$ holds in all generic extensions $V[G]$ of $V$;
\item whenever $G$ is $V$-generic for $\Coll(\omega,\eta)$ for some $\eta\geq \lambda$,
there is $r\in p[T]^{V[G]}$ such that $\Cod(r)=(N_0,a_0)$ refines $(H_{\omega_2}^V,A)$ in the $\Pmax$ order of $V[G]$.
\end{itemize} 
\end{definition}


The following Fact encapsulates the unique place in the proof of Thm. \ref{thm:mainresult}
where the assumption that there are class many Woodin cardinals is essentially used and cannot be significantly reduced.
\begin{fact}\label{fac:treeTforP0}
Assume $V$ models
$\NS_{\omega_1}$\emph{ is saturated $+$ there are class many Woodin cardinals}.

Then
there is in $V$ a witnessing tree
$T$ on $(2\times\omega_2^V)^{<\omega}$ for $A,D$ for any $D$ dense subset of $\Pmax$ with $D=\Cod[B]$ for some $B\in \UB^V$, and 
$A\in\pow{\omega_1}^V\setminus L(\mathbb{R})$ such that $\omega_1^{L[A]}=\omega_1^V$.
\end{fact}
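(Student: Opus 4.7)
\medskip
\noindent\textbf{Proof proposal for Fact \ref{fac:treeTforP0}.}
The plan is to build $T$ by cutting down a universally Baire tree for $B$, made available on the index cardinal $\omega_2^V$ via class many Woodin cardinals, so that its branches additionally encode an $\NS$-correct iteration inside the small model $N_0$ witnessing the $\Pmax$-refinement of $(H_{\omega_2}^V, A)$.

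First, using class many Woodin cardinals I would fix trees $S_B, U_B \in V$ on $(2 \times \omega_2^V)^{<\omega}$ that project to complements in every generic extension $W$ of $V$; such trees exist for any universally Baire set on any prescribed uncountable cardinal by Feng--Magidor--Woodin and its large-cardinal strengthenings. In particular $p[S_B] \subseteq B^W$ in every such $W$. Then I would analyze a Levy collapse $V[G]$ for $\Coll(\omega, \eta)$, $\eta \geq \omega_2^V$ (so that $\lambda = \omega_2^V$ works in Definition~\ref{def:witnessingtree}): in $V[G]$ the standing hypotheses (MA in $V$, $A \notin L(\mathbb{R})^V$ with $\omega_1^{L[A]} = \omega_1^V$), together with iterability of $H_{\omega_2}^V$ transferred to $V[G]$ via class many Woodin cardinals (\cite[Thm.~4.10]{HSTLARSON}), place $(H_{\omega_2}^V, A)$ in $\Pmax^{V[G]}$. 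By the generic absoluteness Theorem~\ref{thm:genabshomega1}, $D^{V[G]} = \Cod[B^{V[G]}]$ is dense in $\Pmax^{V[G]}$, so there is $(N_0, a_0) \in D^{V[G]}$ refining $(H_{\omega_2}^V, A)$; letting $r \in 2^\omega$ code this pair, $r \in B^{V[G]} = p[S_B]^{V[G]}$.

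Next I would define $T$ to be the tree of pairs $(s, f)$ with $s \in 2^{<\omega}$ and $f \in (\omega_2^V)^{<\omega}$ such that $(s, f) \in S_B$ (whence $p[T] \subseteq p[S_B] \subseteq B^W$ in every extension $W$) and such that $s$ is consistent with the extension $r = \bigcup s_n$ coding a countable transitive $N_0$ that contains $H_{\omega_2}^V$ as an element, together with an iteration $\mathcal{J} \in N_0$ of $H_{\omega_2}^V$ mapping $A$ to $a_0$ and regarded by $N_0$ as $\NS$-correct. Because $N_0$ is countable in $V[G]$, the iteration $\mathcal{J}$ is itself internally coded by $r$ and needs no extra real coordinate; the ordinal coordinate $f$ serves the dual role of providing the $S_B$-witness for $r \in B$ and a Shoenfield-style rank certificate (with ranks $<\omega_1^V \leq \omega_2^V$) for the $\Pi^1_1$ pieces of the refinement definition, namely well-foundedness of $N_0$ and of the internal iteration. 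All such data fit on $(2 \times \omega_2^V)^{<\omega}$, and the $r$ extracted in the previous step together with the matching ordinal sequence lies on a branch of $T$, verifying both clauses of Definition~\ref{def:witnessingtree}.

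The main obstacle is the first step: pushing the universally Baire tree representation of $B$ down to the specific small index cardinal $\omega_2^V$ (rather than on some unspecified $\lambda$) is where class many Woodin cardinals are used essentially -- as flagged in the excerpt, this is the unique place in the proof of Theorem~\ref{thm:mainresult} at which the large cardinal hypothesis cannot be significantly weakened. The rest of the argument is careful bookkeeping, combining a Shoenfield-style tree witnessing the $\Sigma^1_2$-refinement property with the tree $S_B$ on the common index set $\omega_2^V$.
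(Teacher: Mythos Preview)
Your first step does not go through as stated. You claim the existence of trees $S_B,U_B$ on $(2\times\omega_2^V)^{<\omega}$ projecting to complements in \emph{every} generic extension of $V$. But after forcing with $\Coll(\omega,\omega_2^V)$, any tree on $(2\times\omega_2^V)^{<\omega}$ is a tree on a countable index set, so its projection is $\Sigma^1_1$; for a general universally Baire $B$ this cannot coincide with $B^{V[G]}$. Feng--Magidor--Woodin gives you $\kappa$-absolutely complementing trees on $(2\times\kappa)^{<\omega}$ for each $\kappa$, not trees on a fixed small $\kappa$ that work for all forcings. So the tree you produce will not satisfy the clause $p[T]\subseteq B^{V[G]}$ for all $V[G]$, or if it does, there is no reason it still carries the branch you need.

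The paper's argument avoids this entirely and is worth comparing. It works with the class-sized tree $T_B$ throughout: in $V[G]$ (for $G$ generic for $\Coll(\omega,\omega_2^V)$) one finds the desired $r\in B^{V[G]}$ together with a witness $h\in\Ord^\omega$ giving a branch of $T_B$; by homogeneity of the collapse one gets names $\dot r,\dot h$ forced by every condition to have these properties. The key point is then purely combinatorial: since $\Coll(\omega,\omega_2^V)$ has size $\omega_2^V$, the set $Z=\bigcup_n\{\beta:\exists p\,(p\Vdash\dot h(\check n)=\check\beta)\}$ has size at most $\omega_2^V$ in $V$, and $T=T_B\cap(2\times Z)^{<\omega}$ (copied via a bijection $Z\to\omega_2^V$) is the witnessing tree. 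No ``refinement condition'' is built into $T$; the refinement is guaranteed by the name $\dot r$, not by the shape of the tree. This sidesteps your Step~3 as well, where it is unclear how to express ``$s$ is consistent with coding an $N_0$ containing the specific uncountable set $H_{\omega_2}^V$ as an element'' as a tree condition definable in $V$.

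You have also mislocated the use of class many Woodin cardinals. They are not used to ``push the tree representation down to $\omega_2^V$'' (the paper never does this); they are used, via Theorem~\ref{thm:genabshomega1}, to guarantee that $\Cod[B^{V[G]}]$ remains dense in $\Pmax^{V[G]}$, so that the refining condition $(N,b)$ exists at all. The remark following the paper's proof makes this explicit.
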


\begin{proof}

Let $G$ be $V$-generic for $\Coll(\omega,\omega_2^V)$. Then  
$(H_{\omega_2}^V,A)$ is a $\Pmax$-condition in $V[G]$ 
(any iteration of $X$ in $V[G]$ of length at most 
$\omega_1^{V[G]}$
can be extended to an iteration of $V$, by \cite[Lemma 1.5, Lemma 1.6]{HSTLARSON}). 

Find $(N,b)\leq(H_{\omega_2}^V,A)$ in $\Cod[B^{V[G]}]$ (which is dense in $\Pmax^{V[G]}$ by Thm. \ref{thm:genabshomega1}), 
$r\in B^{V[G]}$ such that
$\Cod(r)=(N,b)$, $h\in \lambda^\omega$ such that $(r,h)$ is  a branch through 
$T_B$ in $V[G]$ (where $T_B,S_B$ are class sized trees on $(2\times\Ord)^{<\omega}$ witnessing the universal Baireness of $B$ in $V$
according to Def. \ref{def:UBset}).

Let $\dot{r}$ and $\dot{h}$ be $\Coll(\omega,\omega_2^V)$-names\footnote{To prove the existence 
of such names independently of the choice of the $V$-generic filter $G$ one uses the homogeneity 
of $\Coll(\omega,\omega_2^V)$.} for $r$ and $h$ such that
every $p$ in $\Coll(\omega,\omega_2^V)$ forces that $(\dot{r}_G,\dot{\dot{h}}_G)$ is  a 
branch through $T_B$ such that 
$\Cod(\dot{r}_G)\leq_{\Pmax} (H_{\omega_2}^V,A)$ holds in $V[G]$
whenever $G$ is $V$-generic with 
$p\in G$. 

Let for each $n$ 
\[
Y_n=\bp{\beta\in\Ord: \exists p\in\Coll(\omega,\omega_2^V)\, p\Vdash \dot{h}(\check{n})=\check{\beta}}
\]
and $Z=\bigcup_{n\in\omega}Y_n$.
Then $Z\in V$ has size at most $\omega_2^V$ and $r$ belongs to the projection of 
$T^*=T_B\cap (2\times Z)^{<\omega}$ in $V[G]$.
We copy in $V$ $T^*$ to a tree $T$ on $(2\times\omega_2^V)^{<\omega}$ via an injection 
$g:Z\to\lambda$ in $V$,
and we let $\dot{f}$ be the $\Coll(\omega,\omega_2^V)$-name on $T$ obtained as an isomorphic image of 
$\dot{h}$ via $g$.

We get that:

\begin{quote}
\emph{$\dot{r},\dot{f}$ are $\Coll(\omega,\omega_2^V)$-names such that whenever $H$ is $V$-generic for 
$\Coll(\omega,\omega_2^V)$, $(\dot{r}_H, \dot{f}_H)$ is a branch through $T$ such that
$\Cod(\dot{r}_H)\in  \Cod[B^{V[H]}]$ is a $\Pmax$-condition refining $(H_{\omega_2}^V,A)$.}
\end{quote}

\begin{quote}
\emph{The map $g$ grants that
$p[T]^{V[K]}\subseteq p[T_B]^{V[K]}=B^{V[K]}$ holds in any forcing extension $V[K]$ of $V$ for any notion of forcing in $V$.}
\end{quote}

Hence $T$ works.
\end{proof}

\begin{remark}
We use here in an essential way the existence of class many
Woodin cardinals (specifically Thm. \ref{thm:genabshomega1}). 
We could also replace this assumption
with the existence of a Woodin cardinal $\delta$ which is a limit of Woodin cardinals (and then use
Thm. \cite[Thm 3.1.6]{STATLARSON}). What is crucially needed to prove the Fact 
is that the 
$\Pi_2$-sentence for $\tau_{\bool{ST}}$ 
\[
\forall x\,\qp{x\not\in \Pmax \vee \exists y\, (B(y)\wedge \Cod(y)\leq_{\Pmax}x)}
\]
holding in the structure
\[
(H_{\omega_1}^{V},\tau_\ST,B)
\]
remains true 
in the structure
\[
(H_{\omega_1}^{V[G]},\tau_\ST,B^{V[G]}).
\]
To see that the above is a $\Pi_2$-sentence for $\tau_{\bool{ST}}$ use \cite[Remark 1.4]{HSTLARSON} and 
\cite[Lemma 25.25]{JECHST}.
\end{remark}


%
%
%


\begin{definition}\label{def:weaksemcert0} 
Let $M$ be a transitive model of $\ZFC+\NS_{\omega_1}$\emph{ is saturated}:
\begin{itemize}
\item
$T$ in $M$ be a tree on $(2\times \omega_2^M)^{<\omega}$;
\item
$A\in \pow{\omega_1}^M\setminus L(\mathbb{R})^M$ be such that $\omega_1^{L[A]}=\omega_1^M$.
\end{itemize}

A tuple $(\mathcal{J},r,f,T)$ existing in some transitive outer model $W$ of $M$
 is a \emph{weak semantic certificate for $A$} 
 if:
\begin{enumerate}
\item \label{def:semcert0-2bis}
$(r,f)$ is a branch through $T$.
\item \label{def:semcert0-3}
$\mathcal{J}=\bp{j_{\alpha\beta}:\,N_\alpha\to N_\beta:\, \alpha\leq\beta\leq\omega_1^M}$
is an iteration with $N_0$ iterable in $W$. 
\item \label{def:semcert0-4}
$\Cod(r)=(N_0,A\cap\omega_1^{N_0})$ holds.
\item \label{def:semcert0-4bis}
$j_{0\omega_1^M}(A\cap\omega_1^{N_0})=A$.
\item \label{def:semcert0-5}
$H_{\omega_2}^M\subseteq N_{\omega_1}$.
\item \label{def:semcert0-6}
$\NS_{\omega_1}^{M}=\NS_{\omega_1}^{N_{\omega_1^M}}\cap\pow{\omega_1}^M$.
\end{enumerate}
%
%
\item
Let
$D$ in $M$ be a dense subset of $\Pmax^M$ such that $D=\Cod[B]$ for some fixed $B\in \UB^M$.

A weak semantic certificate
$(\mathcal{J},r,f,T)$ for $A$ existing in $W$ 
is a \emph{semantic certificate for $A,D$} (relative to $M$) if:
\begin{enumerate} 
\setcounter{enumi}{6}
\item \label{def:semcert0-7}
$M$ models that $T$ is a witnessing tree for $A,D$.
\item \label{def:semcert0-8}
$W$ is some generic extension of $M$.
\end{enumerate}
\end{definition}

Condition \ref{def:semcert0-5} and \ref{def:semcert0-6} are essential in many stages of the proof, for example we already use them in the proof of Corollary \ref{cor:semcert} right below.
Condition \ref{def:semcert0-7} hides all the large cardinal strength needed in our proofs. 
Condition \ref{def:semcert0-8} makes the property:
\begin{quote}
\emph{There exists a semantic certificate for $A,D$} 
\end{quote}
first order expressible without any reference to $D$ or to the universally Baire set $B$ which codes $D$: 
the relevant information on $B$ and $D$ is now encoded in the witnessing tree $T$ (whose projection may not be defining a universally Baire set 
of $M$, but it is still defining a subset of $B^{M[G]}$ in any generic extension of $M$).
This is going to be very handy, see Remark \ref{rmk:keyremsemcert} below.

There is no reason to expect that the statement \emph{there exists a weak semantic certificate for $A$ in some transitive outer model of $V$} is first order 
expressible in any reasonable way in $V$,
since the class of transitive outer models of $V$ is not first order definable.

The main result of the paper is the following:

\begin{theorem}\label{thm:mainthm00}
Assume $V$ models \emph{$\NS_{\omega_1}$ is saturated} and \emph{there are class many Woodin cardinals}.
Let $\kappa>\omega_2$ be a regular cardinal such that $\Diamond_\kappa$ holds.

Let $A\in\pow{\omega_1}\setminus L(\mathbb{R})$ and $D$ dense subset of $\Pmax$ be such that:
\begin{itemize}
\item
$D=\Cod[B]$ for some universally Baire set $B$, 
\item
$\omega_1^{L[A]}=\omega_1^V$.
\end{itemize}

Then there is a
stationary set preserving forcing $P_{D,A}$ in $V$ of size $\kappa$ 
producing in its generic 
extensions $V[H]$ a semantic certificate $(\mathcal{J}_H,r_H,f_H,T)$ for
$A,D$ such that
\[
\NS_{\omega_1}^{V[H]}\cap N_{\omega_1^V}=\NS_{\omega_1}^{N_{\omega_1}},
\]
where
\[
\mathcal{J}_H=\bp{j_{\alpha\beta}:N_\alpha\to N_\beta:\alpha\leq\beta\leq\omega_1^V}.
\]
\end{theorem}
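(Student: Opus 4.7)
The plan is to construct $P_{D,A}$ as the top of a tower $\bp{P_\alpha:\alpha\in C\cup\bp{\kappa}}$ of forcings indexed by a suitable club $C\subseteq\kappa$, where each $P_\alpha$ is realized as a consistency property forcing $P_{\mathcal{W}_\alpha}$ in the sense of Section \ref{subsec:consprop}: conditions are finite sets of atomic or negated-atomic sentences in a multi-sorted signature $\mathcal{L}_\alpha$, ordered by reverse inclusion, declared compatible exactly when jointly realized by some structure in the designated class $\mathcal{W}_\alpha$. A $P_{\mathcal{W}_\alpha}$-generic then assembles the atomic diagram of a term model whose transitive collapse packages the desired iteration data.

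First I would treat the base forcing $P_0=P_0^*$ of Section \ref{subsec:P0}, whose class $\mathcal{W}_0$ consists of multi-sorted structures coding a $\Pmax$-condition $(N_0,a_0)$ with real code $r$, a branch $(r,f)$ through the witnessing tree $T$ produced by Fact \ref{fac:treeTforP0}, and an iteration $\mathcal{J}=\ap{j_{\alpha\beta}:\alpha\leq\beta\leq\omega_1^V}$ of $N_0$ realizing all the clauses of Def. \ref{def:weaksemcert0} --- in particular $j_{0\omega_1^V}(a_0)=A$, $H_{\omega_2}^V\subseteq N_{\omega_1^V}$, and $\NS_{\omega_1}^{N_{\omega_1^V}}\cap\pow{\omega_1}^V=\NS_{\omega_1}^V$. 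That $\mathcal{W}_0\neq\emptyset$ and every $P_0^*$-condition extends is exactly where Lemma \ref{lem:keylemASPSCH(*)0} enters, as it supplies a semantic certificate in the collapsing extension by $\Coll(\omega,\omega_2^V)$; Fact \ref{fac:keyfacSEMCERTP0} will then show that $P_0^*$-generics realize a semantic certificate. However $|P_0^*|=\mathfrak{c}$, and by Woodin's result cited in the introduction $P_0^*$ cannot be provably stationary set preserving.

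To repair this, at each $\alpha\in C$ I would enlarge $\mathcal{W}_0$ to a class $\mathcal{W}_\alpha$ by also requiring the structures to code a countable elementary substructure $X\prec(H_\kappa^V,\in,\NS_{\omega_1}^V,B,T,\dots)$ of prescribed ``height'' depending on $\alpha$. Such an $X$, once its atomic diagram lies in the generic filter, acts as a sealing device: the consistency requirements governing $\mathcal{W}_\alpha$ are designed so that whenever $X$ contains a stationary $S$ together with a $P_\alpha$-nice name $\dot C$ for a club subset of $\omega_1^V$, the generic is forced to satisfy $\sup(X\cap\omega_1^V)\in S\cap\dot C_{G_\alpha}$. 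Appealing to $\Diamond_\kappa$ I would fix a $\Diamond_\kappa$-sequence catching every pair $(\dot C,S)$ consisting of a $P_\kappa$-nice club name and a stationary set on a stationary-in-$\kappa$ set of coordinates $\alpha$.

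The main obstacle is stationary set preservation of $P_\kappa$ (Theorem \ref{thm:mainthmPkappa}): given $p\in P_\kappa$ and $(\dot C,S)$ as above, one must produce $q\leq p$ forcing $S\cap\dot C_G\neq\emptyset$. The $\Diamond$-guessing locates some $\alpha\in C$ at which the restricted pair is correctly anticipated, and the sealing built into $\mathcal{W}_\alpha$ then supplies the required $q$ by extending $p$ to include the atomic diagram of a suitable $X$. To obtain the full $\NS$-correctness $\NS_{\omega_1}^{V[H]}\cap N_{\omega_1^V}=\NS_{\omega_1}^{N_{\omega_1^V}}$ in the theorem's conclusion, I would arrange the sealing design so that it treats not only genuine $V$-stationary sets $S$ but also every subset of $\omega_1^V$ that appears in the generic copy of $N_{\omega_1^V}$ as stationary --- the countable substructures $X$ carry enough of the generic iteration's data for this to be internalizable as an atomic constraint on $\mathcal{W}_\alpha$. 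Combined with stationary set preservation (which gives $\NS_{\omega_1}^{V[H]}\cap V=\NS_{\omega_1}^V$), this delivers the desired agreement on all of $N_{\omega_1^V}$ and hence the full semantic certificate in $V[H]$.
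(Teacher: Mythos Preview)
Your outline captures the architecture correctly --- the tower of consistency-property forcings, the $\Diamond_\kappa$-guessing of club names, the sealing via countable elementary substructures, and the observation that one must seal against sets stationary in $N_{\omega_1}$ rather than merely in $V$ --- but it glosses over the two places where the real work lies, and in one of them your description is off in a way that matters.

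First, the classes $\mathcal{W}_\lambda$ are defined by transfinite recursion, not by a uniform enlargement of $\mathcal{W}_0$. A $\lambda$-certificate carries a function $F$ on $\omega_1^V$ assigning to each $\alpha$ in a selected $K\subseteq\omega_1^V$ a triple $(\lambda_\alpha,X_\alpha,P_{\lambda_\alpha})$ with $\lambda_\alpha\in C\cap\lambda$ and $X_\alpha\prec(Q_{\lambda_\alpha},\in,P_{\lambda_\alpha},A_{\lambda_\alpha})$, $X_\alpha\cap\omega_1^V=\alpha$; the defining clause (Condition~\ref{def:keydefP0-3}\ref{def:keydefP0-3-c} of Def.~\ref{def:keydefPkappa}) then demands that the witness $\nu$ satisfy $X_\alpha\cap E\cap[\Sigma_\nu]^{<\omega}\neq\emptyset$ for every dense $E\subseteq P_{\lambda_\alpha}$ definable with parameters in $X_\alpha$. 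So the $X_\alpha$ are elementary not in a fixed ambient structure but in structures built from \emph{earlier} forcings in the tower, and the constraint is a genericity condition on the witness itself relative to those earlier forcings. Your ``$X\prec(H_\kappa^V,\in,\NS_{\omega_1}^V,B,T,\dots)$'' misses this recursive coupling, without which there is no mechanism by which sealing recorded at stage $\lambda$ constrains the full $P_\kappa$-generic.

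Second, and more seriously, your sentence ``the sealing built into $\mathcal{W}_\alpha$ then supplies the required $q$ by extending $p$ to include the atomic diagram of a suitable $X$'' hides the entire difficulty. What must be shown is that the finite set $p_0\cup\bp{\check{\alpha}\in\dot K,\ \dot F_0(\check{\alpha})=\check{\lambda}}$ is actually a condition in $P_\kappa$, i.e.\ is realized by some $P_\kappa$-witness. There is no $X$ sitting in $V$ waiting to be included: the $X_\alpha$ one needs must be $X_\alpha$-generic for $P_\lambda$, and no such object exists in $V$. The paper produces it by passing to $V[G]$ with $G$ generic for $\Coll(\omega,\kappa)$, fixing there a $V$-generic $H$ for $P_\lambda$ with $p_0\in H$, iterating $N_{\omega_1^V}^H$ once more through the given $S$ (so that $\omega_1^V$ lands in the image of $S$) and then out to $\omega_1^{V[G]}$, lifting this to a generic iteration $\bar k:V\to\bar M$, and then hand-building a weak $\bar k(P_\kappa)$-witness $\bar\nu$ in $V[G]$ in which $\bar k[Q_\lambda]$ serves as the new $X_{\omega_1^V}$ (Claim~\ref{clm:fundclmrproof}). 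Shoenfield absoluteness pushes the existence of such a witness down to $\bar M[\bar G]$, and elementarity of $\bar k$ reflects the conclusion to $V$. This lifting argument --- and in particular the verification that the multi-sorted map $\nu\mapsto\bar\nu$ preserves atomic diagrams sort by sort, which is why multi-sorted logic is used in the first place --- is the technical core of the proof; nothing in your outline indicates how the master condition is to be manufactured.
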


\begin{corollary}\label{cor:semcert}
Assume $\MM^{++}+$\emph{there are class many Woodin cardinals}. 
Then $\stUB$ holds.
\end{corollary}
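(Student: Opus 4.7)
The plan is to verify the three clauses of $\stUB$ directly. Class many Woodin cardinals are assumed; $\MM^{++}$ gives that $\NS_{\omega_1}$ is saturated and that $2^{\aleph_0} = 2^{\aleph_1} = \aleph_2$, and also $\psi_{\mathrm{AC}}$ by \cite[Section 6]{HSTLARSON}, which yields $\pow{\omega_1}^V \subseteq L(\mathbb{R})[A]$ for every relevant $A$. Saturation of $\NS_{\omega_1}$ combined with the proof (not the statement) of \cite[Lemma 7.6]{HSTLARSON} shows that $G_A$ is a filter on $\Pmax$ in $V$ for each $A \in \pow{\omega_1}^V \setminus L(\mathbb{R})$ with $\omega_1^{L[A]} = \omega_1^V$. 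Hence the whole content of the corollary reduces to showing $G_A \cap D \neq \emptyset$ for every such $A$ and every $D = \Cod[B]$ with $B \in \UB^V$ a dense subset of $\Pmax$.

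Fix such an $A$ and $D$. As already noted in the introductory discussion of the paper, there is in $V$ a stationary set preserving forcing $Q_0$ of size $(2^{\aleph_2})^V$ that forces $\Diamond_{\aleph_3}$ without adding new subsets of $\aleph_2$. Since $Q_0$ preserves the saturation of $\NS_{\omega_1}^V$, preserves $\UB^V$ (universally Baire sets are preserved by set forcing), preserves class many Woodin cardinals, and fixes $A$, all the hypotheses of Theorem \ref{thm:mainthm00} hold in $V^{Q_0}$ with $\kappa = \aleph_3^{V^{Q_0}}$. Apply the theorem there to obtain a stationary set preserving forcing $P_{D,A} \in V^{Q_0}$ of size $\aleph_3$ whose generic extension $V^{Q_0 \ast P_{D,A}}$ contains a semantic certificate $(\mathcal{J}, r, f, T)$ for $A, D$, where $\mathcal{J} = \langle j_{\alpha\beta} : N_\alpha \to N_\beta : \alpha \leq \beta \leq \omega_1^V\rangle$ satisfies
\[
\NS_{\omega_1}^{V^{Q_0 \ast P_{D,A}}} \cap N_{\omega_1^V} = \NS_{\omega_1}^{N_{\omega_1^V}}.
\]

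Set $R = Q_0 \ast P_{D,A}$. Then $R$ is stationary set preserving in $V$ and has size $\leq (2^{\aleph_2})^V$. The semantic certificate witnesses in $V^R$ the following $\Sigma_1$-sentence in the signature $\tau_{\NS_{\omega_1},\UB}$ with parameter $A$:
\[
\exists \mathcal{K}\,\exists r\,\bigl[\mathcal{K}\text{ is a correct iteration}\,\wedge\, B(r)\,\wedge\, \Cod(r) = (N_0, A\cap \omega_1^{N_0})\,\wedge\, k_{0\omega_1}(A\cap\omega_1^{N_0}) = A\bigr].
\]
The witnessing tree $T$ furnishes $B(r)$ via $p[T] \subseteq B^{V^R}$ (Definition \ref{def:witnessingtree}), while the clauses in Definition \ref{def:weaksemcert0} give $\Cod(r) = (N_0, A\cap \omega_1^{N_0})$ and $j_{0\omega_1^V}(A\cap\omega_1^{N_0}) = A$. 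Crucially, the identity displayed above combined with $H_{\omega_2}^V \subseteq N_{\omega_1^V}$ upgrades the certificate's internal correctness to full correctness of $\mathcal{J}$ with respect to $\NS_{\omega_1}^{V^R}$, which is exactly the form of correctness required by the formula.

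Finally, invoking the $\Sigma_1$-reflection theorem for $\tau_{\NS_{\omega_1},\UB}$ of Section \ref{subsec:MM++BMM++} --- applicable to $R$ by $\MM^{++}((2^{\aleph_2})^V)$ (a consequence of $\MM^{++}$) together with class many Woodin cardinals --- the displayed $\Sigma_1$-formula pulls back to $V$. The resulting correct iteration (in $V$) of some $(N_0, a_0) \in D$ mapping $a_0$ to $A$ is precisely a condition in $G_A \cap D$, completing the verification of $\stUB$. I expect the delicate point to be the last step: one has to check that the formula and its parameter $A$ live in $H_{\omega_2}^V$ so that reflection applies (which they do, as $A \subseteq \omega_1$ and an iteration of length $\omega_1$ of a countable structure sits inside $H_{\omega_2}$), and that $V$-correctness emerging from the reflection matches the form of correctness used in the definition of $G_A$ --- the engineering of the displayed identity in Theorem \ref{thm:mainthm00} has been set up precisely to make this pullback clean.
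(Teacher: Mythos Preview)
Your proof is correct and follows the same architecture as the paper's own argument: apply Theorem \ref{thm:mainthm00} to obtain an SSP forcing producing a semantic certificate, then reflect the resulting $\Sigma_1$-sentence in $\tau_{\NS_{\omega_1},\UB}$ back to $V$ via the machinery of Section \ref{subsec:MM++BMM++}. The only substantive difference is in how you secure $\Diamond_\kappa$. The paper's proof of the corollary simply writes ``Find $\kappa>\omega_2$ such that $\Diamond_\kappa$ holds in $V$'' and then applies Theorem \ref{thm:mainthm00} directly in $V$, relying on full $\MM^{++}$ to reflect from a forcing of that (possibly large) size. You instead make explicit the maneuver sketched in the paper's introduction: first force $\Diamond_{\aleph_3}$ by an $<\aleph_3$-closed $Q_0$ of size $2^{\aleph_2}$, apply Theorem \ref{thm:mainthm00} in $V^{Q_0}$, and reflect from the two-step $R=Q_0\ast P_{D,A}$ using only $\MM^{++}(2^{\aleph_2})$. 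Your route has the virtue of matching the quantitative form of Theorem \ref{thm:mainresult} and of not leaving the existence of a $\Diamond$-cardinal in $V$ as an unexplained step; the paper's route is terser but leans on the reader to supply exactly the two-step argument you wrote out. Your verification that $Q_0$ preserves saturation of $\NS_{\omega_1}$, the relevant hypotheses of the Convention (via $H_{\omega_2}^{V^{Q_0}}=H_{\omega_2}^V$), Woodinness above $2^{\aleph_2}$, and the universally Baire status of $B$ is accurate, and the size bound $|R|\leq 2^{\aleph_2}$ follows since $P_{D,A}$ can be identified with a partial order on $\aleph_3^V$ and $Q_0$ adds no bounded subsets of $\aleph_3$.
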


The corollary is immediate: 
\begin{proof}
By $\MM^{++}$, \emph{$\NS_{\omega_1}$ is saturated}. Find $\kappa>\omega_2$ such that $\Diamond_{\kappa}$ 
holds
in $V$.
Fix $A\in \pow{\omega_1}^V$ as prescribed by the theorem, and let $D$ vary among the
universally Baire in the codes dense subsets of $\Pmax$. 
If $H$ is $V$-generic for $P_{D,A}$,
then 
in $V[H]$ there is 
\[
\mathcal{J}_H=\bp{j_{\alpha\beta}:N_\alpha\to N_\beta:\alpha\leq\beta\leq\omega_1}
\]
iteration such that $(N_0,A\cap\omega_1^{N_0})\in D$ and 
$j_{0\omega_1}(A\cap\omega_1^{N_0})=A$ (cfr. Condition \ref{def:semcert0-4bis} of Def. \ref{def:weaksemcert0}).
Now 
\[
H_{\aleph_2}^V\subseteq N_{\omega_1^V},
\] 
and $\NS_{\omega_1}^V=\NS_{\omega_1}^{N_{\omega_1}}\cap H_{\omega_2}$
 (by Conditions \ref{def:semcert0-5} and \ref{def:semcert0-6} of a semantic certificate).
Therefore in $V[H]$ every stationary subset of $\omega_1$ in $V$
is stationary in $N_{\omega_1^V}$,
hence also in $V[H]$ (by Thm. \ref{thm:mainthmPkappa}). 
We conclude that $P_\kappa$ is stationary set preserving.

Since $\NS_{\omega_1}^{N_{\omega_1^V}}=\NS_{\omega_1}^{V[H]}\cap N_{\omega_1^V}$,
$\mathcal{J}_H$ is a correct iteration in $V[H]$ with 
\[
(N_0,A\cap\omega_1^{N_0})=\Cod(r_H), \qquad
r_H\in p[T]\subseteq p[T_B]=B^{V[H]}.
\]

By the results of Section \ref{subsec:MM++BMM++},
the existence of a correct iteration $\mathcal{J}$ such that 
$j_{0,\omega_1}(A\cap\omega_1^{N_0})=A$ and $(N_0,A\cap\omega_1^{N_0})=\Cod(r)$ with 
$r\in B^{V[H]}$ can be reflected to $V$, by $\MM^{++}$.
This concludes the proof.
\end{proof}

The rest of the paper is dedicated to the proof of Thm. \ref{thm:mainthm00}.

First of all let us outline some key observations on semantic certificates.

\begin{remark}\label{rmk:keyremsemcert}
Assume $(M,W)$ is a canonical pair and $H_{\omega_2}^M$ is
countable in $W$.
The statement
\begin{quote}
$(\mathcal{J},r,f,T)$ is a \emph{weak semantic certificate  for $A$.}
\end{quote}
is first order expressible in the structure 
\(
(H_{\omega_1}^{W},\tau_{\bool{ST}}^{W})
\)
by the sentence $\phi(A,T,H_{\omega_2}^M,\omega_1^M)$ in parameters  (the countable in $W$ sets) 
$A,T,H_{\omega_2}^M,\omega_1^M$:
\begin{align}\label{eqn:weaksemcert}
&((r,f) \text{ is a branch of }T)\wedge \\ \nonumber
&\wedge \mathcal{J}=\bp{j_{\alpha\beta}:N_\alpha\to N_\beta:\alpha\leq\beta\leq\omega_1^M} 
\text{ is an iteration}\wedge\\  \nonumber
&\wedge j_{0\omega_1^M}(A\cap\omega_1^{N_0})=A\wedge\\  \nonumber
&\wedge H_{\omega_2}^M\subseteq N_{\omega_1^M}\wedge\\  \nonumber
&\wedge \NS_{\omega_1}^{M}=\NS_{\omega_1}^{N_{\omega_1^M}}\cap H_{\omega_2}^M\wedge\\ \nonumber
&\wedge \Cod(r)=(N_0,A\cap\omega_1^{N_0})\\  \nonumber
\end{align}
\end{remark}

The first five properties listed above
of $\mathcal{J},r,f,T$ are expressed by 
$\Delta_0$-formulae in the relevant parameters 
(for example we already showed that this is the case for
$\mathcal{J}$\emph{ is an iteration of length $\omega_1^M$}). 
The last assertion is a provably $\Delta_1$-property in the signature 
$\tau_{\ST}$.

The unique delicate part (hiding significant large cardinal strength)
for a semantic certificate is the request $p[T]\subseteq B^{V[G]}$ holds in all generic extensions of $V$.
This is  expressible by the $\Pi^1_1(B,T)$-property
\[
\forall r\in 2^\omega\forall f\in \kappa^\omega\,\qp{(\forall n\in\omega (r\restriction n,f\restriction n)\in T)\rightarrow B(r)}.
\]
In principle $\Pi^1_1(B,T)$-properties are not universally Baire, even if $B$ is universally Baire and $T$ is countable. 

Now if $W$ is any transitive outer model of $M$, it is not at all clear whether we can make sense of 
$B^W$ for a universally Baire set $B$ of $M$. The notion of witnessing tree $T$ show that we do not have to worry about this issue, and 
we can just check whether in this outer model weak semantic certificates $(\mathcal{J},r,f,T)$ for $A$ 
with certain properties (depending only on the parameters $A,H_{\omega_2}^M,T$) exist in $W$.

Most of our arguments will run as follows: 
\begin{itemize}
\item We fix in advance in $V$ a witnessing tree $T$ for $A,D$.
\item
We define in some generic extension $V[G]$ of $V$ a generic 
$\bar{k}:V\to \bar{M}$ with $\omega_1^{\bar{M}}=\omega_1^{V[G]}$.
\item
We consider the canonical pair $(\bar{M},V[G])$ and 
we also produce in $V[G]$ a weak semantic certificate $(\mathcal{J},r,f,\bar{k}(T))$ for $\bar{k}(A)$ with certain extra properties (independent of the universally Baire set $B$) which
can be expressed as $\Sigma^1_2(\bar{k}(T),\bar{k}(A),\bar{k}(H_{\omega_2}^V))$-facts in $\bar{M}[H]$
whenever $H$ is an $\bar{M}$-generic filter  with  
$\bar{k}(A),\bar{k}(H_{\omega_2}),\bar{k}(T)$ countable in $\bar{M}[H]$. 
\item
Since these properties of $(\mathcal{J},r,f,\bar{k}(T))$ (holding in $V[G]$) 
still hold in $H_{\omega_1}^{V[H]}$ for any $V$-generic extension $V[H]$ in which
$\bar{k}(A),\bar{k}(H_{\omega_2}),\bar{k}(T)$ are countable, 
we reflect them to $(H_{\omega_1}^{\bar{M}[H]},\tau_{\ST}^{\bar{M}[H]})$ 
using Shoenfield's absoluteness between $(H_{\omega_1}^{\bar{M}[H]},\tau_{\ST}^{\bar{M}[H]})$ and 
$(H_{\omega_1}^{V[H]},\tau_{\ST}^{V[H]})$.
\item 
Finally (since $\bar{k}(T)$ is in $\bar{M}$ a witnessing tree for $\bar{k}(A),\bar{k}(D)$), 
we use elementarity of $\bar{k}$ to reflect these properties to $V$ in the form:
\begin{quote}
\emph{Semantic certificates for $A,D$ with the required properties exist in any generic extension of $V$ by 
$\Coll(\omega,\omega_2^V)$.} 
\end{quote}
\end{itemize}
The first example of this modular argument  is given by Lemma \ref{lem:keylemASPSCH(*)0}.

\subsection{Existence of semantic certificates}\label{subsec:exsemcert}

The first key result of Asper\'o and Schindler is that semantic certificates for $A,D$
exist in some generic extension of $V$ (the proof below produces them in a generic extension collapsing $\omega_2$ to countable).

\begin{lemma}\label{lem:keylemASPSCH(*)0}
Assume in $V$ 
$\NS_{\omega_1}$ is saturated and Martin's axiom, $X$ is a transitive model of $\ZFC^-$ 
and $T$ is a witnessing tree on $(2\times\omega_2)^{<\omega}$ 
for $A,D$ as given by Fact \ref{fac:treeTforP0}, with 
$A\in\pow{\omega_1}\setminus L(\mathbb{R})$ 
such that $\omega_1^{L[A]}=\omega_1^V$ and $D=\Cod[B]$ dense subset of $\Pmax$ with $B$ universally Baire.

Then for any $G$
$V$-generic for $\Coll(\omega,\omega_2^V)$, there is in $V[G]$ a 
semantic certificate $(\mathcal{J},r,f,T)$ for $A,D$. 
\end{lemma}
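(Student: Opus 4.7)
The plan is to build $(\mathcal{J},r,f,T)$ directly in $V[G]$ by extracting a $\Pmax^{V[G]}$-condition from the witnessing tree $T$ and then performing a $V[G]$-internal back-and-forth iteration of length $\omega_1^V$. First I would extract $(N_0,a_0,r,f)$. In $V[G]$ the set $H_{\omega_2}^V$ is countable, so $(H_{\omega_2}^V,A)$ is a $\Pmax^{V[G]}$-condition. The witnessing-tree property of $T$ (Def.~\ref{def:witnessingtree}), applied in $V[G]$, furnishes a branch $(r,f)$ through $T$ with $\Cod(r)=(N_0,a_0)\leq(H_{\omega_2}^V,A)$ in $\Pmax^{V[G]}$. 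Since $p[T]\subseteq B$ holds in every $V$-generic extension, $r\in B^{V[G]}$, whence $(N_0,a_0)\in\Cod[B^{V[G]}]=D^{V[G]}$. Unpacking the refinement, there is inside $N_0$ a correct iteration $\mathcal{L}$ of $H_{\omega_2}^V$ of length $\omega_1^{N_0}$ sending $A$ to $a_0$.

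Next I would construct $\mathcal{J}=\langle j_{\alpha\beta}:N_\alpha\to N_\beta:\alpha\leq\beta\leq\omega_1^V\rangle$ by recursion; since $\omega_1^V$ is countable in $V[G]$ this is a countable-length recursion. At a successor stage $\alpha+1$ I would pick an $N_\alpha$-generic ultrafilter $G_\alpha$ on $(\pow{\omega_1}/\NS)^{N_\alpha}$ subject to two families of constraints. The \emph{matching} family pins $j^{\mathcal{J}}_{0\omega_1^V}(a_0)=A$: assuming inductively $j^{\mathcal{J}}_{0\alpha}(a_0)=A\cap\omega_1^{N_\alpha}$, for every $f:\omega_1^{N_\alpha}\to\omega_1^{N_\alpha}$ in $N_\alpha$ I require $\{\gamma:f(\gamma)\in A\}\in G_\alpha$ iff $[f]_{G_\alpha}\in A$, which forces $j_{\alpha,\alpha+1}(A\cap\omega_1^{N_\alpha})=A\cap\omega_1^{N_{\alpha+1}}$. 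The \emph{coherence} family is the standard $\Pmax$-iteration bookkeeping against $V$-stationary subsets of $\omega_1$ appearing in the $N_\beta$ with $\beta\leq\alpha$, securing $\NS_{\omega_1}^V=\NS_{\omega_1}^{N_{\omega_1^V}}\cap\pow{\omega_1}^V$. At limits I take direct limits; well-foundedness follows from iterability of $N_0$. Each family imposes countably many conditions (as $N_\alpha$ is countable in $V[G]$); MA together with saturation of $\NS_{\omega_1}^V$ in $V$ yields a $V$-generic filter on $(\pow{\omega_1}/\NS)^V$ meeting them all, whose trace on $(\pow{\omega_1}/\NS)^{N_\alpha}$ is the desired $G_\alpha$.

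It remains to verify the clauses of Def.~\ref{def:weaksemcert0} plus the two semantic-certificate additions for $(\mathcal{J},r,f,T)$. Clauses \ref{def:semcert0-2bis}, \ref{def:semcert0-3}, \ref{def:semcert0-4} come from the extraction step; \ref{def:semcert0-4bis} and \ref{def:semcert0-6} are delivered by the matching and coherence families respectively. Clause \ref{def:semcert0-5} is automatic: since $H_{\omega_2}^V$ is countable in $V[G]$ and hence in $N_0$, $H_{\omega_2}^V\in\mathrm{HC}^{N_0}$, and every element of $\mathrm{HC}^{N_0}$ lies below the critical point $\omega_1^{N_0}$ of $\mathcal{J}$, so it is fixed pointwise by $j^{\mathcal{J}}_{0\omega_1^V}$, giving $H_{\omega_2}^V\subseteq N_{\omega_1^V}$. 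Clause \ref{def:semcert0-7} is the hypothesis on $T$, and \ref{def:semcert0-8} is immediate with $W=V[G]$. The main obstacle is the simultaneous management of the matching and coherence constraints when choosing $G_\alpha$: this is precisely the place where Martin's axiom and the saturation of $\NS_{\omega_1}^V$ are essentially used.
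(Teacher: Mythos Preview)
Your direct construction cannot work, and the obstruction is structural rather than a matter of bookkeeping. Since $(N_0,a_0)\leq_{\Pmax}(H_{\omega_2}^V,A)$, the model $N_0$ contains $H_{\omega_2}^V$ as a \emph{countable} set; in particular $\omega_1^V<\omega_1^{N_0}$. Any iteration $\mathcal{J}$ of $N_0$ has $\omega_1^{N_\gamma}\geq\omega_1^{N_0}>\omega_1^V$ for every $\gamma$, so $j^{\mathcal{J}}_{0\omega_1^V}(a_0)$ is, by elementarity, unbounded in $\omega_1^{N_{\omega_1^V}}>\omega_1^V$ and therefore cannot equal $A\subseteq\omega_1^V$. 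Clause~\ref{def:semcert0-4bis} is thus unattainable. Your inductive hypothesis $j_{0\alpha}(a_0)=A\cap\omega_1^{N_\alpha}$ already fails at $\alpha=0$: it would force $a_0=A\cap\omega_1^{N_0}=A$, contradicting that $a_0$ is unbounded in $\omega_1^{N_0}$. For the same reason clause~\ref{def:semcert0-6} is vacuous in your setup: every $S\in\pow{\omega_1}^V$ is bounded in $\omega_1^{N_{\omega_1^V}}$, hence non-stationary there. The appeal to MA and saturation to produce $G_\alpha$ is beside the point; no choice of generics can repair the mismatch of $\omega_1$'s.

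The paper's argument is not a direct construction at all but a reflection. One iterates $N_0$ in $V[G]$ to length $\gamma=\omega_1^{V[G]}$ (not $\omega_1^V$), then takes the image $\bar{\mathcal{K}}=j_{0\gamma}(\mathcal{K})$ of the internal iteration $\mathcal{K}\in N_0$ witnessing $(N_0,a_0)\leq(H_{\omega_2}^V,A)$, and extends $\bar{\mathcal{K}}$ to a full generic iteration $\bar{k}:V\to\bar{M}$ with $\omega_1^{\bar{M}}=\gamma$. Now $(\mathcal{J},r,\bar{k}[f],\bar{k}(T))$ is a weak semantic certificate for $\bar{k}(A)$ relative to $\bar{M}$: here the $\omega_1$'s match, since the iteration has length $\omega_1^{\bar{M}}$ and $j_{0\gamma}(a_0)=j_{0\gamma}(k_{0\omega_1^{N_0}}(A))=\bar{k}(A)$. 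The $\Sigma_1$ statement ``there is a weak semantic certificate for $\bar{k}(A)$ witnessed by $\bar{k}(T)$'' (in the countable parameters $\bar{k}(A),\bar{k}(T),\bar{k}(H_{\omega_2}^V),\omega_1^{\bar{M}}$) holds in $V[H]$ for a further collapse $H$, hence by Shoenfield absoluteness in $\bar{M}[H]$; homogeneity of the collapse and elementarity of $\bar{k}$ then pull the statement back to $V$. The missing idea in your attempt is precisely this passage through the generic ultrapower $\bar{M}$.
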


\begin{proof}
Let $G$ be $V$-generic for $\Coll(\omega,\omega_2^V)$.
In $V[G]$ $H_{\omega_2}^V$ is countable and iterable.
This gives that $(H_{\omega_2}^V,A)$ is a $\Pmax^{V[G]}$ condition.

Since $T$ is a witnessing tree for $A,D$, there is
$(N,b)=\Cod(\dot{r}_G)\leq (H_{\omega_2}^V,A)$ with $r=\dot{r}_G\in p[T]$. 

Since $N$ is countable and iterable in $V[G]$ and $\gamma=(\omega_3)^V=\omega_1^{V[G]}$, we can find
in $V[G]$ an iteration 
\[
\mathcal{J}=\bp{j_{\alpha\beta}:N_\alpha\to N_\beta:\,\alpha\leq\beta\leq\gamma=\omega_1^{V[G]}}
\]
of $N=N_0$ 
(just apply \cite[Lemma 2.8]{HSTLARSON} in $V[G]$ to $N$).

Let $\mathcal{K}=\bp{k_{\alpha\beta}:\,\alpha\leq\beta\leq\omega_1^N}\in N$ be the iteration witnessing that
$(N,b)\leq(H_{\omega_2}^V,A)$ and 
\[
\bar{\mathcal{K}}=\bp{\bar{k}_{\alpha\beta}:\,\alpha\leq\beta\leq\gamma}=j_{0\gamma}(\mathcal{K}).
\]
Extend  in $V[G]$ $\bar{\mathcal{K}}$ to a full iteration of $V$ applying 
\cite[Lemma 1.5, Lemma 1.6]{HSTLARSON}, and
let $\bar{M}=\bar{k}(V)$ with $\bar{k}$ the unique extension of $\bar{k}_{0\gamma}$ to $V$.

Let $H$ be $V$-generic for $\Coll(\omega,\eta)$ with $\eta=\bar{k}(\omega_2^V)$ and 
$G\in V[H]$.
Now observe that:
\begin{itemize}
\item
$H$ is $\bar{M}$-generic for $\Coll(\omega,\eta)$, since $\bar{M}\subseteq V[G]$ and $H$ is also
$V[G]$-generic for $\Coll(\omega,\eta)$. 
\item $\bar{k}(T)$ is a tree in $(2\times\bar{k}(\omega_2^V))^{<\omega}$ in $\bar{M}$
which is countable in $\bar{M}[H]$ and in $V[H]$, and $r\in p[\bar{k}(T)]^{V[G]}$.

\item
$\mathcal{J},\bar{k}(H_{\omega_2}^V)$ are in $H_{\omega_1}^{V[H]}$, and $H_{\omega_1}^{V[H]}$ models that
$\mathcal{J}$ is an iteration such that $(N_0,b)=\Cod(r)$, 
$r\in p[\bar{k}(T)]$, and 
\[
j_{0\gamma}(b)=j_{0\gamma}(\bar{k}_{0\omega_1^{N}}(A))=\bar{k}(A).
\]
\end{itemize}

Consider the 
$\Sigma_1$-sentences in signature $\tau_\ST$ and parameters $\bar{k}(A),\bar{k}(T),
\bar{k}(H_{\omega_2}^V),\omega_1^{\bar{M}}$ (which are in 
$\bar{M}$ and are countable in $\bar{M}[H]$) 
\textit{
\begin{quote}
There is a branch $(r,\bar{k}[f])$ of $\bar{k}(T)$ and an iteration 
$\mathcal{J}=\bp{j_{\alpha\beta}:N_\alpha\to N_\beta:\alpha\leq\beta\leq\omega_1^{\bar{M}}}$ such that: 
\begin{itemize}
\item
$\Cod(r)=(N_0,a_0)$,
\item
$j_{0\omega_1^{\bar{M}}}(A\cap\omega_1^{N_0})=\bar{k}(A)$,
\item
$\bar{k}(H_{\omega_2}^V)\subseteq N_{\omega_1^{\bar{M}}}$,
\item
$\NS_{\omega_1}^{\bar{k}(H_{\omega_2})}=\NS_{\omega_1}^{N_{\omega_1^{\bar{M}}}}\cap \bar{k}(H_{\omega_2}^V)$.
\end{itemize}
\end{quote} 
}
This sentence is true in $V[H]\supseteq \bar{M}[H]$ and all its parameters are in 
$H_{\omega_1}^{\bar{M}[H]}$.


Therefore it holds in $H_{\omega_1}^{\bar{M}[H]}$ by Shoenfield's 
absoluteness (cfr. Lemma \ref{lem:SHOABS}).

By homogeneity of $\Coll(\omega,\eta)$ we 
conclude that $\bar{M}$ models that: \emph{``in any $\bar{M}$-generic extension for $\Coll(\omega,\bar{k}(\omega_2^V))$
there is a weak semantic certificate for $\bar{k}(A)$ as witnessed by $\bar{k}(T)$''}.
By elementarity of $\bar{k}$ we get that the same holds in $V$ replacing 
$\bar{k}(A),\bar{k}(T)$ by $A,T$.
\end{proof}

\begin{remark}
Provided one is given in advance the witnessing tree $T$,  this Lemma 
does not need the existence of class many Woodin cardinals to go through. One just needs
to be able to define in $V[G]$ the iterations $\mathcal{J}$ and $\mathcal{K}$. This property of $\Pmax$-conditions holds if 
$V[G]$ satisfies very mild large cardinal properties (see \cite[Section 2]{HSTLARSON}).
\end{remark}

\section{The proof}\label{sec:proof}

We will show that STRATEGY \ref{strategy0} works.
$P_{D,A}$ is construed 
in what follows as the top element of an increasing chain of posets
\[
\bp{P_\alpha:\alpha\in C}
\]
each of size $\kappa$, 
where $\kappa>\omega_2$ is regular, $C$ is a club subset of $\kappa$, and
$\Diamond_\kappa$ holds.

\begin{convention}\label{conv:convobjP0Pkappa}
Fix from now on:
\begin{enumerate}[(A)]
\item
$A\in\pow{\omega_1}\setminus L(\mathbb{R})$ such that $\omega_1^{L[A]}=\omega_1^V$, 
and $B$ universally Baire set such that 
$\Cod[B]=D$ is a dense subset of $\Pmax$ with $N$ a model of $\ZFC+$\emph{Martin's axiom} for any of its conditions $(N,b)$.
\item
$T_B,S_B$ trees on $(2\times\Ord)^{<\omega}$ projecting to complements in every set sized forcing extension of $V$ 
and such that $B=p[T_B]$.
\item $\kappa>\omega_2$ a regular cardinal such that $\Diamond_\kappa$ holds.
\item $T$ a witnessing tree on $(2\times\omega_2^V)^{<\omega}$ for $A,D$ according to Def. \ref{def:witnessingtree}.
\end{enumerate}
\end{convention}

We first focus on designing a forcing generically adding a weak semantic certificate for $A$ as witnessed by $T$
and prove that this forcing notion accomplishes all 
requests set forth in STRATEGY \ref{strategy0}, 
except possibly that of being
stationary set preserving. 
To overcome this problem one is then led to define a transfinite sequence of posets 
\[
\bp{P_\alpha:\alpha\in C\cup\bp{\kappa}}
\]
with $P_{\min(C)}$ generically adding a weak semantic certificate for $A$ as witnessed by $T$, and  
such that for $\alpha<\beta$ $P_\beta$ extends $P_\alpha$ adding many ``sufficiently $P_\alpha$-generic''
conditions.
One then ensures using $\Diamond_\kappa$
that at many stages $\lambda\in C$ the $P_\kappa$-names for clubs are guessed by   
$P_\lambda$-names for clubs. 
Now the sequence $\bp{P_\alpha:\alpha\in C}$ is designed in order that at all stages $\lambda<\beta$ there are 
$P_{\beta}$-conditions which \emph{``seal''} the $P_\lambda$-name $A_\lambda$ for a club given by the Diamond sequence 
$\bp{A_\alpha:\alpha<\kappa}$ at $\lambda$. 
Here \emph{sealing} means that for any fixed stationary set $S$ existing in $V$
one can admit as $P_{\beta}$-conditions many which will guarantee that any $P_{\beta}$-generic filter $G$
including them will be such that $G\cap P_\alpha$ is a (possibly not $P_\alpha$-generic) filter  ensuring that  
$(A_\alpha)_{G\cap P_\alpha}\cap S$ is non-empty.
In particular the $P_\lambda$ are more and more likely to be stationary set preserving as 
$\lambda$ increases;
$\Diamond_\kappa$ will be used to catch one's tail and be able to infer that 
$P_\kappa$ is stationary set preserving because all potential $P_{\kappa}$-names for 
clubs have been sealed at some previous stage.

The forcings $P_\alpha$ we borrow from Asper\`o and Schindler are here defined as
``consistency properties'', a central concept in
the analysis of infinitary logics.

\subsection{Consistency properties as forcing notions}\label{subsec:consprop}

It will be crucial for our purposes to focus on consistency properties for multi-sorted signatures for relational languages.

\subsubsection*{Multi-sorted logics}
Multi-sorted first order logic extends first order logic by introducing \emph{sorts} for elements of its structures, by specifying for each variable and constant symbol in the language
the \emph{sort} of this symbol, and for any function and relation symbol a declaration of the  \emph{sort} of its entries 
(and of its output for function symbols).

A typical relational signature for multi-sorted logics reads as
$\ap{c^i_k:k\in K_i, i\in I;\, R_j:j\in J}$ where the upper index of each constant symbol denotes the \emph{sort} of that symbol, and each relation symbol $R_j$ comes with a arity $n^j$ and a map $s^j:n_j\to I$ which specifies for each entry of the relation its sort.

Formulae of multi-sorted logic for relational language 
are built as in usual first order logic with the further extra-clauses that:
\begin{itemize}
\item
$(t=s)$ is allowed ony if $t,s$ are terms of the same sort,
\item
$R_j(t_1,\dots,t_n)$ is allowed if and only if $t_k$ is of sort $s^j(k)$ for all $k$,
\item
variable symbols are sorted and quantifiers are bounded to range over the given sort.
\end{itemize}

A structure
\[
\mathcal{M}=\ap{M_i:\, i\in I, R_j^{\mathcal{M}}:j\in J,\, (c^i_k)^{\mathcal{M}}:k\in K_i,i\in I}
\]
is a multi sorted structure for the multi-sorted signature $\ap{c^i_k:k\in K_i, i\in I;\, R_j:j\in J}$ 
in which each $M_i$ denotes the domain of the sort $i\in I$, each 
$(c^i_k)^{\mathcal{M}}$ is an element of $M_i$, and for all $j\in J$, $a_1,\dots,a_{n_j}$,
$R_j^{\mathcal{M}}(a_1,\dots,a_{n_j})$ holds if and only if $a_k\in M_{s_j(k)}$ for $k=1,\dots,n_j$.

Multi-sorted logic can be easily identified with a fragment of first order logic by identifying the multi-sorted structure 
\[
\mathcal{M}=\ap{M_i:\, i\in I, R_j^{\mathcal{M}}:j\in J,\, (c^i_k)^{\mathcal{M}}:k\in K_i,i\in I}
\]
for the multi-sorted signature $\ap{c^i_k:k\in K_i, i\in I;\, R_j:j\in J}$ with the first order structure:
\[
\mathcal{M}^*=\ap{\bigcup_{i\in I} M_i,\, M_i:\, i\in I, R_j^{\mathcal{M}}:j\in J,\, (c^i_k)^{\mathcal{M}}:k\in K_i,i\in I}
\]
for the signature $\ap{c^i_k:k\in K_i, i\in I;\, R_j:j\in J,\, X_i:i\in I}$ with each $X_i$ a new unary predicate symbol interpreted by
$M_i$ in the structure $\mathcal{M}$. But there is an inherent advantage in multi-sortd logic: for example by forbidding
the formulae $(t=s)$ for $t,s$ of different sorts one may render certain concepts which are 
first order definable in 
\[
\mathcal{M}^*=\ap{\bigcup_{i\in I} M_i,\, M_i:\, i\in I, R_j^{\mathcal{M}}:j\in J,\, (c^i_k)^{\mathcal{M}}:k\in K_i,i\in I}
\]
undefinable in  
\[
\mathcal{M}=\ap{M_i:\, i\in I, R_j^{\mathcal{M}}:j\in J,\, (c^i_k)^{\mathcal{M}}:k\in K_i,i\in I}.
\]
For example this makes the notion of morphism for multi-sorted structure much easier to handle than for first order structures
(a morphism
$k:\mathcal{M}\to\mathcal{N}$ of multi sorted structues is a multi-map mapping the domain of the sort $i$ in $\mathcal{M}$
into the domain of the sort $i$ in $\mathcal{N}$ and respecting the interpretation of atomic formulae).

We will crucially use this fact in the key part of the proof of Lemma \ref{fac:keyfacSEMCERTPLAMBDA-1} (cfr. Claim \ref{clm:fundclmrproof}). For the moment to appreciate the extra-flexibility given by multi-sorted logic we present the following
trivial example for the multi-sorted signature $\bp{0,1;A_0,A_1}$ with $0,1$ sorts and the $A_i$s unary predicates with sort $i$.
Consider the two $\bp{0,1;A_0,A_1}$-structures 
\[
\mathcal{M}_0=(\mathbb{N},\mathbb{N};\mathbb{N},\mathbb{N})
\]
\[
\mathcal{M}_1=(X,\mathbb{N}\setminus X;X,\mathbb{N}\setminus X)
\]
where $X$ is an infinite coinfinite subset of $\mathbb{N}$.
It is clear that these structures are isomorphic as multi-sorted structures $\bp{0,1;A_0,A_1}$ 
On the other hand
(following the identification suggested above of a multi-sorted structure $\mathcal{M}$ with a first order structure
$\mathcal{M}^*$) we get that
\[
\mathcal{M}_0^*=(\mathbb{N},\mathbb{N},\mathbb{N})
\]
\[
\mathcal{M}_1^*=(\mathbb{N},X,\mathbb{N}\setminus X)
\]
It is immediate to check that these two structures are not even elementarily equivalent.

The point being that in multi-sorted logic we do not have to worry abot the intersection of domains of differents sorts $i,j$
in a multi-sorted structure $\mathcal{M}$, these two domains may be (partly) overlapping, but multi-sorted logic is not able to detect this overlap.

We will use the extra-flexibility to produce a certain embedding $k:\mathcal{M}\to\mathcal{N}$ of multi-sorted structures both
having interpretation of the sorts given by partially overlapping domains. 
The collation of the maps of $i$ defined on the domains of the various sorts for $\mathcal{M}$ may not even be a function:
there could be overlap of the domain $M_i$ of sort $i$ with $M_j$ of sort $j$ and  for $a\in M_i\cap M_j$; the multimap $k$, 
may assign a value using its $i$-th component different from the value assigned by its $k$-th component.

\begin{notation}
Let $\mathcal{L}=\mathcal{S}\cup\mathcal{R}\cup\mathcal{K}$ be a multi-sorted relational signature with 
$\mathcal{S}$ its set of sorts, $\mathcal{R}$ its set of relation symbols and $\mathcal{K}$ its set of constant symbols.

We denote an $\mathcal{L}$-structure 
\[
\mathcal{M}=(M_s:s\in S,R^{\mathcal{M}}:R\in\mathcal{R},c^{\mathcal{M}}:c\in\mathcal{K})
\]
by $(\mathcal{S}^{\mathcal{M}},\mathcal{R}^{\mathcal{M}},\mathcal{K}^{\mathcal{M}})$.
\end{notation}

\begin{definition}
Given a signature $\mathcal{L}=\mathcal{S}\cup\mathcal{R}\cup\mathcal{K}$ 
the $\mathfrak{L}_{\infty,\omega}$-logic for $\mathcal{L}$ is the smallest family of formulae containing the 
$\mathcal{L}$-atomic formulae and closed under quantifications, negation, infinitary disjunctions and conjunctions, i.e.:
\begin{itemize}
\item
If $\phi$ is an atomic $\mathcal{L}$-formula, it is also an $\mathfrak{L}_{\infty,\omega}$-formula for $\mathcal{L}$.
\item
If $\bp{\phi_i(\vec{x}_{ij}): i<\gamma,j<\eta}$ is a set of $\mathfrak{L}_{\infty,\omega}$-formulae for
 $\mathcal{L}$
with each
$\phi_i(\vec{x}_{ij})$ in displayed free variables 
$\bp{x_{ij}:j<\omega}$,
\[
\bigwedge_{i<\gamma}\phi_i(\vec{x}_{ij}),
\]
\[
\bigvee_{i<\gamma}\phi_i(x_0,\dots,x_n)
\]
are $\mathfrak{L}_{\infty,\omega}$-formulae for $\mathcal{L}$ in displayed free variables  
$\bp{x_{ij}:i<\gamma,j<\eta}$.
\item
If $\phi(y,\vec{x})$ is an $\mathfrak{L}_{\infty,\omega}$-formula for $\mathcal{L}$, so are $\exists y\phi(y,\vec{x})$ and
$\forall y\phi(y,\vec{x})$.
\item
If $\phi$ is an $\mathfrak{L}_{\infty,\omega}$-formula for $\mathcal{L}$, so is $\neg\phi$.
\end{itemize}
The semantics is defined on $\mathcal{L}$-structures 
$\mathcal{M}=(M_s:s\in\mathcal{S},R^{\mathcal{M}}:R\in\mathcal{R},c^{\mathcal{M}}:c\in\mathcal{K})$ 
and assignments of the free variables
to $M=\bigcup_{s\in\mathcal{S}}$ by the obvious rules: 
\begin{itemize}
\item
\[
\mathcal{M}\models R_j(\vec{x})[\vec{x}/\vec{a}]
\]
if and only if
\[
R_j^\mathcal{M}(\vec{a}) \text{ holds}.
\]

\item
\[
\mathcal{M}\models \neg\phi(\vec{x})[\vec{x}/\vec{a}]
\]
if and only if
\[
\mathcal{M}\not\models \phi(\vec{x})[\vec{x}/\vec{a}].
\]
\item
\[
\mathcal{M}\models \bigvee_{i<\gamma}\phi_i(\vec{x}_i)[\vec{x}_i/\vec{a}_i:i<\gamma]
\]
if and only if for some $i<\gamma$
\[
\mathcal{M}\models \phi_i(\vec{x}_i)[\vec{x}_i/\vec{a}_i].
\]
\item
\[
\mathcal{M}\models \bigwedge_{i<\gamma}\phi_i(\vec{x}_i)[\vec{x}_i/\vec{a}_i:i<\gamma]
\]
if and only if for all $i<\gamma$
\[
\mathcal{M}\models \phi_i(\vec{x}_i)[\vec{x}_i/\vec{a}_i].
\]
\item
\[
\mathcal{M}\models \forall x\phi(x,\vec{y})[\vec{y}/\vec{b}]
\]
with $x$ of sort $s$ if and only if  for all $a\in M_s$
\[
\mathcal{M}\models \phi(x,\vec{y})[x/a,\vec{y}/\vec{b}].
\]
\item
\[
\mathcal{M}\models \exists x\phi(x,\vec{y})[\vec{y}/\vec{b}]
\]
with $x$ of sort $s$ if and only if for some $a\in M_s$
\[
\mathcal{M}\models \phi(x,\vec{y})[x/a,\vec{y}/\vec{b}].
\]
\end{itemize}
\end{definition}

\begin{notation}
A $\mathfrak{L}_{\infty,\omega}$-formula $\psi(\vec{x})$ is of $\bigwedge\bigvee$-type if it is logically equivalent to a formula
\[
\bigwedge_{i\in I}\bigvee_{j\in J_i}\phi_{ij}(\vec{x})
\]
with each $\phi_{ij}$ an atomic or negated atomic formula.
\end{notation}

\begin{definition}\label{def:standtheory}
A $\mathfrak{L}_{\infty,\omega}$-theory $\Sigma$ for the signature $\mathcal{L}=(\mathcal{S},\mathcal{R},\mathcal{K})$
(with $\mathcal{K}=\bp{c^s_j:j\in K_s,\,s\in\mathcal{S}}$) is \emph{standard} if
it contains: 
\begin{description}
\item[$\mathcal{L}$-equality axioms]
\emph{}

\begin{enumerate}
\item $(c=c)$,
\item $(c=d)\rightarrow (d=c)$,
\item $\qp{(c=d)\wedge(d=e)}\rightarrow (c=e)$,
\item $\qp{R(c_1,\dots,c_n)\wedge\bigwedge_{j=1}^n(d_j=c_j)}\rightarrow R(d_1,\dots,d_n)$.
\end{enumerate}
for all constant symbols $c,d,e,c_1,\dots,c_n,c_1,\dots,c_n$ of  $\mathcal{L}$ and all permitted formulae of type $(c=d)$,
$R(c_1,\dots,c_n)$ of the multi-sorted signature
$\mathcal{L}$.
\item[$\mathcal{L}$-quantifier elimination axioms]
\[
(\bigvee_{j\in K_s}\psi(c^s_j))\leftrightarrow \exists x\psi(x),
\]
for all $\mathfrak{L}_{\infty,\omega}$-formulae $\psi(x)$ in displayed free variable $x$ of sort $s\in\mathcal{S}$;
\[
\forall x\bigvee_{j\in K_s}\,(x=c^s_j)
\]
for all free variables $x$ of sort $s$ and for all sorts $s\in\mathcal{S}$.
\end{description}

$\Sigma$ is of \emph{$\bigwedge\bigvee$-type} if it is standard and
all other axioms of $\Sigma$ are sentences of $\bigwedge\bigvee$-type.

\end{definition}

\begin{remark}
For a standard $\mathfrak{L}_{\infty,\omega}$-theory, the quantifier elimination axioms give that
any $\Pi_2$-sentence $\forall x\exists y\psi(x,y)$ with $\psi(x,y)$ quantifier free is equivalent
to a $\bigwedge\bigvee$-type sentence. By passing to infinitary logic we will be able to
express the $\Pi_2$-fragment of an ordinary multi-sorted $\mathcal{L}$-theory $T$ with the Henkin property (i.e. for any $\mathcal{L}$-formula 
$\psi(x)$
there is some constant symbol $c\in\mathcal{L}$ with
$(\exists x\,\psi(x))\rightarrow \psi(c)\in T$) using 
quantifier free sentences of $\bigwedge\bigvee$-type on the $\mathfrak{L}_{\infty,\omega}$-theory extending $T$ with
the equality and quantifier elimination axioms.
\end{remark}

\subsubsection*{Consistency properties}

We will be interested in studying certain forcing notions whose properties are best described by means of standard theories.

\begin{definition}
Let $\mathcal{L}=\mathcal{S}\cup\mathcal{R}\cup\mathcal{K}$ be a multi-sorted signature.

\begin{itemize}
\item
A $\nu:\mathcal{L}\to M\cup\bigcup_{n<\omega}\pow{M^n}$ existing in some generic extension of $V$ 
is a generic $\mathcal{L}$-assignment
if $\mathcal{M}=(\mathcal{S}^{\mathcal{M}},\mathcal{R}^{\mathcal{M}},\mathcal{K}^{\mathcal{M}})$ is an $\mathcal{L}$-structure 
with $\mathcal{S}^{\mathcal{M}}=\bp{M_s:s\in\mathcal{S}}$,
$M=\bigcup\mathcal{S}^{\mathcal{M}}$,  and:
\begin{itemize}
\item 
$\nu\restriction\mathcal{K}$ maps correctly and surjectively 
the constants of sort $s$ to $M_s$, 
\item
$\nu\restriction\mathcal{R}$ maps correctly 
the relation symbols $R_j$ of $\mathcal{R}$ of arity $n_j$ and type $s_j$
to subsets of $M_{s_j(0)}\times\dots\times M_{s_j(n_j-1)}$.
\end{itemize}
\item
Given a generic assignment $\nu$, $\Sigma_\nu$ is the family of $\mathfrak{L}_{\infty,\omega}$-sentences for $\mathcal{L}$
realized in the structure $\mathcal{M}$.
\end{itemize}
\end{definition}

\begin{warning}\label{war:1str}
In certain circumstances occurring below, 
when the interpretation of a certain relation symbol $R$ in $\mathcal{R}$ is clear, 
we consider as an $\mathcal{L}$-structure also one where the interpretation of $R$ is not explicitly given (because the information conveyed on the intepretations of the other symbols suffices to determine the natural interpretation of $R$).

The same occurs for generic assignments $\nu$, we will consider just the assignment restricted to the constants 
(and eventually to some predicates), the assignment $\nu$ on other predicates will be clear from the context.

This is a standard practice (consider for example the omission of the interpretation for the relation symbol $=$ in the usal 
notation for first order structures).
\end{warning}

The following definition encapsulates the key provisions of a \emph{consistency property} generated by a class 
of structures according to \cite{KEIINFLOG}, and gives a powerful tool to produce ``generic'' models omitting a prescribed family of types.

\begin{definition}\label{def:conspropforcing}
Given a family of generic assignments $\mathcal{A}$ for a signature $\mathcal{L}=\mathcal{S}\cup\mathcal{R}\cup\mathcal{K}$:
\begin{itemize}
\item
The forcing $P_{\mathcal{A}}$ consists of all 
those finite sets of atomic or negated atomic $\mathcal{L}$-sentences which are contained in some 
$\Sigma_\nu$ with $\nu\in\mathcal{A}$.
\item
The order on $P_{\mathcal{A}}$ is given by reverse inclusion.
\item
A set of $\mathcal{L}$-sentences $\Sigma$ is $\mathcal{A}$-consistent  if 
$p\in P_{\mathcal{A}}$ for every finite subset $p$ of $\Sigma$.
\item
Given a maximally $\mathcal{A}$-consistent set of sentences $\Sigma$, the term model $\mathcal{M}_\Sigma$ for $\mathcal{L}$
is defined as follows:
\begin{itemize}
\item $[c]_\Sigma=\bp{d\in\mathcal{K}: (c=d)\in \Sigma}$;
\item $s^\Sigma=\bp{[c]_\Sigma: c\text{ of sort }s}$;
\item $R^\Sigma([c_1]_\Sigma,\dots,[c_n]_\Sigma)$ if and only if $R(c_1,\dots,c_n)\in \Sigma$.
\end{itemize}
The canonical assignment $\nu_\Sigma$ maps $c$ to $[c]_\Sigma$, $R$ to $R^\Sigma$.
\end{itemize}
\end{definition}

Note that a  filter $H$ on $P_{\mathcal{A}}$ is maximal if and only if $\Sigma_H=\cup H$ is maximally $\mathcal{A}$-consistent.
It is also clear that $\mathcal{A}$-consistent sets are finitely satisfiable, hence consistent in the ordinary sense.

\begin{lemma}\label{lem:keyconspropforcing}
Given a family of generic assignments $\mathcal{A}$ for a signature $\mathcal{L}=\mathcal{S}\cup\mathcal{R}\cup\mathcal{K}$, the following holds for the forcing
$P_{\mathcal{A}}$:
\begin{enumerate}
\item $\mathcal{M}_\Sigma$ is a well defined $\mathcal{L}$-structure for all maximally $\mathcal{A}$-consistent $\Sigma$.
\item 
Assume $H$ is $V$-generic for $P_{\mathcal{A}}$, $\Sigma_H=\cup H$.

Then for any $\mathcal{L}$-sentence 
\[
\bigwedge_{i\in I}\bigvee_{j\in J_i}\psi_{ij}
\]	 
of type $\bigwedge\bigvee$, TFAE:
\begin{enumerate}[(A)]
\item 
$\mathcal{M}_{\Sigma_H}\models\bigwedge_{i\in I}\bigvee_{j\in J_i}\psi_{ij}$.
\item
For all $i\in I$ and some $p\in H$,
$D_i$
is dense below $p$,
where
\[
D_i=\bp{q\in P_{\mathcal{A}}: \psi_{ij}\in q\text{ for some $j\in J_i$}}.
\]
\end{enumerate}
\end{enumerate}
\end{lemma}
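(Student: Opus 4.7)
The plan is to first verify part (1) by extracting the standard equality-axiom consequences from maximal $\mathcal{A}$-consistency, then prove the auxiliary fact that for atomic or negated atomic $\psi$ membership in $\Sigma$ coincides with satisfaction in $\mathcal{M}_\Sigma$. Part (2) then follows from this together with genericity on one side and downward closure of the $D_i$ on the other.

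For part (1), fix a maximally $\mathcal{A}$-consistent $\Sigma$. The key observation is that any finite set of atomic/negated-atomic sentences which is unrealizable in every $\mathcal{L}$-structure cannot belong to $P_{\mathcal{A}}$, and hence (by maximality and the fact that $\Sigma$ decides every atomic sentence) cannot be a subset of $\Sigma$. Applying this to $\{\neg(c=c)\}$, $\{(c=d),\neg(d=c)\}$, $\{(c=d),(d=e),\neg(c=e)\}$, and $\{R(c_1,\dots,c_n),(c_1=d_1),\dots,(c_n=d_n),\neg R(d_1,\dots,d_n)\}$, one obtains reflexivity, symmetry, transitivity of $=$, and the Leibniz rule for each $R\in\mathcal{R}$ of the appropriate type. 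Thus $\sim_\Sigma$ (defined by $c\sim_\Sigma d\iff (c=d)\in\Sigma$) is an equivalence relation respected by the interpretation of every relation symbol, and the sort-tagging of the constants in $\mathcal{K}$ forces each $s^\Sigma$ to be exactly the set of $\sim_\Sigma$-classes of constants of sort $s$. Hence $\mathcal{M}_\Sigma$ is a well-defined $\mathcal{L}$-structure.

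The auxiliary fact to record is that for any atomic or negated atomic $\mathcal{L}$-sentence $\psi$,
\[
\mathcal{M}_\Sigma \models \psi \iff \psi \in \Sigma.
\]
For atomic $\psi$ of the form $R(c_1,\dots,c_n)$ or $(c=d)$ this is immediate from the definitions of $R^\Sigma$ and $[\cdot]_\Sigma$ together with the well-definedness proved above. For $\psi=\neg\phi$ one combines consistency (ruling out having both $\phi$ and $\neg\phi$ in $\Sigma$) with maximality (so either $\phi\in\Sigma$ or $\neg\phi\in\Sigma$) to conclude $\psi\in\Sigma\iff\phi\notin\Sigma\iff\mathcal{M}_\Sigma\not\models\phi\iff\mathcal{M}_\Sigma\models\psi$.

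For part (2), assume (B) and fix $i\in I$. Since $D_i$ is dense below some $p\in H$ and $p\in H$, genericity of $H$ produces $q\in H\cap D_i$; then $\psi_{ij}\in q\subseteq\Sigma_H$ for some $j\in J_i$, and the auxiliary fact gives $\mathcal{M}_{\Sigma_H}\models\psi_{ij}$, whence $\mathcal{M}_{\Sigma_H}\models\bigvee_{j\in J_i}\psi_{ij}$; conjoining over $i$ yields (A). Conversely, assume (A) and fix $i$; choose $j\in J_i$ with $\mathcal{M}_{\Sigma_H}\models\psi_{ij}$, so by the auxiliary fact $\psi_{ij}\in\Sigma_H=\bigcup H$, hence $\psi_{ij}\in p$ for some $p\in H$. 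Then $p\in D_i$, and since $D_i$ is closed under supersets of its members (if $q\supseteq p$ and $\psi_{ij}\in p$ then $\psi_{ij}\in q$, i.e.\ $D_i$ is downward closed in the forcing order), $D_i$ is trivially dense below $p$. The only subtle point in the whole argument is the auxiliary equivalence, which in turn rests entirely on the verifications made in part (1); everything else is a routine application of genericity and the basic combinatorics of finite forcing conditions.
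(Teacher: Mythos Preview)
Your proof is correct. Part~(1) and the direction (B)$\Rightarrow$(A) of part~(2) are essentially identical to the paper's argument (the paper also reduces to the inconsistency of the same finite sets and uses genericity in the same way).

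For (A)$\Rightarrow$(B) you take a more elementary route than the paper. The paper invokes the forcing relation: it finds a single $p\in H$ forcing $\mathcal{M}_{\dot H}\models\bigwedge_i\bigvee_j\psi_{ij}$, unpacks this via the forcing clause for bounded quantifiers to get that $D^*_i=\{q:\exists j\,(q\Vdash\mathcal{M}_{\dot H}\models\psi_{ij})\}$ is dense below $p$ for every $i$, and then passes from $D^*_i$ to $D_i$. This yields the uniform conclusion that one $p$ works for all $i$ simultaneously. Your argument instead reads off, for each $i$ separately, a $j$ with $\psi_{ij}\in\Sigma_H$, picks $p_i\in H$ containing $\psi_{ij}$, and notes $D_i$ is (trivially) dense below $p_i$; so your $p$ depends on $i$. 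Under the natural $\forall i\,\exists p$ reading of the statement of (B) this is exactly what is required, and since the two quantifier orders are in any case equivalent (each being equivalent to (A)), nothing is lost. What you gain is a proof that avoids the forcing machinery entirely and rests only on your auxiliary equivalence $\mathcal{M}_\Sigma\models\psi\iff\psi\in\Sigma$ for literals; what the paper's approach buys is the slightly sharper uniform-$p$ conclusion, which however is not needed in any of the later applications.
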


\begin{notation}
Given a maximal filter $H$ on $P_\mathcal{A}$ denote
$\mathcal{M}_{\Sigma_H}$ by $\mathcal{M}_{H}$, $[c]_{\Sigma_H}$ by $[c]_H$, $R^{\Sigma_H}$ by $R^H$, 
$s^{\Sigma_H}$ by $s^H$,
$\nu_{\Sigma_H}$ by $\nu_H$.

$\mathcal{M}_{\dot{H}}$,  $\nu_{\dot{H}}$, $R^{\dot{H}}$,  $s^{\dot{H}}$, $[-]_{\dot{H}}$ denote the canonical $P_{\mathcal{A}}$-names for the
corresponding objects determined by the $V$-generic filter $H$ for  $P_{\mathcal{A}}$.
\end{notation}

\begin{proof}
\emph{}

\begin{enumerate}
\item
It is clear that $[-]_\Sigma$ is an equivalence relation, and also that $R^\Sigma$ is independent of the representative chosen in 
$[-]_\Sigma$. We spell out some details:
\begin{description}
\item[$\qp{-}_\Sigma$ is an equivalence relation]
Assume $(c=d),(d=e)\in \Sigma_H$. 
The sets $\bp{(c=d),(d=e),\neg(c=e)}$, $\bp{(c=d),\neg(d=c)}$, $\bp{\neg(c=c)}$ are not even consistent, 
therefore any maximally $\mathcal{A}$-consistent set (henceforth consistent) 
containing $(c=d),(d=e)$ must also contain $(c=e)$, $(d=c)$, $(c=c)$.
\item[$R^\Sigma$ is well defined] Assume $[c_i]_\Sigma=[d_i]_\Sigma$ for $i=1,\dots,n$ then:
\begin{quote}
\[
R^\Sigma([c_1]_\Sigma,\dots,[c_n]_\Sigma)
\]
if and only if
\[
R(c_1,\dots,c_n)\in\Sigma
\]
if and only if {\tiny (since $\bp{(c_1=d_1),\dots,(c_n=d_n), R(c_1,\dots,c_n),\neg R(d_1,\dots,d_n)}$ is not even consistent)}
\[
R(d_1,\dots,d_n)\in\Sigma.
\]
\end{quote}
\end{description}

\item 
Assume $H$ is $V$-generic for $P_{\mathcal{A}}$.

\smallskip

One direction is clear:
if for some $p\in H$ 
$D_i$ is predense below $p$ for all $i\in I$, then $H\cap D_i\neq\emptyset$ for all $i\in I$, hence
for all $i$ there is $j$ such that $\psi_{ij}\in\Sigma_H$.
This immediately yields that 
\[
\mathcal{M}_H\models\bigwedge_{i\in I}\bigvee_{j\in J_i}\psi_{ij}.
\]

\smallskip

For the converse assume 
\[
\mathcal{M}_H\models\bigwedge_{i\in I}\bigvee_{j\in J_i}\psi_{ij}.
\]
Then some $p\in H$ is such that
\[
p\Vdash_{P_{\mathcal{A}}} \gp{\mathcal{M}_{\dot{H}}\models\bigwedge_{i\in I}\bigvee_{j\in J_i}\psi_{ij}}.
\]
By definition (since $Z=\bp{(i,j):i\in I,j\in J}$ is in $V$ and so is $\bigwedge_{i\in I}\bigvee_{j\in J_i}\psi_{ij}$)
\[
p\Vdash_{P_{\mathcal{A}}} \forall \tau\in \check{I}\exists\sigma\in \check{J}_i
\gp{\mathcal{M}_{\dot{H}}\models\psi_{\tau\sigma}}.
\]
By definition of the forcing relation (using the fact that $\check{Z}_H=Z$  for all $V$-generic filters $H$, 
and the forcing clauses for existential formulae), 
this holds if and only if
\[
D^*_i=\bp{q\in P: \exists j \, (q\Vdash \gp{\mathcal{M}_{\dot{H}}\models\psi_{\check{i}\check{j}}})}
\]
is open dense below $p$ for all $i\in I$.

Now if $q\Vdash \gp{\mathcal{M}_{\dot{H}}\models\psi_{\check{i}\check{j}}}$ and $H$ is $V$-generic for $P_{\mathcal{A}}$ with $q\in H$, 
we get that
$\psi_{ij}\in \Sigma_H$, therefore $q\cup\bp{\psi_{ij}}\in H$.

Hence we conclude that 
\[
D_i=\bp{q\in P: \exists j \, (\psi_{ij}\in q)}
\]
is dense below $p$ for all $i\in I$.
\end{enumerate}
\end{proof}

\begin{remark}
Note that the Lemma holds in this generality just for sentences of $\bigwedge\bigvee$-type.
For example consider a sentence
\[
\bigvee_{j\in J}\bigwedge_{i\in I_j}\psi_{ij}
\]
Running the proof as before for this sentence we get to the conclusion that
\[
\mathcal{M}_H\models\bigvee_{j\in J}\bigwedge_{i\in I_j}\psi_{ij}
\]
if and only if for some $j\in J$ and some $q\in H$
\[
q\Vdash \gp{\mathcal{M}_{\dot{H}}\models\bigwedge_{i\in I_j}\psi_{ij}}.
\]
Now this is in general false because typically a condition $q\in P_{\mathcal{A}}$ 
can decide only finitely many atomic $\mathcal{L}$-sentences (the unique effective method to check whether 
$q$ forces 
the truth in $\mathcal{M}_{\dot{H}}$ of an atomic $\mathcal{L}$-sentence is to check that it belongs to $q$).
On the other hand the above conditions require $q$ to decide 
the infinitely many atomic $\mathcal{L}$-sentences
$\psi_{ij}$ for all $i\in I_j$.
\end{remark}

\begin{definition}\label{def:SigmaAconsprop}
Given a family of generic assignments $\mathcal{A}$, we let $\Sigma_{\mathcal{A}}$ denote the family of
$\mathcal{L}$-sentences $\psi$ in $V$ of type $\bigwedge\bigvee$ such that 
\[
\emptyset\Vdash_{P_{\mathcal{A}}} \gp{\mathcal{M}_{\dot{H}}\models\psi}.
\]
\end{definition}

The following observation is very useful:
\begin{fact}\label{fac:SigmaAconsprop}
Assume $p\in P_{\mathcal{A}}$ and $\psi:=\bigwedge_{i\in I}\bigvee_{j\in J_i}\psi_{ij}$ is a sentence of 
$\bigwedge\bigvee$-type such that any $\mathcal{A}$-assignment $\nu$ 
with $p\subseteq\Sigma_\nu$ has $\psi$ in $\Sigma_\nu$. Then $\mathcal{M}_H\models\psi$ 
for any $H$ $V$-generic for $p$.

In particular if a family of $\mathcal{L}$-assignments 
$\mathcal{A}$ is axiomatized by a standard $\mathcal{L}$-theory $\Sigma$, $\mathcal{M}_H\models\Sigma$
for any $H$ $V$-generic for $\Sigma$.
\end{fact}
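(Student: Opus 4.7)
The plan is to apply Lemma \ref{lem:keyconspropforcing}(2), so everything reduces to a density argument for the sets
\[
D_i=\bp{q\in P_{\mathcal{A}}: \psi_{ij}\in q\text{ for some $j\in J_i$}},\quad i\in I.
\]
Fix $i\in I$ and an arbitrary extension $q\supseteq p$ with $q\in P_{\mathcal{A}}$. By definition of $P_{\mathcal{A}}$, pick an $\mathcal{A}$-assignment $\nu$ with $q\subseteq\Sigma_\nu$. Since $p\subseteq q\subseteq\Sigma_\nu$, the hypothesis on $\psi$ yields $\psi\in\Sigma_\nu$, which means that the $\mathcal{L}$-structure $\mathcal{M}$ determined by $\nu$ satisfies $\psi$, and in particular satisfies $\bigvee_{j\in J_i}\psi_{ij}$. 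Choose $j\in J_i$ with $\psi_{ij}\in\Sigma_\nu$; then $q\cup\bp{\psi_{ij}}$ is a finite subset of $\Sigma_\nu$, hence a condition in $P_{\mathcal{A}}$, and it extends $q$ while landing in $D_i$. This verifies the density hypothesis of Lemma \ref{lem:keyconspropforcing}(2) uniformly below $p$, so for any $V$-generic $H\ni p$ we conclude $\mathcal{M}_H\models\psi$.

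For the ``in particular'' clause, suppose $\mathcal{A}$ is axiomatized by a standard $\mathcal{L}$-theory $\Sigma$, so $\Sigma\subseteq\Sigma_\nu$ for every $\nu\in\mathcal{A}$. It suffices to show $\mathcal{M}_H\models\sigma$ for each $\sigma\in\Sigma$ by applying the first part with $p=\emptyset$, provided $\sigma$ is of $\bigwedge\bigvee$-type. The equality axioms of Def. \ref{def:standtheory} are already single disjunctions of atomic or negated atomic formulae, hence trivially of $\bigwedge\bigvee$-type. For the quantifier-elimination axioms, one first observes that the axiom $\forall x\bigvee_{j\in K_s}(x=c^s_j)$ is, in $\mathfrak{L}_{\infty,\omega}$ with all elements named, equivalent to the $\bigwedge\bigvee$-sentence $\bigwedge_{c\text{ of sort }s}\bigvee_{j\in K_s}(c=c^s_j)$; applying the first part to this gives that in $\mathcal{M}_H$ every element of sort $s$ is $[c^s_j]_H$ for some $j$. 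Combined with the substitution equality axioms (already handled), this forces the remaining schema $\bigvee_{j}\psi(c^s_j)\leftrightarrow\exists x\psi(x)$ to hold in $\mathcal{M}_H$ as a tautology of the term-model.

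The main obstacle is purely conceptual: confirming that the quantifier-elimination axioms, whose explicit form uses the quantifier $\exists x$, can legitimately be pushed through the $\bigwedge\bigvee$-restriction of Lemma \ref{lem:keyconspropforcing}. The point to keep in mind is that once the axiom $\bigwedge_c\bigvee_j(c=c^s_j)$ is verified in $\mathcal{M}_H$, every potential witness to $\exists x\psi(x)$ is already represented by some constant, so the residual content of $\bigvee_j\psi(c^s_j)\leftrightarrow\exists x\psi(x)$ is vacuous in the term model. Aside from this check, no genericity beyond what Lemma \ref{lem:keyconspropforcing}(2) provides is needed, and the argument is purely finitary at each condition.
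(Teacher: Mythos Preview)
Your proof is correct and follows the same approach as the paper: show that each $D_i$ is dense below $p$ and invoke Lemma~\ref{lem:keyconspropforcing}(2). The paper's proof is a single sentence recording exactly this density observation; your argument for the ``in particular'' clause supplies details the paper omits, but the reasoning is sound and in the intended spirit.
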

\begin{proof}
The sets 
\[
D_i=\bp{q\in P: \exists j \, (\psi_{ij}\in q)}
\]
required
to check $\mathcal{M}_H\models\psi$ are dense below $p$.
\end{proof}

\subsection{The forcing $P^*_0$}\label{subsec:P0}

First of all it is convenient to isolate the smallest transitive fragment $X_{(\mathcal{J},r,f,T)}$ of $V[G]$ having as elements 
all the objects 
required to define a semantic certificate $(\mathcal{J},r,f,T)$ existing in $V[G]$ and computing correctly all the properties of this 
semantic certificate we are interested in. 
Now $P^*_0$ will be obtained by gluing together finite pieces of information describing in an appropriate multi-sorted signature
the properties of 
the structures $(X_{(\mathcal{J},r,f,T)},\in)$.
In fact $P^*_0$ will be $P_{\mathcal{A}}$ for a given family $\mathcal{A}$ of generic assignment 
$\nu:\mathcal{L}^0\to X_{(\mathcal{J},r,f,T)}$ satisfying certain natural 
prescriptions.
$\mathcal{L}^0$ will contain the sorts, predicates and constants needed to encode
what could be true or false in a generic structure $(X_{(\mathcal{J},r,f,T)},\in)$.
A generic filter for $P^*_0$ will add a term model for $\mathcal{L}^0$.
By the result of the previous section it is to be expected that if things 
are organized properly  the term model defined by the generic filter for $P^*_0$
will be isomorphic to the multi-sorted presentation in signature $\mathcal{L}^0$ of a semantic certificate (to get this we just have to ensure that the $\mathcal{L}^0$-theory associated to 
semantic certificates is of $\bigwedge\bigvee$-type and then apply Fact \ref{fac:SigmaAconsprop}, with some extra care to grant well-foundedness of the generic term model). 
If $P^*_0$ were stationary set preserving, one would be (almost) done.
However this basic set-up is not yet sufficient to argue that $P^*_0$ is stationary set preserving, just to infer that generic filters for
$P^*_0$ add semantic certificates $(\mathcal{J},r,f,T)$ for $A,D$. For the moment however let us analyze in detail the 
forcing $P^*_0$ to get accustomed with the more intricate arguments of the next section where the
chain of forcings $\bp{P_\alpha:\alpha\in C\cup\bp{\kappa}}$ will be introduced, with each one adding a semantic certificate, and with 
$P_\kappa$ 
being also stationary set preserving.

\begin{definition}\label{def:modelMnu}
Given a semantic certificate $(\mathcal{J},r,f,T)$ for $A,D$ existing in some $V[K]$,
let
\[
\mathcal{J}=\bp{j_{\alpha\beta}:N_\alpha\to N_\beta:\alpha\leq\beta\leq\omega_1^V}.
\]

$X_{(\mathcal{J},r,f,T)}$ is the set
\[ 
(\bigcup_{i\leq\omega_1^V}N_\alpha)\cup
\bp{N_\alpha:\alpha\leq\omega_1^V}\cup\bp{j_{\alpha\beta}:\alpha\leq\beta\leq\omega_1^V}
\cup\bp{G_\alpha:\alpha<\omega_1}\cup\bp{r,f}.
\]

$\mathcal{X}_{(\mathcal{J},r,f,T)}$ is the multi sorted structure of sorts $N_\alpha:\alpha\leq\omega_1^V,\bp{r,f}$ and
binary relations $\in\restriction N_\alpha$ of type $(N_\alpha,N_\alpha)$, unary relations $G_\alpha$ of type $N_\alpha$,
binary relations $j_{\alpha\beta}$ of type $(N_\alpha,N_\beta)$, unary relation $T$ of type\footnote{Denoting the extension of $T$,
which is a subset of $N_{\omega_1}$.} $N_{\omega_1}$,
binary relation $\sqsubseteq^2\restriction T$ of type\footnote{
$(s,t)\sqsubseteq^2(u,v)$ if and only if $(s,t),(u,v)\in T$, $s\sqsubseteq u$, $t\sqsubseteq v$.} 
$(N_{\omega_1},N_{\omega_1})$, ternary relation $\text{br}_T$ of type\footnote{With $\text{br}_T(x,y,z)$ holding just if
$x=(s,t)\in T$,$y=r$, $z=f$ and $s\sqsubseteq r$, $t\sqsubseteq f$.} $(N_{\omega_1},\bp{r,f},\bp{r,f})$,
ternary relation $\Cod^*$ of type\footnote{Recall $\Cod(r)=(N_0,A\cap\omega_1^{N_0})$; $\Cod^*(x,y,z)$ holds if and only if $x\in N_0$  
$y=A\cap\omega_1^{N_0}$, $z=r$.} $(N_0,N_0,\bp{r,f})$, satisfaction predicates\footnote{To be represented by the pairs $\ap{\gp{\psi},(a_1,\dots,a_m)}$ with $\psi(x_1,\dots,x_m)$ an $\in$-formula 
in displayed free variables and $(a_1,\dots,a_m)\in N_\alpha^m$ such that
\[
(N_\alpha,\in)\models\psi(x_1,\dots,x_m)[x_1/a_1,\dots,x_m/a_m].
\]
}
$\Sat_{N_\alpha}$ of type $(\dot{N}_{\alpha},\dot{N}_{\alpha})$ for $\alpha\leq\omega_1$, 
\[
\ap{N_\alpha:\alpha\leq\omega_1^V,\,\bp{r,f};\,\in\restriction N_\alpha,\Sat_{N_\alpha}:\alpha\leq\omega_1^V,\, G_\alpha:\alpha<\omega_1^V,\, 
j_{\alpha\beta}:\alpha\leq\beta\leq\omega_1^V, \sqsubseteq^2\restriction T,\,\text{br}_T,\,\Cod^*}.
\] 
\end{definition}

\begin{remark}
\emph{}

\begin{itemize}
\item 
The list 
\[ 
\bp{N_\alpha:\alpha\leq\omega_1}\cup\bp{j_{\alpha\beta}:\alpha\leq\beta\leq\omega_1^V}\cup
\bp{G_\alpha,:\alpha<\omega_1}\cup\bp{r,f}
\]
gives the $\in$-maximal element of $(X_{(\mathcal{J},r,f,T)},\in)$ and the list
is not redundant, 
i.e any element in the above list does not belong to any of the others, and any $\in$-maximal element
of the graph of the $\in$-relation over $(X_{(\mathcal{J},r,f,T)},\in)$ is in the above list.
\item
In essence $X_{(\mathcal{J},r,f,T)}$ is a set containing all the elements and predicates of the multisorted structure 
$\mathcal{X}_{(\mathcal{J},r,f,T)}$ except a few objects which can be easily reconstructed from the remaining ones (e.g. the sort
$\bp{r,f}$, the predicates $\Sat_{N_\alpha}$, $\text{br}_T$, $\Cod^*$).
\item
$H_{\omega_2}^V,T$ are both subsets of $N_{\omega_1^V}$ (by Condition \ref{def:semcert0-5} for the weak semantic certificate for 
$A$ as witnessed by $T$).
\item Note that $X_{(\mathcal{J},r,f,T)}$ is not transitive: if $\ap{a,b}=\bp{\bp{a},\bp{a,b}}\in j_{\alpha\beta}$,
$\bp{a}\in N_\alpha$, but $\bp{a,b}\in N_\alpha\cup N_\beta$ may not be neither in $N_\alpha$, nor in $N_\beta$. 
However with the exception of the elements in the graph of $j_{\alpha\beta}$,
all other elements of $X_{(\mathcal{J},r,f,T)}$
are contained in some $N_\alpha$.

%
\item
$X_{(\mathcal{J},r,f,T)}$ has size bounded by $|\omega_2|^{V[K]}$.

\end{itemize}
\end{remark}

The key point is that (as we will see) the $\mathfrak{L}_{\infty,\omega}$-theory of the structures
$\mathcal{X}_{(\mathcal{J},r,f,T)}$ completely describes
the notion of semantic certificate up to type isomorphism.

Given a semantic certificate
$(\mathcal{J},r,f,T)$ for $A,D$ as witnessed by $T$ with
\[
\mathcal{J}=\bp{j_{\alpha\beta}:N_\alpha\to N_\beta:\, \alpha\leq\beta\leq\omega_1^V},
\]
$\mathcal{L}^0$ is in $V$ the multi-sorted signature with sorts,
constants, and predicate symbols to be interpreted by the relevant objects (or subsets ---  see Warning \ref{war:1str}) of 
$X_{(\mathcal{J},r,f,T)}$ which define the associated multi-sorted structure $\mathcal{X}_{(\mathcal{J},r,f,T)}$.

\begin{definition} \label{def:defL0}
$\mathcal{L}^0$ consists of the following sorts, constants, and predicate symbols:
\begin{description}
\item[Sorts]
\emph{}
\begin{enumerate}[(a)]
\item \label{def:defL0-a}
$\dot{N}_i$ to denote the extension of $N_i$ for each $i\leq\omega_1^V$,
\item  \label{def:defL0-b}
$\dot{\bp{r,f}}$ to denote the set given by the elements of the selected branch $(r,f)$ of $T$.
\end{enumerate}

\item[Constants]
\emph{}

\begin{enumerate}[(a)]
\setcounter{enumi}{2}

\item \label{def:defL0-c}
$\check{x}$ of sort $\dot{N}_{\omega_1^V}$ for $x\in H_{\omega_2}$ to denote the elements
of $H_{\omega_2}$.

\item \label{def:defL0-d}
symbols $\bp{\dot{c}_{n,i}:n\in\omega}$ of sort $\dot{N}_i$ for $i<\omega_1^V$ 
needed to denote the elements of 
$N_i$ (but only for $i<\omega_1^V$);

\item \label{def:defL0-e}
symbols $\bp{\dot{c}_{j,\omega_1^V}:j<\omega_1^V}$ of sort 
$\dot{N}_{\omega_1^V}$;

\item \label{def:defL0-f}
symbols $\dot{r},\dot{f}$ of sort $\dot{\bp{r,f}}$ to be interpreted in some $V[G]$ 
by the selected branch $(r,f)$ through $T$ 
such that $\Cod(r)=(N_0,a_0)$.

\end{enumerate}

\item[Predicates]
\emph{}

\begin{enumerate}[(a)]
\setcounter{enumi}{6}

\item \label{def:defL0-g}
A unary predicate 
symbol symbol $\dot{T}$ of type $\dot{N}_{\omega_1}$
to denote the extension of the ($2$-dimensional) witnessing tree $T\subseteq N_{\omega_1}$;


\item \label{def:defL0-h}
Unary predicate 
symbols $\bp{\dot{G}_{j}:j<\omega_1}$ (each of type $\dot{N}_j$ for $j<\omega_1^V$) to denote the $N_\alpha$-generic filters
such that $N_{\alpha+1}$ is the ultrapower of $N_\alpha$ by $G_\alpha$;

\item \label{def:defL0-i}
Binary predicate symbols
$\dot{j}_{\alpha\beta}$ of type $(\dot{N}_\alpha,\dot{N}_\beta)$ to denote $j_{\alpha\beta}$ for 
$\alpha\leq\beta\leq\omega_1^V$;

\item \label{def:defL0-j}
A ternary predicate symbol $\text{br}_T$ of type $(\dot{N}_{\omega_1},\dot{\bp{r,f}},\dot{\bp{r,f}})$
to denote the initial segment relation between elements of $T$ and the branch $(r,f)$;

\item \label{def:defL0-jbis}
For each $\alpha\leq\omega_1^V$ a binary predicate $\in_{N_\alpha}$ of type $(\dot{N}_{\alpha},\dot{N}_{\alpha})$ to denote the $\in$-relation 
restricted to $N_\alpha$;

\item \label{def:defL0-k}
For each $\alpha\leq\omega_1^V$ 
a satisfaction predicate
\[
\Sat_{N_\alpha}(\gp{\psi(x_1,\dots,x_m)})
\] 
of type\footnote{It is clear that the interpretation of this predicate symbol will subsume the interpretation of 
$\in_{N_\alpha}$ by conisdering the formula $\Sat_{N_\alpha}(\gp{x\in y})$. However it is convenient (just for notational 
simplicity) to have a special symbol to denote $\in\restriction N_\alpha$.} $(\dot{N}_{\alpha},\dot{N}_{\alpha})$ 
to be represented by the pairs $\ap{\gp{\psi},(a_1,\dots,a_m)}$ with $\psi(x_1,\dots,x_m)$ an $\in$-formula 
in displayed free variables and $(a_1,\dots,a_m)\in N_\alpha^m$ such that
\[
(N_\alpha,\in)\models\psi(x_1,\dots,x_m)[x_1/a_1,\dots,x_m/a_m];
\]
\item \label{def:defL0-l}
A ternary predicate symbol $\Cod^*$ of type $(\dot{N}_0,\dot{N}_0,\dot{\bp{r,f}})$ to encode the fact that $\Cod(r)=(N_0,a)$ by holding
just of the triples $x,y,z$ with $y=A\cap\omega_1^{N_0}$ and $z=r$.
\end{enumerate}
\end{description}

\end{definition}

Note that $\mathcal{L}^0\in V$ has size $\omega_2$ in $V$.

\begin{definition}\label{def:keydefnu}
Let $(\mathcal{J},r,f,T)$ be a weak semantic certificate for $A,D$ with
\[
\mathcal{J}=\bp{j_{\alpha\beta}:N_\alpha\to N_\beta:\, \alpha\leq\beta\leq\omega_1^V}.
\] 
A map $\nu$ with domain $\mathcal{L}^0$
is an \emph{admissible interpretation} for $(\mathcal{J},r,f,T)$
if it respects the following constraints on the interpretations of constants and predicates:
\begin{description}
\item[Sorts]

\emph{}

\begin{enumerate}[(a)]

\item \label{def:keydefnu-a}
$\nu(\dot{N}_i)=N_i$ for each $i\leq\omega_1^V$.

\item \label{def:keydefnu-b}
$\nu(\dot{\bp{r,f}})=\bp{r,f}$.
\end{enumerate}
\item[Constant symbols]

\emph{}

\begin{enumerate}[(a)]
\setcounter{enumi}{2}

\item \label{def:keydefnu-c}
$\nu(\check{x})=x$
 for every $x\in H_{\omega_2}$.

\item \label{def:keydefnu-d}
$\nu[\bp{\dot{c}_{n,\alpha}:n\in\omega}]=N_\alpha$ and $\nu(c_{0,\alpha})=A\cap\omega_1^{N_\alpha}$ for all $\alpha<\omega_1^V$. 

\item \label{def:keydefnu-e}
$\nu[\bp{\dot{c}_{j,\omega_1^V}:j\in\omega_1^V}]=N_{\omega_1^V}$  and $\nu(c_{0,\omega_1^V})=A$.

\item \label{def:keydefnu-f}
$\nu(\dot{r})=r\in p[T]$,
$\nu(\dot{f})=f$ with $(r,f)$ a branch of $T$ and $\Cod(r)=(N_0,A\cap\omega_1^{N_0})$.
\end{enumerate}

\item[Predicates]
\emph{}

\begin{enumerate}[(a)]
\setcounter{enumi}{6}

\item \label{def:keydefnu-g}
$\nu(\dot{T})=T$.

\item \label{def:keydefnu-h}
$\nu(\dot{G}_\alpha)=G_\alpha$ for 
$\alpha<\omega_1^V$.

\item \label{def:keydefnu-i}
$\nu(\dot{j}_{\alpha\beta})=j_{\alpha\beta}$
for all $\alpha\leq\beta\leq\omega_1^V$.

\item \label{def:keydefnu-j}
$\nu(\text{br}_T)=\bp{\ap{(s,t),r,f}:\, s\sqsubseteq r, t\sqsubseteq f}$.

\item  \label{def:keydefnu-k}
$\nu(\Sat_{N_\alpha})$ as prescribed by 
\ref{def:defL0-k} of Def. \ref{def:defL0}.

\item \label{def:keydefnu-l}
$\nu(\Cod^*)=\bp{\ap{x,A\cap\omega_1^{N_0},r}:\, x\in N_0}$.

%
%
%
%

\end{enumerate}
\end{description}

\begin{warning}\label{warn:signXJrfT}
A key point is that once we fixed the structure $\mathcal{X}_{(\mathcal{J},r,f,T)}$
which is the target of $\nu$,
we have a certain freedom in changing the value of the assignments on constant symbols (for example for all those of the form
$\dot{c}_{n,\alpha}$ for $n>0$ and $\alpha\leq\omega_1^V$), 
while maintaining that the (so modified assignment) 
is still admissible;
but we have no freedom in changing the interpretation of the predicate symbols. 
In particular many of the constant symbols could also be considered sorted variables.
\end{warning}

\begin{itemize}
\item
Given an admissible assignment $\nu$,
$\Sigma_\nu$ is the set of $\mathfrak{L}_{\infty,\omega}$-sentences  for $\mathcal{L}^0$  realized in the structure
$\mathcal{X}_{(\mathcal{J},r,f,T)}$
by the assignment $\nu$.
\item
A set $\Sigma$ of $\mathfrak{L}_{\infty,\omega}$-sentences  for $\mathcal{L}^0$
is \emph{generically consistent} if in some generic extension $V[G]$ of $V$
there is a semantic certificate $(\mathcal{J},r,f,T)$ for $A,D$, 
and an admissible assignment $\nu:\mathcal{L}^0\to X_{(\mathcal{J},r,f,T)}$
such that $\Sigma\subseteq\Sigma_\nu$.
\end{itemize}
\end{definition}

The first key observation is that the $\mathcal{L}^0$-isomorphism types of a semantic certificate define an elementary class for 
$\mathfrak{L}_{\infty,\omega}$ which is axiomatized by a \emph{standard theory} (cfr. Def. \ref{def:standtheory}).

To accomodate our $\mathfrak{L}_{\infty,\omega}$-axiomatization of the notion of semantic certificate it is convenient to detail more the nature
of the map $\Cod:2^{\omega}\to H_{\omega_1}$.
\begin{convention}\label{conv:propCod}
We assume the following properties of $\Cod$ in case $\Cod(r)=(N,a)\in \Pmax$:
\begin{itemize} 
\item
$r$ is identified (modulo a recursive bijection $n\mapsto\ap{k_n,j_n}$ of $\omega$ with $\omega^2$)
with a well-founded extensional relation $E_r$ with domain $\omega$;
\item
the Mostowski collapsing map $\pi$ of the structure $(\omega,E_r)$ has as range $\trcl{\bp{\ap{N,a}}}$ and maps 
$0$ to $N$, $1$ to $\bp{N}$, $2$ to $\bp{N,a}$, $3$ to $\ap{N,a}$, $4$ to $\bp{\ap{N,a}}$, $5$ to $a$.
\end{itemize}
\end{convention}

\begin{fact}\label{fac:keyfacSEMCERTP0}
Let $\Sigma^0$ be the  $\mathcal{L}^0$ theory for $\mathfrak{L}_{\infty,\omega}$
given by the $\mathcal{L}^0$-equality axioms and the 
$\mathcal{L}^0$-quantifier elimination axioms (cfr. Def. \ref{def:standtheory}),
and the following list of axioms:
\begin{enumerate}[(I)]
\item \label{fac:keyfacSEMCERTP0-V}
Axioms to characterize the extensions of $\dot{T}$ and of $H_{\omega_2}^V$ inside $\dot{N}_{\omega_1}$:
\begin{enumerate}
\item \label{fac:keyfacSEMCERTP0-2-V}
For all $x,y\in H_{\omega_2}$ 
\[
(\check{x}\in \check{y})\text{ if $x\in y$,}
\] 
\[
(\check{x}\not\in \check{y}) \text{ if $x\not\in y$.} 
\]
\item \label{fac:keyfacSEMCERTP0-17-VJ}
The infinitary conjunction of infinitary disjunctions
\[
\bigwedge_{x\in H_{\omega_2}}(\bigvee_{j<\omega_1^V}\check{x}=\dot{c}_{j,\omega_1}).
\]

\item \label{fac:keyfacSEMCERTP0-3-T}
For all constant symbols $c$ of sort $\dot{N}_{\omega_1^V}$ 
\[
\dot{T}(c)\leftrightarrow\bigvee_{(s,t)\in T}(c=\check{\ap{s,t}}) 
\]
\end{enumerate}

\item \label{fac:keyfacSEMCERTP0-J}
The following axioms to describe the properties of an iteration $\mathcal{J}$:
\begin{enumerate}
\item \label{fac:keyfacSEMCERTP0-7-J}
For all $\alpha\leq \beta\leq\omega_1^V$
\[
\bigwedge_{n<\omega}\bigvee_{j<\omega_1^V}[\dot{j}_{\alpha\omega_1^V}(\dot{c}_{n,\alpha})=\dot{c}_{j,\omega_1^V}],
\]
\[
\bigwedge_{n<\omega}\bigvee_{m<\omega}(\dot{j}_0(\dot{c}_{n,\alpha})=\dot{c}_{m,\beta}).
\]
\item \label{fac:keyfacSEMCERTP0-11-J}
For all $\alpha<\beta\leq\omega_1^V$ and constant symbols $\dot{c}_{n,\alpha},\dot{c}_{m,\beta}$ for $n,m<\omega$
\[
\qp{\dot{G}_\alpha(\dot{c}_{n,\alpha})\wedge \dot{j}_{\alpha\beta}(\dot{c}_{n,\alpha})=\dot{c}_{m,\beta})}\leftrightarrow
\qp{\Sat_{N_\beta}(\gp{\omega_1\in \dot{c}_{m,\beta}})\wedge 
\Sat_{N_\alpha}(\gp{\dot{c}_{n,\alpha}\subseteq\omega_1})}.
\] 
\item \label{fac:keyfacSEMCERTP0-11-Jbis}
For all $\alpha<\beta\leq\omega_1^V$ and constant symbols $\dot{c}_{m,\beta}$ for\footnote{This sentence states that 
$j_{\alpha\beta}[N_\alpha]=
\bp{j_{\alpha\beta}(f)(\omega_1^{N_\alpha}): f\in N_\alpha, \dom{f}=\omega_1^{N_\alpha}}$.} 
$m<\omega$
\[
\bigvee_{n,k<\omega}\qp{(\dot{c}_{k,\beta}=\dot{j}_{\alpha\beta}(c_{n,\alpha}))\wedge\Sat_{N_\beta}(\gp{\dot{c}_{k,\beta}(\omega_1)=c_{m,\beta}})}.
\]
\item \label{fac:keyfacSEMCERTP0-12-J}
For all $\alpha<\omega_1^V$
\[
\bigwedge_{n<\omega}
\Sat_{N_\alpha}(\gp{\dot{c}_{n,\alpha}\text{ is a dense subset of }\pow{\omega_1}/_{\NS_{\omega_1}}})
\rightarrow \bigvee_{m<\omega}\,(\dot{G}_\alpha(\dot{c}_{m,\alpha})\wedge \dot{c}_{m,\alpha}\in\dot{c}_{n,\alpha}).
\]
\end{enumerate}

\item \label{fac:keyfacSEMCERTP0-TJ}
The following sentences describing the properties an iteration $\mathcal{J}$, a real $r$ and an $f:\omega\to\kappa$
must satisfy to grant that $(\mathcal{J},r,f,T)$ is a semantic certificate:
\begin{enumerate}

\item \label{fac:keyfacSEMCERTP0-14-TJ}
The sentence
\[
\bigwedge_{n<\omega}
\bigvee_{(s,t)\in T\cap (2\times\omega_2^V)^n} \text{br}_T(\ap{s,t},\dot{r},\dot{f})
\]
\[
\bigwedge_{(s,t),(u,v)\in T} 
\qp{\text{br}_T(\check{\ap{s,t}},\dot{r},\dot{f})\wedge\text{br}_T(\check{\ap{u,v}},\dot{r},\dot{f})}\rightarrow 
(\check{\ap{s,t}}\sqsubseteq^2\check{\ap{u,v}}\vee\check{\ap{u,v}}\sqsubseteq^2\check{\ap{s,t}})
\]
\item \label{fac:keyfacSEMCERTP0-16-TJ}
The axioms\footnote{These axioms formalize that $\Cod(r)=(N_0,a_0)$ by imposing that $(N_0,\in)$ is isomorphic to
$E_r\restriction[5;\infty)$ in accordance with notation \ref{conv:propCod}, and that $c_{0,0}$ is assigned to $A\cap\omega_1^{N_0}$.}
\[
\bigwedge_{n<\omega}\Cod^*(c_{n,0},c_{0,0},\dot{r}),
\]
\[
\bigwedge_{n\in\omega,k_n,j_n\geq 5,\ap{s,t}\in T} (\text{br}_T(\check{\ap{s,t}},\dot{r},\dot{f})\wedge \check{s(n)}=\check{1})\rightarrow
\Sat_{N_0}(\gp{c_{k_n-5,0}\in c_{j_n-5,0}}),
\]
\[
\bigwedge_{n\in\omega,k_n,j_n\geq 5,\ap{s,t}\in T} (\text{br}_T(\check{\ap{s,t}},\dot{r},\dot{f})\wedge \check{s(n)}=\check{0})\rightarrow
\Sat_{N_0}(\gp{c_{k_n-5,0}\not\in c_{j_n-5,0}}).
\]
\item \label{fac:keyfacSEMCERTP0-10-J}
The sentence
\[
\dot{j}_{0\omega_1^V}(\check{c}_{0,0})=A.
\]

\item \label{fac:keyfacSEMCERTP0-1-V}
For\footnote{This axioms grants that $(H_{\omega_2},\in)$ sits inside $(N_{\omega_1}^V,\in)$ as a transitive subclass.} 
all constant symbols $c$ of sort $\dot{N}_{\omega_1}$ and $y\in H_{\omega_2}$
\[
c\in \check{y}\leftrightarrow\bigvee_{x\in y}\check{x}=c.
\]

\item \label{fac:keyfacSEMCERTP0-18-VJ}
The infinitary conjunction 
\[
\bigwedge_{S\in(\pow{\omega_1}\setminus \NS_{\omega_1})^V} 
\Sat_{N_{\omega_1^V}}(\gp{\dot{S}\text{ is stationary}}).
\] 
\end{enumerate}
\end{enumerate}

Then:
\begin{enumerate}
\item
$\Sigma^0$ is standard (cfr. Def. \ref{def:standtheory}).
\item 
$\Sigma^0$ is realized by any admissible assignement $\nu$.
\item 
Whenever
$\mathcal{N}=\ap{(\mathcal{S}^0)^\mathcal{N},(\mathcal{R}^0)^\mathcal{N},(\mathcal{K}^0)^\mathcal{N}}$ is a 
$\mathcal{L}^0$-model of 
$\Sigma^0$ existing in some transitive model $W$, we have that:
\begin{itemize}
\item
 The domain of each sort of $(\mathcal{S}^0)^\mathcal{N}$ has as extension the 
interpretations of the constant symbols of $\mathcal{K}^0$ of that sort.
\item
The sort $\dot{N}_\alpha$ of $\mathcal{N}$ is well founded and extensional for all $\alpha\leq\omega_1^V$.
\item 
The transitive collapse of the sorts $\dot{N}_\alpha$ of $\mathcal{N}$ define 
a semantic certificate $(\mathcal{J},r,f,T)$ for $A,D$ existing in $W$ with $r(n)=i$ and $f(n)=j$ if and only if for some
$(s,t)\in T$ with $s(n)=i$ and $f(n)=j$, $\text{br}_T(\check{\ap{s,t}},r,f)$.
\end{itemize}
\end{enumerate}
\end{fact}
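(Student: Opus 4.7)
The plan is to dispatch the three conclusions in order, with the bulk of the work residing in (3). For (1) I would simply check that $\Sigma^0$ is in the scope of Definition \ref{def:standtheory}: the $\mathcal{L}^0$-equality axioms and the quantifier elimination axioms are explicitly built in, and all remaining axioms are sentences of $\bigwedge\bigvee$-type, so $\Sigma^0$ is standard (indeed of $\bigwedge\bigvee$-type). For (2), I would walk through each axiom family and confront it with the constraints from Definition \ref{def:keydefnu}. The $H_{\omega_2}$-axioms in group (\ref{fac:keyfacSEMCERTP0-V}) hold because $\nu(\check{x}) = x$ for $x \in H_{\omega_2}$ and $H_{\omega_2}^V \subseteq N_{\omega_1}$ is enumerated by $\{\dot{c}_{j,\omega_1}:j < \omega_1^V\}$; the $\mathcal{J}$-axioms in (\ref{fac:keyfacSEMCERTP0-J}) record the defining properties of a generic ultrapower iteration (surjectivity of $j_{\alpha\beta}$ onto the listed constants, Łoś' theorem translated through $\Sat_{N_\alpha}$, genericity of $G_\alpha$ over the $\NS_{\omega_1}$-algebra); and the axioms in (\ref{fac:keyfacSEMCERTP0-TJ}) directly capture that $(r,f)$ is a branch of $T$, the convention \ref{conv:propCod} for $\Cod(r)=(N_0,a_0)$, the condition $j_{0\omega_1}(a_0) = A$, the inclusion $H_{\omega_2}^V \subseteq N_{\omega_1^V}$ as a transitive subclass, and the stationarity-preservation half of NS-correctness.

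The substance lies in (3). Let $\mathcal{N}$ be a model of $\Sigma^0$ in a transitive $W$. The axiom $\forall x \bigvee_{j} (x = \dot{c}^s_j)$ of standardness immediately forces the domain of each sort to coincide with the interpretations of its constants; in particular for $\alpha < \omega_1^V$ the sort $\dot{N}_\alpha^{\mathcal{N}}$ is countable in $W$. For well-foundedness and extensionality at $\alpha = 0$, I would use that axioms (\ref{fac:keyfacSEMCERTP0-14-TJ}) and (\ref{fac:keyfacSEMCERTP0-16-TJ}) exactly pin down the $\in_{N_0}^{\mathcal{N}}$-diagram of the constants $\dot{c}_{n,0}^{\mathcal{N}}$ to be isomorphic to $(\omega \setminus \{0,1,2,3,4\}, E_r)$, where $r$ is the real obtained from the branch selected by $\mathrm{br}_T$. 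Since $T$ is a witnessing tree, $r \in p[T]^W \subseteq B^W$, so $r \in \WFE$ and the associated relation is well-founded extensional; hence $\dot{N}_0^{\mathcal{N}}$ collapses to an iterable $P_{\max}$-condition $(N_0, a_0)$ in $W$ with $\Cod(r) = (N_0, a_0)$.

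For $\alpha \geq 1$ I would argue by induction using the genericity and ultrapower axioms (\ref{fac:keyfacSEMCERTP0-11-J}), (\ref{fac:keyfacSEMCERTP0-11-Jbis}), (\ref{fac:keyfacSEMCERTP0-12-J}): at successor stages $\Sat_{N_\alpha}$ encodes that $\NS_{\omega_1}$ is precipitous in the collapse $N_\alpha$, and $G_\alpha$ collapses to a fully generic filter, so the ultrapower is well-founded; at limit stages below $\omega_1^V$ one invokes Larson's fact that iterations of iterable structures up to length $\omega_1$ are well-founded, applied in $W$ (where the iteration has some well-defined length which is either $<\omega_1^W$ or itself in a canonical direct-limit situation). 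Once every $\dot{N}_\alpha$ is well-founded and extensional, the transitive collapses and the collapsed $\dot{j}_{\alpha\beta}$ assemble into an iteration $\mathcal{J}$; the pair $(r,f)$ read off from $\mathrm{br}_T$ is a branch of $T$ (by axiom (\ref{fac:keyfacSEMCERTP0-14-TJ})); axiom (\ref{fac:keyfacSEMCERTP0-10-J}) gives $j_{0\omega_1}(a_0) = A$; axioms (\ref{fac:keyfacSEMCERTP0-1-V})–(\ref{fac:keyfacSEMCERTP0-17-VJ}) together yield $H_{\omega_2}^V \subseteq N_{\omega_1^V}$ as a transitive subclass; and the $\NS$-correctness identity $\NS_{\omega_1}^V = \NS_{\omega_1}^{N_{\omega_1^V}} \cap \mathcal{P}(\omega_1)^V$ follows from the precipitousness of $\NS_{\omega_1}$ in $V$ combined with axiom (\ref{fac:keyfacSEMCERTP0-18-VJ}).

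The hard part is the well-foundedness of $\dot{N}_{\omega_1^V}^{\mathcal{N}}$ at the final limit stage. Here the internal axioms do not on their own force well-foundedness: one must lean on the external fact that the collapse of $\dot{N}_0$ is an iterable $P_{\max}$-condition in $W$, and this rides entirely on the fact that $r \in p[T]^W \subseteq B^W$ by design of the witnessing tree $T$. This is exactly the pressure point where the large cardinal assumption underlying Fact \ref{fac:treeTforP0} is being cashed in, so the proof needs to quote this inclusion at the precise moment the limit step threatens to fail.
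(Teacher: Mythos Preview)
Your proposal is correct and tracks the paper's proof closely: parts (1) and (2) are left by the paper as routine checking, and for (3) the paper argues exactly as you do---quantifier elimination axioms force the sorts to be exhausted by constants, the $\Cod$-axioms pin down $(\dot{N}_0^{\mathcal{N}},\in_{N_0}^{\mathcal{N}})$ as an isomorphic copy of the well-founded relation coded by $r\in p[T]$, and then iterability of the collapse $N_0$ together with the iteration axioms (the paper cites \cite[Lemma~2.7]{HSTLARSON} rather than spelling out the induction) forces all higher sorts to collapse to the stages of the unique iteration determined by the $G_\alpha$'s.

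Two small corrections. First, your appeal to ``precipitousness of $\NS_{\omega_1}$ in $V$'' for the $\NS$-correctness identity is misplaced: the inclusion $\NS_{\omega_1}^V\subseteq\NS_{\omega_1}^{N_{\omega_1^V}}$ follows simply from $H_{\omega_2}^V\subseteq N_{\omega_1^V}$ (a club in $V$ witnessing non-stationarity lives in $N_{\omega_1^V}$), while the reverse inclusion is exactly axiom~(\ref{fac:keyfacSEMCERTP0-18-VJ}). Second, your singling out of stage $\omega_1^V$ as ``the hard part'' is slightly misleading: once $N_0$ is iterable in $W$, well-foundedness at \emph{every} stage up to $\omega_1^W$ (hence at $\omega_1^V$) is automatic by the basic iterability fact; the real pressure point, which you do identify correctly, is earlier---namely that $r\in p[T]^W\subseteq B^W$ is what certifies $N_0$ as a genuine $\Pmax$-condition, and this inclusion is only guaranteed when $W$ is a generic extension of $V$.
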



\begin{proof}
The proof that $\Sigma^0$ is standard and that 
$\Sigma^0\subseteq\Sigma_\nu$ for any admissible assignment $\nu$ is left to the reader.
It consists in the tyresome checking that all the above sentences are of $\bigwedge\bigvee$-type and are in $\Sigma_\nu$ for any 
admissible assignment $\nu$.

Let $W$ be a transitive model of $\ZFC$ and 
$\mathcal{N}$ be in $W$ a structure satisfying $\Sigma^0$.
The axiom for quantifier elimination grants that $\mathcal{N}$ has as domain of its sorts exactly 
the interpretation of the constant symbols of 
$\mathcal{L}^0$ of that sort.
The axioms of Type 
\ref{fac:keyfacSEMCERTP0-V} 
grant that 
\[
(\bp{\check{x}^{\mathcal{N}}: x\in H_{\omega_2}},\in_{N_{\omega_1^V}}^{\mathcal{N}})
\]
is isomorphic to $(H_{\omega_2},\in)$ and also that
\[
(\bp{c^{\mathcal{N}}: \mathcal{N}\models\dot{T}(c)},(\sqsubseteq^2)^{\mathcal{N}})
\]
is isomorphic to
\[
(T,\sqsubseteq^2).
\]
These axioms combined with axioms \ref{fac:keyfacSEMCERTP0-14-TJ}
grant that:
\begin{itemize}
\item
The unique infinite sequence $(r,f)\in V[G]$
given by $r(n)=i$ and $f(n)=\alpha$ if 
\[
\mathcal{N}\models\dot{r}(\dot{n})=\dot{i}\wedge\dot{f}(\dot{n})=\check{\alpha}
\] 
is a branch through $T$.
Therefore in $V[G]$ $\Cod(r)=(N_0,a_0)$ with $a_0=A\cap\omega_1^{N_0}$.
\item
The structure
 \[
(\bp{c_{n,0}^{\mathcal{N}}: n\in\omega},\in_{\mathcal{N}^0}^{\mathcal{N}})
\]
is well-founded and its transitive collapse is exactly $N_0$ by Axioms \ref{fac:keyfacSEMCERTP0-16-TJ}.
\end{itemize}
%
Now the 
axioms of type \ref{fac:keyfacSEMCERTP0-J} 
grant that an iteration $\mathcal{J}$ of $N_0$ can be defined so to extend
the partial $\in$-isomorphism $\Theta$ so far defined between the multi-sorted structures
$(H_{\omega_2}^V, N_0;\in\restriction H_{\omega_2}, \in\restriction N_{0})$ and
\[
(\bp{\check{x}: x\in H_{\omega_2}^V},\bp{\dot{e}_{n,0}:n\in\omega},\in_{N_{\omega_1^V}},\in_{N_0})
\]
to
a full isomorphism $\Theta:\mathcal{X}_{(\mathcal{J},r,f,T)}\to\mathcal{N}$
with 
\[
\mathcal{J}=\bp{j_{\alpha\beta}:N_\alpha\to N_\beta:\,\alpha\leq\beta<\omega_1^V}
\] 
the unique iteration of $N_0$ of length $\omega_1^V$ such that $j_{0\omega_1}(A\cap\omega_1^{N_0})=A$.

Specifically these axioms impose inductively  the following for all $\alpha<\omega_1^V$:
\begin{itemize} 
\item
$(\bp{c_{j,\alpha}^\mathcal{N}:j\in\omega},\in_{N_\alpha}^{\mathcal{M}})$ is isomorphic to $(N_\alpha,\in)$,
\item
The Mostowski collapsing map of $\bp{c_{j,\alpha}^\mathcal{N}:j\in\omega}$ onto $N_\alpha$, 
maps $\bp{c^\mathcal{N}:\mathcal{N}\models \dot{G}_\alpha(c)}$ onto some $G_\alpha$ which is $N_\alpha$-generic for 
$(\pow{\omega_1}/_{\NS_{\omega_1}})^{N_\alpha}$.
\end{itemize}

This occurs because of 
Axioms \ref{fac:keyfacSEMCERTP0-11-J}, \ref{fac:keyfacSEMCERTP0-11-Jbis} holding in $\mathcal{M}$.
Using the fact that there is at most one iteration
in $V[G]$ satisfying the constraints given by these axioms and using \cite[Lemma 2.7]{HSTLARSON}
we conclude that $\Theta$ extends to a full isomorphism of the
$\mathcal{L}^0$-structure 
\begin{equation}\label{eqn:iterJ}
(N_i:i\leq\omega_1^V,\,
\bp{r,f};\,\,\in_{N_i}:i\leq\omega_1,\,G_i:i<\omega_1^V,\, j_{\alpha\beta}:\alpha\leq\beta\leq \omega_1^V,\,\Sat_{N_\alpha}:\alpha\leq\omega_1^V)
\end{equation}
with $\ap{(\mathcal{S})^\mathcal{N},(\mathcal{R}\setminus\bp{\dot{T},\Cod^*,\text{br}_T})^\mathcal{N}}$. 
Now the structure given by (\ref{eqn:iterJ}) uniquely determines the iteration
\[
\mathcal{J}=\bp{j_{\alpha\beta}:N_\alpha\to N_\beta:\,\alpha\leq\beta\leq\omega_1^V}.
\] 

Finally use the type \ref{fac:keyfacSEMCERTP0-TJ} axioms to get that
\begin{itemize}
\item
$A\cap\omega_1^{N_0}\in N_0$ and $j_{0\omega_1}(A\cap\omega_1^{N_0})=A$
(by Axioms \ref{fac:keyfacSEMCERTP0-16-TJ} and \ref{fac:keyfacSEMCERTP0-10-J}),
\item
$H_{\omega_2}^V\subseteq N_{\omega_1^V}$  (by Axiom \ref{fac:keyfacSEMCERTP0-1-V}),
\item
$\NS_{\omega_1}^V=\NS_{\omega_1}^{N_{\omega_1^V}}\cap V$  (by Axiom \ref{fac:keyfacSEMCERTP0-18-VJ}). 
\end{itemize}
This shows that $(\mathcal{J},r,f,T)$ is a semantic certificate for $A,D$ as witnessd by $T$ in $V[G]$.
\end{proof} 

\begin{remark}
In this proof we use crucially the fact that a semantic certificate existing in $V[G]$ is witnessed by a branch through
$T$. Notice that in principle $\dot{\in}_{N_{\omega_1}}^{\mathcal{N}}$ could be such that
$\dot{B}^{\mathcal{N}}$ (whatever definition one may try to give to $\dot{B}^{\mathcal{N}}$) and $B^{V[G]}$ 
may not have much in common. 
However this is not the case for the properties which can be described by $\mathcal{L}^0$-sentences in parameter 
$\dot{T}$.
This is one of the reasons to define the tree $T$.

Note also that we forgot to add the extensionality axiom for the interpretation of the $\in$-relation in the above list; 
so either one adds it, or one oberves that for the 
models of $\Sigma^0$ the domain is exactly given by the interpretation of the constants, hence extensionality comes for free since
the axioms grants that extensionality holds for the image under the transitive collapse of the $\in_{N_\alpha}^{\mathcal{N}}$-relation and
this map is injective.
\end{remark}

\begin{definition}\label{def:keydefP0}
Let $\mathcal{A}$ be the family of admissible assignments according to Def. \ref{def:defL0}.
$P^*_0$ is $P_\mathcal{A}$. 
\end{definition}
%

%

The key consequence of Fact \ref{fac:keyfacSEMCERTP0}, 
Lemma \ref{lem:keyconspropforcing}, and Fact \ref{fac:SigmaAconsprop} 
is the following:
\begin{lemma}\label{lem:admGenstructureP0}
Given $H$ $V$-generic for $P^*_0$,
$\mathcal{M}_H$ models $\Sigma^0$.

Therefore $\mathcal{M}_H$  is well founded and its unique isomorphism with $\mathcal{X}_{(\mathcal{J}_H,r_H,f_H,T)}$ gives
a uniquely defined semantic certificate $(\mathcal{J}_H,r_H,f_H,T)$ for $A,D$. 

Moreover the map
$\nu_H:c\mapsto \pi_{\alpha,H}([c]_H)$ (where $\pi_{\alpha,H}:\dot{N}_\alpha^{\mathcal{M}_H}\to X_{(\mathcal{J}_H,r_H,F_H,T)}$ is the Mostowski collapse of $\in_{N_\alpha}^H$)
is an admissible interpretation on the constants of $\mathcal{L}^0$ and can be naturally extended to a full interpretation of 
$\mathcal{L}^0$ by mapping $\nu_H(X)=\nu_H\qp{X^{\mathcal{M}_H}}$ for $X$ predicate symbol of $\mathcal{L}^0$.
\end{lemma}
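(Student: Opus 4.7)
The plan is to derive the lemma as a direct corollary of the general consistency property machinery (Lemma \ref{lem:keyconspropforcing} and Fact \ref{fac:SigmaAconsprop}) applied to the detailed axiomatization accomplished in Fact \ref{fac:keyfacSEMCERTP0}. The first step is to show $\mathcal{M}_H\models\Sigma^0$; from there the remaining assertions follow by invoking part 3 of Fact \ref{fac:keyfacSEMCERTP0} with $W=V[H]$ as the ambient transitive model.

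For $\mathcal{M}_H\models\Sigma^0$, I would appeal to Fact \ref{fac:SigmaAconsprop}. By Fact \ref{fac:keyfacSEMCERTP0}(1)--(2) the theory $\Sigma^0$ is standard and $\Sigma^0\subseteq\Sigma_\nu$ for every admissible assignment $\nu$. Since by Def.\ \ref{def:keydefP0} the conditions of $P^*_0=P_{\mathcal{A}}$ are precisely the finite subsets of $\Sigma_\nu$ for $\nu$ ranging over $\mathcal{A}$, any such $\nu$ witnessing a condition $p$ automatically has $\psi\in\Sigma_\nu$ for every $\psi\in\Sigma^0$. Every axiom of $\Sigma^0$ is of $\bigwedge\bigvee$-type (this being the content of the $\mathfrak{L}_{\infty,\omega}$-reformulation of the equality and quantifier elimination schemas and of the sentences listed in Fact \ref{fac:keyfacSEMCERTP0}(I)--(III)), so the hypothesis of Fact \ref{fac:SigmaAconsprop} is met axiom by axiom, giving $\mathcal{M}_H\models\Sigma^0$.

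Next I would apply Fact \ref{fac:keyfacSEMCERTP0}(3) to $\mathcal{N}=\mathcal{M}_H$ inside $W=V[H]$. This yields well-foundedness and extensionality of each sort $\dot{N}_\alpha^{\mathcal{M}_H}$, hence Mostowski collapses $\pi_{\alpha,H}\colon\dot{N}_\alpha^{\mathcal{M}_H}\to N_\alpha$, together with a semantic certificate $(\mathcal{J}_H,r_H,f_H,T)$ for $A,D$ in $V[H]$. The pair $(r_H,f_H)$ is read off the predicate $\text{br}_T$: by axioms \ref{fac:keyfacSEMCERTP0-14-TJ} the set of $(s,t)\in T$ with $\mathcal{M}_H\models\text{br}_T(\check{\langle s,t\rangle},\dot{r},\dot{f})$ forms a $\sqsubseteq^2$-chain whose union is the branch $(r_H,f_H)$, and by \ref{fac:keyfacSEMCERTP0-16-TJ} and \ref{fac:keyfacSEMCERTP0-10-J} we have $\mathrm{Cod}(r_H)=(N_0,A\cap\omega_1^{N_0})$ with $j_{0\omega_1}(A\cap\omega_1^{N_0})=A$. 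Uniqueness of $(\mathcal{J}_H,r_H,f_H,T)$ given $\mathcal{M}_H$ is already internal to the proof of Fact \ref{fac:keyfacSEMCERTP0}(3): the axioms of type \ref{fac:keyfacSEMCERTP0-J} force $\mathcal{J}_H$ to be the unique iteration of $N_0$ driven by the collapsed generic filters $G_\alpha$ for $\alpha<\omega_1^V$.

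Finally, admissibility of $\nu_H$ is verified by running down the list in Def.\ \ref{def:keydefnu}. The quantifier elimination axioms ensure $\nu_H$ is surjective on each sort; the axioms \ref{fac:keyfacSEMCERTP0-2-V} and \ref{fac:keyfacSEMCERTP0-17-VJ} pin down $\nu_H(\check{x})=x$ on $H_{\omega_2}$; the axioms \ref{fac:keyfacSEMCERTP0-3-T} and the $\Cod^*$-axioms pin down $\nu_H(\dot{T})=T$ and $\nu_H(\dot{r})=r_H$, $\nu_H(\dot{f})=f_H$; the remaining predicate interpretations are forced by the iteration axioms. I anticipate the only delicate point in the whole argument is one that Fact \ref{fac:keyfacSEMCERTP0}(3) already absorbs: establishing well-foundedness of $\dot{N}_\alpha^{\mathcal{M}_H}$ for $\alpha>0$. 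This reduces to well-foundedness at stage $0$ (guaranteed because $r_H$ is forced to lie in $p[T]\subseteq\mathrm{WFE}$ by the $\Cod^*$-axioms and the fact that $(r_H,f_H)$ is a branch of the witnessing tree $T$), propagated along the iteration by the axioms of type \ref{fac:keyfacSEMCERTP0-J} together with \cite[Lemma 1.5]{HSTLARSON}. The rest of the proof is assembly.
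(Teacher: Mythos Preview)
Your proposal is correct and follows essentially the same approach as the paper: invoke Fact~\ref{fac:SigmaAconsprop} (via Lemma~\ref{lem:keyconspropforcing}) together with Fact~\ref{fac:keyfacSEMCERTP0}(1)--(2) to get $\mathcal{M}_H\models\Sigma^0$, then apply Fact~\ref{fac:keyfacSEMCERTP0}(3). The paper's own proof is terser---it dismisses the well-foundedness and admissibility verifications as ``almost self-evident''---whereas you spell out how Fact~\ref{fac:keyfacSEMCERTP0}(3) is instantiated and which axioms pin down each clause of Def.~\ref{def:keydefnu}; but the logical structure is identical.
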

\begin{proof}
By Lemma \ref{lem:keyconspropforcing} and Fact \ref{fac:SigmaAconsprop},
$\mathcal{M}_H$ realizes all sentences of type $\bigwedge\bigvee$ holding in $\Sigma_\nu$ for all admissible $\nu$.
$\Sigma^0$ is a subset of this family of sentences by Fact \ref{fac:keyfacSEMCERTP0}.
Hence $\mathcal{M}_H$ models $\Sigma^0$.

The remaining items of the Lemma are almost self-evident.
\end{proof}

By Lemma \ref{lem:keylemASPSCH(*)0} $P^*_0$ is non-empty; the problem is to check
whether it is also stationary set preserving. This natural question has a negative answer since $P^*_0$ has size
$\aleph_2$ and Woodin has shown that $\MM^{++}(2^{\aleph_0})$ is consistent with the negation of $(*)$ (cfr. \cite[Thm. 10.90]{woodinbook}).
To sidestep this difficulty, 
Asper\`o and Schindler
are led to the definition of
the sequence of forcings $\bp{P_\alpha:\alpha\leq\kappa}$, and we also follow their proof-pattern.

\subsection{The forcing $P_\kappa$} \label{subsec:Pkappa}

We fix (or already have fixed) in $V$: 
\begin{notation}
\emph{}

\begin{itemize}
\item
$\kappa>\omega_2$ regular and such that $\Diamond_\kappa$ holds.  
\item
$T$ tree on $(2\times\omega_2^V)^{<\omega}$, such that there are semantic certificates 
$(\mathcal{J},r,f,T)$ for $A,D$ in any generic extension $V[G]$ of $V$ by $\Coll(\omega,\omega_2)$.
\item
$C\subseteq \kappa$ club and
$\bp{Q_\alpha,A_\alpha:\alpha\in \kappa}$ a $\Diamond_{\kappa}$ sequence  
such that:
\begin{enumerate}
\item
For all $\eta\in C$
\begin{equation*}
(Q_\eta,\in,T, R\cap Q_\eta)\prec (H_{\kappa},\in,T, R),
\end{equation*}
where $R$ is a fixed well ordering of $H_{\kappa}$ in type $\kappa$.
\item
For all $\eta$ limit point of $C$
\[
Q_\eta=\bigcup_{\alpha\in C\cap\eta}Q_\alpha.
\]
\item 
For any $Y\subseteq H_{\kappa}$ there are stationarily many $\eta<\kappa$ such that $Y\cap Q_\eta=A_\eta$.
\end{enumerate}
\end{itemize}
\end{notation}

%


The forcings $\bp{P_\alpha:\alpha\in C\cup\bp{\kappa}}$ are obtained from $P^*_0$ adding new constant and predicate 
symbols so to have a stronger 
control on the type of structures generated by their $V$-generic filters. The key idea is to incorporate inside the extended 
language the names of the countable elementary substructures of $V$ which can be used to seal a $P_\kappa$-name for a club.
Specifically we will add to $\mathcal{L}^0$ a sort $\dot{Q}$ to denote (a subset of) $H_\kappa$ and
unary predicates $\bp{\dot{X}_i:i<\omega_1^V}$ in this new sort which will 
be used to ensure the following:
\begin{equation}\label{eqn:contlayerPalphahint0}
\text{}
\end{equation}
\begin{quote}
Whenever $H$ is $V$-generic for the forcing  $P_{\kappa}$, for  ``almost'' all\footnote{More precisely for any $S^*\subseteq\kappa$ stationary 
subset of $\kappa$ in $V$ and any $S\subseteq\omega_1^V$ stationary subset of $\omega_1^V$ in $V$, one can find $\eta\in S^*$ and $\alpha<\omega_1^V$ as prescribed.} 
$\eta\in C$ and $\alpha<\omega_1^V$:
\begin{itemize}
\item
$X_\alpha=(\dot{X}_\alpha)_H\prec (Q_\eta,\in,P_\eta,A_\eta)$;
\item 
$H\cap P_\eta$ is 
$X_\alpha$-generic\footnote{It meets inside $X_\alpha$ all dense sets of 
$P_\eta$ definable in $X_\alpha$.} for 
$P_\eta$;
\item
$X_\alpha\cap\omega_1^V=\alpha$.
\end{itemize}
\end{quote}
We will also ensure that the layering of the posets $P_\eta$ is continuous at limit stages, and that the $P_\eta$ are contained in $Q_\eta$
for all $\eta\in C$, so to get that
\begin{equation}\label{eqn:contlayerPalphahint}
(Q_\eta,\in,P_\eta,A_\eta)\prec (H_\kappa,\in,P_\kappa,X),
\end{equation}
holds in $V$ for stationarily many $\eta$ and any $X\subseteq H_\kappa$.

Now assume $S$ is a stationary subset of $\omega_1$ in $V$ and $\dot{C}$ is a $P_\kappa$-name for a club.
Let $H$ be $V$-generic for $P_\kappa$;
By \ref{eqn:contlayerPalphahint0} and \ref{eqn:contlayerPalphahint}, 
we can find $\eta\in C$  and $\alpha<\omega_1^V$ so that $\dot{C}\cap Q_\eta=A_\eta$ is a $P_\eta$-name for a club
and
\[
X_\alpha=(\dot{X}_\alpha)_H\prec (Q_\eta,\in,P_\eta,A_\eta)\prec (H_\kappa,\in,P_\kappa,\dot{C}),
\] 
$\alpha=X_\alpha\cap\omega_1^V\in S$,
$H\cap P_\eta$ is $X_\alpha$-generic for $P_\eta$. 
Then we should expect that $\dot{C}_H$ will have $\alpha\in S$ as a limit point
(note that $(\dot{X}_\alpha)_H$ will be countable in $V[H]$ and will not exist in $V$).
This will grant that $S\cap\dot{C}_H$ is non-empty. By repeating this argument for all stationary subsets of $\omega_1^V$ in $V$ and
all $P_\kappa$-names for a club subset of $\omega_1^V$, we will get that $P_\kappa$ is stationary set preserving.
On the other hand the forcing $P_\kappa$ will also add a semantic certificate for $A,D$ as witnessed by $T$.
This allows to infer, combining the information so far given, that $\MM^{++}$ instantiated for $P_\kappa$ can be used
to assert
that $G_A\cap D$ is non-empty.

We are now going to describe close approximations to the generic objects added by the various $P_\eta$; 
then we will define the 
signature  which characterizes these generic objects as models of a standard theory of $\bigwedge\bigvee$-type for
$\mathcal{L}^1_\eta$. 
The forcings $P_\eta$ will be of the form $P_{\mathcal{W}_\eta}$ for suitably chosen families of generic assignments
$\mathcal{W}_\eta$. 

We assume and recall the following facts, notations and conventions:

%
%
\begin{notation}
Given a weak semantic certificate $(\mathcal{J},r,f,T)$ for $A,D$, we let
$X_{(\mathcal{J},r,f,T)}$ be the set:
\[ 
(\bigcup_{i\leq\omega_1^V}N_\alpha)\cup
\bp{N_\alpha:\alpha\leq\omega_1^V}\cup\bp{j_{\alpha\beta}:\alpha\leq\beta\leq\omega_1^V}
\cup\bp{G_\alpha:\alpha<\omega_1}\cup\bp{r,f},
\]
$\mathcal{X}_{(\mathcal{J},r,f,T)}$ be the structure:
\[
\ap{N_\alpha:\alpha\leq\omega_1^V,\,\bp{r,f};\,\in\restriction N_\alpha,\Sat_{N_\alpha}:\alpha\leq\omega_1^V,\, G_\alpha:\alpha<\omega_1^V,\, 
j_{\alpha\beta}:\alpha\leq\beta\leq\omega_1^V,\, \sqsubseteq^2\restriction T,\, \text{br}_T,\, \Cod^*}.
\] 
\end{notation}

Recall that we have fixed in $V$ the diamond sequence $\bp{Q_\lambda:\lambda\in C}$ and the witnessing tree $T$.

\begin{definition} \label{def:semcert1}
For $\lambda\in C\cup\bp{\kappa}$ 
a tuple
\[
(\mathcal{J},r,f,F,K)
\]
existing in some transitive $\ZFC$-model $W\supseteq Q_\lambda$ is a \emph{weak $\lambda$-precertificate} 
(relative to $V$) if
\begin{itemize}
\item $(\mathcal{J},r,f,T)$ is a weak semantic certificate for $A$ (relative to $V$) existing in $W$ 
as witnessed by 
$T$ as in Def. \ref{def:weaksemcert0}.
\item $K\subseteq \omega_1^V$.
\item $F$ a function with domain $\omega_1^V$ defined by
\begin{align*}
F:\, &\omega_1^V\to C\cap\lambda\times\pow{Q_\lambda}^W\times Q_\lambda\\
&\alpha\mapsto\ap{\lambda_\alpha,X_\alpha,Z_\alpha}
\end{align*}
with $\ap{\lambda_\alpha,X_\alpha,Z_\alpha}=\ap{F_0(\alpha),F_1(\alpha),F_2(\alpha)}$ and:
\begin{itemize}
\item
$F\restriction (\omega_1^V\setminus K)$ constantly with value $\ap{0,1,0}$. 
\item
$F_0\restriction K$ and $F_1\restriction K$ being
$\subseteq$-increasing,
\item
for all $\alpha\in K$, $F_2(\alpha)=Z_\alpha\subseteq Q_{F_0(\alpha)}$ is in $Q_\lambda$ and
$F_1(\alpha)=X_\alpha$ is the domain of an elementary substructure of 
\[
(Q_{F_0(\alpha)},\in,F_2(\alpha), A_{F_0(\alpha)})
\]
with\footnote{$X_\alpha$ will not be in $V$, and will be countable in $V[G]$. 
In the right set up (i.e. when the posets $\bp{P_\beta:\beta\in C\cap\lambda_\alpha}$ have been defined),
$Z_\alpha$ will be assigned to the poset $P_{\lambda_\alpha}$.  The very definition of $P_{\lambda_\alpha}$ 
is done by a recursion based on the notion of 
$\lambda$-precertificate and using the sequence $\bp{P_\beta:\beta\in C\cap\lambda_\alpha}$, in particular we cannot for the moment assign 
$Z_\alpha$ to its canonical intended meaning. 
Hence we let $Z_\alpha$ range over all possible elements of $Q_\lambda$.} 
$\alpha=X_\alpha\cap\omega_1^V$.
\end{itemize}

\item $(\mathcal{J},r,f,F,K)$ is a \emph{$\lambda$-precertificate for $A,Q_\lambda$} (relative to $V$) if it exists in a generic extension of $V$.
\end{itemize}
We denote by $X^\lambda_{(\mathcal{J},r,f,F,K)}$ the set
\[
X_{(\mathcal{J},r,f,T)}\cup\bp{K,F_0,Q_\lambda}\cup\bp{F_1(\alpha),F_2(\alpha):
\alpha\in K}.
\]
and by $\mathcal{X}^\lambda_{(\mathcal{J},r,f,F,K)}$ the multi-sorted structure obtained extending 
$\mathcal{X}_{(\mathcal{J},r,f,T)}$ with the sort $Q_\lambda$, the constants $\bp{Z_\alpha:\alpha<\omega_1^V}$ of sort $Q_\lambda$, the binary predicates $F_0,\in\restriction{Q_\lambda}$ both of type $(Q_\lambda,Q_\lambda)$, the unary predicates
$K$, $\bp{X_\alpha:\alpha<\omega_1^V}$ each of type $Q_\lambda$.
\end{definition}

$X^\lambda_{(\mathcal{J},r,f,F,K)}$ is the smallest set existing in $W$ and containing 
all the information needed to reconstruct the weak
semantic certificate $(\mathcal{J},r,f,T_\lambda)$ for $A,D$, the function $F$, the selected set $K$.
 $\mathcal{X}^\lambda_{(\mathcal{J},r,f,F,K)}$ is the multi-sorted structure which will encode this information using multi-sorted logic.

\begin{remark}

Note 
that letting 
\[
\mathcal{J}=\bp{j_{\alpha\beta}:N_\alpha\to N_\beta:\alpha\leq\beta\leq\omega_1^V},
\]
and $(G_i:i<\omega_1^V)$ list the $N_i$-generic ultrafilters associated to $\mathcal{J}$,
\begin{itemize}
\item
The top elements in the $\in$-graph of $X^\lambda_{(\mathcal{J},r,f,F,K)}$ are exactly the elements of the following set:
\[
\bp{r,f,K, F_0}\cup\bp{X_\alpha:\alpha<\omega_1^V}\cup\bp{N_i:  i\leq \omega_1^V}\cup\mathcal{J}\cup\bp{G_\alpha:\alpha<\omega_1}.
\]
\item $X^\lambda_{(\mathcal{J},r,f,F,K)}$ is not transitive for the same reason for which transitivity fails for 
$X_{(\mathcal{J},r,f,T_\lambda)}$, but the new sorts, constants,
and predicates denote elements or subsets of $Q_\lambda$, hence they will not have an $\in$-ill-founded extension in models of
$\Sigma^0$ extended with axioms granting that the sort $Q_\lambda$ gets interpreted by an isomorphic copy itself.
\end{itemize}

Note also that the extra requirement imposed on a weak $\lambda$-precertificate are expressible by
$\Delta_0$-properties in parameters 
$C\cap\lambda,\lambda,\omega_1^V,\Diamond_\kappa\cap Q_\lambda,F_0,K,\bp{X_\alpha,Z_\alpha:\alpha<\omega_1^V}$, i.e.:
\begin{enumerate}[(i)]
\item $K\subseteq \omega_1^V$,
\item 
\begin{align*}
(F_0 \text{ is a function})\wedge(\dom(F_0)=\omega_1^V)\wedge(\ran{F}_0\subseteq C\cap\lambda\cup\bp{0})\wedge\\
\wedge\forall\alpha\in\beta\in\omega_1^V\, \qp{(\alpha\in K\wedge\beta\in K)\rightarrow (X_\alpha\subseteq X_\beta\wedge F_0(\alpha)\in F_0(\beta))}
\end{align*}
\item
$\forall\alpha\in\omega_1^V\setminus K \,\qp{(X_\alpha=Z_\alpha=1)\wedge (F_0(\alpha)=0)}$,
\item
\[
\forall\alpha\in K\,\qp{(X_\alpha\subseteq Q_{F_0(\alpha)})\wedge (X_\alpha\cap\omega_1^V=\alpha)\wedge (Z_\alpha\subseteq Q_{F_0(\alpha)})\wedge 
(Z_\alpha\in Q_\lambda)},
\]
\item
\begin{align*}
&\forall\,\alpha\in K\,\forall n,m\in\omega\,\forall\vec{x}\in X_\alpha^{<\omega}&\\ 
&\qp{\gp{(X_\alpha,\in,\,Z_\alpha\cap X_\alpha,\,A_{F_0(\alpha)\cap X_\alpha})\models \phi_n(\vec{x})}\leftrightarrow 
\gp{(Q_{F_0(\alpha)},\in,Z_\alpha,A_{F_0(\alpha)})\models \phi_n(\vec{x})}}&.
\end{align*}
\end{enumerate}
Combining these observations with those in Remark \ref{rmk:keyremsemcert}, we get that
all the conditions needed to check whether $(\mathcal{J},r,f,F,K)$ is a weak $\lambda$-precertificate 
are expressible by provably 
$\Delta_1$-properties in $(H_{\omega_1}^W,\tau_\ST^W)$ for every $W$  transitive model of $\ZFC$ in which all the relevant parameters
are countable.
\end{remark}



\begin{definition} \label{def:defL1}
Let $\mathcal{L}^1$ be the extension of $\mathcal{L}^0$ (given in Def. \ref{def:defL0})
adding the following sort, constants and predicates:
\begin{description}

\item[Sort]
\emph{}

\begin{enumerate}[(a)]
\setcounter{enumi}{13}
\item \label{def:defL0-m}
A sort $\dot{Q}$ to denote the element of $Q_\lambda$ for some $\lambda\in C\cup\bp{\kappa}$.
\end{enumerate}

\item[Constants]
\emph{}

\begin{enumerate}[(a)]
\setcounter{enumi}{14}
\item \label{def:defL0-m1}
symbols $\check{\check{x}}$ of sort $\dot{Q}$ for each $x\in H_\kappa$ to be interpreted by $x$.

\item \label{def:defL0-m2}
symbols $\dot{e}_{n,\alpha}$ of sort $\dot{Q}$ for each $n<\omega,\alpha<\omega_1^V$ to be interpreted 
as the elements of the various $X_\alpha$.
\item
symbols $\dot{Z}_\alpha$ of sort $\dot{Q}$ for each $\alpha<\omega_1^V$ to be interpreted 
as the sets $F_2(\alpha)$.

\end{enumerate}

\item[Predicates]
\emph{}

\begin{enumerate}[(a)]
\setcounter{enumi}{17}
\item \label{def:defL0-n}
a binary predicate symbol $\in_{\dot{Q}}$ of type $(\dot{Q},\dot{Q})$ to be interpreted by the $\in$-relation restricted to the interpretation of $Q$,

\item \label{def:defL0-n1}
a unary predicate symbol $\dot{K}$ of type $\dot{Q}$ to be interpreted by the selected subset of $\omega_1^V$,

\item \label{def:defL0-o}
a binary predicate symbol $\dot{F}_0$ of type $(\dot{Q},\dot{Q})$
to be interpreted by the selected map $\alpha\mapsto \lambda_\alpha$ from 
$\omega_1^V$ to $C$,

\item \label{def:defL0-p}
unary predicate symbols $\dot{X}_\alpha$  of type $\dot{Q}$ for each $\alpha<\omega_1$ to be interpreted by the sets 
$F_1(\alpha)$,


\item\label{def:defL0-q}
Let $\bp{\in,U_0,U_1}$ be the signature with $U_0,U_1$ unary predicate symbols; we introduce
the satisfaction predicates
$\Sat_{i,\alpha}(\gp{\psi(x_1,\dots,x_m)})$ of type $(\dot{Q},\dot{Q})$  with $i=0,1$ and $\alpha<\omega_1^V$
to be represented by the pairs $\ap{\gp{\psi},(a_1,\dots,a_m)}$ with $\psi(x_1,\dots,x_m)$ an 
$\bp{\in,U_0,U_1}$-formula in displayed free variables and $(a_1,\dots,a_m)\in H_\kappa^m$ such that 
for all $\alpha<\omega_1^V$:

\begin{enumerate}[(i)]
\item\label{def:defL0-qi}
$\Sat_{0,\alpha}(\gp{\psi(a_1,\dots,a_m)})$ holds if and only if $\alpha\in K$, $a_1,\dots,a_m\in X_\alpha$ and 
\[
(X_\alpha,\in,Z_\alpha\cap X_\alpha,A_{F_0(\alpha)}\cap X_\alpha)\models\psi(x_1,\dots,x_m)[x_1/a_1,\dots,x_m/a_m].
\]
\item \label{def:defL0-qii}
$\Sat_{1,\alpha}(\gp{\psi(a_1,\dots,a_m)})$ holds if and only if $\alpha\in K$, $F_0(\alpha)=\eta$, $a_1,\dots,a_m\in Q_\eta$,
and 
\[
(Q_\eta,\in,Z_\alpha,A_\eta)\models\psi(x_1,\dots,x_m)[x_1/a_1,\dots,x_m/a_m].
\]
\end{enumerate}

\end{enumerate}
\end{description}


For $\lambda\in C\cup\bp{\kappa}$ $\mathcal{L}^1_\lambda$ restricts $\mathcal{L}^1$ omitting the constant symbols $\check{\check{x}}$ for any $x\not\in Q_\lambda$, so
that $\mathcal{L}^1_\kappa$ is $\mathcal{L}^1$.

$\mathcal{R}^1_\lambda$ denotes the relations symbols of $\mathcal{L}^1_\lambda$ and $\mathcal{K}^1_\lambda$ its constant symbols.

$\SL{\lambda}$ the family of atomic sentences of  $\mathcal{L}^1_\lambda$.
\end{definition}

Note that (with a slight abuse of notation, see Warning \ref{warn:signXJrfT}) 
$\mathcal{X}^\lambda_{(\mathcal{J},r,f,F,K)}$ can be considered an $\mathcal{R}^1_\lambda$-structure whenever 
$(\mathcal{J},r,f,F,K)$ is a $\lambda$-precertificate.

\begin{notation}
From now on we assume 
$\mathcal{L}^1_\kappa$ being a definable class in $(H_{\kappa},\in,R)$. We may (and will) also assume that
$\mathcal{L}^1_\lambda\subseteq Q_\lambda$ is a definable class in $(Q_\lambda,\in,R\cap Q_\lambda)$ for all $\lambda\in C$.
\end{notation}

\begin{definition}\label{def:keydefnuPkappa}
\emph{}

\begin{itemize}
\item 
Given a $\lambda$-precertificate $(\mathcal{J},r,f,F,K)$ existing in $V[G]$,
 a function 
\[
\nu:\mathcal{L}^1_\lambda\to X^\lambda_{(\mathcal{J},r,f,F,K)}
\] 
 is a $\lambda$-admissible interpretation for $(\mathcal{J},r,f,F,K)$ if: 
\begin{itemize}
\item
$\nu\restriction \mathcal{L}^0$ respects the interpretation of the symbols of 
$\mathcal{L}^0$ 
according to the prescription set forth in Def. \ref{def:keydefnu};

\item
$\nu$ respects the following assignments to the symbols of $\mathcal{L}^1\setminus \mathcal{L}^0$:
\begin{enumerate}[(a)]
\setcounter{enumi}{13}
\item
$\nu(\dot{Q})=Q_\lambda$,
\item
$\nu(\check{\check{x}})=x$ for all $x\in Q_\lambda$,
\item
$\nu[\bp{\dot{e}_{n,\alpha}:n\in\omega}]=F_1(\alpha)=X_\alpha$ for all $\alpha\in \omega_1^V$,
\item
$\nu(\dot{K})=K$, 
\item
$\nu(\dot{F}_0)=F_0$,
\item
$\nu(\dot{X}_\alpha)=F_1(\alpha)$ and
$\nu(\dot{Z}_\alpha)=F_2(\alpha)$ for all $\alpha\in \omega_1^V$.
\item
\emph{}

\begin{enumerate}[(i)]
\item
$\nu(\Sat_{0,\alpha})$ is defined according to the prescriptions given in \ref{def:defL0-q}\ref{def:defL0-qi},
\item
$\nu(\Sat_{1,\alpha})$ is defined according to the prescriptions given in \ref{def:defL0-q}\ref{def:defL0-qii}.
\end{enumerate}
\end{enumerate}
\end{itemize}


\item Given a $\lambda$-admissible $\nu:\mathcal{L}^1\to X^\lambda_{(\mathcal{J},r,f,F,K)}$, 
$\Sigma_\nu$ is the set of $\mathcal{L}^1_\lambda$-sentences for $\mathfrak{L}_{\infty,\omega}$
belonging to $V$ and
realized in the multi-sorted $\mathcal{L}^1_\lambda$-structure $\mathcal{X}^\lambda_{(\mathcal{J},r,f,F,K)}$.
\item 
We denote by $\mathcal{A}_\lambda$ the family of $\lambda$-admissible interpretations.
\end{itemize}
\end{definition}

The forcings $P_\lambda$ will be defined as refinements of the forcing $P_{\mathcal{A}_\lambda}$, in which we enforce the 
structures $X_\alpha$ occurring in a $\lambda$-precertificate to admit
generic filters for larger and larger fragments of $P_\lambda$ by letting $K$ and $F_0$ grow in size.

\begin{definition}\label{def:keydefPkappa}
We define by transfinite recursion on $\alpha\in C\cup\bp{\kappa}$ the forcings 
$P_\alpha$ and the notions of (weak) $P_\alpha$-certificate $(\mathcal{J},r,f,F,K)$ and
(weak) $P_\alpha$-witness $\nu$ for the certificate $(\mathcal{J},r,f,F,K)$
as follows:
\begin{enumerate}[(A)]
\item \label{def:keydefP0-2}
$P_{\min(C)}=P_{\mathcal{A}_{\min(C)}}$ according to\footnote{By the requirements set forth for $K$ in Def. \ref{def:semcert1}, 
the $\min(C)$-precertificate witnessing $p\in P_{\min(C)}$ always assigns $\emptyset$ to $K$.}
Def. \ref{def:conspropforcing} and \ref{def:keydefnuPkappa}. 
\item \label{def:keydefP0-3a}
A weak $P_{\min(C)}$-certificate is a weak $\min(C)$-precertificate, and a
weak $P_{\min(C)}$-witness for the $P_{\min(C)}$-certificate is a $\min(C)$-admissible interpretation according to
Def. \ref{def:keydefnuPkappa}.

\item \label{def:keydefP0-3}
For $\eta\in C\cup\bp{\kappa}$ a weak $\eta$-precertificate $(\mathcal{J},r,f,F,K)$ existing in $W$ is a weak $P_\eta$-certificate, and a weak 
$\eta$-admissible $\nu:\mathcal{L}^1_\eta\to X^\eta_{(\mathcal{J},r,f,F,K)}$ existing in $W$
is a weak $P_\eta$-witness for $(\mathcal{J},r,f,F,K)$ if the following holds for all $\alpha\in K$:
\begin{enumerate}[(i)]
\item \label{def:keydefP0-3-a}
\begin{align*}
F:&\omega_1^V\to\eta\times\pow{Q_\eta}^W\times Q_\eta\\
&\alpha\mapsto\ap{\lambda_\alpha,X_\alpha,Z_\alpha}=\ap{F_0(\alpha),F_1(\alpha),F_2(\alpha)}
\end{align*}
with $P_{\lambda_\alpha}=Z_\alpha$, 

\item \label{def:keydefP0-3-c}
\begin{align*}
& 
E\cap X_\alpha\cap [\Sigma_\nu]^{<\omega}\neq\emptyset & \nonumber
\end{align*}
for any
dense 
subset $E$ of $P_{\lambda_\alpha}$ definable
in 
\[
(Q_{\lambda_\alpha},\in,P_{\lambda_\alpha},A_{\lambda_\alpha})
\]
using parameters in $X_\alpha$.
\end{enumerate}
\item
A $P_\eta$-certificate (respectively a $P_\eta$-witness) is a weak $P_\eta$-certificate 
(respectively a weak $P_\eta$-witness) existing in a generic extension of $V$.

\item \label{def:keydefP0-4}
For $\eta\in C\cup\bp{\kappa}$, $\mathcal{W}_\eta$ is the family of $P_\eta$-witnesses and 
$P_{\eta}$ is $P_{\mathcal{W}_\eta}$ according to Def. \ref{def:conspropforcing}.
\end{enumerate}

\end{definition}

We can now state in precise terms the main technical result of this paper:

\begin{theorem}\label{thm:mainthmPkappa}
Assume $H$ is $V$-generic for $P_\kappa$.
Let 
\[
\mathcal{J}_H=\bp{j_{\alpha\beta}:N^H_\alpha\to N^H_\beta:\,\alpha\leq\beta\leq\omega_1^V}
\]
be the iteration induced by $H$ and accordingly define $r_H,f_H$.
Then $(\mathcal{J}_H,r_H,f_H,T)$ is a semantic certificate for $A,D$ and
for all $S\in \pow{\omega_1}^{N_{\omega_1}}$
\[
(N_{\omega_1},\in)\models \text{S}\text{ is stationary } \qquad \Longleftrightarrow \qquad (V[H],\in)\models \text{S}\text{ is stationary. }
\] 

In particular (since $H_{\omega_2}^V\subseteq N_{\omega_1}$ and $\NS^V=\NS^{N_{\omega_1}}\cap H_{\omega_2}^V$) 
$P_\kappa$ is stationary set preserving.
\end{theorem}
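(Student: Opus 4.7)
The plan is to prove the theorem in two stages: first verify that the generic $H$ induces a semantic certificate, and then establish the stationarity equivalence, which is the hard part.

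For the first stage, since $P_\kappa = P_{\mathcal{W}_\kappa}$ and every $P_\kappa$-witness restricted to $\mathcal{L}^0$ is an $\mathcal{L}^0$-admissible interpretation, Fact \ref{fac:SigmaAconsprop} ensures that $\mathcal{M}_H$ restricted to $\mathcal{L}^0$ realises every $\bigwedge\bigvee$-sentence of $\Sigma^0$; hence by Fact \ref{fac:keyfacSEMCERTP0} its transitive collapse is an $\mathcal{X}_{(\mathcal{J}_H, r_H, f_H, T)}$ with $(\mathcal{J}_H, r_H, f_H, T)$ a semantic certificate for $A, D$, exactly as in Lemma \ref{lem:admGenstructureP0}. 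Non-emptiness of $P_\kappa$ must also be checked: starting from a semantic certificate in a collapse extension of $V$ (Lemma \ref{lem:keylemASPSCH(*)0}), one manufactures a $\kappa$-precertificate by taking $K = \emptyset$ and $F$ the constant assignment.

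The easy $(\Leftarrow)$ direction of the stationarity equivalence is immediate since $N_{\omega_1} \subseteq V[H]$, so every $N_{\omega_1}$-club is a $V[H]$-club. For the hard $(\Rightarrow)$ direction, by Conditions \ref{def:semcert0-5}--\ref{def:semcert0-6} of a semantic certificate it suffices to show that $P_\kappa$ is stationary set preserving, i.e.\ that for every $V$-stationary $S \subseteq \omega_1^V$ and every $P_\kappa$-name $\dot{C}$ for a club in $\omega_1^V$, the intersection $\dot{C} \cap S$ is forced non-empty. Fix such $S, \dot{C}$ and $p \in P_\kappa$. Using $\Diamond_\kappa$ and the continuous $\prec$-chain $\{Q_\eta : \eta \in C\}$, the set of $\eta \in C$ such that $\dot{C} \cap Q_\eta = A_\eta$ is a $P_\eta$-name for a club and
\[
(Q_\eta, \in, P_\eta, A_\eta) \prec (H_\kappa, \in, P_\kappa, \dot{C})
\]
is stationary in $\kappa$. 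Fix any such $\eta$. The key density claim is: for every $\alpha \in S$ and every $p' \leq p$, there is $q \leq p'$ whose witnesses assign $\alpha$ to $K$ with $F_0(\alpha) = \eta$, $F_2(\alpha) = P_\eta$, and $F_1(\alpha) = X_\alpha$ a countable $X_\alpha \prec (Q_\eta, \in, P_\eta, A_\eta)$ with $X_\alpha \cap \omega_1^V = \alpha$. Granting this, for $V$-generic $H \ni p$ there is some $\alpha \in S$ in $\dot{K}_H$ with these data; Condition \ref{def:keydefP0-3-c} forces $H \cap P_\eta$ to meet, inside $X_\alpha$, every dense subset of $P_\eta$ definable in $(Q_\eta, \in, P_\eta, A_\eta)$ from parameters in $X_\alpha$. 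A standard elementarity argument (using $X_\alpha \cap \omega_1^V = \alpha$) then shows that the interpretation of $A_\eta$ by $H \cap P_\eta$ is unbounded below $\alpha$, so $\alpha$ is a limit point of this club; since $A_\eta = \dot{C} \cap Q_\eta$, we conclude $\alpha \in \dot{C}_H \cap S$.

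I expect the main obstacle to be proving the density claim, which I would handle by the modular reflection argument of Lemma \ref{lem:keylemASPSCH(*)0}. Working in a generic extension $V[G]$ collapsing $\omega_2^V$ to $\omega$ and passing to a generic elementary embedding $\bar{k}: V \to \bar{M}$ as there, one starts from a semantic certificate $(\mathcal{J}, r, f, T)$ compatible with $p'$, selects a countable $X_\alpha \prec (\bar{k}(Q_\eta), \in, \bar{k}(P_\eta), \bar{k}(A_\eta))$ with $X_\alpha \cap \omega_1^V = \alpha$, and augments $p'$ with finitely many atomic sentences encoding $\alpha \in K$, $F_0(\alpha) = \eta$, $F_1(\alpha) = X_\alpha$, $F_2(\alpha) = P_\eta$; the resulting finite extension is realised by a $\kappa$-admissible assignment in $V[G]$, hence is a condition in $\bar{k}(P_\kappa)$. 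One then reflects its existence via Shoenfield absoluteness between $H_{\omega_1}^{\bar{M}[H_0]}$ and $H_{\omega_1}^{V[H_0]}$ for a suitable collapse generic $H_0$, and finally via elementarity of $\bar{k}$, back to $V$, exactly in the style schematised in the bullet list following Remark \ref{rmk:keyremsemcert}.
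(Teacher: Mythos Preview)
Your proposal has two genuine gaps.

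First, the reduction is incorrect. You claim that for the hard $(\Rightarrow)$ direction it suffices to show that $P_\kappa$ is stationary set preserving, i.e., to handle $V$-stationary $S$. But the theorem asserts the equivalence for all $S \in \pow{\omega_1}^{N_{\omega_1}^H}$, and $N_{\omega_1}^H$ properly contains $H_{\omega_2}^V$ (it is the last model of a nontrivial iteration). Conditions \ref{def:semcert0-5}--\ref{def:semcert0-6} only relate $\NS^V$ to $\NS^{N_{\omega_1}} \cap V$; they say nothing about an $N_{\omega_1}$-stationary $S \notin V$. The paper handles this correctly by writing $S = \nu_H(\dot{c}_{j,\omega_1^V})$ for some $j < \omega_1^V$, fixing a condition $p_0$ forcing that this named set is $N_{\omega_1}$-stationary yet disjoint from $\dot{C}$, and then using that $S$ is $N_{\omega_1^V}$-stationary to choose the ultrafilter $G_0$ with $S \in G_0$, so that $\omega_1^V \in k_{0\rho}(S)$ after iterating.

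Second, and more seriously, your density argument does not address Condition \ref{def:keydefP0-3}\ref{def:keydefP0-3-c} at the new $\alpha$. Selecting an arbitrary countable $X_\alpha \prec (Q_\eta, \in, P_\eta, A_\eta)$ is not enough: the condition demands that $[\Sigma_\nu]^{<\omega} \cap X_\alpha$ meet every dense subset of $P_\eta$ definable over that structure with parameters in $X_\alpha$, i.e., that the witness $\nu$ already encode an $X_\alpha$-generic filter. A witness certifying $p'$ has no reason to have this property for an $X_\alpha$ chosen after the fact. The paper's key idea is quite different: working in $V[G]$ for $G$ generic for $\Coll(\omega,\kappa)$, one fixes a \emph{full $V$-generic filter $H$ for $P_\lambda$} (possible since $|P_\lambda| < \kappa$) containing $p_0$, obtains from it via Lemma \ref{lem:permittedPlambda} a $P_\lambda$-certificate and witness $\nu_H$, then iterates $N_{\omega_1^V}$ further to length $\rho = \omega_1^{V[G]}$ starting with $G_0 \ni S$. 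This iteration restricted to $H_{\omega_2}^V$ extends to $\bar{k}: V \to \bar{M}$, and one sets $K^* = K_H \cup \{\omega_1^V\}$ with $X_{\omega_1^V} = \bar{k}[Q_\lambda]$ and $F_0^*(\omega_1^V) = \bar{k}(\lambda)$. The genericity condition at $\omega_1^V$ is then satisfied precisely because $H$ was fully $V$-generic for $P_\lambda$ and the new assignment $\bar{\nu}$ is built so that $\bar{k}[\cup H] \subseteq \Sigma_{\bar{\nu}}$. This last inclusion (Subclaim \ref{subclm:keyXXX}, that atomic $\mathcal{L}^1_\lambda$-sentences in $\Sigma_H$ are preserved under the sort-by-sort multimap $\nu(c) \mapsto \bar{\nu}(c)$) is the technical heart of the proof and crucially exploits the multi-sorted setup, since $\bar{k}$ acts on sort $\dot{Q}$ while $\bar{j}_{\omega_1^V\rho}$ acts on sort $\dot{N}_{\omega_1^V}$, and these need not agree on overlaps. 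Your proposed reflection via an embedding built as in Lemma \ref{lem:keylemASPSCH(*)0} does not supply this mechanism: there is nothing linking $\Sigma_\nu$ to the $X_\alpha$ you pick.
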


The key to the proof of Thm. \ref{thm:mainthmPkappa} will be a combination of straightforward generalizations
to the various $P_\lambda$ of the results on forcings induced by consistency properties which we already 
obtained for $P^*_0$, and of a
sophisticated variation of Lemma \ref{lem:keylemASPSCH(*)0} which will be used to run a master condition argument in 
the style of 
those one uses to establish the properness of a given poset.

Let us start outlining basic properties of the various forcings $P_\lambda$.

\begin{remark}\label{rmk:basicPkappa}
\emph{}

\begin{enumerate}[(i)]
\item
The key condition for the various $P_\eta$ is \ref{def:keydefP0-3}.

\item
The definition of the sequence $\bp{P_\eta:\eta\in C\cup\bp{\kappa}}$ is first order because ``existence in some generic extension'' is 
first order definable in $(V,\in)$ (while ``existence in a transitive outer model of $\ZFC$'' is apparently not).
On the other hand once we know what $P_\eta$ is in $V$, it makes perfect sense to check
whether in some outer transitive model $W$ a certain tuple is a weak $P_\eta$-certificate or $P_\eta$-witness.

\item\label{rmk:itemrelPkappa}
$\bp{P_\eta:\eta\in C\cup\bp{\kappa}}$ can be defined in any sufficiently correct $\in$-structure containing 
\[
H_\kappa,R,T,C,\Diamond_\kappa.
\]
More precisely this recursive definition can be carried in the structure
\[
(H_{\kappa},\in,R,T,C,\Diamond_\kappa).
\]

It will be convenient to relativize this definition to $\bar{M}$ where $\bar{k}:V\to \bar{M}$ is a generic 
embedding existing in a generic extension of $V$. 
In particular it makes sense to check in outer models of $\bar{M}$ whether there are weak 
$\bar{k}(P_\kappa)$-witnesses $\nu$, and
weak $\bar{k}(P_\kappa)$-certificates $(\mathcal{J},r,f,F,K)$ with  $(\mathcal{J},r,f,\bar{k}(T))$ a weak semantic certificate for 
$\bar{k}(A)$. This will occur in the statement of Claim \ref{clm:fundclmrproof} to follow.

\item
Each $P_\eta$ is a subset of $[\SL{\eta}]^{<\omega}\subseteq Q_\eta$. 

\item 
\emph{$\nu:\mathcal{L}^1_\lambda\to X_{(\mathcal{J},r,f,F,K)}$ is a weak $P_\lambda$-witness} is expressible by a provably $\Delta_1$-property 
$\psi(\nu,A, \bp{Q_\eta:\eta\in C\cap\lambda+1},T, \bp{Q_\eta,P_\eta:\eta\in C\cap\lambda},\nu)$ in $\tau_\ST$: 
it adds to the request that the target of $\nu$ is a weak $\lambda$-precertificate, and that $\nu$ is a $\lambda$-assignment, 
formulae for Condition \ref{def:keydefP0-3} given by
\[
\forall\alpha\in K\, (F_2(\alpha)=P_{F_0(\alpha)}),
\]
and
\begin{align*}
&\forall \alpha\in K\, \forall\eta\in (C\cap\lambda)\,\forall\vec{z}\in X_\alpha^{<\omega}&\\
&\qp{(F_0(\alpha)=\eta\wedge 
\qp{(Q_\eta,\in,P_\eta,A_\eta)\models\forall x\in P_\eta\,\exists y\in P_\eta \, (x\subseteq y\wedge \psi(y,\vec{z}))}}&\\
&\rightarrow&\\ 
&\exists y\in ([\Sigma_\nu]^{<\omega}\cap X_\alpha)\, \qp{(Q_\eta,\in,P_\eta,A_\eta)\models\psi(y,\vec{z})}&.
\end{align*}

\item
For all $\lambda<\eta \in C\cup\bp{\kappa}$,
\begin{enumerate}[(a)]
\item
$P_\lambda=P_\eta\cap Q_\lambda$ holds:
\begin{itemize}

\item
If $\nu$ is a weak $P_\lambda$-witness for the weak $P_\lambda$-certificate $(\mathcal{J},r,f,F,K)$, 
then it is easily checked that $\nu^*$ is a weak $P_\eta$-witness 
(where 
$\nu^*(\dot{Q})=Q_\eta$, $\nu^*(\in_{\dot{Q}})=\in\restriction Q_\eta$, and the other assignments can be left unchanged).

\item \label{enum:propPkappa}
If
$\nu$ is a weak $P_\eta$-witness and $(\mathcal{J},r,f,F,K)$ its associated weak $P_\eta$-certificate, 
letting $\gamma=\sup\bp{\alpha\in K: F_0(\alpha)<\lambda}$, $K^*=K\cap\gamma$, the map $F^*$ such that 
$F^*\restriction K^*=F\restriction K^*$
and defined by $\alpha\mapsto\ap{0,1,1}$ for all other $\alpha<\omega_1^V$ is such that 
$(\mathcal{J},r,f,F^*,K^*)$ is a weak $P_\lambda$-certificate; moreover letting $\nu^*(c)=\nu(c)$ for all $c$ not among
$\bp{\dot{F}_0,\dot{K},\dot{X}_\alpha,\dot{Z}_\alpha:\alpha<\omega_1^V}$, 
$\nu^*(\dot{Q})=Q_\lambda$
$\nu^*(\dot{K})=K\cap \gamma$,
$\nu^*(\dot{F}_0)=F^*_0$, $\nu^*(\dot{X}_\alpha)=0$ and $\nu^*(\dot{Z}_\alpha)=0$ if $\alpha\not \in K^*$, 
$\nu^*(\dot{X}_\alpha)=\nu(\dot{X}_\alpha)$ and $\nu^*(\dot{Z}_\alpha)=\nu(\dot{Z}_\alpha)$ for 
$\alpha\in K\cap\gamma$, we get that $\nu^*$ is a  
weak $P_\lambda$-witness.
\end{itemize}
\end{enumerate}

\item
We should not expect that $P_\lambda$ is a complete suborder of $P_\kappa$ for $\lambda\in C$.
Condition \ref{def:keydefP0-3} is inserted exactly  to infer that in some cases the inclusion of
$P_\lambda$ into $P_\kappa$ is sufficiently well behaved; for example we will see that 
whenever 
$r_{\alpha,\lambda}=\bp{(\check{\alpha}\in\dot{K}),(\dot{F}_0(\check{\alpha})=\check{\lambda})}$ is in $P_\kappa$,
the map $r\mapsto r\cup r_{\alpha,\lambda}$ is well defined on a sufficiently large fragment of\footnote{But not on all of $P_\lambda$: for example if $\gamma>\alpha$ and
if $(\check{\gamma}\in \dot{K})$ is in $r$ then  $r\cup r_{\alpha,\lambda}$ is not in $P_\kappa$.}
$P_\lambda$
and can be used to seal a  $P_\kappa$-name $\dot{C}$ for a club at stage $\lambda$
if $A_\lambda=\dot{C}\cap Q_\lambda$.
\end{enumerate}
\end{remark}

All properties for being a $P_\lambda$-certificate (with the exception
of Condition \ref{def:keydefP0-3}\ref{def:keydefP0-3-c} of Def. \ref{def:keydefPkappa}) are expressible by
$\mathcal{L}^1_\lambda$-sentences of $\bigwedge\bigvee$-type, henceforth 
will be true in the term model given by a generic filter for $P_\lambda$ (which is a 
maximally $\mathcal{W}_\lambda$-consistent set of atomic $\mathcal{L}^1_\lambda$-sentences).
In order to assert that also \ref{def:keydefP0-3}\ref{def:keydefP0-3-c} is satisfied  in the generic term model
we use a density argument.

On the other hand (as for $P^*_0$), the $\mathcal{L}^{1}_\lambda$-structures whose domain is a 
$P_\lambda$-certificate can still be
characterized by a $\mathfrak{L}_{\infty,\omega}$-theory; to do so however we must expand the 
language adding a predicate symbol for the filter
$[\Sigma_\nu\cap\SL{\lambda}]^{<\omega}$ given by the $P_\lambda$-witness in order to formalize 
Condition \ref{def:keydefP0-3}\ref{def:keydefP0-3-c} of Def. \ref{def:keydefPkappa} in the extended signature.

\begin{definition}
$\mathcal{L}^{1*}$ is obtained from $\mathcal{L}^{1}$ adding:

\begin{enumerate}[(a)]

\setcounter{enumi}{22}
\item  \label{def:defL0-r}
A unary predicate symbol $\dot{H}$ of type $\dot{Q}$ to be interpreted by $[\Sigma_\nu\cap\SL{\lambda}]^{<\omega}$
by any $\lambda$-admissible assignment $\nu$.
\end{enumerate}

$\mathcal{L}^{1*}_\lambda=\mathcal{L}^1_\lambda\cup\bp{\dot{H}}$
\end{definition} 

\begin{notation}
From now on we identify a $\lambda$-admissible $\nu$ by its restriction to the fragment given by
\[
\mathcal{L}^*=\bp{\dot{X}_{\alpha},\dot{Z}_\alpha:n<\omega,\alpha<\omega_1^V}\cup\bp{\dot{K},\dot{F}_0,\dot{r},\dot{f}}.
\]
We also consider that the $\lambda$-admissible $\nu$ are defined on the whole $\mathcal{L}^{1*}_\lambda$.
\end{notation}

\begin{remark}\label{rmk:domainlambdaadmnu}
it is clear that any $\nu_0$ defined on $\mathcal{L}^*$ admits at most one extension to a $\lambda$-admissible 
$\nu:\mathcal{L}^{1*}_\lambda\to X_{(\mathcal{J},r,f,F,K)}$: If $\nu(\dot{r})=r,\nu(\dot{f})=f$, $(r,f)\in T$, and
$\Cod(r)=(N_0,a_0)$, there is at most one iteration $\mathcal{J}$ of
$N_0$ such that $j_{0\omega_1^V}(a_0)=A$ such that $(\mathcal{J},r,f,T)$ is weak semantic certificate for $A$; now
there is at most one $\lambda$-precertificate $(\mathcal{J},r,f,F,K)$ such that $F_0=\nu(\dot{F}_0)$, $K=\nu(\dot{K})$,
$F_1(\alpha)=X_\alpha=\nu(\dot{X}_{\alpha})$, $F_2(\alpha)=Z_\alpha$ for all $\alpha<\omega_1^V$.
\end{remark}

\smallskip

We now introduce a $\mathfrak{L}_{\infty,\omega}$-theory whose models are (modulo isomorphism) 
exactly the $P_\lambda$-witnesses. We will use axioms expressible in the signature $\mathcal{L}^1_\lambda$ to capture all the properties characterizing 
a $\lambda$-precertificate satisfying also Condition \ref{def:keydefP0-3}\ref{def:keydefP0-3-a} of Def. \ref{def:keydefP0}. We will need
the extra predicate symbol $\dot{H}$ to axiomatize Condition \ref{def:keydefP0-3}\ref{def:keydefP0-3-c} of Def. \ref{def:keydefP0}.
 
 
 \begin{lemma}\label{fac:keyfacSEMCERTPLAMBDA}
Let $\Sigma^{1}_\lambda$ be the $\mathcal{L}^1_\lambda$-theory for 
$\mathfrak{L}_{\infty,\omega}$ containing:
\begin{itemize}
\item The $\mathcal{L}^1_\lambda$-equality axioms and the 
$\mathcal{L}^1_\lambda$-quantifier elimination axioms.
\item
All axioms of $\Sigma^0$. 
\item The following list of axioms:

\begin{enumerate}[(I)]
\setcounter{enumi}{3}
\item \label{fac:keyfacSEMCERTP-LAMBDA-X}
\emph{}

\begin{enumerate}
\item \label{fac:keyfacSEMCERTP-LAMBDA-00}
For all constant symbols $c$ of sort $\dot{Q}$
\[
\bigvee_{x\in Q_\lambda}\check{\check{x}}=c.
\]
\item \label{fac:keyfacSEMCERTP0-LAMBDA-01}
For all $x, y\in Q_\lambda$ 
\[
(\check{\check{x}}\in \check{\check{y}})\text{ if $x\in y$,}
\] 
\[
(\check{\check{x}}\not\in \check{\check{y}}) \text{ if $x\not\in y$.} 
\]
\item \label{fac:keyfacSEMCERTP-LAMBDA-02}
For all constant symbols $c$ of sort $\dot{Q}$
\[
c\in \dot{K}\rightarrow c\in\check{\check{\omega}}_1^V.
\]
\item \label{fac:keyfacSEMCERTP-LAMBDA-03}
For all $\alpha\in\omega_1^V$
\[
(\check{\check{\alpha}}\not\in\dot{K})\rightarrow \qp{(\dot{F}_0(\check{\check{\alpha}})=\check{\check{0}})\wedge 
(\bigwedge_{n\in\omega}\dot{e}_{n,\alpha}=\check{\check{0}})\wedge \dot{Z}_\alpha=\check{\check{0}}}.
\]
\item \label{fac:keyfacSEMCERTP-LAMBDA-04}
For all constant symbols $c,d,e$ of sort $\dot{Q}$ 
\[
(\dot{F}_0(c)=d\wedge c\in \dot{K})\rightarrow\bigvee_{\eta\in C\cap \lambda}(d=\check{\check{\eta}}),
\]
\[
[(\dot{F}_0(c)=d)\wedge (\dot{F}_0(c)=e)]\rightarrow (d=e),
\]

\item \label{fac:keyfacSEMCERTP-LAMBDA-1}
For all constant symbols $c$ of sort $\dot{Q}$ and $\alpha<\omega_1^V$
\[
c\in \dot{X}_\alpha\leftrightarrow\bigvee_{n<\omega}\dot{e}_{n,\alpha}=c.
\]
\item \label{fac:keyfacSEMCERTP-LAMBDA-2}
For all constant symbols $c$ of sort $\dot{Q}$and $\alpha<\omega_1^V$
\[
\check{\check{\alpha}}\in \dot{K}\rightarrow \qp{(c\in \dot{X}_\alpha\wedge c<\check{\check{\omega}}_1^V)\leftrightarrow c\in\check{\check{\alpha}}}.
\]

\item \label{fac:keyfacSEMCERTP-LAMBDA-3}
For all constant symbols $c$ of sort $\dot{Q}$ and $\alpha<\beta<\omega_1^V$
\[
(\check{\check{\alpha}}\in\dot{K}\wedge\check{\check{\beta}}\in\dot{K})\rightarrow (c\in \dot{X}_\alpha\rightarrow c\in \dot{X}_\beta).
\]

\item \label{fac:keyfacSEMCERTP-LAMBDA-4}
For all constant symbols $c,d$ of sort $\dot{Q}$ and $\alpha<\beta<\omega_1^V$
\[
\qp{\check{\check{\alpha}}\in\dot{K}\wedge\check{\check{\beta}}\in\dot{K}\wedge\dot{F}_0(\check{\check{\alpha}})=c\wedge \dot{F}_0(\check{\check{\beta}})=d}
\rightarrow c\in d.
\]
\item \label{fac:keyfacSEMCERTP-LAMBDA-5}
For $\alpha<\omega_1^V,\eta<\kappa$, 
$\mathcal{R}=\bp{\in,U_0,U_1}$ the signature extending $\bp{\in}$ with unary predicates
$U_0,U_1$, let $\Psi_{\alpha\eta}$ be the sentence
\[
\bigwedge_{\psi\in \mathrm{Form}_{\mathcal{R}},n<\omega,
i_1,\dots,i_n<\omega}\,\qp{\Sat_{0,\alpha}(\gp{\psi(e_{i_1,\alpha},\dots,e_{i_n,\alpha})})
\leftrightarrow \Sat_{1,\alpha}(\gp{\psi(e_{i_1,\alpha},\dots,e_{i_n,\alpha})})} 
\]
where $\mathrm{Form}_{\mathcal{R}}$ denotes the set of $\mathcal{R}$-formulae.

We add the axioms
\begin{align*}
\qp{(\check{\check{\alpha}}\in\dot{K})\wedge (\dot{F}_0(\check{\check{\alpha}})=\check{\check{\eta}})}
\rightarrow \Psi_{\alpha\eta}
\end{align*}
for all $\alpha<\omega_1^V,\eta\in C\cap\lambda$.
%
%
%
\item \label{fac:keyfacSEMCERTP-LAMBDA-6}
For all $\alpha<\omega_1^V$, $\eta\in C\cap\lambda$ 
\[
(\check{\check{\alpha}}\in\dot{K}\wedge \dot{F}_0(\check{\check{\alpha}})=\check{\check{\eta}})\rightarrow 
\qp{(\bigwedge_{n\in\omega}\bigvee_{x\in Q_\eta} \dot{e}_{n,\alpha}=\check{\check{x}})
\wedge \dot{Z}_{\alpha}=\check{\check{P}}_\eta}.
\]
%
%


\end{enumerate}
\end{enumerate}
\end{itemize}

For any atomic sentence $\psi\in\SL{\kappa}$, let $x_\psi=\bp{\psi}$.
$\Sigma^{1*}_\lambda$ adds to $\Sigma^1_\lambda$ the axioms:
\begin{enumerate}[(I)]
\setcounter{enumi}{4}
\item \label{fac:keyfacSEMCERTP-LAMBDA-Y}
\emph{}
\begin{enumerate}
\item \label{fac:keyfacSEMCERTP-LAMBDA-Y1}
for all atomic sentences $\psi\in\SL{\lambda}$, the axiom
\[
\psi\leftrightarrow (\check{\check{x}}_\psi\in\dot{H})
\]
\item \label{fac:keyfacSEMCERTP-LAMBDA-Y2}
for all $\alpha<\omega_1^V$, $\eta\in C\cap \lambda$, $c_1,\dots,c_n\in\bp{\dot{e}_{n,\alpha}:n<\omega}$, $\psi(y,x_1,\dots,x_n)$ formula of
$\bp{\in,U_0,U_1}$ the axiom
\begin{align*}
&\qp{(\check{\check{\alpha}}\in \dot{K})\wedge(\dot{F}_0(\check{\check{\alpha}})=\check{\check{\eta}})\wedge
\Sat_{0,\alpha}(\gp{\forall x \qp{U_0(x)\rightarrow \exists y\, (U_0(y)\wedge x\subseteq y\wedge\psi(y,c_1,\dots,c_n))} })}&\\
&\rightarrow&\\ 
&\bigvee_{n<\omega} \qp{\Sat_{0,\alpha}(\gp{U_0(\dot{e}_{n,\alpha})\wedge\psi(\dot{e}_{n,\alpha},c_1,\dots,c_n)})\wedge \dot{e}_{n,\alpha}\in \dot{H}}&
\end{align*}
\end{enumerate}
\end{enumerate}

\smallskip

Then for any $\lambda\in C\cup\bp{\kappa}$:
\begin{enumerate}
\item \label{keyfactPkappa0}
$\Sigma^{1}_\lambda$ and $\Sigma^{1*}_\lambda$ are standard theories of $\bigwedge\bigvee$-type.
Moreover any model of $\Sigma^{1}_\lambda$ is such that $\in_{\dot{Q}}$ and $\in_{\dot{N}_\alpha}$ for 
$\alpha<\omega_1^V$ are interpreted by well-founded extensional relations whose transitive collapses determines 
a unique $\lambda$-precertificate $(\mathcal{J},r,f,F,K)$.

\item \label{keyfactPkappa1}
Given a $\lambda$-admissible $\nu:\mathcal{L}^1_\lambda\to X_{(\mathcal{J},r,f,F,K)}$, let 
$\nu^*$ extend $\nu$ to $\mathcal{L}^{1*}_\lambda$ by letting $\nu(\dot{H})=[\Sigma_\nu\cap\SL{\lambda}]^{<\omega}$.
Then
$\Sigma^{1*}_\lambda\subseteq\Sigma_{\nu^*}$ if and only if $\nu$ is a weak $P_\lambda$-witness for some weak 
$P_\lambda$-certificate
$(\mathcal{J},r,f,F,K)$.

\item \label{keyfactPkappa2}
Assume 
$\mathcal{N}$ is a $\mathfrak{L}_{\infty,\omega}$-model of 
$\Sigma^{1*}_\lambda$ existing in some transitive model $W$, and
let
$\pi^s_{\mathcal{N}}$ denote the Mostowski collapsing map of the structure $(s^{\mathcal{N}},\in_{s})$ for each sort $s$ of 
$\mathcal{L}^1_\lambda$. 

Then the multi-map defined on each $c$ of sort
$s$ by 
\[
\nu_{\mathcal{N}}:c\mapsto \pi^s_{\mathcal{N}}(c^{\mathcal{N}})
\] 
and naturally defined on the predicates\footnote{I.e. in accordance with Warning \ref{war:1str}.} of $\mathcal{N}$
can be uniquely extended to a weak $P_\lambda$-witness for the weak $P_\lambda$-certificate $(\mathcal{J},r,f,F,K)$.
\end{enumerate}
\end{lemma}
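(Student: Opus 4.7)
The plan is to handle the three items in sequence, reducing most of the work to the multi-sorted analogue of Fact~\ref{fac:keyfacSEMCERTP0} and then isolating the new content contributed by the sort $\dot{Q}$ and the predicate symbol $\dot{H}$.

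For item (\ref{keyfactPkappa0}), the verification that $\Sigma^{1}_\lambda$ and $\Sigma^{1*}_\lambda$ are standard and of $\bigwedge\bigvee$-type is a direct syntactic inspection of the axioms in groups (I)--(V) (each listed sentence is visibly of the required shape). Well-foundedness and the uniqueness of the transitive collapse are then argued in two stages. First, the $\mathcal{L}^0$-fragment of $\mathcal{N}$ satisfies $\Sigma^0$, so Fact~\ref{fac:keyfacSEMCERTP0} yields that the sorts $\dot{N}_\alpha$ collapse uniquely and give rise to a weak semantic certificate $(\mathcal{J},r,f,T)$ for $A$ with $j_{0\omega_1^V}(a_0)=A$. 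Second, by the quantifier elimination axiom every element of sort $\dot{Q}$ is equal to some $\check{\check{x}}^{\mathcal{N}}$, and axioms (IV)(\ref{fac:keyfacSEMCERTP-LAMBDA-00}) and (IV)(\ref{fac:keyfacSEMCERTP0-LAMBDA-01}) show that the map $x\mapsto\check{\check{x}}^{\mathcal{N}}$ is an $\in$-isomorphism from $(Q_\lambda,\in)$ onto $(\dot{Q}^{\mathcal{N}},\in_{\dot{Q}}^{\mathcal{N}})$; in particular $\in_{\dot{Q}}^{\mathcal{N}}$ is well-founded and its transitive collapse equals $Q_\lambda$ itself. The remaining axioms of group (IV) then translate, after the collapses, into the defining clauses of a $\lambda$-precertificate: axiom (\ref{fac:keyfacSEMCERTP-LAMBDA-02}) gives $K\subseteq\omega_1^V$, axiom (\ref{fac:keyfacSEMCERTP-LAMBDA-03}) forces the degenerate value $\ap{0,1,0}$ on $\omega_1^V\setminus K$, axioms (\ref{fac:keyfacSEMCERTP-LAMBDA-04})--(\ref{fac:keyfacSEMCERTP-LAMBDA-4}) yield that $F_0$ is a well-defined increasing function into $C\cap\lambda$ and that the $X_\alpha=F_1(\alpha)$ are $\subseteq$-increasing, axioms (\ref{fac:keyfacSEMCERTP-LAMBDA-1})--(\ref{fac:keyfacSEMCERTP-LAMBDA-2}) give $X_\alpha\cap\omega_1^V=\alpha$, axiom (\ref{fac:keyfacSEMCERTP-LAMBDA-5}) gives the elementarity of $X_\alpha$ in $(Q_{F_0(\alpha)},\in,Z_\alpha,A_{F_0(\alpha)})$, and axiom (\ref{fac:keyfacSEMCERTP-LAMBDA-6}) forces $Z_\alpha=P_{F_0(\alpha)}$ and $X_\alpha\subseteq Q_{F_0(\alpha)}$.

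For item (\ref{keyfactPkappa1}), the forward direction is a direct check: if $\nu$ is a weak $P_\lambda$-witness for a weak $P_\lambda$-certificate $(\mathcal{J},r,f,F,K)$, axioms of group (I)--(IV) and of $\Sigma^0$ all lie in $\Sigma_{\nu^*}$ by the very definitions of $\lambda$-admissible assignment, $\lambda$-precertificate and condition~(\ref{def:keydefP0-3-a}) of Definition~\ref{def:keydefPkappa}; axiom (V)(\ref{fac:keyfacSEMCERTP-LAMBDA-Y1}) is tautological given the stipulation $\nu^*(\dot{H})=[\Sigma_\nu\cap\SL{\lambda}]^{<\omega}$. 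The only substantive clause is (V)(\ref{fac:keyfacSEMCERTP-LAMBDA-Y2}): reading the predicate $U_0$ in the $\Sat_{0,\alpha}$-relativization as the sort picked out by $P_{F_0(\alpha)}=Z_\alpha$ (using (IV)(\ref{fac:keyfacSEMCERTP-LAMBDA-5}) and (IV)(\ref{fac:keyfacSEMCERTP-LAMBDA-6})), this axiom is exactly the rephrasing of condition~(\ref{def:keydefP0-3-c}) of Definition~\ref{def:keydefPkappa} asserting that $E\cap X_\alpha\cap[\Sigma_\nu]^{<\omega}\neq\emptyset$ for every dense $E\subseteq P_{F_0(\alpha)}$ definable over $(Q_{F_0(\alpha)},\in,P_{F_0(\alpha)},A_{F_0(\alpha)})$ from parameters in $X_\alpha$. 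Conversely, if $\Sigma^{1*}_\lambda\subseteq\Sigma_{\nu^*}$, item~(\ref{keyfactPkappa0}) produces a $\lambda$-precertificate $(\mathcal{J},r,f,F,K)$, (IV)(\ref{fac:keyfacSEMCERTP-LAMBDA-6}) enforces $F_2(\alpha)=P_{F_0(\alpha)}$ (so (i) of Def.~\ref{def:keydefPkappa} holds), and (V)(\ref{fac:keyfacSEMCERTP-LAMBDA-Y2}) together with (V)(\ref{fac:keyfacSEMCERTP-LAMBDA-Y1}) forces (iii), so $\nu$ is indeed a weak $P_\lambda$-witness.

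Item (\ref{keyfactPkappa2}) is now immediate by composition: given $\mathcal{N}\models\Sigma^{1*}_\lambda$ in a transitive $W$, item~(\ref{keyfactPkappa0}) gives a unique $\lambda$-precertificate $(\mathcal{J},r,f,F,K)\in W$ obtained by assembling the Mostowski collapses $\pi^s_{\mathcal{N}}$ for each sort $s$; the induced multi-map $\nu_{\mathcal{N}}$ is then a $\lambda$-admissible assignment by construction, and item~(\ref{keyfactPkappa1}) upgrades it to a weak $P_\lambda$-witness, with uniqueness of the extension to the full signature $\mathcal{L}^{1*}_\lambda$ granted by Remark~\ref{rmk:domainlambdaadmnu}. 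The genuinely delicate point, and the only place where care is needed beyond the bookkeeping inherited from Fact~\ref{fac:keyfacSEMCERTP0}, is confirming that the higher-order density clause (iii) of Definition~\ref{def:keydefPkappa} really does reduce to the single $\bigwedge\bigvee$-axiom (V)(\ref{fac:keyfacSEMCERTP-LAMBDA-Y2}): this hinges on the observation that the satisfaction predicate $\Sat_{0,\alpha}$ internally ranges over all $\bp{\in,U_0,U_1}$-formulae and that any dense subset $E$ definable over $(Q_{F_0(\alpha)},\in,P_{F_0(\alpha)},A_{F_0(\alpha)})$ from parameters in $X_\alpha$ reflects to $X_\alpha$ by the elementarity clause $\Psi_{\alpha,F_0(\alpha)}$ of axiom (IV)(\ref{fac:keyfacSEMCERTP-LAMBDA-5}).
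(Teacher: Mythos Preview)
Your proposal is correct and follows essentially the same approach as the paper: reduce to Fact~\ref{fac:keyfacSEMCERTP0} for the $\mathcal{L}^0$-part, argue well-foundedness of $\in_{\dot{Q}}$ via the isomorphism $x\mapsto\check{\check{x}}^{\mathcal{N}}$ with $(Q_\lambda,\in)$ forced by axioms (IV)(\ref{fac:keyfacSEMCERTP-LAMBDA-00})--(IV)(\ref{fac:keyfacSEMCERTP0-LAMBDA-01}), and then match the remaining axioms of groups (IV) and (V) against the clauses of Definitions~\ref{def:semcert1} and~\ref{def:keydefPkappa}, invoking Remark~\ref{rmk:domainlambdaadmnu} for uniqueness. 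The paper's own proof is a two-line sketch pointing to exactly these ingredients; you have simply spelled out the bookkeeping it leaves to the reader. One small slip: when you write ``forces (iii)'' in item~(\ref{keyfactPkappa1}) you mean condition~(\ref{def:keydefP0-3-c}), which in Definition~\ref{def:keydefPkappa} is labeled (ii), not (iii).
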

\begin{proof}
Along the lines of the proof of Fact \ref{fac:keyfacSEMCERTP0}, using also Remark \ref{rmk:domainlambdaadmnu}.
\begin{description}
\item[\ref{keyfactPkappa0}]
The key point is that the new sorts, constants, and relation symbols define elements or subsets
of the structure $(Q_\lambda,\in)$, hence they cannot produce witnesses to the ill-foundedness of $\in_{\dot{Q}}^{\mathcal{N}}$.

\item[\ref{keyfactPkappa1}] Left to the reader: the new axioms are exactly added to check that any $\nu^*$ witnessing them is
such that $\nu^*\restriction \mathcal{L}^1_\lambda$ is a weak $P_\lambda$-witness.
 
\item[\ref{keyfactPkappa2}] Left to the reader: same argument as for the previous item.
\end{description}
 \end{proof}

This is the first non-trivial result for the forcings $P_\lambda$:

\begin{lemma} \label{lem:permittedPlambda}  
Let $\mathcal{M}_H$ be the canonical term model for $\mathcal{L}^1_\lambda$ induced by 
$\Sigma_H$ whenever 
$H$ is a
$V$-generic filter $H$ for $P_\lambda$ (according to Def. \ref{def:conspropforcing}).

Then $\mathcal{M}_H$ 
models $\Sigma^{1}_\lambda$ and $\in_s^{\mathcal{M}_H}$ is well-founded and extensional 
on each sort $s$ for which $\in_s$ is a predicate symbol of
$\mathcal{L}^1_\lambda$.

\smallskip
Moreover,
let: 
\begin{itemize}
\item
$(\mathcal{J}_H,r_H,f_H,F_H,K_H)$ be the precertificate induced by $H$ via the Mostowski collapse $\pi^s_H$ of 
each sort $s$ of $\mathcal{M}_H$ for which $\in_s$ is in $\mathcal{L}^1_\lambda$ 
with 
$F_H(\alpha)=\ap{\eta^H_\alpha,X^H_\alpha,Z^H_\alpha}$ for all $\alpha<\omega_1^V$, 
\item
$\nu_H:c\mapsto\pi^s_H([c]_H)$ be the associated admissible assignment naturally 
extended\footnote{I.e. in accordance with Warning \ref{war:1str}.} to the relation symbols of
$\mathcal{L}^1_\lambda$. 
\end{itemize}
Then
$(\mathcal{J}_H,r_H,f_H,F_H,K_H)$ is a $P_\lambda$-certificate and $\nu_H$ is a $P_\lambda$-witness
such that $H=[\Sigma_{\nu_H}]^{<\omega}\cap \SL{\lambda}$.
Hence the natural extension of $\mathcal{M}_H$ to an $\mathcal{L}^{1*}_\lambda$-structure makes it a model of $\Sigma^{1*}_\lambda$.
\end{lemma}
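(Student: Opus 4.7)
The plan is to derive the conclusion from the characterization of $P_\lambda$-witnesses by the theory $\Sigma^{1*}_\lambda$ provided by Lemma \ref{fac:keyfacSEMCERTPLAMBDA}, combined with the general machinery for consistency-property forcings developed in Lemma \ref{lem:keyconspropforcing} and Fact \ref{fac:SigmaAconsprop}. First, I would observe that by item \ref{keyfactPkappa1} of Lemma \ref{fac:keyfacSEMCERTPLAMBDA}, every $P_\lambda$-witness $\nu$ has the property that its natural extension $\nu^*$ to $\mathcal{L}^{1*}_\lambda$ (obtained by interpreting $\dot{H}$ as $[\Sigma_\nu\cap\SL{\lambda}]^{<\omega}$) satisfies $\Sigma^{1*}_\lambda\subseteq\Sigma_{\nu^*}$. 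Since $\Sigma^{1*}_\lambda$ is standard and of $\bigwedge\bigvee$-type (item \ref{keyfactPkappa0} of Lemma \ref{fac:keyfacSEMCERTPLAMBDA}), Fact \ref{fac:SigmaAconsprop} applies: every axiom of $\Sigma^{1*}_\lambda$ is forced by the empty condition to hold in the generic term model. This gives $\mathcal{M}^*_H\models\Sigma^{1*}_\lambda$, where $\mathcal{M}^*_H$ is the expansion of $\mathcal{M}_H$ obtained by interpreting $\dot{H}$ as $H$ itself -- here one uses that the axioms of type \ref{fac:keyfacSEMCERTP-LAMBDA-Y1} pin down the interpretation of $\dot{H}$ to be exactly $\{[\check{\check{x}}_\psi]_H:\psi\in\Sigma_H\cap\SL{\lambda}\}$, which after the Mostowski collapse becomes $H$ itself.

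Second, by item \ref{keyfactPkappa0} of Lemma \ref{fac:keyfacSEMCERTPLAMBDA}, the fact that $\mathcal{M}_H\models\Sigma^1_\lambda$ forces $\in_s^{\mathcal{M}_H}$ to be well-founded and extensional on every sort $s$ for which $\in_s$ is a predicate symbol; hence the Mostowski collapses $\pi^s_H$ are well-defined and produce a unique $\lambda$-precertificate $(\mathcal{J}_H,r_H,f_H,F_H,K_H)$ together with the associated admissible assignment $\nu_H$ in the sense of Def.~\ref{def:keydefnuPkappa}. Applying item \ref{keyfactPkappa2} of Lemma \ref{fac:keyfacSEMCERTPLAMBDA} to $\mathcal{M}^*_H$, which lives in $V[H]$, gives that $\nu_H$ is a weak $P_\lambda$-witness and $(\mathcal{J}_H,r_H,f_H,F_H,K_H)$ a weak $P_\lambda$-certificate; since $V[H]$ is a generic extension of $V$, both are in fact $P_\lambda$-witnesses and $P_\lambda$-certificates respectively.

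The identity $H=[\Sigma_{\nu_H}]^{<\omega}\cap\SL{\lambda}$ is then a formal consequence of maximality of $H$: the inclusion $\subseteq$ holds because every $p\in H$ is, by construction of $\mathcal{M}_H$, a finite set of atomic $\mathcal{L}^1_\lambda$-sentences realized by $\nu_H$; the converse inclusion holds because any $\psi\in\Sigma_{\nu_H}\cap\SL{\lambda}$ is compatible with every condition $p\in H$ (as $p\cup\{\psi\}\subseteq\Sigma_{\nu_H}$ witnesses its membership in $P_\lambda$), and maximality of $H$ then forces $\{\psi\}\in H$.

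The main conceptual obstacle lies not in the present argument but in Lemma \ref{fac:keyfacSEMCERTPLAMBDA} itself, which already did the nontrivial syntactic work of matching the semantic notion of a $P_\lambda$-witness against a standard $\bigwedge\bigvee$-type theory. Once that lemma is available, the proof of the present lemma reduces to the translation outlined above. The one delicate point to double-check is that the sealing axioms \ref{fac:keyfacSEMCERTP-LAMBDA-Y2}, which encode the density condition \ref{def:keydefP0-3}\ref{def:keydefP0-3-c} of Def.~\ref{def:keydefPkappa}, are genuinely of $\bigwedge\bigvee$-type and do belong to $\Sigma_{\nu^*}$ for every $P_\lambda$-witness $\nu$; this is precisely what item \ref{keyfactPkappa1} of Lemma \ref{fac:keyfacSEMCERTPLAMBDA} asserts, and it is exactly this ingredient that makes the interpretation of $\dot{H}$ in $\mathcal{M}^*_H$ behave as a generic filter meeting sufficiently many dense sets inside each $X^H_\alpha$.
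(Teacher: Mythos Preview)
There is a genuine gap in your treatment of the axioms of type \ref{fac:keyfacSEMCERTP-LAMBDA-Y2}. You invoke Fact~\ref{fac:SigmaAconsprop} to conclude that $\mathcal{M}^*_H\models\Sigma^{1*}_\lambda$, but that fact is stated and proved for the forcing $P_{\mathcal{A}}$ and for $\bigwedge\bigvee$-sentences whose atomic constituents $\psi_{ij}$ lie in the signature of $P_{\mathcal{A}}$: its proof shows that the sets $D_i=\{q\in P_{\mathcal{A}}:\exists j\ \psi_{ij}\in q\}$ are dense. Here the forcing is $P_\lambda=P_{\mathcal{W}_\lambda}$, a consistency property for the signature $\mathcal{L}^1_\lambda$, so conditions $q\in P_\lambda$ are finite sets of atomic $\mathcal{L}^1_\lambda$-sentences. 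The axioms \ref{fac:keyfacSEMCERTP-LAMBDA-Y2} contain the atom $\dot{e}_{n,\alpha}\in\dot{H}$, which is an $\mathcal{L}^{1*}_\lambda$-atom but \emph{not} an $\mathcal{L}^1_\lambda$-atom; hence no $q\in P_\lambda$ can ever contain it, and the density of the corresponding $D_i$ fails outright. Knowing that $\Sigma^{1*}_\lambda\subseteq\Sigma_{\nu^*}$ for every $P_\lambda$-witness $\nu$ does not help: the $\nu^*$ are $\mathcal{L}^{1*}_\lambda$-assignments, and the associated forcing $P_{\mathcal{W}^*_\lambda}$ is a different (larger) poset than $P_\lambda$; you have not argued that a $V$-generic for $P_\lambda$ induces one for $P_{\mathcal{W}^*_\lambda}$.

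The paper flags exactly this obstruction (``for this we cannot use a syntactic argument, since the axioms we must check are not part of the theory $\Sigma_H$ \dots\ the symbol $\dot{H}$ is not part of the signature'') and replaces the syntactic step by an explicit density argument in $P_\lambda$. Given $\alpha\in K_H$ with $F^H_0(\alpha)=\eta$ and a dense $D^*\subseteq P_\eta$ definable with parameters in $X^H_\alpha$, one first finds $p_0\in H$ recording $(\check{\check{\alpha}}\in\dot{K})$, $(\dot{F}_0(\check{\check{\alpha}})=\check{\check{\eta}})$ and the relevant parameter assignments, and then shows that
\[
E=\{p\leq p_0:\ \exists\, q_p\in D^*\ \exists\, n_p\ (q_p\subseteq p\ \wedge\ (\check{\check{q}}_p=\dot{e}_{n_p,\alpha})\in p)\}
\]
is dense below $p_0$. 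Density of $E$ uses the defining Condition~\ref{def:keydefP0-3}\ref{def:keydefP0-3-c}: any $P_\lambda$-witness $\nu$ realizing $p\leq p_0$ already has some $q_0\in[\Sigma_\nu]^{<\omega}\cap X_\alpha\cap D^*$, so $p\cup q_0\cup\{(\check{\check{q}}_0=\dot{e}_{m,\alpha})\}$ (for suitable $m$) lies in $E$. Picking $p^*\in E\cap H$ then supplies the witness $q_{p^*}\in D^*\cap X^H_\alpha\cap[\Sigma_{\nu_H}]^{<\omega}$. In effect this argument \emph{translates} the forbidden atom $\dot{e}_{n,\alpha}\in\dot{H}$ into the $\mathcal{L}^1_\lambda$-data ``$q_p\subseteq p$ and $(\check{\check{q}}_p=\dot{e}_{n_p,\alpha})\in p$''; that translation is the missing idea in your proposal. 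Your treatment of $\Sigma^1_\lambda$, of the axioms \ref{fac:keyfacSEMCERTP-LAMBDA-Y1}, and of the identity $H=[\Sigma_{\nu_H}]^{<\omega}\cap\SL{\lambda}$ is fine.
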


\begin{proof}
By Lemma \ref{lem:keyconspropforcing} and Fact \ref{fac:keyfacSEMCERTPLAMBDA}
$\mathcal{M}_H$ is a $\mathcal{L}^1_\lambda$-structure which
models $\Sigma^{1}_\lambda$ whenever $H$ is $V$-generic for $P_\lambda$, since $\Sigma^{1}_\lambda$ is standard of 
$\bigwedge\bigvee$-type, and is contained in
$\Sigma_\nu$ for all $P_\eta$-witnesses $\nu$. By Fact \ref{fac:keyfacSEMCERTPLAMBDA} the map 
$\nu_H:\mathcal{L}^1_\lambda\to X^\lambda_{(\mathcal{J}_H,r_H,f_H,F_H,K_H)}$ is a $\lambda$-admissible interpretation.

We must now check that the natural extension of $\nu_H$ to a $\mathcal{L}^{1*}_\lambda$-assignment $\nu^*_H$ 
obtained by letting $\nu^*_H(\dot{H})=H$
satisfies also the missing axioms of $\Sigma^{1*}_\lambda$. For this we cannot use a syntactic argument, since
the axioms we must check are not part of the theory $\Sigma_H$ one uses to define $\mathcal{M}_H$ and $\nu_H$
(the symbol $\dot{H}$ is not part of the signature for the theory $\Sigma_H$).

The Axioms \ref{fac:keyfacSEMCERTP-LAMBDA-Y1} are in $\Sigma_{\nu^*_H}$ almost by definition.

We must also check that Axioms \ref{fac:keyfacSEMCERTP-LAMBDA-Y2} are also in $\Sigma_{\nu^*_H}$.
For this we use a density argument on $P_\lambda$:

Assume $\alpha\in K_H$ and $F^H_0(\alpha)=\eta$ for some $\eta\in C\cap\lambda$.
Let $D^*$ be a dense subset  of $P_\eta$ definable in $(Q_\eta,\in,P_\eta,A_\eta)$ with parameters $a_1,\dots,a_n$ 
in $X^H_\alpha$.
Find $i_1,\dots,i_n$ such that $\check{\check{a}}_j=\dot{e}_{j,\alpha}\in \Sigma_{\nu_H}$ for all $j=1,\dots,n$. 
Then
\[
p_0=\bp{\check{\check{a}}_j=\dot{e}_{j,\alpha}:j=1,\dots,n}\cup\bp{(\dot{F}_0(\check{\check{\alpha}})=\check{\check{\eta}}),
\check{\check{\alpha}}\in\dot{K}}\in H
\]

Now consider the set 
\[
E=\bp{p\leq p_0: \text{ for some  $q_p\subseteq p$, $q_p\in D^*$ and there is $n_p$ such that }
(\check{q}_p=\dot{e}_{n_p,\alpha})\in p}
\]

We claim that $E\in V$ is dense in $P_\lambda$ below $p_0$:
Assume $p\in P_\lambda$ refines $p_0$.
Then any $P_\lambda$-witness $\nu$ with $p\subseteq\Sigma_\nu$ and associated $P_\lambda$-certificate
$(\mathcal{J},r,f,F,K)$ with $F_1(\alpha)=X_\alpha$
is such that
there is some $q_0\in [\Sigma_\nu]^{<\omega}\cap X_\alpha\cap D^*$, since $D^*$ is a dense subset of $P_\eta\cap Q_\eta$ definable
in parameters $a_1,\dots,a_n\in X_\alpha$ in the structure $(Q_\eta,\in,P_\eta,A_\eta)$, by 
Condition \ref{def:keydefP0-3}\ref{def:keydefP0-3-c} applied to $\nu$.
 
Since $\Sigma^1_\lambda\subseteq\Sigma_\nu$, $\check{q}_0=\dot{e}_{m,\alpha}\in\Sigma_\nu$ for some $m<\omega$.
Now we let
$q=p\cup q_0\cup\bp{(\check{\check{q}}_0=\dot{e}_{m,\alpha})}$. Then $q\in E$ refines $p$.

Therefore $E$ is dense below $p_0$.

Since $p_0\in H$ and $E$ is dense below $p_0$, we can find $p^*\leq p_0\in E\cap H$.
Since $p^*\subseteq \Sigma_{\nu_H}$, we conclude that
Condition \ref{def:keydefP0-3}\ref{def:keydefP0-3-c} is satisfied by $\nu_H$ and $X_{(\mathcal{J}_H,r_H,f_H,F_H,K_H)}$
for $E$ and $X^H_\alpha$  by 
$q_{p^*}\in[\Sigma_{\nu_H}]^{<\omega}\cap D\cap X^H_\alpha$.
$q_{p^*}$ witnesses that $\mathcal{M}_H$ satisfies the conclusion of Axiom \ref{fac:keyfacSEMCERTP-LAMBDA-Y2}
instantiated for $D$ and $\alpha$.

\end{proof}

The following is a key observation explaining how $\Diamond_\kappa$ and Condition \ref{def:keydefP0-3}\ref{def:keydefP0-3-c} 
can be used for proving that $P_\kappa$ is stationary set preserving. 
\begin{lemma}\label{fac:keyfacSEMCERTPLAMBDA-1}
Assume $\dot{X}$ is a $P_\kappa$-name for an unbounded subset of $\omega_1^V$.
Then there are stationarily many $\eta<\kappa$ such that $A_\eta=\dot{X}\cap Q_\lambda$ and
\[
(Q_\eta,\in,P_\eta,A_\eta)\prec (H_\kappa,\in,P_\kappa,\dot{X})
\]
Assume further that for some such $\eta$,
$r_{\alpha\eta}=\bp{\check{\alpha}\in\dot{K},\dot{F}_0(\check{\alpha})=\check{\eta}}$ is a condition in $P_\kappa$
for some $\alpha<\omega_1^V$.

Then 
\[
r_{\alpha\eta}\Vdash_{P_\kappa}\check{\alpha}\text{ is a limit point of $\dot{X}$}.
\]
\end{lemma}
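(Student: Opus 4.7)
My plan is to split the statement into two parts: the stationarity claim, which is a routine diamond-plus-reflection argument, and the forcing claim, which will be obtained by combining the structure of the generic $P_\kappa$-certificate with the density-inside-$X_\alpha$ clause in the definition of a $P_\kappa$-witness.

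For the first part, note that the set $C^{*}\subseteq C$ of $\eta$ such that $P_\eta=P_\kappa\cap Q_\eta$ and $(Q_\eta,\in,P_\eta)\prec(H_\kappa,\in,P_\kappa)$ contains a club: this uses the continuity and elementarity built into the sequence $\{Q_\lambda:\lambda\in C\}$ together with Remark \ref{rmk:basicPkappa}(\ref{rmk:itemrelPkappa}) and the fact that $P_\kappa$ is definable in $(H_\kappa,\in,R,T,C,\Diamond_\kappa)$. The guessing property of the $\Diamond_\kappa$-sequence applied to $Y=\dot X$ then supplies stationarily many $\eta\in C^{*}$ with $A_\eta=\dot X\cap Q_\eta$, and for any such $\eta$ the Tarski--Vaught test upgrades the equality $A_\eta=\dot X\cap Q_\eta$ to the full elementarity $(Q_\eta,\in,P_\eta,A_\eta)\prec(H_\kappa,\in,P_\kappa,\dot X)$, since every instance of ``$x\in\dot X$'' with parameter in $Q_\eta$ coincides with ``$x\in A_\eta$''.

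For the second part, fix such an $\eta$ together with $\alpha<\omega_1^V$ so that $r_{\alpha\eta}\in P_\kappa$, and let $H$ be a $V$-generic filter for $P_\kappa$ with $r_{\alpha\eta}\in H$. Write $\nu_H$ for the generic $P_\kappa$-witness associated to $H$ and $(\mathcal J_H,r_H,f_H,F_H,K_H)$ for the induced $P_\kappa$-certificate, as in Lemma \ref{lem:permittedPlambda}. Since $r_{\alpha\eta}\subseteq\Sigma_{\nu_H}$, necessarily $\alpha\in K_H$ and $F^H_0(\alpha)=\eta$; writing $X_\alpha=F^H_1(\alpha)$, the defining clauses of a weak $\kappa$-precertificate guarantee $X_\alpha\prec(Q_\eta,\in,P_\eta,A_\eta)$ and $X_\alpha\cap\omega_1^V=\alpha$. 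To show $\alpha$ is a limit point of $\dot X_H$, it is enough to produce, for every $\beta<\alpha$, some $\gamma$ with $\beta<\gamma<\alpha$ and $\gamma\in\dot X_H$. So fix $\beta<\alpha$, hence $\beta\in X_\alpha$, and consider the set
\[
D^{*}_\beta\;=\;\{\,p\in P_\eta:\exists\gamma>\beta\ \ p\Vdash_{P_\eta}\check\gamma\in A_\eta\,\},
\]
which is definable in $(Q_\eta,\in,P_\eta,A_\eta)$ from the parameter $\beta\in X_\alpha$ (using that the forcing relation $\Vdash_{P_\eta}$ for atomic statements $\check\gamma\in A_\eta$ is definable inside $Q_\eta$) and which is dense in $P_\eta$ because, by the elementarity established in the first part, it corresponds to the dense set witnessing that $\dot X$ is forced unbounded in $\omega_1$. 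Clause \ref{def:keydefP0-3}(\ref{def:keydefP0-3-c}) of Definition \ref{def:keydefPkappa}, applied to $\nu_H$ at the stage $\alpha$, then supplies some $q_\beta\in D^{*}_\beta\cap X_\alpha$ whose atomic sentences all lie in $\Sigma_{\nu_H}$, so that $q_\beta\in H$.

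Finally, since $q_\beta,\beta\in X_\alpha$ and $X_\alpha\prec(Q_\eta,\in,P_\eta,A_\eta)$, elementarity applied to $q_\beta\in D^{*}_\beta$ yields some $\gamma\in X_\alpha$ with $\beta<\gamma$ and $q_\beta\Vdash_{P_\eta}\check\gamma\in A_\eta$; the constraint $X_\alpha\cap\omega_1^V=\alpha$ forces $\gamma<\alpha$. Transferring this forcing statement along $(Q_\eta,\in,P_\eta,A_\eta)\prec(H_\kappa,\in,P_\kappa,\dot X)$ gives $q_\beta\Vdash_{P_\kappa}\check\gamma\in\dot X$, and since $q_\beta\in H$ we conclude $\gamma\in\dot X_H$, as required. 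The main obstacle I anticipate is the careful bookkeeping across the three layers of elementarity in play --- the promotion of the diamond guess $A_\eta=\dot X\cap Q_\eta$ to genuine elementarity, the internal countable substructure $X_\alpha\prec(Q_\eta,\ldots)$, and the absoluteness of the forcing relation for atomic sentences $\check\gamma\in\dot X$ between $Q_\eta$ and $H_\kappa$ --- together with the verification that the witness $\gamma$ chosen via elementarity in $X_\alpha$ indeed falls below $\alpha$.
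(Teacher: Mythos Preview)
Your proof is correct and follows essentially the same line as the paper's. The only cosmetic difference is that you define the dense sets $D^*_\beta$ internally in $(Q_\eta,\in,P_\eta,A_\eta)$ using $\Vdash_{P_\eta}$, whereas the paper defines $D_\xi$ in $(H_\kappa,\in,P_\kappa,\dot X)$ using $\Vdash_{P_\kappa}$ and then restricts to $Q_\eta$; by the elementarity $(Q_\eta,\in,P_\eta,A_\eta)\prec(H_\kappa,\in,P_\kappa,\dot X)$ these two descriptions coincide on parameters in $Q_\eta$, so the arguments are interchangeable.
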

\begin{proof}
$\dot{X}$ is naturally represented by a subset of $P_\kappa\times\omega_1$ identifying the pairs $(p,\alpha)$ such that
$p\Vdash_\kappa\check{\alpha}\in \dot{X}$. Hence $\dot{X}\subseteq H_\kappa$.
By $\Diamond_\kappa$ there are stationarily many $\eta<\kappa$ such that
\[
(Q_\eta,\in,P_\eta,A_\eta)\prec (H_\kappa,\in,P_\kappa,\dot{X}).
\]
Let $H$ be $P_\kappa$-generic with $r_{\alpha\eta}\in H$.
By the previous Lemma we obtain that $H$ induces a unique $P_\kappa$-certificate $(\mathcal{J}_H,r_H,f_H,F_H,K_H)$
and a $P_\kappa$-witness $\nu_H$ associated to $(\mathcal{J}_H,r_H,f_H,F_H,K_H)$ such that
$H=[\Sigma_{\nu_H}]^{<\omega}\cap \SL{\kappa}$.

Now the sets 
\[
D_\xi=\bp{p\in P_\kappa: \text{ there is }\omega_1^V>\gamma>\xi \, (p\Vdash_{P_\kappa}\gamma\in \dot{X})}
\]
are dense in $P_\kappa$ and definable in parameter $\xi$ in the structure $(H_\kappa,\in, P_\kappa,\dot{X})$
for any $\xi<\omega_1^V$.
By assumption
\begin{equation}\label{eqn:useofdiamond}
(Q_\eta,\in,P_\eta,A_\eta)\prec (H_\kappa,\in,P_\kappa,\dot{X}),
\end{equation}
while (letting $F_H(\alpha)=\ap{\eta,X_\alpha,Z_\alpha}$) Condition \ref{def:keydefP0-3} of Def. \ref{def:keydefPkappa} ensures that:
\begin{enumerate}[(i)]\label{enum:useofdiamond}
\item\label{enum:useofdiamond-1}
$Z_\alpha=P_\eta$,
\item\label{enum:useofdiamond-2}
$(X_\alpha,\in,Z_\alpha\cap X_\alpha,A_\eta\cap X_\alpha)\prec (Q_\eta,\in,P_\eta,A_\eta)$,
\item\label{enum:useofdiamond-3}
$X_\alpha\cap\omega_1^V=\alpha$,
\item\label{enum:useofdiamond-4}
$A\cap X_\alpha\cap H$ is non empty for all $A$ dense subset of of $P_\eta$ definable by parameters in $X_\alpha$.
\end{enumerate}
By \ref{eqn:useofdiamond}, we get that $D_\xi\cap P_\eta$ is a dense subset of $P_\eta$ definable in 
$(Q_\eta,\in,P_\eta,A_\eta)$
by parameter $\xi<\omega_1^V$. By \ref{enum:useofdiamond-1}, \ref{enum:useofdiamond-2}, \ref{enum:useofdiamond-3}, we get that
$D_\xi\cap P_\eta$ is definable in 
\[
(X_\alpha,\in,Z_\alpha\cap X_\alpha,A_\eta\cap X_\alpha)
\]
for all $\xi<\alpha$.
By \ref{enum:useofdiamond-4} we get that there is some $q\in H\cap D_\xi\cap X_\alpha$.
Now observe that for any $q$ in this set, 
the least $\gamma>\xi$ such that $q$ forces $\gamma\in D_\xi$ is such that $\gamma<\alpha$ (by 
\ref{enum:useofdiamond-2} and \ref{enum:useofdiamond-3}).
This gives that $\alpha$ is a limit point of $\dot{X}_H$ in $V[H]$.
We are done.
\end{proof}

We are now ready to complete the proof of Theorem \ref{thm:mainthm00}.


%

\begin{proof}
Fix $\dot{C}$ $P_\kappa$-name for a club subset of $\omega_1^V$.

Towards a contradiction assume that
for some $H$ $V$-generic for $P_\kappa$ there is $S\in N_{\omega_1}^H$ such that
\[
\dot{C}_H\cap S=\emptyset,
\]
but
\[
(N_{\omega_1}^H,\in)\models S\subseteq\omega_1^V\text{ is stationary}.
\]

Since $S\in N_{\omega_1}^H$, $S=\nu_H(\dot{c}_{j,\omega_1^V})$, 
 for some $j<\omega_1^V$.

This gives that
\[
\Sat_{N_{\omega_1^V}}(\gp{\dot{c}_{j,\omega_1^V}\subseteq\dot{\omega}_1^V\text{ is stationary}})\in \Sigma_H.
\]
Note that the latter is an atomic sentence of $\mathcal{L}^1$.
Therefore
%
\[
p=
\bp{\Sat_{N_{\omega_1^V}}(\gp{\dot{c}_{j,\omega_1^V}\subseteq\dot{\omega}_1^V\text{ is stationary}})}\in H
\]
and some $p_0\in H$ refining $p$ forces
\[
\dot{C}\cap \nu_{\dot{H}}(\dot{c}_{j,\omega_1^V})=\emptyset.
\]

Then for any $V$-generic filter $H$ for $P_\kappa$ with $p_0\in H$, $V[H]$ models that
\[
\dot{C}_H\cap \nu_H(\dot{c}_{j,\omega_1^V})=\emptyset
\]
and
\[
(N^H_{\omega_1^V},\in)\models \nu_H(\dot{c}_{j,\omega_1^V})\subseteq \omega_1
\text{ is stationary}.
\]
Our aim is to find $r\leq p_0$ forcing that $\dot{C}\cap \nu_H(c_{j,\omega_1^V})$ is non-empty, thus reaching a contradiction.

By $\Diamond_\kappa$, 
\[
S_{\dot{C}}=\bp{\lambda<\kappa: A_\lambda=\dot{C}\cap Q_\lambda}
\]
is stationary.

Pick $\lambda\in S_{\dot{C}}$ with $p_0\in P_\lambda=P_\kappa\cap Q_\lambda$ and
\[
(Q_{\lambda},\in,P_\lambda, A_\lambda)\prec (H_\kappa,\in,P_\kappa,\dot{C}).
\]

Note that for $\alpha<\omega_1^V$ the sets
\[
D_\alpha=\bp{p\in P_\kappa: \, \exists \xi>\alpha \, (p\Vdash_{P_\kappa}\check{\xi}\in\dot{C})}
\]
are dense open in $P_\kappa$, hence such that $D_\alpha\cap Q_\lambda$ is dense in $P_\lambda$,
and definable in parameter $\alpha$ 
in the structure $(Q_\lambda,\in,P_\lambda,A_\lambda)$.

Now to find this $r$ we proceed along the lines of the proof of Lemma \ref{lem:keylemASPSCH(*)0} and use 
Lemma \ref{fac:keyfacSEMCERTPLAMBDA-1} in the generic ultrapower $\bar{M}$ to be defined below.

Let $G$ be $V$-generic for $\Coll(\omega,\kappa)$. Now $2^\lambda\leq\kappa$ (by $\Diamond_\kappa$), 
and $p_0\in P_\lambda=P_\kappa\cap Q_\lambda$ with $P_\lambda$ in $V$ of size $|\lambda|<\kappa$.
Hence
we can fix in
$V[G]$ a $V$-generic filter $H$ for $P_\lambda$ with $p_0\in H$. 
By Lemma \ref{lem:permittedPlambda}, the structure
$\mathcal{M}_H$ for $\mathcal{L}^{1}_\lambda$ induced by 
$H$ is such that all the relations $\in_s$ are well founded, and the transitive collapse of their domains via the Mostowski map $\pi^s_H$ extend to a multi-map which is an isomorphism of 
$\mathcal{M}_H$ with
the $\mathcal{L}^{1}_\lambda$-structure 
\[
\mathcal{X}_{(\mathcal{J}_H,r_H,f_H,F_H,K_H)}
\]
with $(\mathcal{J}_H,r_H,f_H,F_H,K_H)$ a $P_\lambda$-certificate and
$\nu_H:c\mapsto \pi^s_H([c]_H)$ for $c$ a constant of sort $s$ a $P_\lambda$-witness.
We take note that $(\mathcal{J}_H,r_H,f_H,T)$ is a semantic certificate for  $A,D$.
To simplify notation we let $\nu_H=\nu$ in what follows.

%
%

%
%
%
%
%

  
Let us denote
\[
\mathcal{J}_H=\bp{j_{\alpha\beta}:N_\alpha\to N_\beta:\, \alpha\leq\beta\leq\omega_1^V}.
\] 

Note that all these objects (i.e. $A,\dot{C},S,T,\Diamond_\kappa,P_\lambda,\nu, \mathcal{J}_H,r_H,F_H,K_H,\dots$) 
belong to $H_{\omega_1}^{V[G]}$. 

Let now $G_0$ be in $V[G]$ an $N_{\omega_1^V}$-generic filter for $(\pow{\omega_1}/\NS_{\omega_1})^{N_{\omega_1^V}}$ with 
$S=\nu_H(\dot{c}_{j,\omega_1^V})\in G_0$ 
($G_0$ exists because $S$ is stationary in $N_{\omega_1^V}$). 
Let
$k:N_{\omega_1^V}\to N^*_1$ be the ultrapower embedding induced by $G_0$.
This gives that $\omega_1^V\in k(S)$. 
Extend $k$ in $V[G]$ to an iteration
\[
\mathcal{K}=\bp{k_{\alpha\beta}:N^*_\alpha\to N^*_\beta:\, \alpha\leq\beta\leq\omega_1^{V[G]}=\rho}
\] 
of
$N_{\omega_1^V}=N^*_0$ with 
$k=k_{01}$ using the fact that $N_0$ (and therefore also $N_{\omega_1^V}=N_0^*$) is iterable in $V[G]$.

Note that
\begin{equation}\label{eqn:omega1VinS}
\omega_1^V\in k_{0\rho}(S)=k_{0\rho}(\nu_H(\dot{c}_{j,\omega_1^V})).
\end{equation}

Since $H_{\omega_2}^V\subseteq N_{\omega_1^V}$, $\NS^V=\NS^{N_{\omega_1^V}}\cap V$, and 
$V$ models $\NS_{\omega_1}$\emph{ is saturated}, we can extend uniquely
$k_{0\rho}\restriction H_{\omega_2}^V$ to an elementary 
$\bar{k}:V\to\bar{M}$ applying \cite[Lemma 1.5, Lemma 1.6]{HSTLARSON} 
to\footnote{More precisely,
these assumptions on $V$ and $N_{\omega_1^V}$ give that all maximal antichains of
$(\pow{\omega_1}/_\NS)^{V}$ are in $N_{\omega_1^V}$ (hence $G_0\cap V$ is $V$-generic for
$(\pow{\omega_1}/_\NS)^V$). 
Inductively, letting
$\bp{G_\alpha:\alpha<\rho}$ be the family of filters on $(\pow{\omega_1}/_\NS)^{k_{0\alpha}(N_{\omega_1}^V)}$ defining
$\mathcal{K}$, 
and assuming the existence of
$\bar{k}_{0\alpha}:V\to\bar{M}_\alpha$ unique extension to $V$ of 
$k_{0\alpha}\restriction H_{\omega_2}^V$, 
one can check (by elementarity of $\bar{k}_{0\alpha}$ and of $k_{0_\alpha}$)
that 
\[
\NS_{\omega_1}^{\bar{M}_\alpha}=\NS_{\omega_1}^{N^*_\alpha}\cap\bar{k}_{0\alpha}(H_{\omega_2}^V),
\] 
and that
$\bar{M}_\alpha$ models  $\NS_{\omega_1}$\emph{ is saturated}; this gives that all maximal antichains of
$(\pow{\omega_1}/_\NS)^{\bar{M}_{\alpha}}$ are maximal antichains of 
$(\pow{\omega_1}/_\NS)^{N^*_\alpha}$; therefore  one can also inductively infer that
$G_\alpha\cap\bar{M}_\alpha$ 
is $\bar{M}_\alpha$-generic for $(\pow{\omega_1}/_\NS)^{\bar{M}_\alpha}$.
We conclude that $\bp{G_\alpha\cap\bar{M}_\alpha:\alpha<\rho}$ gives rise to a unique iteration 
\[
\bp{\bar{k}_{\alpha\beta}:\alpha\leq\beta\leq\rho}
\]
of $V$
such that $k_{0\beta}\restriction H_{\omega_2}^V=\bar{k}_{0\beta}\restriction H_{\omega_2}^V$ for all $\beta\leq\rho$.
We can set $\bar{k}=\bar{k}_{0\rho}$.} 
$k_{0\rho}\restriction H_{\omega_2}^V$.


Let us denote by
\[
\bar{\mathcal{J}}=\bp{\bar{j}_{\alpha\beta}:N_\alpha\to N_\beta:\, \alpha\leq\beta\leq\omega_1^{V[G]}=\rho}
\] 
the iteration obtained letting $\bar{j}_{\alpha\beta}=j_{\alpha\beta}$ if $\beta\leq\omega_1^V$,
$\bar{j}_{\omega_1^V+\alpha\beta}=k_{\alpha\beta}$,
$\bar{j}_{\alpha\beta}=k_{0\beta}\circ j_{\alpha\omega_1^V}$ if $\beta\geq\omega_1^V\geq\alpha$.




We can easily check this first property of $\bar{\mathcal{J}}$ (recall Def. \ref{def:weaksemcert0}):

\begin{claim}\label{fac:auxfacPkappa}
$(\bar{\mathcal{J}},r_H,\bar{k}[f_H],\bar{k}(T))$ is in $V[G]$ 
a weak semantic certificate for $\bar{k}(A)$.
\end{claim}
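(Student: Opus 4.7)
The plan is to verify one by one the six defining clauses of Def.~\ref{def:weaksemcert0} for the tuple $(\bar{\mathcal{J}},r_H,\bar{k}[f_H],\bar{k}(T))$ relative to $\bar M$ (the ambient outer model being $V[G]$). Throughout, I would use two key features of the construction: first, $\bar{k}:V\to\bar M$ has critical point $\omega_1^V$ and extends $k_{0\rho}\restriction H_{\omega_2}^V$; second, $(\mathcal{J}_H,r_H,f_H,T)$ is already known to be a weak semantic certificate for $A$ (relative to $V$) by Lemma~\ref{lem:permittedPlambda}, so all clauses of Def.~\ref{def:weaksemcert0} hold for it at the level of $V$. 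Note also that $\bar k(\omega_1^V)=\rho=\omega_1^{V[G]}=\omega_1^{\bar M}$, so $\bar{\mathcal{J}}$ has the correct length for an iteration indexed by $\omega_1^{\bar M}$.

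For clause~\ref{def:semcert0-2bis}, since $(s,t)\in T$ iff $s\in 2^{<\omega}$ and $t\in(\omega_2^V)^{<\omega}$, and $\bar k$ sends $T$ to a tree on $(2\times\bar k(\omega_2^V))^{<\omega}$ acting as the identity on the first coordinate and via $\bar k\restriction\omega_2^V$ on the second, the branch $(r_H,f_H)$ through $T$ is carried to the branch $(r_H,\bar k[f_H])$ through $\bar k(T)$. For clause~\ref{def:semcert0-3}, $\bar{\mathcal{J}}$ is by construction the concatenation of $\mathcal{J}_H$ with the iteration $\mathcal{K}$ of $N_{\omega_1^V}=N_0^*$, so it is an iteration of $N_0$ of length $\rho=\omega_1^{\bar M}$, and $N_0$ is iterable in $V[G]$ since it is countable and iterable already in $V[H]\subseteq V[G]$. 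For clauses~\ref{def:semcert0-4} and \ref{def:semcert0-4bis}, note that $A\cap\omega_1^{N_0}\in H_{\omega_1}^V$ sits below the critical point of $\bar k$, hence $\bar k(A)\cap\omega_1^{N_0}=A\cap\omega_1^{N_0}$, so $\Cod(r_H)=(N_0,A\cap\omega_1^{N_0})=(N_0,\bar k(A)\cap\omega_1^{N_0})$. Moreover, $\bar j_{0\rho}=k_{0\rho}\circ j_{0\omega_1^V}$ by the definition of $\bar{\mathcal{J}}$, so
\[
\bar j_{0\rho}(\bar k(A)\cap\omega_1^{N_0})=k_{0\rho}(j_{0\omega_1^V}(A\cap\omega_1^{N_0}))=k_{0\rho}(A)=\bar k(A),
\]
using that $j_{0\omega_1^V}(A\cap\omega_1^{N_0})=A$ for the semantic certificate $(\mathcal{J}_H,r_H,f_H,T)$ and that $\bar k$ extends $k_{0\rho}\restriction H_{\omega_2}^V\ni A$.

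Clauses \ref{def:semcert0-5} and \ref{def:semcert0-6} follow by applying $k_{0\rho}$ to the corresponding facts already available at level $\omega_1^V$. Since $H_{\omega_2}^V\subseteq N_{\omega_1^V}$, elementarity of $k_{0\rho}$ gives $k_{0\rho}(H_{\omega_2}^V)\subseteq k_{0\rho}(N_{\omega_1^V})=N_\rho^*$; but $k_{0\rho}(H_{\omega_2}^V)=\bar k(H_{\omega_2}^V)=H_{\omega_2^{\bar M}}^{\bar M}=H_{\omega_2}^{\bar M}$, because $\bar k=k_{0\rho}$ on $H_{\omega_2}^V$ and $\bar k(\omega_2^V)=\omega_2^{\bar M}$. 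Similarly, from $\NS_{\omega_1}^V=\NS_{\omega_1}^{N_{\omega_1^V}}\cap\pow{\omega_1}^V$ and elementarity we get
\[
\NS_{\omega_1}^{\bar M}=\bar k(\NS_{\omega_1}^V)=k_{0\rho}(\NS_{\omega_1}^{N_{\omega_1^V}})\cap k_{0\rho}(\pow{\omega_1}^V)=\NS_{\omega_1}^{N_\rho^*}\cap\pow{\omega_1}^{\bar M}.
\]

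Essentially everything is a bookkeeping exercise once one has identified that $\bar k$ agrees with $k_{0\rho}$ on $H_{\omega_2}^V$ and is the identity on $H_{\omega_1}^V$. The only point that calls for a small amount of care is clause~\ref{def:semcert0-6}, where one must be careful to distinguish between $\NS$-ness computed in $\bar M$, in $N_\rho^*$, and in $V[G]$; the argument outlined above uses only elementarity of $k_{0\rho}$ (and its compatibility with $\bar k$ on $\pow{\omega_1}^V\subseteq H_{\omega_2}^V$), so no extra input is needed.
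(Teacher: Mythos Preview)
Your approach is exactly the paper's: verify the clauses of Def.~\ref{def:weaksemcert0} via the elementarity of $\bar j_{0\rho}$ and $\bar k$. Clauses~\ref{def:semcert0-2bis}--\ref{def:semcert0-4bis} are handled correctly.

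There is, however, a genuine slip in your treatment of clauses~\ref{def:semcert0-5} and~\ref{def:semcert0-6}. You write ``elementarity of $k_{0\rho}$ gives $k_{0\rho}(H_{\omega_2}^V)\subseteq k_{0\rho}(N_{\omega_1^V})=N_\rho^*$'' and then justify $k_{0\rho}(H_{\omega_2}^V)=\bar k(H_{\omega_2}^V)$ by ``$\bar k=k_{0\rho}$ on $H_{\omega_2}^V$''. But $H_{\omega_2}^V$ is only known to be a \emph{subset} of $N_{\omega_1^V}=\dom(k_{0\rho})$, not an element, so $k_{0\rho}(H_{\omega_2}^V)$ is not a priori meaningful; and the pointwise agreement $\bar k\restriction H_{\omega_2}^V=k_{0\rho}\restriction H_{\omega_2}^V$ only yields $\bar k[H_{\omega_2}^V]=k_{0\rho}[H_{\omega_2}^V]\subseteq N_\rho^*$, which is strictly weaker than $H_{\omega_2}^{\bar M}=\bar k(H_{\omega_2}^V)\subseteq N_\rho^*$. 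The same issue arises in your display for clause~\ref{def:semcert0-6}, where you apply $k_{0\rho}$ to $\pow{\omega_1}^V$.

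The fix is already present in the paper, in the footnote accompanying the construction of $\bar k$: one argues by induction on $\alpha\le\rho$ that $H_{\omega_2}^{\bar M_\alpha}\subseteq N^*_\alpha$ and $\NS_{\omega_1}^{\bar M_\alpha}=\NS_{\omega_1}^{N^*_\alpha}\cap\pow{\omega_1}^{\bar M_\alpha}$, together with $\bar k_{0\alpha}\restriction H_{\omega_2}^V=k_{0\alpha}\restriction H_{\omega_2}^V$. The successor step uses that any $y\in H_{\omega_2}^{\bar M_{\alpha+1}}$ is represented by some $f:\omega_1\to H_{\omega_2}^{\bar M_\alpha}$ with $f\in\bar M_\alpha$; since such $f$ lies in $H_{\omega_2}^{\bar M_\alpha}\subseteq N^*_\alpha$ by induction, the same function represents an element of $N^*_{\alpha+1}$, and saturation of $\NS$ in $\bar M_\alpha$ ensures the generics match. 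Once this induction is in place, clauses~\ref{def:semcert0-5} and~\ref{def:semcert0-6} are just the case $\alpha=\rho$.
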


\begin{proof}
The $\Delta_1$-properties outlined in Remark \ref{rmk:keyremsemcert} are easily checked for $(\bar{\mathcal{J}},r_H,\bar{k}[f_H],\bar{k}(T))$
using the elementarity of $\bar{j}_{0\rho}$ and of 
$\bar{k}$. 
\end{proof}

Our aim will be to reinforce this conclusion to the following:

\begin{claim}\label{clm:fundclmrproof}
Let:
\begin{itemize}
\item
$f^*=\bar{k}[f_H]$;
\item
$K^*=K_H\cup\bp{\omega_1^V}$; 
\item for $\alpha\in \omega_1^V$
\[
F^*_0(\alpha)=\bar{k}((F_H)_0(\alpha)),\qquad F^*_1(\alpha)=\bar{k}[(F_H)_1(\alpha)],\qquad F^*_2(\alpha)=\bar{k}((F_H)_2(\alpha)),
\]
\item
$F^*_0(\omega_1^V)=\bar{k}(\lambda),\qquad F^*_1(\omega_1^V)=\bar{k}[Q_\lambda],\qquad F^*_2(\omega_1^V)=\bar{k}(P_\lambda)$;
\item
$F^*(\beta)=\ap{0,1,0}$
for all $\rho>\beta>\omega_1^V$.
\end{itemize}
Then 
in $V[G]$ there is a $\bar{k}(\kappa)$-assignment 
\[
\bar{\nu}:\bar{k}(\mathcal{L}^{1}_\kappa)\to X_{(\bar{\mathcal{J}},r_H,f^*,F^*,K^*)}
\] 
such that:
\begin{itemize}
\item
\begin{equation}\label{eqn:barnuproperty}
\Sigma_{\bar{\nu}}\supseteq\bar{k}[\cup H]\supseteq \bar{k}(p_0),
\end{equation}
\[
(\dot{\omega}_1^V\in\dot{K}),(\dot{F}_0(\dot{\omega}_1^V)=\bar{k}(\lambda))\in \Sigma_{\bar{\nu}},
\]
\item
$\bar{\nu}$ is a 
weak $\bar{k}(P_{\kappa})$-witness making $(\bar{\mathcal{J}},r_H,f^*,F^*,K^*)$ a weak $\bar{k}(P_\kappa)$-certificate 
relative to\footnote{See Remark \ref{rmk:basicPkappa}\ref{rmk:itemrelPkappa}.} $\bar{M}$.
\end{itemize}
\end{claim}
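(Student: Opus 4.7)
The plan is to construct $\bar{\nu}$ explicitly in $V[G]$ as the natural pushforward of $\nu$ via $\bar{k}$, extended by forced choices at the new index $\omega_1^V$, and then to verify the required properties using elementarity of $\bar{k}$ together with $V$-genericity of $H$ for $P_\lambda$. The argument parallels the modular pattern described after Remark \ref{rmk:keyremsemcert}, but here no Shoenfield reflection is needed because $V[G]$ already contains everything: $\bar{M}$, the iteration $\bar{\mathcal{J}}$, and the requisite countable approximations.

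First I would check that $(\bar{\mathcal{J}},r_H,f^*,F^*,K^*)$ is a weak $\bar{k}(\kappa)$-precertificate relative to $\bar{M}$. That $(\bar{\mathcal{J}},r_H,f^*,\bar{k}(T))$ is a weak semantic certificate for $\bar{k}(A)$ is exactly Claim \ref{fac:auxfacPkappa}. The structural requirements on $F^*$ on the set $K_H$ transfer from $F^H$ by elementarity of $\bar{k}$: $\bar{k}[X_\alpha]$ is an elementary substructure of $(\bar{k}(Q_{\lambda_\alpha}),\in,\bar{k}(P_{\lambda_\alpha}),\bar{k}(A_{\lambda_\alpha}))$ with $\bar{k}[X_\alpha]\cap\omega_1^{\bar{M}}=\alpha$ since $\mathrm{crit}(\bar{k})=\omega_1^V$ and $\alpha<\omega_1^V$. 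At the new index $\omega_1^V$: $\bar{k}[Q_\lambda]\prec(\bar{k}(Q_\lambda),\in,\bar{k}(P_\lambda),\bar{k}(A_\lambda))$ by elementarity, $\bar{k}[Q_\lambda]$ is countable in $V[G]$ (since $\kappa$ has been collapsed), and $\bar{k}[Q_\lambda]\cap\omega_1^{\bar{M}}=\omega_1^V$ because the critical point of $\bar{k}$ is $\omega_1^V$.

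Next I would define $\bar{\nu}$: On constants lying in $\bar{k}[\mathcal{L}^1_\lambda]$, set $\bar{\nu}(\bar{k}(c))=\bar{k}(\nu(c))$. On the remaining constants of $\bar{k}(\mathcal{L}^1_\kappa)$---in particular those indexed by ordinals in $[\omega_1^V,\omega_1^{\bar{M}})$ and those naming elements of $\bar{k}(H_\kappa)\setminus\bar{k}[H_\kappa]$---I would fix in $V[G]$ enumerations of $\bar{k}[Q_\lambda]$ and of each sort of $\mathcal{X}^{\bar{k}(\kappa)}_{(\bar{\mathcal{J}},r_H,f^*,F^*,K^*)}$, and assign remaining constants of each sort accordingly, following the admissibility prescriptions of Def.~\ref{def:keydefnu} and Def.~\ref{def:keydefnuPkappa}. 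Predicates are then interpreted canonically (Warning \ref{war:1str}), and Remark \ref{rmk:domainlambdaadmnu} ensures the whole assignment is determined once these choices are made. Property (\ref{eqn:barnuproperty}) is immediate: every atomic sentence $\psi\in\cup H\subseteq\Sigma_\nu$ is mapped by $\bar{k}$ to a sentence $\bar{k}(\psi)$ satisfied by $\bar{\nu}$, by construction and elementarity; the two constraints $(\dot{\omega}_1^V\in\dot{K})$ and $(\dot{F}_0(\dot{\omega}_1^V)=\bar{k}(\lambda))$ hold by definition of $F^*$ at $\omega_1^V$.

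The main obstacle is verifying Condition \ref{def:keydefP0-3}\ref{def:keydefP0-3-c} of Def.~\ref{def:keydefPkappa} at every $\alpha\in K^*$. For $\alpha\in K_H$ this is routine: any dense $E\in\bar{M}$ of $\bar{k}(P_{\lambda_\alpha})$ definable in $(\bar{k}(Q_{\lambda_\alpha}),\in,\bar{k}(P_{\lambda_\alpha}),\bar{k}(A_{\lambda_\alpha}))$ from parameters in $\bar{k}[X_\alpha]$ is the $\bar{k}$-image of a dense $D\in V$ of $P_{\lambda_\alpha}$ definable in $(Q_{\lambda_\alpha},\in,P_{\lambda_\alpha},A_{\lambda_\alpha})$ with parameters in $X_\alpha$; by Condition \ref{def:keydefP0-3}\ref{def:keydefP0-3-c} applied to $\nu$ we obtain $q\in X_\alpha\cap[\Sigma_\nu]^{<\omega}\cap D$, and then $\bar{k}(q)\in\bar{k}[X_\alpha]\cap[\Sigma_{\bar{\nu}}]^{<\omega}\cap E$. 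The case $\alpha=\omega_1^V$ is the critical one, and is precisely where $V$-genericity of $H$ for $P_\lambda$ is used: a dense $E\in\bar{M}$ of $\bar{k}(P_\lambda)$ definable in $(\bar{k}(Q_\lambda),\in,\bar{k}(P_\lambda),\bar{k}(A_\lambda))$ with parameters in $\bar{k}[Q_\lambda]$ pulls back via $\bar{k}$ to a dense subset $D$ of $P_\lambda$ in $V$ definable with parameters in $Q_\lambda$; by genericity of $H$ pick $q\in H\cap D$; then $\bar{k}(q)\in\bar{k}[H]\subseteq[\Sigma_{\bar{\nu}}]^{<\omega}$ lies in $\bar{k}[Q_\lambda]\cap E$, as required. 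This completes the witness check and yields the Claim.
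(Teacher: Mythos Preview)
Your overall strategy---checking the precertificate conditions, defining $\bar{\nu}$, and then verifying Condition \ref{def:keydefP0-3}\ref{def:keydefP0-3-c} at each $\alpha\in K^*$ using elementarity of $\bar{k}$ for $\alpha\in K_H$ and $V$-genericity of $H$ for $\alpha=\omega_1^V$---matches the paper exactly, and your treatment of \ref{def:keydefP0-3}\ref{def:keydefP0-3-c} is correct. The gap is in the construction of $\bar{\nu}$ itself and hence in the verification of (\ref{eqn:barnuproperty}).

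You propose the uniform rule $\bar{\nu}(\bar{k}(c))=\bar{k}(\nu(c))$ for all constants $c\in\mathcal{L}^1_\lambda$. This fails on the $\mathcal{L}^0$-part of the language. For a constant $\dot{c}_{n,\alpha}$ of sort $\dot{N}_\alpha$ (any $\alpha\leq\omega_1^V$), the value $\nu(\dot{c}_{n,\alpha})$ lies in $N_\alpha$, a transitive set living in $V[G]$ that need not be contained in $V$; so $\bar{k}(\nu(\dot{c}_{n,\alpha}))$ is in general undefined, since $\bar{k}$ has domain $V$. Even where it is defined there is a structural mismatch: the target structure $\mathcal{X}^{\bar{k}(\kappa)}_{(\bar{\mathcal{J}},r_H,f^*,F^*,K^*)}$ interprets the sort $\dot{N}_\alpha$ for $\alpha<\omega_1^V$ by the \emph{same} $N_\alpha$ (since $\bar{\mathcal{J}}$ extends $\mathcal{J}_H$), and interprets the top sort by $N^*_\rho$, into which the relevant map is the iteration embedding $\bar{j}_{\omega_1^V\rho}$, not $\bar{k}$. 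The paper therefore defines $\bar{\nu}$ as a genuine multi-map acting differently on different sorts: the identity on constants of sort $\dot{N}_\alpha$ for $\alpha<\omega_1^V$; $\bar{j}_{\omega_1^V\rho}\circ\nu$ on constants of sort $\dot{N}_{\omega_1^V}$; and $\bar{k}\circ\nu$ on constants of sort $\dot{Q}$. The containment $\Sigma_{\bar{\nu}}\supseteq\bar{k}[\cup H]$ is then not a one-line ``by elementarity'' but the content of a separate subclaim (Subclaim~\ref{subclm:keyXXX} in the paper), proved by a case analysis over the atomic $\mathcal{L}^1_\lambda$-sentences, using elementarity of $\bar{k}$ on the $\dot{Q}$-sort, elementarity of $\bar{j}_{\omega_1^V\rho}$ on the $\dot{N}_{\omega_1^V}$-sort, and the identity on the lower $\dot{N}_\alpha$-sorts. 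The multi-sorted formalism is essential precisely here: the maps $\bar{k}$ and $\bar{j}_{\omega_1^V\rho}$ may disagree on $N_{\omega_1^V}\cap V$, but since no atomic sentence of the multi-sorted language compares constants of sort $\dot{Q}$ with constants of sort $\dot{N}_{\omega_1^V}$, this potential conflict never has to be resolved. Your uniform rule collapses this distinction and so cannot be completed to an admissible assignment into the intended target.
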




Suppose we succeed to prove the Claim. 
Let $\bar{G}$ be $V[G]$-generic for $\Coll(\omega,\rho)$. Then all the parameters needed to check the above
are in $H_{\omega_1}^{V[\bar{G}]}$. 

Now
\[
(H_{\omega_1}^{\bar{M}[\bar{G}]},\tau_{\ST}^{\bar{M}[\bar{G}]}) 
\]
is a $\Sigma_1$-substructure  of
\[
(H_{\omega_1}^{V[\bar{G}]},\tau_{\ST}^{V[\bar{G}]}). 
\]
Therefore it
models 
\begin{quote}
There are a weak $\bar{k}(P_\kappa)$-certificate $(\mathcal{J}^*,r^*,f^*,F^*,K^*)$ and a weak 
$\bar{k}(P_\kappa)$-witness
$\bar{\nu}$ for $(\bar{\mathcal{J}},r^*,f^*,F^*,K^*)$  such that 
\[
p^*=(\bp{(\omega_1^V\in K^*), 
(F^*_0(\omega_1^V)=\bar{k}(\lambda))}\cup \bar{k}(p_0))\subseteq \Sigma_{\bar{\nu}}.
\]
\end{quote}
This gives that $\bar{M}$ models that $\Coll(\omega,\rho)$ forces that there are a weak $\bar{k}(P_\kappa)$-certificate 
$(\mathcal{J}^*,r^*,f^*,F^*,K^*)$ and 
$\bar{k}(P_\kappa)$-witness $\bar{\nu}$ with $p^*\subseteq \Sigma_{\bar{\nu}}$. 

We conclude that $\bar{M}$ models that
$p^*=\bar{k}(p_0)\cup\bp{\dot{\omega}_1^V\in\dot{K},\dot{F}_0(\dot{\omega}_1^V)=\bar{k}(\check{\lambda})}$ 
is in $\bar{k}(P_\kappa)$ as witnessed by $\bar{\nu}$ and $(\mathcal{J}^*,r^*,f^*,F^*,K^*)$.

We also observe that 
\begin{equation}\label{clm:keyclmPkappaSSP-2}
\bar{M}\models p^*\Vdash\omega_1^V\in\bar{k}(\dot{C})
\end{equation}
by applying in $\bar{M}$ Lemma \ref{fac:keyfacSEMCERTPLAMBDA-1} to $\bar{k}(\dot{C})$, $\bar{k}(\lambda)$, and observing that
$(\dot{F}_0(\dot{\omega}_1^V)=\bar{k}(\check{\lambda})), (\dot{\omega}_1^V\in\dot{K})$ is in $p^*$.

Once \ref{clm:keyclmPkappaSSP-2} is proved, by elementarity of $\bar{k}$, we get that in $V$ there is a condition in 
$P_\kappa$ refining $p_0$
and forcing that $\nu_{\dot{H}}(\dot{c}_{j,\omega_1^V})\cap\dot{C}$ is non-empty 
(since we already know by \ref{eqn:omega1VinS}, 
that $\omega_1^V\in k_{0\rho}(S)$). This is the desired contradiction.

To complete the proof of the Theorem we need only to prove Claim \ref{clm:fundclmrproof}.
 
\subsubsection{Proof of Claim \ref{clm:fundclmrproof}}

First of all we easily check that 
$(\bar{J},r_H,f^*,F^*,K^*)$ is 
 a $\bar{k}(\kappa)$-precertificate relative to $\bar{M}$ which satisfies also Condition \ref{def:keydefP0-3}
\ref{def:keydefP0-3-a} of Def. \ref{def:keydefPkappa}.

This is not hard: 
$(\bar{J},r_H,f^*,\bar{k}(T))$ is a weak semantic certificate for 
$\bar{k}(A)$ by Claim \ref{fac:auxfacPkappa}. 
Now all the other requirements needed to establish that  $(\bar{J},r_H,f^*,F^*,K^*)$ is 
 a $\bar{k}(\kappa)$-precertificate relative to $\bar{M}$ are easily checked using the elementarity of $\bar{k}$,
 for example: 
 \begin{itemize}
 \item
 if $\alpha\in K_H$, and $F_H(\alpha)=\ap{\eta,X_\alpha,P_\eta}$ (it is the case that $Z_\alpha=P_\eta$ since 
 $(\mathcal{J}_H,r_H,f_H,K_H,F_H)$ is a $P_\lambda$-certificate with $\nu_H$ a $P_\lambda$-witness)
 \[
 (X_\alpha,\in,P_\eta\cap X_\alpha,A_\eta\cap X_\alpha)\prec (Q_\eta,\in,P_\eta,A_\eta);
 \]
 by elementarity of $\bar{k}$
 \[
 (\bar{k}[X_\alpha],\in,\bar{k}[P_\eta\cap X_\alpha],\bar{k}[A_\eta\cap X_\alpha])\prec 
 (\bar{k}[Q_\eta],\in,\bar{k}[P_\eta],\bar{k}[A_\eta])\prec
 (\bar{k}(Q_\eta),\in,\bar{k}(P_\eta),\bar{k}(A_\eta)).
 \]
\item Similarly if $\alpha=\omega_1^V$, $X_\alpha=\bar{k}[Q_\lambda]$ and
 \[
 (\bar{k}[Q_\lambda],\in,\bar{k}[P_\lambda],\bar{k}[A_\lambda])= 
 (\bar{k}[Q_\lambda],\in,\bar{k}(P_\lambda)\cap \bar{k}[Q_\lambda],\bar{k}(A_\lambda)\cap \bar{k}[Q_\lambda])\prec
 (\bar{k}(Q_\lambda),\in,\bar{k}(P_\lambda),\bar{k}(A_\lambda)).
 \]
\end{itemize}

Now Condition \ref{def:keydefP0-3}\ref{def:keydefP0-3-c} would also be easy to check if we can ensure
the existence of a $\bar{\nu}$ satisfying \ref{eqn:barnuproperty} as in the Claim.

Assume this is the case. Then
for $\alpha\in K_H$ with $F_0(\alpha)=\eta$ and $E$ dense subset of $\bar{k}(P_\eta)\cap \bar{k}[X_\alpha]$ definable in 
\[
(\bar{k}[X_\alpha],\in,\bar{k}[P_\eta\cap X_\alpha],\bar{k}[A_\eta\cap X_\alpha]),
\]
we have that $E=\bar{k}(U)\cap k[X_\alpha]$ for some $U$ dense subset of $P_\eta$ definable in the structure
\[
(X_\alpha,\in,P_\eta\cap X_\alpha,A_\eta\cap X_\alpha).
\]
Then $H\cap U\cap X_\alpha\neq\emptyset$, giving that 
$k[H]\cap k(U)\cap k[X_\alpha]\neq\emptyset$.

Similarly since $H$ is $V$-generic for $P_\lambda$, we get that
$k[H]\cap k(U)\cap k[Q_\lambda]\neq\emptyset$ for any $U\in V$ dense subset of $P_\lambda$.

This would show that $(\bar{J},r_H,f^*,F^*,K^*)$ is a weak $\bar{k}(P_\kappa)$-certificate relative to $\bar{M}$
as witnessed by 
$\bar{\nu}$.

So we are left with the definition of $\bar{\nu}$.

We modify in $V[G]$, 
$\nu_H$ (from now on denoted as $\nu$) 
to a weak $\bar{k}(P_\kappa)$-certificate 
\[
\bar{\nu}:\bar{k}(\mathcal{L}^{1}_\kappa)\to X_{(\bar{\mathcal{J}},r_H,f^*,F^*,K^*)}
\] 
witnessing that\footnote{To avoid a heavy notation we will free to identify certain symbols $c$ of $\mathcal{L}^1$ 
whose intended meaning is transparent with the corresponding symbol
 $\bar{k}(c)$ of $\bar{k}(\mathcal{L}^1)$. For example we write $\dot{K}$ instead of $\bar{k}(\dot{K})$.
 Similarly we write $\dot{e}_{n,\omega_1^V}$ with the intended meaning of denoting the constants which represents the
 elements of the $\omega_1^V$-th point of the sequence $\bar{k}(\bp{\dot{X}_\alpha:\alpha<\omega_1^V})$, etc.}
$(\bar{\mathcal{J}},r_H,f^*,F^*,K^*)\in V[G]$ is a weak $\bar{k}(P_\kappa)$-certificate.

We first define $\bar{\nu}$ on
\[
\bar{k}[\mathcal{L}^1_\kappa]\cup\bp{\dot{X}_{\omega_1^V},\dot{Z}_{\omega_1^V}}
\] 
according to the following five types of sort, constant, or predicate symbols:

\begin{enumerate}[(a)]
\item  \label{const:type1}
$\bar{\nu}(\bar{k}(c))$ is defined ad hoc if $c\in\mathcal{L}^1$ is a sort, constant, or predicate symbol of $\mathcal{L}^1$ among
$\dot{\bp{r,f}},\dot{F}_0,\dot{K}$, $\dot{X}_i$ for each $i< \omega_1^V$, and the constant symbols $\dot{r}$, $\dot{f}$,
or if $c$ is either $\dot{X}_{\omega_1^V}$ or if $c$ is $\text{br}_T$.

\item  \label{const:type3}
$\bar{\nu}(\bar{k}(c))=\bar{k}(\nu(c))$ if $c$ is any symbol of $\mathcal{L}^1\setminus\mathcal{L}^0$ such that 
$c$ has not been already considered in the list \ref{const:type1}.

\item  \label{const:type2}
$\bar{\nu}(\bar{k}(c))=\nu(c)$ if $c$ is a sort or predicate symbol of $\mathcal{L}^0$ among
\[
\dot{N}_\alpha,\dot{G}_\alpha, \Sat_{N_\alpha},\in_{N_\alpha},\Cod^*, 
\dot{j}_{\alpha\beta},
\] 
or $c$ is a constant of sort $\dot{N}_\alpha$ for $\alpha\leq\beta<\omega_1^V$.


\item  \label{const:type4}
$\bar{\nu}(c)=j_{\omega_1^V\rho}(c)$ if $c$ is a constant symbol of $\mathcal{L}^0$ and $\nu(c)\in N_{\omega_1}$
(e.g. $c$ is $\dot{c}_{i,\omega_1^V}$ for some $i<\omega_1^V$, or $c=\check{x}$ for some $x\in H_{\omega_2}$).

\item  \label{const:type5}
$\bar{\nu}(\dot{j}_{\alpha\omega_1^V})=\bar{j}_{\alpha\rho}$ for $\alpha\leq\omega_1^V$, 
$\bar{\nu}(\dot{N}_{\omega_1}^V)=N^*_\rho$, and
\[
\bar{\nu}(\Sat_{N_{\omega_1^V}})=\bp{\ap{\gp{\psi},(a_1,\dots,a_n)}: (N^*_{\rho},\in)\models\psi(a_1,\dots,a_n)}.
\]
\end{enumerate}

$\bar{\nu}$ is defined as follows on the symbols of type \ref{const:type1}:

\begin{enumerate}[(i)]

\item $\bar{\nu}(\dot{K})=K^*=K_H\cup\bp{\omega_1^V}$,

\item $\bar{\nu}(\dot{X}_i)=\bar{k}[F_1(i)]$ for all $i\in K_H$,

\item $\bar{\nu}(\dot{X}_{\omega_1^V})=\bar{k}[Q_\lambda]$,


\item $\bar{\nu}(\dot{Z}_{\omega_1^V})=\bar{k}(P_\lambda)$,

\item $\bar{\nu}(\dot{F}_0)(\alpha)=\bar{k}((F_H)_0(\alpha))$ for $\alpha\in K_H$,

\item $\bar{\nu}(\dot{F}_0)(\omega_1^V)=\bar{k}(\lambda)$,

\item $\bar{\nu}(\dot{F}_0)(\beta)=0=\bar{\nu}(\dot{Z}_\beta)$, $\bar{\nu}(\dot{X}_\beta)=1$ if $\beta\notin K^*$,

\item $\bar{\nu}(\dot{r})=r_H$,

\item $\bar{\nu}(\dot{f})=\bar{k}[f_H]=f^*$,

\item \label{rulelastnu*}
$\bar{\nu}(\dot{T})=\bar{k}(T)$,

\item
$\bar{\nu}(\text{br}_T)=\bp{\ap{(r_H\restriction n,f^*\restriction n),r_H,f^*}: n\in\omega}$.

\end{enumerate}
We must therefore check that:
 \begin{enumerate}
 \item\label{eqn:keypropnu*-1}
 $\bar{\nu}$ is well defined (i.e. $\bar{\nu}$ respects
the equivalence class $[\cdot]_H$ on the constant symbols of $\mathcal{L}^1_\lambda$) and is consistent with the 
 constraints set forth in Def. \ref{def:keydefnu} for the symbols in its domain
 in order to be $\bar{\kappa}$-admissible for
$(\bar{J},r_H,f^*,F^*,K^*)$ relative to $\bar{M}$ (the clauses of Def. \ref{def:keydefnuPkappa} must be satisifed).

 \item\label{eqn:keypropnu*-2} 
Any extension of 
$\bar{\nu}$ in $V[G]$ to a total assignment 
$\bar{\nu}:\bar{k}(\mathcal{L}^1)\to X_{(\bar{J},r_H,f^*,F^*,K^*)}$ respecting the constraints
of Def. \ref{def:keydefnuPkappa} witnesses that $(\bar{J},r_H,f^*,F^*,K^*)$
is a weak $\bar{k}(P_\kappa)$-certificate relative to $\bar{M}$, hence $\bar{\nu}$ is also a weak $\bar{k}(P_\kappa)$-witness.
\end{enumerate}

 The key to establish items \ref{eqn:keypropnu*-1} and \ref{eqn:keypropnu*-2} is the following
 \begin{subclaim}\label{subclm:keyXXX} 
 For $\psi(c_1,\dots,c_n)$ an atomic  $\mathcal{L}^1_\lambda$-sentence 
 \[
 \psi(c_1,\dots,c_n)\in \Sigma_H\text{ if and only if }\mathcal{X}^{\bar{k}(\lambda)}_{(\bar{J},r_H,f^*,F^*,K^*)}\models \psi(\bar{\nu}(c_1),\dots,\bar{\nu}(c_n)).
 \]
 \end{subclaim}
 
 Suppose for the moment the Subclaim is proved and let us complete the proof of the Theorem.  
 By the Subclaim $\bar{\nu}$ can be extended to the whole of $\bar{k}(\mathcal{L}^1_\kappa)$ so to maintain the
 constraints set forth in Def. \ref{def:keydefnuPkappa} to be a $\bar{k}(\kappa)$-assignment
 because these constraints are easily checked for $\bar{\nu}$ on its partial domain. 
 Then all conditions required to make $\bar{\nu}$  a weak $\bar{k}(P_\kappa)$-witness are met, and we are done.
%

So we are left with the proof of Subclaim \ref{subclm:keyXXX}.

\begin{proof}
It is essentially a tiresome matter of checking all possible cases for all atomic sentences of $\mathcal{L}^1_\lambda$, appealing to the fact that $\bar{k}$, $\bar{j_{\omega_1^V\rho}}$ are elementary maps, and the observation
that the cases for predicates with a multi-sorted type are easy to handle.
Let $\mathcal{S}^1$ denote the sorts of $\mathcal{L}^1$ and
define the multimap $(j^*_s:s\in\mathcal{S}^1)$ by
 \[
 j^*_s:\nu(c)\mapsto \bar{\nu}(c)
 \] for $c$ a constant symbol of 
 $\mathcal{L}^1$ of sort $s$.
Now:
\begin{enumerate}[(i)]
\item
Sorts are mapped to sorts: 
\begin{itemize}
\item
$\bar{\nu}(\dot{Q})=\bar{k}(Q_\lambda)=\bar{k}(\nu(\dot{Q}))$,
\item
$\bar{\nu}(\dot{N}_{\omega_1^V})=\bar{j}_{\omega_1^V\rho}(N_{\omega_1^V})=
\bar{j_{\omega_1^V\rho}}(\nu(\dot{N}_{\omega_1^V}))$,
\item
$\bar{\nu}(\dot{\bp{r,f}})=\bp{r,\bar{j}_{\omega_1^V\rho}[f]}$,
\item
$\bar{\nu}(s)=\nu(s)$
for all other sorts in $\mathcal{S}^1$.
\end{itemize}
\item
 For atomic formulae given by  a predicate $R_j$ whose type $s_j$ has entries all of the same sort, one easily checks the preservation of the formula appealing to the properties of the multi-map $(j^*_s:s\in\mathcal{S}^1)$. 
 For example:
 \begin{itemize}
 \item
 $(F_H)_0(\nu(c))=\nu(d)$ holds if and only if $F^*_0(\bar{k}(\nu(c)))=\bar{k}(\nu(d))$ if and only if
 $F^*_0(\bar{\nu}(c))=\bar{\nu}(d)$ for $\nu(c)\in Q_\lambda$; similarly one handles the case of the formula $\dot{K}(c)$.
 \item
 $\Sat_{0,\omega_1^V}(\gp{\psi(\nu(c_1),\dots,\nu(c_m))})$ holds if and only if
 \[
 (Q_\lambda,\in,P_\lambda,A_\lambda)\models\psi(\nu(c_1),\dots,\nu(c_m))
 \]
 if and only if 
 \[
 (\bar{k}(Q_\lambda),\in,\bar{k}(P_\lambda),\bar{k}(A_\lambda))\models\psi(\bar{k}(\nu(c_1)),\dots,
 \bar{k}(\nu(c_m)))
 \]
 if and only if 
  $\Sat_{0,\rho}(\gp{\psi(\bar{\nu}(c_1),\dots,\bar{\nu}(c_m))})$ holds,
given that $\bar{\nu}(c)=\bar{k}(\nu(c))$ for 
  all $c$ of sort $\dot{Q}$ and $\bar{k}$ is elementary.
  \item
  $\Sat_{N_{\omega_1^V}}(\gp{\psi(\nu(c_1),\dots,\nu(c_m))})$ holds if and only if
 \[
 (N_{\omega_1^V},\in)\models\psi(\nu(c_1),\dots,\nu(c_m))
 \]
 if and only if 
 \[
 (N_{\rho},\in)\models\psi(\bar{j}_{\omega_1^V\rho}(\nu(c_1)),\dots,
 \bar{j}_{\omega_1^V\rho}(\nu(c_m)))
 \]
 if and only if 
  $\Sat_{N_\rho}(\gp{\psi(\bar{\nu}(c_1),\dots,\bar{\nu}(c_m))})$ holds,
  given that $\bar{\nu}(c)=\bar{j}_{\omega_1^V\rho}(\nu(c))$ for 
  all $c$ of sort $\dot{N}_{\omega_1^V}$ and $\bar{j}_{\omega_1^V\rho}$ is elementary.

 \item For $\alpha<\omega_1^V$
   $\Sat_{N_{\alpha}}(\gp{\psi(\nu(c_1),\dots,\nu(c_m))})$ holds if and only if
 \[
 (N_{\alpha},\in)\models\psi(\nu(c_1),\dots,\nu(c_m))
 \]
 if and only if  
  $\Sat_{N_\alpha}(\gp{\psi(\bar{\nu}(c_1),\dots,\bar{\nu}(c_m))})$ holds, given that $\bar{\nu}(c)=\nu(c)$ for 
  all $c$ of sort $\dot{N}_\alpha$.
 \item
 We leave to the reader to handle the other atomic formulae whose type has entries of just one fixed sort.
 \end{itemize}
 \item
It remains to handle the case of formulae whose predicate has a multi-sorted type. There are just
three clauses in Def. \ref{def:defL0} and Def. \ref{def:defL1} defining them:
\begin{enumerate}[(a)]
\item $\dot{j}_{\alpha\beta}(c)=d$ (Clause \ref{def:defL0-i}),
\item $\text{br}_T(c,d,e)$ (Clause \ref{def:defL0-j}),
\item $\Cod^*(c,d,e)$ (Clause \ref{def:defL0-l}),
\end{enumerate}
We handle them as follows:
\begin{enumerate}[(a)]
\item
We observe that:
\begin{itemize}
\item if $\beta<\omega_1^V$,
$\nu(c)=\bar{\nu}(c)$ and $\nu(d)=\bar{\nu}(d)$,
\item if $\beta=\omega_1^V$,
$\nu(c)=\bar{\nu}(c)$ and $j_{\omega_1^V\rho}(\nu(d))=\bar{\nu}(d)$.
\end{itemize}
It is immediate to check that this type of formulae is preserved in both cases by the multi-map $(j_{\dot{N}_\alpha}^*,j_{\dot{N}_\beta}^*)$.
\item
It is immediate to check their preservation 
observing that $\bar{\nu}(\dot{f})=\bar{j}_{\omega_1^V\rho}[f]$.
\item
It is immediate to check their preservation
observing that $\bar{\nu}(\dot{r})=r=\nu(\dot{r})$ and
$\bar{\nu}(c)=\nu(c)$ for all constants $c$ of sort $\dot{N}_0$.
\end{enumerate}
\end{enumerate}
The remaining details are left to the reader.
\end{proof}

We now closed all open ends of the proof and 
Theorem \ref{thm:mainthmPkappa} is proved.
\end{proof}

%
%

\begin{remark}
Subclaim \ref{subclm:keyXXX} is the very reason why one has to resort to multi-sorted logic to formalize the notion of $P_\lambda$-certificate in
infinitary logic. Consider the first order structures $(\mathcal{X}^\lambda_{(\mathcal{J}_H,r_H,f_H,F_H,K_H)})^*$
 and $(\mathcal{X}_{(\bar{\mathcal{J}},r_H,f^*,F^*,K^*)}^{\bar{k}(\lambda})^*$ associated respectively to 
 $\mathcal{X}_{(\mathcal{J}_H,r_H,f_H,F_H,K_H)}$
 and $\mathcal{X}_{(\bar{\mathcal{J}},r_H,f^*,F^*,K^*)}$.
 In this case there is overlap betwen  the domains of the various sorts $s$ (for example $\nu(\dot{Q})\cap\nu(\dot{N}_{\omega_1^V})\supseteq H_{\omega_2}^V$), and these domains could be mapped in possibly different way by the corresponding
 maps $j^*_s$. For example if $x\in N_{\omega_1^V}\setminus H_{\omega_2}^V$ it is possible that 
 $x=\nu(\check{\check{x}})$ and $x=\nu(c_{i,\omega_1^V})$, while it is a priori conceivable that
 \[
 j^*_{\dot{Q}}(\check{\check{x}})=\bar{k}(x)\neq \bar{j}_{\omega_1^V\rho}(x)=j^*_{\dot{N}_{\omega_1}}(c_{i,\omega_1^V}).
 \] 
 But this potential conflict is avoided exactly because we resorted to multi-sorted logic: the critical first order 
 formula $\check{\check{x}}=c_{i,\omega_1^V}$ is not among the ones whose preservation should be checked in 
 Subclaim \ref{subclm:keyXXX} exactly
 because it is instantiated for constants of different sort.
 \end{remark}

\section{Comparing the proofs}
The reader familiar with Asper\'o and Schindler's proof will notice that our proof is just rephrasing 
their argument taking advantage of the relation between density argument and the use of 
consistency properties to 
produce models of sentences for infinitary multi-sorted logic. 
A small twist with respect to their proof is also given by our use of the apparatus set forth in \ref{subsec:consprop}  
to establish many properties of the generic extension by 
$P_\lambda$ by means of a syntactic analysis of the $\mathfrak{L}_{\infty,\omega}$-theory of the $P_\lambda$-certificates.
This makes a large swath of density arguments easy consequences of Fact \ref{fac:SigmaAconsprop}. 

Taken aside these considerations, the proof presented here rephrases with a different terminology 
the arguments given in
\cite{ASPSCH(*)}.

\section{On the consistency strength of $(*)$}

This part is joint work with Ralf Schindler.

For $P$ a poset of regular uncountable size $\kappa$
$\FA^+(P)$ states that for any $P$-name $\dot{T}$ for a stationary subset of $\kappa$,
there is $G$ filter such that 
\[
\bp{\alpha<\kappa:\, \exists p\in G\, p\Vdash \check{\alpha}\in\dot{T}}
\] 
is stationary in its supremum.

\begin{theorem}[Schindler, V.]
Assume $\kappa=\theta^+=2^\theta$ with $\theta>\omega_1$.

Let $\mathcal{L}^{2}=\mathcal{L}^1\cup \bp{\dot{R}}$ with
$\dot{R}$ a binary predicate symbol of type $(\dot{Q},\dot{Q})$.

Let $\Sigma^2_\lambda$ enlarge $\Sigma^{1}_\lambda$ with the axioms\footnote{These axioms states that for $\delta\in K$
$R(\delta,\eta)$ holds if and only if $\eta$ is the supremum of $X_\delta\cap\Ord$.}
\begin{enumerate}[(I)]
\setcounter{enumi}{5}
\item
\emph{}

\begin{enumerate}
\item For all $\delta<\omega_1$ and $\eta<\lambda$,
\[
\qp{\check{\delta}\in\dot{K} \wedge \dot{R}(\check{\delta},\check{\eta})}\rightarrow 
\bigwedge_{\gamma<\lambda}
\qp{(\dot{X}_\delta(\check{\gamma})\rightarrow \check{\gamma}<\check{\eta})\wedge
(\check{\gamma}<\check{\eta}\rightarrow \bigvee_{n<\omega} \check{\gamma}\in \dot{e}_{n,\delta})}
\]
\end{enumerate}

\end{enumerate}

Let $\bar{P}_\lambda$ be exactly the forcing defined by $P_\lambda$ with the additon that 
a $\bar{P}_\eta$-witness
is obtained by an $\eta$-precertificate $\nu$ as the unique extension of $\nu$ to
a  $\bar{\nu}:\mathcal{L}^2_\lambda\to \mathcal{X}^\lambda_{(\mathcal{J},r,f,F,K)}$ 
such that: 
\begin{itemize}
\item
$\Sigma^2_\lambda\subseteq\Sigma_{\bar{\nu}}$,
\item
Condition \ref{def:keydefP0-3}\ref{def:keydefP0-3-c} is satisfied, now with
with $\bar{P}_\eta$ replacing $P_\eta$.
\end{itemize}

Assume $\FA^+_{\aleph_1}(\bar{P}_\kappa)$. 
Then $\square_\kappa$ fails.
\end{theorem}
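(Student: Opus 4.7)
Toward a contradiction I would assume $\square_\kappa$ holds, witnessed by a coherent non-threadable sequence $\vec{C}=\langle C_\beta:\beta\in\kappa^+\cap\mathrm{Lim}\rangle$ with $\otp(C_\beta)\leq\kappa$. The first task is to check that $\bar{P}_\kappa$ remains stationary set preserving: the extra axiom in $\Sigma^2_\lambda$ merely forces $\dot R(\delta,\eta)$ to record $\sup(X_\delta\cap\Ord)$, so the master-condition argument in the proof of Theorem~\ref{thm:mainthmPkappa} adapts verbatim by extending $\bar\nu$ via $\bar\nu(\dot R)=\bp{\ap{\delta,\sup(F_1^*(\delta)\cap\Ord)}:\delta\in K^*}$; this set is preserved by $\bar k:V\to\bar M$ since suprema of ordinals commute with elementary embeddings. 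Hence $\FA^+_{\aleph_1}(\bar P_\kappa)$ is a meaningful hypothesis, and a $\bar P_\kappa$-generic $H$ adds a continuous strictly increasing sequence $\langle\eta_\alpha^H:\alpha\in K_H\rangle$ of ordinals $<\kappa$ which, crucially, are now \emph{atomically named} by the forcing language through $\dot R$ and therefore may appear inside $\bar P_\kappa$-names and inside dense sets.

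Next I would fix $\beta^*<\kappa^+$ with $\cof(\beta^*)=\kappa$ so that $\otp(C_{\beta^*})=\kappa$, and enumerate $C_{\beta^*}=\bp{c_\xi:\xi<\kappa}$ increasing and continuous. Using $\Diamond_\kappa$ to encode $\vec C\cap Q_\lambda$ into the predicate $A_\lambda$ at stationarily many $\lambda\in C$, a density argument shows that for each $\alpha<\omega_1^V$ the set
\[
D_\alpha=\bp{p\in\bar P_\kappa:p\Vdash(\eta_\alpha\in C_{\beta^*})\wedge(\eta_\alpha\text{ is a limit of }C_{\beta^*})}
\]
is dense below any condition forcing $\check\alpha\in\dot K$. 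Let $\dot T$ be the $\bar P_\kappa$-name for $\bp{\alpha\in K:\eta_\alpha\in C_{\beta^*}\wedge \eta_\alpha\text{ is a limit of }C_{\beta^*}}$; by density together with the continuity of $\alpha\mapsto\eta_\alpha$, $\dot T$ is forced to be stationary in $\omega_1^V$.

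Applying $\FA^+_{\aleph_1}(\bar P_\kappa)$ to $\dot T$ together with the $\aleph_1$-many dense sets $D_\alpha$ and the dense sets pinning each $\eta_\alpha$ to a specific ordinal, I obtain a filter $G\in V$ and a stationary $S\subseteq\omega_1^V$ such that for each $\alpha\in S$ some $p_\alpha\in G$ forces $\check\alpha\in\dot T$ and the value $\gamma_\alpha:=\eta_\alpha^G\in V$ is well defined, lies in $C_{\beta^*}$, and is a limit point of $C_{\beta^*}$. Coherence of $\vec C$ then yields $C_{\gamma_\alpha}=C_{\beta^*}\cap\gamma_\alpha$ for every $\alpha\in S$; the closure of $\bp{\gamma_\alpha:\alpha\in S}$ in its supremum $\gamma^*$ is a club $D\subseteq\gamma^*$ with $D\cap\gamma=C_\gamma$ at all limit points $\gamma$ of $D$ --- a thread through $\vec C$ at $\gamma^*$, contradicting non-threadability.

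The main obstacle is establishing the density of each $D_\alpha$: one must force $\eta_\alpha$ to be pinned to an element of $C_{\beta^*}$ while respecting clause~\ref{def:keydefP0-3}\ref{def:keydefP0-3-c} of Definition~\ref{def:keydefPkappa}, which demands that the relevant dense sets be sealable at the countable substructures $X_\alpha$. This requires (i) that $\vec C$ be guessed by the $\Diamond_\kappa$-sequence so that $(Q_\lambda,\in,P_\lambda,A_\lambda)$ knows $\vec C\restriction Q_\lambda$ for stationarily many $\lambda$, and (ii) that the new predicate $\dot R$ render ``$\eta_\alpha=\check c_\xi$'' an atomic $\mathcal L^2$-formula visible to the sealing mechanism --- without $\dot R$ the relevant assertion would not be of $\bigwedge\bigvee$-type and the sealing step would fail. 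This is precisely the purpose of the enrichment from $\mathcal L^1$ to $\mathcal L^2$.
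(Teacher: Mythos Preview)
Your argument fails at the final step. The set $D$ you construct is a club in $\gamma^*<\kappa^+$, not a club in $\kappa^+$; a bounded coherent piece like this does not contradict the non-threadability of a $\square_\kappa$-sequence --- indeed $C_{\gamma^*}$ itself is already such a club in $\gamma^*$. A genuine thread must be unbounded in $\kappa^+$, and an $\aleph_1$-sized set $\{\gamma_\alpha:\alpha\in S\}$ can never produce one. Your density claim for $D_\alpha$ is also unjustified: the master-condition mechanism lets you choose $\lambda_\alpha$ from any stationary set on which the diamond sequence guesses the relevant name, but it gives no control allowing you to land $\sup(X_\alpha)$ inside a fixed club $C_{\beta^*}$ that is unrelated to the diamond sequence.

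The paper's approach is entirely different and does not attempt to thread anything. In the paper $\square_\kappa$ denotes a coherent sequence $\langle C_\alpha:\alpha<\kappa\rangle$ with $\otp(C_\alpha)\le\theta$ (so really $\square_\theta$ in standard notation, since $\kappa=\theta^+$), and $\FA^+(\bar P_\kappa)$ concerns names for stationary subsets of $\kappa$, not of $\omega_1$. One first fixes $\xi<\theta$ so that $S_\xi=\{\eta<\kappa:\cof(\eta)=\omega,\ \otp(C_\eta)=\xi\}$ still carries $\Diamond_\kappa$, then reruns the master-condition argument of Theorem~\ref{thm:mainthmPkappa} choosing $\lambda\in S_\xi$; the countable cofinality of $\lambda$ is exactly what makes $\bar k[Q_\lambda]$ cofinal in $\bar k(\lambda)$, so that $\dot R(\check\delta,\check\lambda)$ can be forced. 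This shows that $\dot S=\{\lambda\in S_\xi:\exists\delta\ (\dot K(\check\delta)\wedge\dot F_0(\check\delta)=\check\lambda\wedge\dot R(\check\delta,\check\lambda))\}$ is a $\bar P_\kappa$-name for a stationary subset of $\kappa$. Applying $\FA^+$ yields $S\subseteq S_\xi$ stationary in its supremum $\eta$, and for $\alpha<\gamma$ limit points of $C_\eta$ in $S$ coherence gives $C_\alpha=C_\gamma\cap\alpha$, whence $\xi=\otp(C_\alpha)<\otp(C_\gamma)=\xi$, a contradiction. The role of $\dot R$ is not to make $\eta_\alpha$ visible to sealing, but to make the single atomic sentence $\dot R(\check\delta,\check\lambda)$ available so that membership in $\dot S$ is decided by conditions.
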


Note that the unique
change from $P_\lambda$ to $\bar{P}_\lambda$ is that in the new forcing it is possible to express by an atomic formula the 
statement $\sup(X_\delta)=\eta$ for any $\delta<\omega_1^V,\eta<\lambda$. On the other hand it is not clear whether
the inclusion map of $P_\lambda$ into $\bar{P}_\lambda$ is a complete embedding:
if $r\in \bar{P}_\lambda$, $D$ is dense in $P_\lambda$, 
and $s\in D$ is an extension of $r\restriction P_\eta=r\cap \SL{\eta}$, the unique extension of a
$P_\eta$-witness $\nu$ for $s$ to an assignment $\bar{\nu}$ with $\Sigma_{\bar{\nu}}\supseteq \Sigma^2_\eta$
may not satisfy all the atomic sentences for $s\cup r$ or (even if  it satisfied all these atomic sentences) it might 
not be a $\bar{P}_\eta$-certificate.

\begin{proof}
Under our assumptions on $\kappa$, Shelah has proved that
$\Diamond_{\kappa}$ is witnessed by the set of points of countable cofinality below $\kappa$
(see \cite{MR2567499}).

Now the proof is just a slight refinement of the proof of Thm. \ref{thm:mainthmPkappa}.

Assume towards a contradiction that $(C_\alpha:\alpha<\kappa)$ is a square sequence, 
i.e. is such that $\otp(C_\alpha)\leq\theta$ for all $\alpha$, and $c_\alpha=c_\beta\cap\alpha$ whenever
$\alpha$ is a limit point of $c_\beta$.
By a standard diagonalization argument we can find $\xi<\theta$ such that
$\Diamond_\kappa$ is witnessed by $S_\xi$, where
\[
S_\xi=\bp{\eta<\kappa:\, \cof(\eta)=\omega\text{ and }\otp(C_\eta)=\xi}.
\]

If one parses through that proof one realizes that the following holds:
\begin{claim} 
Given $p\in \bar{P}_\kappa$ and a
$\bar{P}_\kappa$-name 
$\dot{C}$ for a club subset of $\kappa$, there are
$\delta<\omega_1$ and $\lambda\in S_\xi$ such that
\[
p\cup\bp{\dot{K}(\check{\delta}), \dot{F}_0(\check{\delta})=\check{\lambda},\dot{R}(\check{\delta},\check{\lambda})}
\]
is in $\bar{P}_\kappa$ and forces $\check{\lambda}\in\dot{C}$.
\end{claim}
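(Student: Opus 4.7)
The plan is to adapt the master-condition argument of Theorem \ref{thm:mainthmPkappa} and Claim \ref{clm:fundclmrproof} to the enlarged signature $\mathcal{L}^2$, the novelty being that one must also arrange for the new predicate $\dot{R}$ to hold of a suitable pair $(\delta,\lambda)$; this forces us to work with $\lambda$ of countable cofinality so as to exploit the continuity of a generic embedding at $\lambda$. By Shelah's result the set
\[
S^{*}=\bp{\lambda\in S_\xi:\, p\in\bar{P}_\lambda,\ A_\lambda=\dot{C}\cap Q_\lambda,\ (Q_\lambda,\in,\bar{P}_\lambda,A_\lambda)\prec(H_\kappa,\in,\bar{P}_\kappa,\dot{C})}
\]
is stationary in $\kappa$; fix any $\lambda\in S^{*}$ and aim to produce, by an iteration and reflection argument inside a collapse extension of $V$, some countable $\delta$ such that the extended condition lies in $\bar{P}_\kappa$. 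That the condition then forces $\check{\lambda}\in\dot{C}$ will follow by a direct variation of Lemma \ref{fac:keyfacSEMCERTPLAMBDA-1}.

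In $V[G]$ with $G$ a $V$-generic filter for $\Coll(\omega,\kappa)$, pick a $V$-generic filter $H$ for $\bar{P}_\lambda$ below $p$ and let $(\mathcal{J}_H,r_H,f_H,F_H,K_H)$, $\nu_H$ be the associated $\bar{P}_\lambda$-certificate and witness. As in the proof of Theorem \ref{thm:mainthmPkappa}, prolong $\mathcal{J}_H$ by an $N_{\omega_1^V}^H$-generic ultrapower and further iteration to length $\rho=\omega_1^{V[G]}$, and lift the resulting embedding to an elementary $\bar{k}\colon V\to\bar{M}$ with critical point $\omega_1^V$ (using the saturation of $\NS_{\omega_1}^V$). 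Define the $\bar{k}(\kappa)$-precertificate $(\bar{\mathcal{J}},r_H,f^{*},F^{*},K^{*})$ exactly as in Claim \ref{clm:fundclmrproof}, with $K^{*}=K_H\cup\bp{\omega_1^V}$ and $F^{*}(\omega_1^V)=\ap{\bar{k}(\lambda),\bar{k}[Q_\lambda],\bar{k}(\bar{P}_\lambda)}$. Extend the $\bar{k}(\kappa)$-assignment $\bar{\nu}$ of Claim \ref{clm:fundclmrproof} to the new predicate by declaring $\bar{\nu}(\bar{k}(\dot{R}))$ to consist of the pairs $(\delta,\eta)$ with $\delta\in K^{*}$ for which axiom (VI) holds of $(\delta,\eta)$ in the multi-sorted structure $\mathcal{X}^{\bar{k}(\kappa)}_{(\bar{\mathcal{J}},r_H,f^{*},F^{*},K^{*})}$.

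The crucial new verification is that $(\omega_1^V,\bar{k}(\lambda))$ is in $\bar{\nu}(\bar{k}(\dot{R}))$. The first conjunct of axiom (VI) amounts to $\bar{k}[Q_\lambda]\cap\Ord\subseteq\bar{k}(\lambda)$, which is immediate from $Q_\lambda\cap\kappa=\lambda$ and order-preservation of $\bar{k}$. The second conjunct demands that every $\gamma<\bar{k}(\lambda)$ belong to some $\bar{k}(x)$ with $x\in Q_\lambda$: since $\lambda\in S_\xi$ has cofinality $\omega$ in $V$ and $\omega$ is below the critical point of $\bar{k}$, any cofinal $\omega$-sequence $\mu_n\nearrow\lambda$ existing in $V$ has image $\bar{k}(\mu_n)\nearrow\bar{k}(\lambda)$ in $\bar{M}$, so for any such $\gamma$ there is $n$ with $\gamma<\bar{k}(\mu_n)$, and $\bar{k}(\mu_n)\in\bar{k}[Q_\lambda]$ since $\mu_n<\lambda$ implies $\mu_n\in Q_\lambda$. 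This cofinality-continuity step is the one genuine novelty in the proof, and the main obstacle: without $\cof(\lambda)=\omega$, the interpretation of $\dot{X}_{\omega_1^V}$ could not cover $\bar{k}(\lambda)$ from below and the axiom could not be satisfied. The remainder of the verification that $\bar{\nu}$ is a weak $\bar{k}(\bar{P}_\kappa)$-witness goes through as in Claim \ref{clm:fundclmrproof} and Subclaim \ref{subclm:keyXXX}, the new predicate $\dot{R}$ causing no extra difficulty thanks to its multi-sorted type $(\dot{Q},\dot{Q})$.

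Reflecting the resulting $\Sigma_1$-statement asserting existence of such a certificate-witness pair from $H_{\omega_1}^{V[\bar{G}]}$ to $H_{\omega_1}^{\bar{M}[\bar{G}]}$ by Shoenfield absoluteness (with $\bar{G}$ a $V[G]$-generic filter for $\Coll(\omega,\bar{k}(\kappa))$ making all relevant parameters countable), then using homogeneity of the collapse and the elementarity of $\bar{k}$, yields in $V$ some $\delta<\omega_1^V$ with $p\cup\bp{\dot{K}(\check{\delta}),\dot{F}_0(\check{\delta})=\check{\lambda},\dot{R}(\check{\delta},\check{\lambda})}\in\bar{P}_\kappa$. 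That this condition forces $\check{\lambda}\in\dot{C}$ is then an exact variant of Lemma \ref{fac:keyfacSEMCERTPLAMBDA-1}: under any $\bar{P}_\kappa$-generic filter $\bar{H}$ through it, the interpretation of $\dot{X}_\delta$ is an elementary substructure of $(Q_\lambda,\in,\bar{P}_\lambda,A_\lambda)$ which by axiom (VI) is cofinal in $\lambda$ and which (from the $\bar{P}_\kappa$-certificate property) meets every dense subset of $\bar{P}_\lambda$ definable from its parameters; the dense sets $D_\mu=\bp{q\in\bar{P}_\kappa:\exists\gamma>\mu\,(q\Vdash\check{\gamma}\in\dot{C})}$ restrict, by the choice $\lambda\in S^{*}$, to dense subsets of $\bar{P}_\lambda$ definable in $(Q_\lambda,\in,\bar{P}_\lambda,A_\lambda)$, so the interpretation of $\dot{C}$ meets this $X_\delta$ cofinally below $\lambda$, making $\lambda$ a limit point, and hence an element, of the club $\dot{C}_{\bar{H}}$.
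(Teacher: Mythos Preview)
Your proposal is correct and follows essentially the same approach as the paper: both adapt the master-condition argument of Theorem \ref{thm:mainthmPkappa} and Claim \ref{clm:fundclmrproof} to the enlarged signature, with the one genuine novelty being the verification that $\bar{k}[Q_\lambda]$ is cofinal in $\bar{k}(\lambda)$, obtained from $\cof(\lambda)=\omega$ and $\crit(\bar{k})=\omega_1^V$. The only minor organizational difference is that the paper first establishes in $\bar{M}$ that the extended condition forces $\bar{k}(\lambda)\in\bar{k}(\dot{C})$ and then reflects both facts via $\bar{k}$, whereas you first reflect the mere existence of the condition to $V$ and then argue directly (via your variant of Lemma \ref{fac:keyfacSEMCERTPLAMBDA-1}, using that axiom (VI) forces $\sup(X_\delta\cap\Ord)=\lambda$) that it forces $\check{\lambda}\in\dot{C}$; both orderings are valid.
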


The proof of this claim is (modulo the variations spelled out below) 
exactly the one given in the proof of 
Thm. \ref{thm:mainthmPkappa} to infer that on top of any condition $p\in P_\kappa$ and for any $\dot{e}_{i,\omega_1}$
which $p$ forces to be stationary for $N_{\omega_1}$ and for any
$P_\kappa$-name 
$\dot{C}$ for a club subset of $\omega_1$ one can always
find $\delta<\omega_1$ and $\lambda$ such that
\[
p\cup\bp{\dot{K}(\check{\delta}), \dot{F}_0(\check{\delta})=\check{\lambda}}
\]
is in $P_\kappa$ and $p$ forces $\check{\delta}\in \dot{e}_{i,\omega_1}\cap\dot{C}$.

The key changes are the following:
\begin{itemize}
\item A $\bar{P}_\kappa$-name for a subset of $\kappa$ is still given by a subset of $H_\kappa$ (now we need 
$\kappa$-many antichains of $\bar{P}_\kappa$ to decide $\dot{C}$), hence we can find
$\lambda\in S_\xi$ such that $\dot{C}\cap Q_\lambda=A_\lambda$.
\item
Claim \ref{clm:fundclmrproof} is now reinforced adding the further request that 
\[
\bar{\nu}:\bar{k}(\mathcal{L}^{1}_\kappa)\to X_{(\bar{\mathcal{J}},r_H,f^*,F^*,K^*)}
\] 
is such that its unique extension to a model of $\bar{k}(\Sigma^2_\lambda)$
is such that
$\dot{R}(\check{\omega_1^V},\bar{k}(\check{\lambda}))\in\Sigma_{\bar{\nu}}$.

The proof that this stronger form of the claim holds is exactly the same, with the following further addition:
since $\lambda$ has countable cofinality in $V$,
$\bar{k}(\lambda)$ has countable cofinality in $\bar{M}$, hence $V[G]$ models that
$\bar{k}[\lambda]$ is cofinal in $\bar{k}(\lambda)$. This gives
that $\dot{R}(\omega_1^V,\bar{k}(\lambda))\in\Sigma_{\bar{\nu}}$ assuming the $\bar{\nu}$ given in 
Claim \ref{clm:fundclmrproof}
has been 
extended to $\bar{k}(\mathcal{L}^2_\lambda)$ in the unique possible way which makes it 
a model of $\bar{k}(\Sigma^2_\lambda)$. 
\item
The same definability argument given in the proof of the main theorem yields that 
\[
\bar{k}(p)\cup\bp{\dot{K}(\check{\omega}_1),\dot{F}_0(\check{\omega}_1)=\bar{k}(\check{\lambda}),
\dot{R}(\check{\omega}_1,\bar{k}(\check{\lambda}))}
\]
forces that $k(\dot{\lambda})\in k(\dot{C})$ holds in $\bar{M}$ for $\bar{k}(P_\kappa)$.
\end{itemize}

This gives that
\begin{claim}
$\dot{S}$ is a $\bar{P}_\kappa$-name for a stationary set, where
\[
\dot{S}=\bp{\ap{\check{\lambda},p}:\, \lambda\in S_\xi,\text{ and }\, \dot{K}(\check{\delta}),\,(\dot{F}_0(\check{\delta})=\check{\lambda}),\, \dot{R}(\check{\delta},\check{\lambda})\in p\text{ for some }\delta\in \omega_1^V}.
\]
\end{claim}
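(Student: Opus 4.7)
The plan is to reduce the statement directly to the immediately preceding claim, without any new forcing or elementarity argument. By the standard characterization of names for stationary sets, a $\bar{P}_\kappa$-name $\dot{S}$ is forced to denote a stationary subset of $\kappa$ iff for every pair $(p,\dot{C})$ with $p\in\bar{P}_\kappa$ and $\dot{C}$ a $\bar{P}_\kappa$-name such that $p\Vdash\dot{C}$ is a club subset of $\kappa$, there exists $q\leq p$ in $\bar{P}_\kappa$ with $q\Vdash\dot{S}\cap\dot{C}\neq\emptyset$. I would therefore fix such a pair $(p,\dot{C})$ and produce the desired $q$.

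By the previous claim applied to $(p,\dot{C})$, I obtain $\delta<\omega_1$ and $\lambda\in S_\xi$ such that
\[
q=p\cup\bp{\dot{K}(\check{\delta}),\ \dot{F}_0(\check{\delta})=\check{\lambda},\ \dot{R}(\check{\delta},\check{\lambda})}
\]
is a condition of $\bar{P}_\kappa$ with $q\Vdash\check{\lambda}\in\dot{C}$. Consulting the definition of $\dot{S}$, the pair $\ap{\check{\lambda},q}$ lies in $\dot{S}$, because $q$ contains the three atomic sentences required by the defining clause of $\dot{S}$ with witness $\delta$. Hence $q\Vdash\check{\lambda}\in\dot{S}$, and combined with $q\Vdash\check{\lambda}\in\dot{C}$ this yields $q\Vdash\dot{S}\cap\dot{C}\neq\emptyset$, as required.

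The main obstacle has already been surmounted in the previous claim, whose proof is the one genuinely new piece of work in this section: it is the adaptation of the proof of Theorem \ref{thm:mainthmPkappa} incorporating two new ingredients, namely restricting the target $\lambda$ to lie in $S_\xi$ (using Shelah's sharpening of $\Diamond_\kappa$ on $S_\xi$ to guess $\dot{C}$ there) and enforcing $\dot{R}(\check{\omega}_1^V,\bar{k}(\check{\lambda}))\in\Sigma_{\bar{\nu}}$ by the axiom for $\dot{R}$, once one observes that $\cof^V(\lambda)=\omega$ implies $\bar{k}[\lambda]$ is cofinal in $\bar{k}(\lambda)$ in $V[G]$. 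Granted that claim, the present statement is purely a matter of matching the shape of the conditions delivered by the previous claim against the defining clause of $\dot{S}$, and no further argument is needed.
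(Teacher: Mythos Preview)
Your proposal is correct and matches the paper's own treatment: the paper simply writes ``This gives that'' before stating the claim, i.e., it regards the claim as an immediate consequence of the preceding one via exactly the density argument you spell out. Your reduction is the intended one, and there is nothing to add.
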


Now by $\FA^{+1}(\bar{P}_\kappa)$ we can find $H$ filter on $\bar{P}_\kappa$ such that
\[
S=\dot{S}_H=\bp{\lambda\in S_\xi: \,\text{exists }p\in H \text{ forcing }\check{\lambda}\in\dot{S}}
\]
is stationary in its supremum $\eta$.
Then $S\cap C_\eta$ is stationary. If $\alpha<\gamma\in S\cap C_\eta$ are limit points of $C_\eta$, we get that:
\[
C_\alpha=C_\eta\cap\alpha=C_\eta\cap\gamma\cap\alpha=C_\gamma\cap\alpha
\]
hence 
\[
\xi=\otp(C_\alpha)=\otp(C_\gamma\cap\alpha)<\otp(C_\gamma)=\xi,
\]
which is a contradiction.

\end{proof}

\bibliographystyle{plain}
	\bibliography{Biblio}

\end{document}